\providecommand{\R}{\mathbb{R}}
\providecommand{\C}{\mathbb{C}}
\providecommand{\Z}{\mathbb{Z}}
\providecommand{\Sp}{\mathbb{S}}
\providecommand{\B}{\mathbb{B}}
\providecommand{\K}{\mathbb{K}}
\providecommand{\apr}{\mathbb{A}}
\providecommand{\pri}{\mathbb{P}}
\providecommand{\pyr}{\mathbb{Y}}
\providecommand{\refl}{\underline{\mathsf{R}}}
\providecommand{\rot}{\mathsf{R}}
\newcommand{\Wedge}[2]{W_{#1}^{#2}}
\providecommand{\st}{\, :\ }
\providecommand{\lipdom}{\Omega}
\providecommand{\potential}{q}
\providecommand{\robinpotential}{r}
\providecommand{\closure}[1]{\overline{#1}}  
\providecommand{\bdy}[1]{\partial_{#1}}
\providecommand{\dir}{\mathrm{D}}
\providecommand{\neum}{\mathrm{N}}
\providecommand{\dbdy}{\bdy{\dir}}
\providecommand{\nbdy}{\bdy{\neum}}
\providecommand{\rbdy}{\bdy{\mathrm R}}
\providecommand{\ltwosob}[1]{H^{#1}}
\providecommand{\sob}{\ltwosob{1}}
\providecommand{\sobd}[1]{\sob_{#1}}
\providecommand{\conormal}[2]{\eta^{#1}_{#2}}
\providecommand{\twoff}[1]{A^{#1}}
\providecommand{\met}[1]{g^{#1}}
\providecommand{\geuc}{\met{\R^3}}
\providecommand{\gsph}{\met{\Sp^2}}
\providecommand{\indexform}[1]{Q^{#1}}
\providecommand{\hausmeas}[2]{\mathscr{H}^{#1}(#2)}
\providecommand{\hausint}[2]{d\hausmeas{#1}{#2}}
\providecommand{\weakbf}{T}
\providecommand{\eigenvalc}[3]{\lambda_{#2}^{#3}\left(#1\right)}
\providecommand{\eigenval}[2]{\eigenvalc{#1}{#2}{}}
\providecommand{\eigenvalsym}[4]{\eigenvalc{#1}{#2}{#3,#4}}
\providecommand{\eigenspc}[3]{E_{#3}^{#2}(#1)}
\providecommand{\eigensp}[2]{\eigenspc{#1}{#2}{}}
\providecommand{\eigenspsym}[4]{\eigenspc{#1}{#2}{#3,#4}}
\providecommand{\proj}{\pi}
\providecommand{\eigenproj}[2]{\proj_{#1}^{#2}}
\providecommand{\equivind}[1]{\ind_{#1}}
\providecommand{\equivnul}[1]{\nul_{#1}}
\providecommand{\symind}[2]{\ind_{#1}^{#2}}
\providecommand{\symnul}[2]{\nul_{#1}^{#2}}
\providecommand{\grp}{G}
\providecommand{\stabgrp}{H}
\providecommand{\twsthom}{\sigma}
\providecommand{\twstact}[1]{\widehat{#1}}
\providecommand{\invproj}[2]{\proj_{#1,#2}}
\providecommand{\nrmlsgn}[1]{\sgn_{#1}}
\providecommand{\inttext}{\mathrm{int}}
\providecommand{\exttext}{\mathrm{ext}}
\providecommand{\intbdy}{\partial_\inttext}
\providecommand{\extbdy}{\partial_{\exttext}}
\providecommand{\dint}[1]{#1^{\mathrm{D}_\inttext}}
\providecommand{\nint}[1]{#1^{\mathrm{N}_\inttext}}
\providecommand{\conffact}{\rho}
\providecommand{\cycgrp}[1]{\boldsymbol{#1}}
\providecommand{\zerogen}{\Xi^{-\K_0 \cup \K_0}}
\providecommand{\threebc}{\Sigma^{-\K_0 \cup \B^2 \cup \K_0}}
\providecommand{\initzerogen}{\widehat{\Xi}^{-\K_0 \cup \K_0}}
\providecommand{\initthreebc}{\widehat{\Sigma}^{-\K_0 \cup \B^2 \cup \K_0}}
\providecommand{\tow}{\mathbb{M}}
\providecommand{\towdom}{\tow_{\mathrm{fb}}}
\providecommand{\liptowdom}{\widehat{\tow}_\mathrm{fb}}
\providecommand{\two}[1]{#1^\Xi}
\providecommand{\three}[1]{#1^\Sigma}
\providecommand{\catr}{K}
\providecommand{\discr}{B}
\providecommand{\towr}{M}
\providecommand{\towrdom}[1]{M_{\mathrm{fb},#1}}
\providecommand{\liptowrdom}[1]{\widehat{M}_{\mathrm{fb},#1}}
\providecommand{\transr}{\Lambda}
\providecommand{\loc}{\mathrm{loc}}
\providecommand{\resolvent}{R}
\providecommand{\subspace}{\underset{\clap{\scriptsize subspace}}{\subset}}
\DeclarePairedDelimiter\abs{\lvert}{\rvert}
\DeclarePairedDelimiter\nm{\lVert}{\rVert}
\DeclarePairedDelimiter\sk{\langle}{\rangle}
\DeclarePairedDelimiter\interval{]}{[}
\DeclarePairedDelimiter\Interval{[}{[}
\DeclarePairedDelimiter\IntervaL{[}{]}
\DeclareMathOperator{\Ogroup}{O}
\DeclareMathOperator{\Ugroup}{U}
\DeclareMathOperator{\SOgroup}{SO}
\DeclareMathOperator{\sgn}{sgn}
\DeclareMathOperator{\morseindex}{index}
\DeclareMathOperator{\dive}{div}
\DeclareMathOperator{\ind}{ind} 
\DeclareMathOperator{\nul}{nul}
\DeclareMathOperator{\Closure}{Closure}
\DeclareMathOperator{\Span}{Span}
\definecolor{Dirichlet}{cmyk}{.5, 1, 0, 0}
\definecolor{Neumann}  {cmyk}{ 0,.5, 1, 0}
\definecolor{Robin}    {cmyk}{ 1, 0,.5, 0}
\tikzset{
    scale plot marks/.is choice,
    scale plot marks/true/.style={},	
    scale plot marks/false/.code={
        \def\pgfuseplotmark##1{\pgftransformresetnontranslations\csname pgf@plot@mark@##1\endcsname}
    },
every mark/.append style={scale plot marks=false},
plus/.style={mark=+,mark size=2.25pt},
vdash/.style={mark=|,mark size=2.25pt},
hdash/.style={mark=-,mark size=2.25pt},
bullet/.style={mark=*,mark size=1.125pt},
declare function={acosh(\x)=ln(\x+sqrt(\x*\x-1));},
}
\pgfmathsetmacro{\fbmsscale}{0.46}
\pgfmathsetmacro{\unitscale}{\fbmsscale*\textwidth/2cm}
\theoremstyle{plain}
\newtheorem{theorem}{Theorem}[section]
\newtheorem{lemma}[theorem]{Lemma}
\newtheorem{corollary}[theorem]{Corollary} 
\newtheorem{proposition}[theorem]{Proposition}
\theoremstyle{definition}
\newtheorem{example}[theorem]{Example}
\theoremstyle{remark}
\newtheorem{remark}[theorem]{Remark}
\numberwithin{equation}{section}
\title{Spectral estimates for free boundary minimal surfaces via Montiel–Ros partitioning methods}
\author{Alessandro Carlotto, Mario B. Schulz, David Wiygul}
\date{
\vspace*{-3ex}
} 
\newcommand\printaddress{{
\setlength{\parindent}{17pt}
\bigskip
\par
\vbox{
{\scshape Alessandro Carlotto}
\newline Universit\`a di Trento, 
Dipartimento di Matematica,
via Sommarive 14, 
38123 Povo di Trento, 
Italy
\newline
\textit{E-mail address:} 
\texttt{alessandro.carlotto@unitn.it}
\par\medskip
{\scshape Mario B. Schulz}
\newline 
University of M\"unster, 
Mathematisches Institut, 
Einsteinstrasse 62,
48149 M\"unster,
Germany
\newline
\textit{E-mail address:} 
\texttt{mario.schulz@uni-muenster.de}
\par\medskip
{\scshape David Wiygul}
\newline Universit\`a di Trento, 
Dipartimento di Matematica,
via Sommarive 14, 
38123 Povo di Trento, 
Italy
\newline
\textit{E-mail address:} 
\texttt{davidjames.wiygul@unitn.it}
\par
}
}}
\begin{document}

\maketitle

\begin{abstract}
We adapt and extend the Montiel–Ros methodology to compact manifolds with boundary, allowing for mixed 
(including oblique) boundary conditions and also accounting for the action of a finite group $\grp$ together with an additional twisting homomorphism $\sigma\colon \grp\to\Ogroup(1)$. 
We then apply this machinery in order to obtain quantitative lower and upper bounds on the growth rate of the Morse index of free boundary minimal surfaces with respect to the topological data (i.\,e. the genus and the number of boundary components) of the surfaces in question. 
In particular, we compute the exact values of the equivariant Morse index and nullity for two infinite families of examples, with respect to their maximal symmetry groups, and thereby derive explicit two-sided linear  bounds when the equivariance constraint is lifted.
\end{abstract}

\section{Introduction}

Despite a profusion
of constructions of
free boundary
minimal surfaces in the Euclidean unit ball $\B^3$
over the course of the past decade
(\cites{FraSch11, FraSch16, KarKokPol14, GirLag21}
via optimization of the first Steklov eigenvalue,
\cites{Ketover16equiv, Ketover16FB, CarFraSch20}
via min-max methods for the area functional,
and
\cites{
FolPacZol17,
KapouleasLiDiscCCdesing, KapouleasMcGrathGenLinDblI,
KapouleasWiygul17,
KapouleasZouCloseToBdy,
CSWnonuniqueness
}
via gluing methods),
many basic questions about the space
of such surfaces remain open.
The reader is referred to
\cites{FraserLectureNotes,
LiSurvey, FranzThesis}
for recent overviews of the field.
In particular, 
so far it is only
for the rotationally symmetric examples,
planar discs through the origin
and critical catenoids,
that the exact value of the Morse index is actually known
(see \cites{Devyver2019, SmithZhou2019, Tran2020}). The present manuscript is the first in a series of works aimed at shedding new light on this fundamental invariant, which (also due to its variational content, and thus to its natural connection with min-max theory, cf.  \cite{MarquesNeves2016, MarquesNeves2017Sur, MarquesNeves2020} and references therein) has acquired great importance within geometric analysis.

Partly motivated by the corresponding conjectures concerning closed minimal hypersurfaces in manifolds of positive Ricci curvature (cf. \cite{AmbCarSha18-SMNconj, Neves2014}), five years ago the first-named author proved with Ambrozio and Sharp a universal lower bound for the index of any free boundary minimal surface in any mean-convex subdomain $\Omega$ of $\R^3$ in terms of the topological data of the surface under consideration. Specifically, it was shown in \cite{AmbCarSha18-Index} that the following estimate holds:
\begin{equation}\label{eq:ACSestimate}
\morseindex(\Sigma)\geq \frac{1}{3}(2g+b-1)
\end{equation}
where $\Sigma$ is any free boundary minimal surface in $\Omega$, and $g$, $b$ denote respectively its genus and the number of its boundary components. This result was then partly complemented by the one of Lima (see \cite[Theorem 4]{Lim17}), that is an affine upper bound with a very large, yet in principle computable numerical constant.
In this article we shall develop a general methodology, building upon  the fundamental work by Montiel and Ros -- as first presented in \cite{MontielRos} -- which allows, among other things, to significantly refine such universal estimates bringing the geometry and symmetry group of the surfaces under consideration into play. This approach, while motivated by our goal to better understand the behaviour of certain infinite families of free boundary minimal surfaces in $\B^3$ (aiming for two-sided bounds in terms of explicit, affine functions of the topological data), turns out to be of independent interest and much wider applicability. 

In more abstract terms, we shall be concerned here with proving effective estimates for (part of) the spectrum of Schr\"odinger-type operators on bounded Lipschitz domains of Riemannian manifolds, combined with mixed boundary conditions, that will be -- on disjoint portions of the boundary in question -- of Dirichlet or Robin (oblique) type. 
Summarizing and oversimplifying things to the extreme, the number of eigenvalues of any such operator \emph{below a given threshold} can be estimated by suitably partitioning the domain into finitely many subdomains, provided one adjoins Dirichlet boundary conditions in the interior boundaries when aiming for lower bounds, and Neumann boundary conditions in the interior boundaries for upper bounds instead. 
We refer the reader to Section \ref{sec:nota} for the setup of our problem together with our standing assumptions, and to the first part of Section \ref{sec:theory} (specifically to Proposition \ref{mr}, and Corollary \ref{cor:MontielRosHuman}) for precise statements.

Often times (yet not always) the partitions mentioned above naturally relate to the underlying symmetries of the problem in question, which is in particular the case for some of the classes of free boundary minimal surfaces in $\B^3$ that have so far been constructed.
With this remark in mind, a peculiar (and, a posteriori, fundamental) feature of our work is the development of the Montiel--Ros methodology in the presence of the action of a group $\grp$ together with an additional twisting homomorphism $\sigma\colon\grp\to\Ogroup(1)$, in the terms explained in Section \ref{subs:GroupAction}. 
This allows, for instance, to explicitly and transparently study how the Morse index of a given free boundary minimal surface depends on the symmetries one imposes, namely to look at the ``functor''
$(\grp,\twsthom)\to\symind{\grp}{\twsthom}(\weakbf)$, where $T$ denotes the index (Jacobi) form of the surface in question. As apparent even from the simplest examples we shall discuss, this perspective turns out to be very natural and effective in tackling the geometric problems we are interested in.

With this approach, \emph{lower} bounds are sometimes relatively cheap to obtain. One way they can derived is from ambient Killing vector fields, once it is shown that the associated (scalar-valued) Jacobi field on the surface under consideration vanishes along the (interior) boundary of any domain of the chosen partition,
which in practice amounts to suitably \emph{designing the partition and picking the Killing field} given the geometry of the problem.
We present one simple yet paradigmatic such result in Proposition \ref{indBelowPyr}, which concerns free boundary minimal surfaces with pyramidal or prismatic symmetry in $\B^3$. Instead, \emph{upper} bounds are often a lot harder to obtain
and shall typically rely on finer information than the sole symmetries of the scene one deals with. 
Said otherwise, one needs to know \emph{how} (i.\,e. by which method) the surface under study has been obtained. 

We will develop here a detailed analysis of the Morse index of the two families of free boundary minimal surfaces we constructed in our recent, previous work \cite{CSWnonuniqueness}. Very briefly, using gluing methods of essentially PDE-theoretic character, we obtained there a sequence $\threebc_{m}$ of surfaces having genus $m$, three boundary components and antiprismatic symmetry group $\apr_{m+1}$, and a sequence $\zerogen_n$ of surfaces having genus zero, $n+2$ boundary components and prismatic symmetry group $\pri_n$.
As we described at length in Section 7 therein, with data (cf. Table 2 and Table 3) and heuristics, numerical simulations for the Morse index of the surfaces in the former sequence display a seemingly ``erratic'' behaviour, as such values do not align on the graph of any affine function, nor seem to exhibit any obvious periodic pattern. 
This is a rather unexpected behaviour (by comparison e.\,g. with other families of examples, say in the round three-dimensional sphere, see \cite{KapouleasWiygulIndex}), which obviously calls for a careful study that we carry through
in Section \ref{sec:CSW} of the present article. In particular, we establish the following statement:

\begin{theorem}[Index estimates for $\threebc_m$ and $\zerogen_n$]
\label{cswIndexBounds}
There exist $m_0, n_0>0$ such that for all integers
$m>m_0$ and $n>n_0$ the
Morse index and nullity of the free boundary minimal surfaces
$\threebc_m, \zerogen_n \subset \B^3$
satisfy the bounds
\begin{align*}
2m+1&\leq\ind(\threebc_m), 
&
\ind(\threebc_m)+\nul(\threebc_m)&\leq 12m+12,
\\
2n+2&\leq\ind(\zerogen_n), 
&
\ind(\zerogen_n)+\nul(\zerogen_n)&\leq 8n.
\end{align*}
\end{theorem}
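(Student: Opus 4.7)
The plan is to exploit the equivariant Montiel--Ros framework of Section \ref{sec:theory} in both directions, reducing each global eigenvalue count to a count on a well-chosen partition whose pieces are simpler and whose local contributions can be either bounded by dimension arguments (Dirichlet case, for the lower bounds) or controlled via knowledge of the gluing construction (Neumann case, for the upper bounds). A preliminary step is to decompose
\[
\ind(\Sigma) = \sum_{\twsthom}\symind{\grp}{\twsthom}(\weakbf),
\qquad
\ind(\Sigma)+\nul(\Sigma) = \sum_{\twsthom}\bigl(\symind{\grp}{\twsthom}(\weakbf)+\symnul{\grp}{\twsthom}(\weakbf)\bigr),
\]
across the characters $\twsthom$ of the relevant symmetry group $\grp$ (namely $\apr_{m+1}$ for $\threebc_m$ and $\pri_n$ for $\zerogen_n$), with appropriate multiplicities, so that the global problem splits into equivariant subproblems more amenable to analysis on fundamental domains.

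For the lower bounds, I would construct explicit test directions from the ambient Killing fields of $\R^3$: rotations about the vertical symmetry axis, horizontal translations, and horizontal rotations. These induce Jacobi fields on $\threebc_m$ and $\zerogen_n$ that transform according to specific characters of the symmetry group. Following the pyramidal/prismatic strategy of Proposition \ref{indBelowPyr}, I would choose a partition whose interior boundary lies in the zero set of a suitably chosen Killing-type Jacobi field, and apply Proposition \ref{mr} with Dirichlet interior conditions: each sign-definite piece of the partition then carries at least one non-positive Dirichlet eigenvalue for the Jacobi operator. Counting subdomains, with attention to which character sectors the various Killing-induced Jacobi fields occupy, should reproduce the linear lower bounds $2m+1$ and $2n+2$.

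The upper bounds require the full strength of the construction in \cite{CSWnonuniqueness}: for sufficiently large $m$ (resp.\ $n$), the surfaces $\threebc_m$ and $\zerogen_n$ are, on carefully defined pieces, graphs of small functions over a reference configuration built from flat discs and suitably rescaled Euclidean catenoidal necks. I would thus select an $\apr_{m+1}$-equivariant (resp.\ $\pri_n$-equivariant) partition into core regions and neck regions, apply Corollary \ref{cor:MontielRosHuman} with Neumann interior conditions, and bound the number of non-positive Neumann eigenvalues on each piece by a uniform constant via a spectral perturbation from the corresponding count on the model. Multiplying per-piece bounds of $12$ (resp.\ $8$) by the number of pieces, which scales linearly as $m+1$ (resp.\ $n$) thanks to the symmetry, then yields the claimed global bounds $12m+12$ and $8n$.

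The principal obstacle is the upper-bound argument. It demands quantitative geometric control on the closeness of $\threebc_m$ and $\zerogen_n$ to their formal models on each partition piece --- control that must be extracted from the analytic estimates of \cite{CSWnonuniqueness} --- combined with a robust spectral-stability statement ensuring that local Neumann eigenvalue counts are preserved under this perturbation as $m,n\to\infty$. A secondary difficulty is compatibility: the partition's interior boundary must be invariant under $\apr_{m+1}$ or $\pri_n$ so that the equivariant Neumann decomposition respects the character decomposition cleanly, and the model pieces themselves must be chosen with enough symmetry to make the per-piece spectral analysis tractable.
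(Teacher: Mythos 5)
Your overall strategy --- Montiel--Ros partitioning with Dirichlet interior conditions for lower bounds and Neumann for upper bounds, with spectral stability from the gluing construction controlling the Neumann counts --- is the right one, and the $\threebc_m$ lower bound $2m+1$ does follow from the Killing-field/symmetry-plane argument (Proposition \ref{indBelowPyr} with $k=m+1$). However, the proposal has two genuine gaps.

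For the lower bound on $\zerogen_n$, the Killing-field/symmetry-plane argument delivers only $\ind(\zerogen_n)\ge 2n-1$. Getting to $2n+2$ is not a matter of ``attention to character sectors'': the extra $3$ comes from the odd-sector bound $\symind{\cycgrp{\{z=0\}}}{-}(\zerogen_n)\ge 3$, which requires locating an \emph{extra} nodal line of the horizontal-axis Jacobi field $\kappa_\xi$ that is not determined by the symmetry group. The paper produces it by the intermediate value theorem applied to the smooth local convergence $\zerogen_n\to -\K_0\cup\K_0$ (transferring a sign change from $\K_0$) plus a nodal-set structure theorem (Proposition \ref{pro:OddLowerBods}). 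Your sketch ``choose a partition whose interior boundary lies in the zero set of a suitably chosen Killing-type Jacobi field'' presumes a zero set that can be read off from the symmetries, which for $\kappa_\xi$ is false; this perturbative step is missing.

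For the upper bound, the sign homomorphism $\nrmlsgn{\nu}$ is nontrivial on $\apr_{m+1}$ (the half-turn about a horizontal axis reverses the normal of the disc region), so by Lemma \ref{dom_red_ext} the $\apr_{m+1}$-equivariant spectrum corresponds to a fundamental domain carrying \emph{Dirichlet} conditions on part of its interior boundary; Corollary \ref{cor:MontielRosHuman} requires pure Neumann there, and a Dirichlet bound does not dominate a Neumann bound. The paper therefore proves a second equivariant bound, $\equivind{\pyr_{m+1}}(\threebc_m)+\equivnul{\pyr_{m+1}}(\threebc_m)\le 6$ (Corollary \ref{cor:EquivUpperBound}), for the pyramidal subgroup $\pyr_{m+1}$, which consists only of reflections through vertical planes (so $\nrmlsgn{\nu}\equiv +1$ and the fundamental domain carries pure Neumann data), and only then applies Corollary \ref{cor:MontielRosHuman} with $2(m+1)$ isometric pieces to get $2(m+1)\cdot 6=12m+12$. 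Your plan does not separate the equivariant-bound step (Proposition \ref{mr} on the non-isometric core/neck partition, which yields the constants $2$ and $6$) from the de-equivariantization step (Corollary \ref{cor:MontielRosHuman} on the isometric fundamental-domain partition), and it relies on a character-sum decomposition of $\ind(\Sigma)$ over $\Ogroup(1)$-characters of $\grp$ that is not valid for the non-abelian groups $\apr_{m+1}$ and $\pri_n$.
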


In fact, the upper bound in this ``absolute estimate'' follows quite easily by combining the ``relative estimate'', associated to the equivariant Morse index of these surfaces (with respect to their respective \emph{maximal} symmetry groups) with the aforementioned Proposition \ref{mr}. The next statement thus pertains to such equivariant bounds, for which we do obtain equality, thus settling part of Conjecture~7.7 (iv) and Conjecture~7.9 (iv) of \cite{CSWnonuniqueness}. We stress that neither family is constructed variationally, 
and thus there is actually no cheap index bound one can extract from the design methodology itself; on the contrary, this statement indicates \emph{a posteriori} that the families of surfaces in question may in principle be constructed (even in a non-asymptotic regime) by means of min-max schemes generated by $2$-parameter sweepouts, modulo the well-known problem of fully controlling the topology in the process (cf. \cite{CarFraSch20}).

\begin{theorem}[Equivariant index and nullity of $\threebc_m$ and $\zerogen_n$]
\label{cswEquivariantIndexAndNullity}
There exist $m_0, n_0>0$ such that for all integers
$m>m_0$ and $n>n_0$ the equivariant 
Morse index and nullity of the free boundary minimal surfaces
$\threebc_{m}, \zerogen_n \subset \B^3$
satisfy
\begin{align*}
\equivind{\apr_{m+1}}(\threebc_{m})&=2, & \equivnul{\apr_{m+1}}(\threebc_{m})&=0,
\\
\equivind{\pri_n}(\zerogen_n)&=2, & \equivnul{\pri_n}(\zerogen_n)&=0.
\end{align*}
\end{theorem}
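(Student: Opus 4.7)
The strategy is to combine the equivariant Montiel--Ros partitioning framework developed in the first part of the paper with the detailed description of $\threebc_m$ and $\zerogen_n$ arising from the gluing construction in \cite{CSWnonuniqueness}. In that construction both families are, for large parameter, small normal graphs over a singular catenoidal configuration: a stack of horizontal equatorial discs joined along rings of vanishingly small half-catenoidal necks, arranged so as to realise the prescribed $\apr_{m+1}$- or $\pri_n$-symmetry. After reducing to a fundamental domain of the full symmetry group via the $(\grp,\twsthom)$-twisted action, the computation of equivariant index and nullity becomes a mixed boundary value problem on a single ``unit cell'' whose geometry in the limit $m,n\to\infty$ is effectively described by one neck glued to a portion of the disc, with the interior boundary conditions on the faces of the cell determined by $\twsthom$.

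For the lower bound $\equivind{\grp}\ge 2$, I would produce a two-dimensional subspace on which the equivariant index form is negative definite, spanned by two $(\grp,\twsthom)$-invariant test functions supported near the catenoidal regions and modelled respectively on the vertical translation and logarithmic (dilation) Jacobi fields of the catenoid, cut off and adjusted to the free boundary condition on $\partial\B^3$. Both yield strictly negative Rayleigh quotients on the model configuration, and negativity is preserved under the small perturbation from model to true surface for $m$ or $n$ large enough. For the upper bound $\equivind{\grp}+\equivnul{\grp}\le 2$ I would apply the equivariant version of Proposition~\ref{mr} to an equivariant partition of the fundamental cell into a ``neck piece'' and a complementary ``background piece'', imposing Dirichlet boundary conditions along the interior interface. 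On the background piece the Jacobi operator is a small perturbation of a strictly stable operator (essentially $-\Delta-2$ on a piece of the flat disc with free boundary on one arc and Dirichlet data on the other), which contributes $0$ to the equivariant count below a small positive threshold. On the neck piece a natural rescaling turns the operator into a perturbation of the stability operator on a truncated catenoid with Dirichlet data at the ends; its spectrum is explicit, and a Fourier decomposition in the azimuthal variable shows that the $(\grp,\twsthom)$-invariant subspace retains exactly two non-positive modes, both with strictly negative eigenvalue, while all other modes are projected out by the twisting.

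The main obstacle I anticipate is the precise matching of symmetry representations between the model catenoid and the actual ambient $\apr_{m+1}$- or $\pri_n$-action combined with $\twsthom$: the argument delivers the sharp value $2$ only after verifying that no rotational or higher azimuthal mode of the catenoid, and no accidental low-frequency mode on the background piece, survives the equivariant restriction. This requires careful equivariant perturbation theory together with the quantitative error estimates available from the gluing construction in \cite{CSWnonuniqueness}. Once the upper bound is established in this sharp form, the vanishing of the equivariant nullity follows automatically, since both surviving modes have strictly negative limiting Rayleigh quotient, leaving no room for a zero eigenvalue for sufficiently large $m$ or $n$.
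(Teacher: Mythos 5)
Your proposal has several genuine gaps, both in the geometric picture and in the mechanics of the Montiel--Ros scheme.

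First, the geometric model you work with is not the one underlying $\threebc_m$ and $\zerogen_n$. These surfaces are not ``discs joined along rings of small half-catenoidal necks''; they desingularize the configurations $-\K_0\cup\K_0$ (resp. $-\K_0\cup\B^2\cup\K_0$), where $\K_0$ is a fixed catenoidal annulus meeting $\Sp^2$ along the equator and along an orthogonal circle of latitude, and the desingularization near the equator is carried out with scaled-down, wrapped \emph{Karcher--Scherk towers}, not catenoidal necks. Consequently, the ``neck piece'' of the surface near $\Sp^1$ is a truncation of a periodic minimal surface with many wings, whose stability operator is \emph{not} a small perturbation of the stability operator on a truncated catenoid. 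The paper's entire Subsections 5.2--5.4 are devoted precisely to the spectral analysis of this tower piece via the Gauss map, which conformally carries the problem to $\Delta_{\gsph}+2$ on spherical triangles/lunes (Lemmas \ref{lowSpectraInSphere} and \ref{towIndexAndNullity}), and then back to the wrapped-and-truncated towers $\two{\liptowrdom{n}}$, $\three{\liptowrdom{m}}$ through delicate uniform estimates on the transition regions (Lemmas \ref{transitionEstimates}, \ref{UniformBoundsOnTower}, \ref{towrdomLowerBounds}). None of this is captured by an azimuthal Fourier analysis on a catenoid.

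Second, you have the boundary-condition polarity in the Montiel--Ros comparison reversed: to get the \emph{upper} bound $\symind{\grp}{\twsthom}+\symnul{\grp}{\twsthom}\le 2$ one must impose \emph{Neumann} (not Dirichlet) conditions on the interior interfaces, as in item \ref{mrUpper} of Proposition \ref{mr} and in \eqref{DirAndNeumInternalizationsOfBilinearForm}; Dirichlet interior conditions serve the lower bound. Imposing Dirichlet along the cuts as you propose would only yield a lower bound, which is not what you need here.

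Third, the way the ``$2$'' arises is different from what you sketch. In the paper's decomposition (Corollary \ref{cor:EquivUpperBound}), each of the two independent negative directions is contributed by a \emph{different} building block: one from the catenoidal annulus piece $\catr$ (with $\equivind{\pyr_k}(\indexform{\K_0}_\neum)=1$, Lemma \ref{K0IndexAndNullity}) and one from the tower piece $\towr$ (with equivariant index $1$, Corollary \ref{towrIndexAndNullity}), while the disc piece $\discr$ (only for $\threebc_m$) contributes $0$. Your proposal, by contrast, expects both modes to come from the catenoidal part, which is not how the partition behaves. As for the lower bound, the paper simply quotes Proposition 7.1 of \cite{CSWnonuniqueness}; your idea of building two equivariant test functions on the model is in principle a legitimate alternative route, but as written it again relies on the wrong limiting model (two catenoidal modes), so even the lower-bound argument would need to be reworked before it can be trusted to give two \emph{independent} negative directions on the actual surfaces.

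Finally, you assert that once the sharp upper bound is obtained ``the vanishing of the equivariant nullity follows automatically, since both surviving modes have strictly negative limiting Rayleigh quotient.'' This is not automatic without the uniform spectral convergence established in the paper: one needs the full equality $\equivind{\grp}+\equivnul{\grp}=2$ together with $\equivind{\grp}\ge 2$ to force $\equivnul{\grp}=0$, and the eigenvalue lower bounds on the tower piece (Lemma \ref{towrdomLowerBounds}) are the nontrivial ingredient ruling out a null mode slipping in at the limiting threshold.
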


The main idea behind the proof of these results, or -- more precisely -- for the upper bounds can only be explained by recalling, in a few words, how the surfaces in question have been constructed. 
Following the general methodology of
\cite{KapouleasEuclideanDesing}, one first considers a singular configuration, that is a formal union of minimal surfaces in $\B^3$ (not necessarily free boundary), then its regularization -- which needs the use of (wrapped) periodic minimal surfaces in $\R^3$,  to desingularize near the divisors, and controlled interpolation processes between the building blocks in play -- and, thirdly and finally, the perturbation of such configurations to exact minimality (at least for \emph{some} values of the parameters), while also ensuring proper embeddedness and accommodating the free boundary condition. 
Here we first get a complete understanding of the index and nullities of the building blocks, for the concrete cases under consideration in Section \ref{sec:CSW}. 
In somewhat more detail,
the analysis of the Karcher--Scherk towers (the periodic building blocks employed in either construction) exploits, in a substantial fashion, the use of the Gauss map, which allows one
to rephrase the initial geometric question into
one for the spectrum of simple elliptic operators of the form $\Delta_{\gsph}+2$ on suitable (typically singular, i.\,e. spherical triangles, wedges or lunes) subdomains of round $\Sp^2$, with mixed boundary conditions, and possibly subject to additional symmetry requirements. 
The analysis of the other building blocks -- disks and asymmetric catenoidal annuli -- is more direct, although, in the latter case, trickier than it may first look (see e.\,g. Lemma \ref{K0IndexAndNullity}).

Once that preliminary analysis is done, we then prove that, corresponding to the (local) geometric convergence results (that are implied by the very gluing methodology) there are robust spectral convergence results that serve our scopes. However, a general challenge in the process is that gluing constructions typically have \emph{transition regions} where different scales interact with each another: in our constructions of the sequences $\threebc_m$ and $\zerogen_n$ such regions occur between the catenoidal annuli $\K_0$ (as well as the disk $\B^2$ in the former case) and the wrapped Karcher--Scherk towers, roughly at distances between $m^{-1}$ and $m^{-1/2}$ (respectively $n^{-1}$ and $n^{-1/2}$) from the equatorial $\Sp^1$. 
As a result,
we need to deal with delicate scale-picking arguments, an \emph{ad hoc} study of the geometry of such regions (cf. Lemma \ref{transitionEstimates}) and -- most importantly -- prove the corresponding uniform bounds for eigenvalues and eigenfunctions  (collected in Lemma \ref{UniformBoundsOnTower}), which allow to rule out pathologic concentration phenomena, thereby leading to the desired conclusions.

\paragraph{Acknowledgements.} 
The authors wish to express their sincere gratitude to Giada Franz for a number of conversations on themes related to those object of the present manuscript.
This project has received funding from the European Research Council (ERC) under the European Union’s Horizon 2020 research and innovation programme (grant agreement No. 947923). 
The research of M.\,S. was funded by the Deutsche Forschungsgemeinschaft (DFG, German Research Foundation) under Germany's Excellence Strategy EXC 2044 -- 390685587, Mathematics M\"unster: Dynamics--Geometry--Structure, and the Collaborative Research Centre CRC 1442, Geometry: Deformations and Rigidity. 
Part of this article was finalized while A.\,C. was visiting the ETH-FIM, whose support and excellent working conditions are gratefully acknowledged.

\section{Notation and standing assumptions}
\label{sec:nota}

\subsection{Boundary value problems for Schr\"{o}dinger operators on Lipschitz domains}
Let $\lipdom$ be a Lipschitz domain
of a smooth, compact $d$-dimensional manifold $M$
with (possibly empty) boundary $\partial M$,
by which we mean here a non-empty, open subset of $M$
whose boundary is everywhere locally representable
as the graph of a Lipschitz function.
We do not require -- at least in general -- $\Omega$ to be connected, and we admit the case $\closure{\lipdom}=M$ (where $\closure{\lipdom}$ denotes the closure
of $\lipdom$ in $M$), when of course
$\partial\lipdom=\partial M$, the boundary of the ambient manifold in question. Throughout this article we will in fact assume $d \geq 2$.

We are going to study the spectrum of a given Schr\"{o}dinger operator on $\lipdom$ subject to boundary conditions and, sometimes, symmetry constraints. 
Such symmetry constraints will be encoded in terms of equivariance with respect to a certain group action, which we shall specify at due place.

The Schr\"{o}dinger operator
\begin{equation*}
\Delta_g+\potential
\end{equation*}
is determined
by the data of a given smooth Riemannian metric $g$ on
$\closure{\lipdom}$
and a given smooth (i.\,e. $C^\infty$) function $\potential \colon \closure{\lipdom} \to \R$. 
To avoid ambiguities, we remark here
that
a function (or tensor field) on $\closure{\lipdom}$
smooth if it is the restriction of a smooth tensor field on $M$ or -- equivalently -- on a relatively open set containing $\closure{\Omega}$.

The boundary conditions are specified by another smooth function
$\robinpotential \colon \closure{\lipdom} \to \R$
and a decomposition
\begin{equation}
\label{bdydecomp}
\partial \lipdom
=
\closure{\dbdy \lipdom}
  \cup 
  \closure{\nbdy \lipdom}
  \cup
  \closure{\rbdy \lipdom}
\end{equation}
where the sets on the right-hand side are
the closures of pairwise disjoint open subsets
$\dbdy\lipdom$, $\nbdy\lipdom$, and $\rbdy\lipdom$
of $\partial \lipdom$. 

Somewhat more specifically,
we will consider the spectrum of the operator $\Delta_g + q$ subject to the Dirichlet, Neumann, and Robin conditions
\begin{align}\label{eq:BoundaryConditions}
\left\{
\begin{aligned}
u&=0 
&&\text{ on }
  \dbdy \lipdom,
\\
du(\conormal{\lipdom}{g})&=0
  &&\text{ on }
  \nbdy \lipdom,
\\
du(\conormal{\lipdom}{g})&=\robinpotential u
  &&\text{ on }
  \rbdy \lipdom,
\end{aligned}
\right.
\end{align}
where $\conormal{\lipdom}{g}$
is the almost-everywhere defined
outward unit normal induced by $g$ on $\partial \lipdom$.

It is obviously the case that the Neumann boundary conditions can be regarded as a special case of their inhomogenous counterpart, however it is convenient -- somewhat artificially -- to distinguish them in view of the later applications we have in mind, to the study of the Morse index of free boundary minimal surfaces.

\subsection{Sobolev spaces and traces}
To pose the problem precisely
we introduce the Sobolev space
$\sob(\lipdom,g)$
consisting of all real-valued functions in $L^2(\lipdom,g)$
which have a weak $g$-gradient whose pointwise $g$-norm
is also in $L^2(\lipdom,g)$;
then $\sob(\lipdom,g)$ is a Hilbert space
equipped with the inner product
\begin{equation*}
\sk{u,v}_{\sob(\lipdom,g)}
\vcentcolon=
\int_\lipdom
  \bigl(
    uv
    +
    g(\nabla_g u, \nabla_g v)
  \bigr)
  \,
  \hausint{d}{g},
\end{equation*}
integrating with respect to the $d$-dimensional
Hausdorff measure induced by $g$.
(We say a function $u \in L^1_{\loc}(\lipdom,g)$
has a weak $g$-gradient $\nabla_g u$
if
$\nabla_g u$ is a measurable vector field
on $\lipdom$ with pointwise $g$ norm
in $L^1_{\loc}(\lipdom,g)$
and
$
 \int_\lipdom g(X, \nabla_g u)
   \, \hausint{d}{g}
 =
 -\int_\lipdom u \, \dive_g X
   \, \hausint{d}{g}
$
for every smooth vector field $X$
on $\lipdom$ of relatively compact support,
where $\dive_g X$ is the $g$ divergence of $X$;
$\nabla_g u$ is uniquely defined whenever it exists,
modulo vector fields vanishing almost everywhere.)

Under our assumptions on $\partial \lipdom$
we have a bounded trace map
$\sob(\lipdom,g) \to L^2(\partial \lipdom, g)$,
extending the restriction map
$
 C^1(\closure{\lipdom})
 \to
 C^0(\partial\lipdom)
$.
(The Hilbert space
$L^2(\partial \lipdom, g)$
is defined using either
the $(d-1)$-dimensional
Hausdorff measure $\hausmeas{d-1}{g}$
induced by $g$
or, equivalently,
the almost-everywhere
defined volume density
induced by $g$ on $\partial \lipdom$.)
In fact, we have not only boundedness of this map
but also the stronger inequality
\begin{equation}
\label{interpolatingTraceInequality}
\nm{u|_{\partial \lipdom}}_{L^2(\partial \lipdom,g)}
\leq
C(\lipdom,g)
  \Bigl(
    \epsilon\nm{u}_{\sob(\lipdom,g)}
    +C(\epsilon)\nm{u}_{L^2(\lipdom,g)}
  \Bigr)
\end{equation}
for all $u \in \sob(\lipdom,g)$, 
all $\epsilon>0$,
some $C(\lipdom,g)$
independent of $u$ and $\epsilon$,
and some $C(\epsilon)$
independent of $u$
and $(\lipdom,g)$.
(This can be deduced, for example,
by inspecting the proof
of Theorem 4.6
in \cite{EvansGariepy}:
specifically,
we can apply
the Cauchy--Schwarz inequality
(weighting with $\epsilon$, as standard)
to the inequality immediately
above the line labeled
($\star\star\star$)
on page 158 of the preceding reference,
whose treatment of Lipschitz domains
in Euclidean space
is readily adapted to our setting.)

For each $\mathrm{C}\in \{\mathrm{D},\mathrm{N},\mathrm{R}\}$,
indicating one of the boundary conditions we wish to impose, 
by composing the preceding trace map
with the restriction
$
 L^2(\partial\lipdom,g)
 \to
 L^2(\bdy{\mathrm{C}}\lipdom,g)
$, since $\bdy{\mathrm{C}} \lipdom$ is open 
in $\partial \lipdom$,
we also get a trace map
$
 \cdot|_{\bdy{\mathrm{C}}}
 \colon
 \sob(\lipdom,g)
 \to
 L^2(\bdy{\mathrm{C}} \lipdom,g)
$.
In practice we will consider traces
on just $\dbdy\lipdom$
and $\rbdy\lipdom$.
Considering the condition on $\dbdy\lipdom$ we will then define
\begin{equation*}
\sobd{\dbdy \lipdom}(\lipdom,g)
\vcentcolon=
\{
  u \in \sob(\lipdom,g)
  \st
  u|_{\dbdy \lipdom}=0
\},
\end{equation*}
that is obviously to be understood in the sense of traces, in the terms we just described,
and we remark that
\eqref{interpolatingTraceInequality}
also clearly holds
with $\partial\lipdom$ on the left-hand side
replaced by $\rbdy\lipdom$
(or by $\dbdy\lipdom$ or $\nbdy\lipdom$,
but we have no need of the inequality in these cases).

\subsection{Bilinear forms and their eigenvalues and eigenspaces}\label{subsec:BasicSpectral}
Corresponding to the above data we define the bilinear form $\weakbf=\weakbf[\lipdom,g,\potential,\robinpotential,\dbdy\lipdom,\nbdy\lipdom,\rbdy\lipdom]$
by 
\begin{equation}
\label{bilinear_form_def}
\begin{aligned}
\weakbf\colon\sobd{\dbdy \lipdom}(\lipdom,g)
\times\sobd{\dbdy \lipdom}(\lipdom,g)
&\to\R
\\
(u,v)&\mapsto
  \int_\lipdom
    \Bigl(
      g(\nabla_g u, \nabla_g v)
      -\potential u v
    \Bigr)
    \, \hausint{d}{g}
  -\int_{\rbdy \lipdom}
    \robinpotential u v
    \, \hausint{d-1}{g}.
\end{aligned}
\end{equation}
Then $\weakbf$ is symmetric, bounded, and coercive as encoded in the following three equations respectively:
\begin{alignat}{2}\notag
\forall u,v &\in \sobd{\dbdy\lipdom}(\lipdom,g)\quad &
\weakbf(u,v)&=\weakbf(v,u),
\\[1ex]\label{eq:bounded}
\forall u &\in \sobd{\dbdy\lipdom}(\lipdom,g) &
\weakbf(u,u)&\leq\bigl(1+C(\lipdom,g,\potential, \robinpotential)\bigr)\nm{u}_{\sob(\lipdom,g)}^2,
\\\label{eq:coercive}
\forall u &\in \sobd{\dbdy\lipdom}(\lipdom,g) &
\weakbf(u,u)&\geq \frac{1}{2}\nm{u}_{\sob(\lipdom,g)}^2
  -C(\lipdom,g,\potential, \robinpotential)  \nm{u}_{L^2(\lipdom,g)}^2,
\end{alignat}
where, for \eqref{eq:bounded} and \eqref{eq:coercive}, one can take
$C(\lipdom,g,\potential,\robinpotential)
  =
  \nm{\potential}_{C^0(\closure{\lipdom})}
  +C(\lipdom,g)
    \nm{\robinpotential}_{C^0(\closure{\rbdy\lipdom})}$,
thanks to the trace inequality \eqref{interpolatingTraceInequality}.
From these three properties
and the Riesz representation theorem
for Hilbert spaces
it follows that
for some constant
$
 \Lambda
 =
 \Lambda(\lipdom,g,\potential,\robinpotential)>0
$
there exists a linear map
$\resolvent{}
 \colon
 L^2(\lipdom,g)
 \to
 \sobd{\dbdy\lipdom}(\lipdom,g)
$
such that
$
 \weakbf(\resolvent f,v)
 +\Lambda \sk{\iota \resolvent f,\iota v}_{L^2(\lipdom,g)}
 =
 \sk{f,\iota v}_{L^2(\lipdom,g)}
$
for all functions
$f \in L^2(\lipdom,g)$
and
$v \in \sobd{\dbdy\lipdom}(\lipdom,g)$, where we have introduced the inclusion map
$
 \iota
 \colon
 \sobd{\dbdy \lipdom}(\lipdom,g)
 \to
 L^2(\lipdom,g).
$

(Of course, if $f$ is smooth
then standard elliptic \emph{interior} regularity results
ensures that
$u$ is as well smooth on $\Omega$
and there satisfies the equation
$-(\Delta_g + \potential -\Lambda)u=f$ in a classical pointwise sense.)
Since the inclusion
$
 \sob(\lipdom,g)
 \hookrightarrow
 L^2(\lipdom,g)
$
is compact
(see for example
Section 7 of Chapter~4 of \cite{TaylorPDE1})
and of course the inclusion of the closed subspace
$
 \sobd{\dbdy \lipdom}(\lipdom,g)
 \hookrightarrow
 \sob(\lipdom,g)
$
is bounded, the aforementioned maps
$\iota\colon\sobd{\dbdy \lipdom}(\lipdom,g)\to L^2(\lipdom,g)$
and the composite $\iota \resolvent\colon L^2(\lipdom,g)\to L^2(\lipdom,g)$ are also both compact operators.
Furthermore, 
to confirm that $\iota\resolvent$ is symmetric we simply note that (by appealing to the equation defining the operator $R$, with $Rf_1$ and $Rf_2$ in place of $v$)
\begin{align*}
\sk{f_2,\iota\resolvent f_1}_{L^2(\lipdom,g)}
&=\weakbf(\resolvent f_2, \resolvent f_1)+\Lambda \sk{\iota\resolvent f_2,\iota\resolvent f_1 }_{L^2(\lipdom,g)}
\\
&=\weakbf(\resolvent f_1, \resolvent f_2)+\Lambda \sk{\iota\resolvent f_1,\iota\resolvent f_2}_{L^2(\lipdom,g)}
=\sk{f_1,\iota\resolvent f_2}_{L^2(\lipdom,g)}
\end{align*}
for all $f_1, f_2 \in L^2(\lipdom,g)$. 
That being clarified, to improve readability we will from now on refrain from explicitly indicating the inclusion map $\iota$ in our equations.

With slight abuse of language, in the setting above we call $\lambda\in\R$ an \emph{eigenvalue} of
$\weakbf$
if there exists a non-zero
$u \in \sobd{\dbdy \lipdom}(\lipdom,g)$
such that
\begin{equation}
\label{eigendef}
\forall v \in \sobd{\dbdy \lipdom}(\lipdom,g)
\quad
\weakbf(u,v)
=
\lambda
  \sk{u,v}_{L^2(\lipdom,g)},
\end{equation}
and we call any such $u$ an \emph{eigenfunction} of $\weakbf$ with eigenvalue $\lambda$.
(We caution that the notions
of eigenfunctions
and eigenvalues depend
not only on $\weakbf$
but also on the underlying
metric $g$;
for the sake of convenience
we choose to suppress
the latter dependence
from our notation.)

Hence, as a consequence of the key facts we presented before this definition, one can prove by well-known arguments the existence of a discrete spectrum for the ``shifted'' elliptic operator $(\Delta_g+q)-\Lambda$ subject to the very same boundary conditions \eqref{eq:BoundaryConditions}. As a straightforward corollary, by accounting for the shift, we obtain
the following conclusions for $T$:
\begin{itemize} 
\item
the set of eigenvalues of $\weakbf$
is discrete in $\R$ and bounded below,
\item
for each eigenvalue of $\weakbf$
the corresponding eigenspace
has finite dimension,
\item
there exists an Hilbertian basis
$\{e_j\}_{j=1}^\infty$
for $L^2(\lipdom,g)$
consisting of eigenfunctions of $\weakbf$,
\item
and $\{e_j\}_{j=1}^\infty$
has dense span in
$\sobd{\dbdy\lipdom}(\lipdom,g)$.
\end{itemize}
(To avoid ambiguities, we remark that the phrase \emph{Hilbertian basis} refers to a countable, complete orthonormal system for the Hilbert space in question.)
For each integer $i \geq 1$
we write $\eigenval{\weakbf}{i}$
for the
$i\textsuperscript{th}$
eigenvalue of $\weakbf$
(listed with repetitions
in non-decreasing order, in the usual fashion).
There holds
the usual min-max characterization
\begin{equation}\label{eq:MinMaxEigenv}
\eigenval{\weakbf}{i}
=
\min
  \left\{
    \max
      \biggl\{
        \frac{\weakbf(w,w)}
        {\nm{w}_{L^2(\lipdom,g)}^2}
        \st  
        0 \neq w \in W
    \biggr\}
    \st
    W
    \subspace
    \sobd{\dbdy \lipdom}(\lipdom,g),
    ~
    \dim W = i
  \right\}.
\end{equation}
Next, for any $t \in \R$ we let
$\eigensp{\weakbf}{=t}$
denote the (possibly trivial) linear span,
in $\sobd{\dbdy\lipdom}(\lipdom,g)$,
of the eigenfunctions of $\weakbf$
with eigenvalue $t$,
and, more generally, for any $t \in \R$
and any binary relation $\sim$ on $\R$
(in practice $<$, $\leq$,
$>$, $\geq$, or $=$)
we set
\begin{equation*}
\eigensp{\weakbf}{\sim t}
\vcentcolon=
\Closure_{L^2(\lipdom,g)}
  \left(
  \Span
    \biggl(
      \bigcup_{s \sim t}
      \eigensp{\weakbf}{=s}      
    \biggr)
  \right)
\end{equation*}
and we denote the corresponding orthogonal projection by
\begin{equation*}
\eigenproj{\weakbf}{\sim t}
\colon
L^2(\lipdom,g)
\to
\eigensp{\weakbf}{\sim t}.
\end{equation*}
That is, the space
$\eigensp{\weakbf}{\sim t}$
has been defined
to be the closure in
$L^2(\lipdom,g)$
of the span of all eigenfunctions
of $\weakbf$ having eigenvalue
$\lambda$ such that $\lambda \sim t$.
Of course
$\eigensp{\weakbf}{\sim t}$
is a subspace of
$\sobd{\dbdy\lipdom}(\lipdom,g)$
-- in particular -- whenever the former has finite dimension.
Taking $\sim$ to be equality
clearly reproduces
the originally defined space
$\eigensp{\weakbf}{=t}$.

For future use
observe that the above spectral theorem
for $\weakbf$ implies
\begin{equation}
\label{basic_spectral_facts}
\begin{aligned}
(\eigensp{\weakbf}{\sim t})^{\perp_{L^2(\lipdom,g)}}
  = 
  \eigensp{\weakbf}{\not \sim t},
&\qquad
\eigensp{\weakbf}{< t}
  \subspace
  \eigensp{\weakbf}{\leq t}
  \subspace
  \sobd{\dbdy\lipdom}(\lipdom,g),
\\
\forall u \in
  \eigensp{\weakbf}{\sim t}
    \cap
    \sobd{\dbdy\lipdom}(\lipdom,g)
  \quad
  \weakbf(u,u)
    &\sim t
    \nm{u}_{L^2(\lipdom,g)}^2
  \quad
  \mbox{for $\sim$ any one of 
        $<,\leq,>,\geq$},
\end{aligned}
\end{equation}
and
\[
\forall u \in
  \sobd{\dbdy\lipdom}(\lipdom,g)
    \cap
    \left(
      \eigensp{\weakbf}{\leq t}
      \cup
      \eigensp{\weakbf}{\geq t}
    \right) 
\quad
   \weakbf(u,u)=t\nm{u}_{L^2(\lipdom,g)}^2 
~\Rightarrow~
  u \in \eigensp{\weakbf}{= t},
\]
throughout which $t$ is any real number
(not necessarily an eigenvalue of $\weakbf$)
and where in the first equality of \eqref{basic_spectral_facts} 
$\sim$ is any relation on $\R$
and $\not \sim$ its negation
(so that $\{s \not \sim t\}=\R \setminus \{s \sim t\}$
for any $t \in \R$).

\paragraph{Index and nullity.} 
In the setting above, and under the corresponding standing assumption, we shall define the non-negative integers 
\begin{equation*}
\ind(\weakbf)
  \vcentcolon=
  \dim \eigensp{\weakbf}{<0}
\quad \text{ and } \quad
\nul(\weakbf)
  \vcentcolon=
  \dim \eigensp{\weakbf}{=0},
\end{equation*}
called, respectively,
the \emph{index} and \emph{nullity} of $\weakbf$. Such invariants will be of primary interest in our applications.

\subsection{Group actions}\label{subs:GroupAction}
Let $\grp$ be a finite group
of smooth diffeomorphisms
of $M$, each restricting to an isometry
of $(\closure{\lipdom},g)$.
Then, as for any group of diffeomorphism of $\lipdom$,
we have the standard (left) action of $\grp$
on functions on $\lipdom$ via pullback:
\begin{equation*}
(\phi,u)
\mapsto
u \circ \phi^{-1}
=
\phi^{-1*}u
\quad
\text{ for all }
\phi \in \grp, ~
u \colon \lipdom \to \R.
\end{equation*}
We say that a function $u$
is $\grp$-invariant if
it is invariant under this action:
equivalently
$u \circ \phi = u$
for all $\phi \in \grp$.

We can also twist this action by orthogonal transformations
on the fiber $\R$:
given in addition to $\grp$
a group homomorphism
$
 \twsthom
 \colon
 \grp
 \to
 \Ogroup(1)=\{-1,1\}
$,
we define the action
\begin{equation*}
(\phi,u)
\mapsto
\twsthom(\phi)(u \circ \phi^{-1})
=
\twsthom(\phi)\phi^{-1*}u
\quad
\mbox{for all }
\phi \in \grp, ~
u \colon \lipdom \to \R,
\end{equation*}
and we call a function
$(\grp,\twsthom)$-invariant
if it is invariant under this action.
Obviously the above standard action 
$(\phi,u) \mapsto u \circ \phi^{-1}$
is recovered by taking
the trivial homomorphism
$\twsthom \equiv 1$.
We also comment that one could of course
replace $\R$ by $\C$
and correspondingly
$\Ogroup(1)$ by $\Ugroup(1)$
(and in the preceding sections instead
work with Sobolev spaces over $\C$)
though we restrict attention
to real-valued functions in this article.

Since, by virtue of our initial requirement, $\grp$ is a group of isometries
of $(\closure{\lipdom},g)$,
the above twisted action yields a unitary representation of $G$ in $L^2(\lipdom,g)$, i.\,e. a group homomorphism
\begin{equation}
\label{twistedrep}
\begin{aligned}
\twstact{\twsthom}
  \colon
  \grp
  &\to
  \Ogroup\bigl(L^2(\lipdom,g)\bigr) 
\\
  \phi
  &\mapsto
  \twsthom(\phi)\phi^{-1*}
\end{aligned}
\end{equation}
whose target are the global isometries
of $L^2(\lipdom,g)$; we note that the same conclusions hold true with $\sob(\lipdom,g)$ in place of $L^2(\lipdom,g)$.
The corresponding subspaces of $(\grp,\twsthom)$-invariant functions, in $L^2(\lipdom,g)$ or $\sob(\lipdom,g)$, are readily checked to be closed, and thus Hilbert spaces themselves. That said, we define
the orthogonal projection
\begin{equation}
\label{invariantprojection}
\begin{aligned}
\invproj{\grp}{\twsthom}
  \colon
  L^2(\lipdom,g)
  &\to
  L^2(\lipdom,g)
\\
u
  &\mapsto
  \frac{1}{\abs{\grp}}
  \sum_{\phi \in \grp}
  \twstact{\twsthom}(\phi)u.
\end{aligned}
\end{equation}
Here $\abs{\grp}$ is the order of $\grp$, which -- we recall -- is
assumed throughout to be finite. The image of $L^2(\lipdom,g)$
under $\invproj{\grp}{\twsthom}$
thus consists of $(\grp,\twsthom)$-invariant
functions.

\begin{remark}
One could lift the finiteness assumption,
say by allowing $\grp$ to be a compact Lie group,
requiring $\twsthom$ to be continuous,
and replacing the finite average in
\eqref{invariantprojection}
with the average over $\grp$
with respect to its Haar measure
(which reduces to the former
for finite $\grp$). However, with a view towards our later applications, in this article we will content ourselves with the finiteness assumption, which allows for a lighter exposition.
\end{remark}

Henceforth we make the additional assumptions that
$\grp$ \emph{globally} (i.\,e.  as sets) preserves each of
$\dbdy\lipdom$,
$\nbdy\lipdom$,
and
$\rbdy\lipdom$,
and that
$\potential$
and
$\robinpotential$
are both $\grp$-invariant.
Each element of
$\twstact{\twsthom}(\grp)$
then preserves also
$\sobd{\dbdy\lipdom}(\lipdom,g)$
and the bilinear form $\weakbf$,
and the projection
$\invproj{\grp}{\twsthom}$
commutes with the projection
$\eigenproj{\weakbf}{\sim t}$,
for any $t \in \R$
and binary relation $\sim$ on $\R$ (as above).
In particular
$\invproj{\grp}{\twsthom}$
preserves each eigenspace
$\eigensp{\weakbf}{=t}$
of $\weakbf$,
and more generally the space
\begin{equation}
\label{inveigensp}
\eigenspsym{\weakbf}{\sim t}{\grp}{\twsthom}
\vcentcolon=
\invproj{\grp}{\twsthom}
  (\eigensp{\weakbf}{\sim t})
\end{equation}
is a subspace of $\eigensp{\weakbf}{\sim t}$.

For each integer $i \geq 1$ we can then define
$\eigenvalsym{\weakbf}{i}{\grp}{\twsthom}$,
the $i$\textsuperscript{th}
$(\grp,\twsthom)$-eigenvalue of $\weakbf$,
to be the $i$\textsuperscript{th}
eigenvalue of $\weakbf$ having a $(\grp,\twsthom)$-invariant
eigenfunction (by definition non-zero),
counting with multiplicity as before; equivalently one can work with spaces of $(G,\sigma)$-invariant functions and derive the analogous conclusions as in Subsection \ref{subsec:BasicSpectral} directly in that setting.

\begin{remark}\label{rem:SpectralPathology}
We explicitly note, for the sake of completeness, that under no additional assumptions on the group $G$ and the homomorphism $\sigma$ it is possible that the space of $(G,\sigma)$-invariant functions be finite dimensional (possibly even of dimension zero). 
This type of phenomenon happens, for instance, when every point of the manifold
$M$ is a fixed point of an isometry on which $\sigma$ takes the value $-1$.
In this case, all conclusions listed above still hold true, but need to be understood with a bit of care: the corresponding sequence of eigenvalues
$\eigenvalsym{\weakbf}{1}{\grp}{\twsthom}\leq \eigenvalsym{\weakbf}{2}{\grp}{\twsthom}\leq \ldots$
will in fact just be a finite sequence, consisting say of $I(\grp,\twsthom)$ elements, counted with multiplicity as usual; we shall formally convene that $\eigenvalsym{\weakbf}{i}{\grp}{\twsthom}=+\infty$ for $i>I(\grp,\twsthom)$. 
That being said, we also remark that this phenomenon patently does not occur for the Jacobi form of the two sequences of free boundary minimal surfaces we examine in Sections \ref{sec:FBMSgen} and \ref{sec:CSW}.
\end{remark}

In this equivariant framework we still have the corresponding min-max characterization
\begin{equation}
\label{symminmax}
\eigenvalsym{\weakbf}{i}{\grp}{\twsthom}
=
\min
  \left\{
    \max
      \biggl\{
        \frac{\weakbf(w,w)}
        {\nm{w}_{L^2(\lipdom,g)}^2}
\st
0\neq w \in W
    \biggr\}
\st
    W
    \subspace
    \invproj{\grp}{\twsthom}\bigl(\sobd{\dbdy \lipdom}(\lipdom,g)\bigr),
    ~
    \dim W = i
\right\}.
\end{equation}

We also define the $(\grp,\twsthom)$-index and $(\grp,\twsthom)$-nullity
\begin{align*}
\symind{\grp}{\twsthom}(\weakbf)
  &\vcentcolon=
  \dim \eigenspsym{\weakbf}{<0}{\grp}{\twsthom}
\quad\text{ and }\quad
\symnul{\grp}{\twsthom}(\weakbf)
 \vcentcolon=
  \dim \eigenspsym{\weakbf}{=0}{\grp}{\twsthom}
\end{align*}
of $\weakbf$.
Obviously we can recover
$\eigensp{\weakbf}{\sim t}$,
$\eigenval{\weakbf}{i}$,
and the standard index and nullity
by taking $\grp$ to be the trivial group. As mentioned in the introduction, we reiterate
that it is one of the goals of the present article
to study, \emph{for fixed $g$ and $T$}, how these numbers (index $\symind{\grp}{\twsthom}(\weakbf)$ and nullity $\symnul{\grp}{\twsthom}(\weakbf)$) depend on $G$ and $\sigma$. 

\paragraph{Terminology.}
For the sake of brevity, we shall employ the phrase \emph{admissible data}
to denote any tuple
\(
 (
 \lipdom,g,\potential,\robinpotential,
 \dbdy\lipdom,\nbdy\lipdom,\rbdy\lipdom,
 \grp,\twsthom
 )
\)
satisfying all the standing assumptions presented up to now.
We digress briefly to highlight two important special cases, which warrant additional notation.

\begin{example}[Actions of order-$2$ groups] \label{ex:Group2El}
When $\abs{G}=2$,
there are precisely two homomorphisms
$\grp \to \Ogroup(1)$. Considering such homomorphisms, and the corresponding $(G,\sigma)$-invariant functions,
we may define
$\grp$-even or $\grp$-odd functions. Hence, we may call
$\symind{\grp}{+}$ and $\symind{\grp}{-}$
the $\grp$-even and $\grp$-odd index,
and likewise for the nullity.
Clearly, we always have
\begin{equation}
\label{evenOddDecomp}
\begin{cases}
\ind(\weakbf)
  =
  \symind{\grp}{+}(\weakbf)
  +
  \symind{\grp}{-}(\weakbf),
\\[1ex]
\nul(\weakbf)
  =
  \symnul{\grp}{+}(\weakbf)
  +
  \symnul{\grp}{-}(\weakbf).
\end{cases}
\end{equation}
\end{example}

\begin{example}[Actions of self-congruences of two-sided hypersurfaces] \label{ex:SelfCongr}
Suppose, momentarily, that $(M,g)$ is isometrically embedded (as a codimension-one submanifold) in a Riemannian manifold $(N,h)$, that the set $\Omega$ be connected
and assume further that the normal bundle of $M$ over $\lipdom$ is trivial.
Then we can pick a unit normal
$\nu$ on $\lipdom$
and thereby identify -- as usual -- sections of the normal bundle of $M_{|\lipdom}$
with functions on $\lipdom$.
With this interpretation of functions on $\lipdom$ in mind
and $\grp$ now a finite group of
diffeomorphisms of $N$
that map $\lipdom$ onto itself (as a set),
and everywhere on $\lipdom$
preserve the ambient metric $h$ meaning that $\phi^\ast h=h$ for any $\phi\in G$,
we have a natural action given by
\begin{align*}
(\phi,u)
&\mapsto
\nrmlsgn{\nu}(\phi)
  (u \circ \phi^{-1})
\quad
\text{ for all }
\phi \in \grp,~
u \colon \lipdom \to \R,
\end{align*}
where $\nrmlsgn{\nu}(\phi)\vcentcolon=h(\phi_*\nu,\nu)$ is a constant in $\Ogroup(1)=\{1,-1\}$.
We shall further assume
that the action of $\grp$ on $\lipdom$
is faithful, meaning that only the identity
element fixes $\lipdom$ pointly; this assumption is always satisfied in our applications.

In this context we continue to say that a function
$u \colon \lipdom \to \R$
is $\grp$-invariant
if $u=u \circ \phi$
for all $\phi \in \grp$,
and we say rather that
$u$ is $\grp$-equivariant
if $u=\nrmlsgn{\nu}(\phi) u \circ \phi$
for all $\phi \in \grp$
(that is, noting the identity
$
 \nrmlsgn{\nu}(\phi)
 =
 \nrmlsgn{\nu}(\phi^{-1})
$,
provided $u$ is invariant under
the $\nrmlsgn{\nu}$-twisted $\grp$ action).

Similarly, in this context, we set
\begin{equation}\label{eq:MeaningEquivariance}
\equivind{\grp}(\weakbf)
  \vcentcolon=
  \symind{\grp}{\nrmlsgn{\nu}}(\weakbf)
\quad \mbox{and} \quad
\equivnul{\grp}{\weakbf}
  \vcentcolon=
  \symnul{\grp}{\nrmlsgn{\nu}}(\weakbf),
\end{equation}
which we may refer to as simply
the $\grp$-equivariant index
and $\grp$-equivariant nullity
of $\weakbf$. 
We point out that we are abusing notation
in the above definitions
in that,
on the right-hand side of each,
in place of $\grp$
we mean really the group,
isomorphic to $\grp$
by virtue of the faithfulness assumption,
obtained by restricting
each element of $\grp$ to $\lipdom$,
and in place of $\nrmlsgn{\nu}$
we mean really the
corresponding homomorphism,
well-defined by the faithfulness assumption,
on this last group of isometries of $\lipdom$.
\end{example}

We now return to the more general assumptions on $G$ preceding this paragraph.

\subsection{Subdomains}\label{subsec:subdomains}

Suppose that $\lipdom_1 \subset \lipdom$ is another Lipschitz domain of $M$ (cf. Figure \ref{fig:subdomain}).
We shall define 
\begin{equation}
\label{subdomainbdydecomps}
\begin{aligned}
\intbdy\lipdom_1
  &\vcentcolon=
  \partial\lipdom_1 \cap \lipdom,
\qquad
&\extbdy\lipdom_1
  &\vcentcolon=
  \partial \lipdom_1 
    \setminus
    \closure{\intbdy\lipdom_1},
\\[1ex]
\dint{\dbdy}\lipdom_1
  &\vcentcolon=
  (\extbdy\lipdom_1 \cap \dbdy\lipdom)
    \cup \intbdy\lipdom_1,
\qquad
&\nint{\dbdy}\lipdom_1
  &\vcentcolon=
  \extbdy\lipdom_1 \cap \dbdy\lipdom,
\\[1ex]
\dint{\nbdy}\lipdom_1
  &\vcentcolon=
  \extbdy\lipdom_1 \cap \nbdy\lipdom,
\qquad
&\nint{\nbdy}\lipdom_1
  &\vcentcolon=
  (\extbdy\lipdom_1 \cap \nbdy\lipdom)
    \cup \intbdy\lipdom_1,
\\[1ex]
\dint{\rbdy}\lipdom_1
  &\vcentcolon=
  \extbdy\lipdom_1 \cap \rbdy\lipdom,
\qquad
&\nint{\rbdy}\lipdom_1
  &\vcentcolon=
  \extbdy\lipdom_1 \cap \rbdy\lipdom.
\end{aligned}
\end{equation}
In this way we prepare to pose two different
sets of boundary conditions on $\lipdom_1$,
whereby, roughly speaking,
in both cases
$\partial\lipdom_1$ inherits
whatever boundary condition is in effect
on $\partial\lipdom$ wherever the two meet
(corresponding to $\extbdy\lipdom_1$)
and the two sets of conditions are distinguished
by placing either
the Dirichlet or the Neumann condition
on the remainder
of the boundary
(corresponding to $\intbdy{\lipdom_1}$).
Naturally associated to these two sets of conditions
are the bilinear forms
\begin{equation}
\label{DirAndNeumInternalizationsOfBilinearForm}
\begin{aligned}
\dint{\weakbf}_{\lipdom_1}
  &\vcentcolon=
  \weakbf[
    \lipdom_1,g,\potential,\robinpotential,
    \dint{\dbdy}\lipdom_1,
    \dint{\nbdy}\lipdom_1,
    \dint{\rbdy}\lipdom_1
  ],
\\[1ex]
\nint{\weakbf}_{\lipdom_1}
  &\vcentcolon=
  \weakbf[
    \lipdom_1,g,\potential,\robinpotential,
    \nint{\dbdy}\lipdom_1,
    \nint{\nbdy}\lipdom_1,
    \nint{\rbdy}\lipdom_1
  ],
\end{aligned}
\end{equation}
defined, respectively, on the Sobolev spaces
$\sobd{\dint{\dbdy}\lipdom_1}(\lipdom_1,g)$
and $\sobd{\nint{\dbdy}\lipdom_1}(\lipdom_1,g)$.

\begin{figure}[b]
\centering
\begin{tikzpicture}[scale=3.1]
\coordinate(A)at(0,0.75);
\coordinate(B)at(-2,0);
\coordinate(C)at(-1.66,-1);
\coordinate(D)at(0,-0.66);
\coordinate(E)at(1.66,-1);
\coordinate(F)at(2,0);
\begin{scope}[line width=4pt]
\draw[Robin](C)sin(D)cos(E);
\draw[Robin](D)node[below]{$\rbdy\lipdom$};
\draw[Dirichlet]
(B)to node[midway,sloped,below]{$\dbdy\lipdom$}(C)
(E)to node[midway,sloped,below]{$\dbdy\lipdom$}(F);
\draw[Neumann](F)to node[midway,sloped,above]{$\nbdy\lipdom$}(A)to node[midway,sloped,above]{$\nbdy\lipdom$}(B);
\fill[black!10](A)--(B)--(C)sin(D)cos(E)--(F)--cycle;
\end{scope}
\begin{scope}
\clip(A)--(B)--(C)sin(D)--cycle;
\filldraw[black!25,draw=black!75,line width=4pt]
(A)--(B)--(C)sin(D)--cycle;
\end{scope}
\path(A)--(C)node[midway,circle,inner sep=0pt]{$\lipdom_1$};
\begin{scope}[black!75]
\path(A)to node[midway,below, sloped]{$\extbdy\lipdom_1\cap\nbdy\lipdom$}(B);
\path(B)to node[midway,above, sloped]{$\extbdy\lipdom_1\cap\dbdy\lipdom$}(C);
\path(C)to node[midway,above=1ex, sloped]{$\extbdy\lipdom_1\cap\rbdy\lipdom$}(D);
\path(A)to node[midway,below, sloped]{$\intbdy\lipdom_1$}(D);
\end{scope}
\end{tikzpicture}
\caption{Example of a Lipschitz domain $\Omega$ with subdomain $\Omega_1$. }%
\label{fig:subdomain}%
\end{figure}

Recalling $(\grp,\twsthom)$ from above, with the tacit understanding that 
$(\lipdom,g,\potential,\robinpotential,
 \dbdy\lipdom,\nbdy\lipdom,\rbdy\lipdom,
 \grp,\twsthom)$ 
is admissible, we further assume that each element of $\grp$ maps $\lipdom_1$ onto itself;
since $\grp$ preserves $\lipdom$ and respects the decomposition \eqref{bdydecomp},
it follows that it also respects the decompositions \eqref{subdomainbdydecomps}.
Somewhat abusively, we shall write $\twstact{\twsthom}$ and $\invproj{\grp}{\twsthom}$
not only for the maps
\eqref{twistedrep} and \eqref{invariantprojection}
but also for their counterparts
with $\lipdom$ replaced by $\lipdom_1$,
which are well-defined under our assumptions.
The spaces
$
 \eigenspsym
   {\dint{\weakbf}_{\lipdom_1}}
   {\sim t}{\grp}{\twsthom}
$
and
$
 \eigenspsym
   {\nint{\weakbf}_{\lipdom_1}}
   {\sim t}{\grp}{\twsthom}
$
as in \eqref{inveigensp}, are then also well-defined.

\section{Fundamental tools}\label{sec:theory}

\subsection{Index and nullity bounds in the style of Montiel and Ros}
Recalling the notation and assumptions of Section \ref{sec:nota},
suppose now that we have not only
$\lipdom_1 \subset \lipdom$ as above,
but also (open) Lipschitz subdomains
$\lipdom_1,\ldots,\lipdom_n \subset \lipdom$
which are pairwise disjoint,
each of which satisfies the same assumptions
as $\lipdom_1$ in Section \ref{subsec:subdomains},
and whose closures cover $\closure{\lipdom}$. 
In particular, we assume that each element of the group $G$ maps each subdomain $\lipdom_i$ onto itself. 
We assume further that $G$ acts transitively on the connected components of $\lipdom$ and note that this last condition is always satisfied in the important special case that $\lipdom$ is connected.

\begin{proposition}
[Montiel--Ros bounds on the number of eigenvalues below a threshold]
\label{mr}
With assumptions as in the preceding paragraph
and notation as in Section \ref{sec:nota}, the following inequalities hold
for any $t \in \R$
\begin{enumerate}[label={\normalfont(\roman*)}]
\item \label{mrLower}
$\displaystyle
 \dim \eigenspsym{\weakbf}{<t}{\grp}{\twsthom}
 \geq
 \dim \eigenspsym
       {\dint{\weakbf}_{\lipdom_1}}
       {<t}{\grp}{\twsthom}
   +\sum_{i=2}^n \dim
     \eigenspsym{\dint{\weakbf}_{\lipdom_i}}
                {\leq t}{\grp}{\twsthom}
$, 
\item \label{mrUpper}
$\displaystyle
 \dim \eigenspsym{\weakbf}{\leq t}{\grp}{\twsthom}
 \leq
 \dim \eigenspsym
      {\nint{\weakbf}_{\lipdom_1}}
      {\leq t}{\grp}{\twsthom}
   +\sum_{i=2}^n \dim
     \eigenspsym{\nint{\weakbf}_{\lipdom_i}}
                {<t}{\grp}{\twsthom}
$.
\end{enumerate}
\end{proposition}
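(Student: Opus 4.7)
The plan for both inequalities is to construct an injective linear map between appropriate finite-dimensional invariant eigenspaces, exploiting the fact that $\weakbf$ splits additively along the partition: for every $u \in \sobd{\dbdy\lipdom}(\lipdom,g)$ one has the identity $\weakbf(u,u) = \sum_{i=1}^n \nint{\weakbf}_{\lipdom_i}(u|_{\lipdom_i}, u|_{\lipdom_i})$ by splitting the integrals, while for $w$ arising as the zero-extension of $w_i \in \sobd{\dint{\dbdy}\lipdom_i}(\lipdom_i,g)$ one has $\weakbf(w,w) = \sum_{i=1}^n \dint{\weakbf}_{\lipdom_i}(w_i, w_i)$. The Dirichlet condition baked into $\dint{\dbdy}\lipdom_i$ makes zero-extensions admissible elements of $\sobd{\dbdy\lipdom}(\lipdom,g)$, and $G$-invariance of each $\lipdom_i$ ensures that restriction and zero-extension preserve $(G,\sigma)$-invariance.

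For (i) I assemble a $(G,\sigma)$-invariant subspace $W \subset \invproj{G}{\sigma}(\sobd{\dbdy\lipdom}(\lipdom,g))$ as the direct sum (via disjoint supports) of the zero-extensions of $\eigenspsym{\dint{\weakbf}_{\lipdom_1}}{<t}{G}{\sigma}$ and of $\eigenspsym{\dint{\weakbf}_{\lipdom_i}}{\leq t}{G}{\sigma}$ for $i \geq 2$, whose total dimension matches the right-hand side. The claim that the $L^2$-projection $W \to \eigenspsym{\weakbf}{<t}{G}{\sigma}$ is injective is equivalent to $W \cap \eigenspsym{\weakbf}{\geq t}{G}{\sigma} = \{0\}$. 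For $w = \sum_i w_i$ in this intersection, the inequalities collected in \eqref{basic_spectral_facts} yield $\dint{\weakbf}_{\lipdom_1}(w_1,w_1) < t \nm{w_1}_{L^2(\lipdom_1,g)}^2$ whenever $w_1 \neq 0$, and $\dint{\weakbf}_{\lipdom_i}(w_i, w_i) \leq t \nm{w_i}_{L^2(\lipdom_i,g)}^2$ for $i\geq 2$; after summation this conflicts with the lower bound $\weakbf(w,w) \geq t \nm{w}_{L^2(\lipdom,g)}^2$ coming from $w \in \eigenspsym{\weakbf}{\geq t}{G}{\sigma}$, thereby ruling out $w_1 \neq 0$. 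If instead $w_1 = 0$, the two bounds force equality $\weakbf(w,w) = t \nm{w}_{L^2(\lipdom,g)}^2$, and the last clause of \eqref{basic_spectral_facts} then upgrades $w$ to a genuine eigenfunction of $\weakbf$ with eigenvalue $t$; interior elliptic regularity makes $w$ smooth on $\lipdom$, and unique continuation for second-order linear elliptic equations with smooth coefficients, combined with the transitivity of $G$ on the connected components of $\lipdom$ and the $(G,\sigma)$-invariance of $w$, propagates vanishing from $\lipdom_1$ to the whole of $\lipdom$.

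For (ii) I consider instead the linear map $\Phi\colon \eigenspsym{\weakbf}{\leq t}{G}{\sigma} \to \eigenspsym{\nint{\weakbf}_{\lipdom_1}}{\leq t}{G}{\sigma} \oplus \bigoplus_{i\geq 2}\eigenspsym{\nint{\weakbf}_{\lipdom_i}}{<t}{G}{\sigma}$ sending $u$ to the tuple of $L^2$-orthogonal projections of its restrictions $u|_{\lipdom_i}$ onto the displayed eigenspaces, and I aim at $\ker\Phi = \{0\}$. Any $u \in \ker\Phi$ satisfies $u|_{\lipdom_1} \in \eigenspsym{\nint{\weakbf}_{\lipdom_1}}{>t}{G}{\sigma}$ and $u|_{\lipdom_i} \in \eigenspsym{\nint{\weakbf}_{\lipdom_i}}{\geq t}{G}{\sigma}$ for $i \geq 2$, so \eqref{basic_spectral_facts} together with additivity gives $\weakbf(u,u) \geq t \nm{u}_{L^2(\lipdom,g)}^2$, strictly if $u|_{\lipdom_1} \neq 0$. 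Combined with $\weakbf(u,u) \leq t \nm{u}_{L^2(\lipdom,g)}^2$ coming from $u \in \eigenspsym{\weakbf}{\leq t}{G}{\sigma}$, the strict case is excluded, forcing $u|_{\lipdom_1} = 0$ and the equality $\weakbf(u,u) = t \nm{u}_{L^2(\lipdom,g)}^2$; the regularity/unique-continuation/transitivity chain from (i) then kills $u$. The main obstacle in both parts is precisely this borderline equality case, where energy bookkeeping by itself yields only $\weakbf(\cdot,\cdot) = t \nm{\cdot}_{L^2(\lipdom,g)}^2$: closing the gap crucially relies on the rigidity clause at the end of \eqref{basic_spectral_facts}, interior elliptic regularity, unique continuation for linear elliptic equations, and the transitivity of $G$ on the connected components of $\lipdom$, without which eigenfunctions supported away from $\lipdom_1$ could otherwise survive.
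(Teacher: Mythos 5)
Your proposal is correct and follows essentially the same route as the paper: zero-extension of eigenfunctions on the $\lipdom_i$ followed by projection onto $\eigenspsym{\weakbf}{<t}{\grp}{\twsthom}$ for (i), and restriction followed by projection for (ii), with injectivity in both cases reduced, via the additivity of the quadratic form across the partition and the rigidity clause in \eqref{basic_spectral_facts}, to the borderline equality case, which is then killed by unique continuation. The only (minor, welcome) difference is that you spell out the role of $G$-transitivity on connected components in propagating the vanishing obtained from unique continuation, a point the paper leaves implicit.
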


The statement and proof
of Proposition \ref{mr}
are adapted from Lemma 12
and Lemma 13 of \cite{MontielRos},
which concern the spectrum
of the Laplacian on branched coverings
of the round sphere
and rely on standard,
fundamental facts about
eigenvalues and eigenfunctions
of Schr\"{o}dinger operators,
much as in the proof
of the classical
Courant nodal domain theorem.
These arguments are readily applied
to more general Schr\"{o}dinger operators
on more general domains,
as observed for instance in
\cites{KapouleasWiygulIndex},
where such bounds in the style
of Montiel and Ros
played a major role
in the computation
of the index and nullity
of the $\xi_{g,1}$ Lawson surfaces.
Here, instead, we present an extended version allowing
for the imposition
of mixed (Robin and Dirichlet) boundary conditions
and invariance under a group action; as mentioned in the introduction, this level of generality is motivated by the goal of bounding (from above and below) the $G$-equivariant Morse index of free boundary minimal surfaces. 
(Our treatment of course includes the fundamental case when $G$ is the trivial group.)

\begin{proof}
Throughout the proof
we will make free use of the consequences
\eqref{basic_spectral_facts}
of the spectral theorem
for the various bilinear forms
appearing in the statement.
Fix $t \in \R$.
For \ref{mrLower}
we will verify injectivity
of the map
\begin{align*}
\dint{\iota}
  \colon
  \eigenspsym{\dint{\weakbf}_{\lipdom_1}}
             {<t}{\grp}{\twsthom}
    \oplus
    \bigoplus_{i=2}^n
    \eigenspsym{\dint{\weakbf}_{\lipdom_i}}
                {\leq t}{\grp}{\twsthom}
  &\to
  \eigenspsym{\weakbf}{<t}{\grp}{\twsthom}
\\
(u_1,u_2,\ldots,u_n)
  &\mapsto
  \eigenproj{\weakbf}{<t}
  \biggl(\sum_{i=1}^n U_i\biggr)
\end{align*}
where each $U_i$
is the extension
to $\lipdom$
of $u_i$
such that $U_i$ vanishes
on $\lipdom \setminus \lipdom_i$.
Clearly, each such extension lies
in the image of $\invproj{\grp}{\twsthom}$,
which, as observed above, commutes with
$\eigenproj{\weakbf}{< t}$,
so that the map is indeed well-defined
with its asserted target.
Now suppose that
$(u_1,\ldots,u_n)$
belongs to the domain
of $\dint{\iota}$,
and set
$
 v
 \vcentcolon=
 \sum_{i=1}^n
 U_i
$.
Then
$v \in \sobd{\dbdy\lipdom}(\lipdom,g)$
and
\begin{equation*}
\weakbf(v,v)
=
\sum_{i=1}^n
\dint{\weakbf}_{\lipdom_i}(u_i,u_i)
\leq
t\nm{v}_{L^2(\lipdom,g)}^2,
\end{equation*}
with equality possible only when
$u_1=0$.
To check injectivity
suppose next that
$\dint{\iota}(u_1,\ldots,u_n)=0$.
By definition of $\dint{\iota}$
this assumption means that
$v$ is $L^2(\lipdom,g)$-orthogonal
to $\eigenspsym{\weakbf}{<t}{\grp}{\twsthom}$,
and so in view of the
preceding inequality and \eqref{basic_spectral_facts}
we have
$v \in \eigenspsym{\weakbf}{=t}{\grp}{\twsthom}$.
Thus,
$v$ satisfies the elliptic equation
$(\Delta_g + \potential + t)u=0$;
moreover,
we must also have
$v|_{\lipdom_1}=u_1=0$,
but now the unique continuation principle
\cite{AronszajnUC}
implies that $v=0$,
whence
$(u_1,\ldots,u_n)=0$,
completing the proof of
\ref{mrLower}.

For \ref{mrUpper}
we verify injectivity of
\begin{align*}
\nint{\iota}
  \colon
  \eigenspsym{\weakbf}{\leq t}{\grp}{\twsthom}
  &\to
  \eigenspsym{\nint{\weakbf}_{\lipdom_1}}
             {\leq t}{\grp}{\twsthom}
    \oplus
    \bigoplus_{i=2}^n
    \eigenspsym{\nint{\weakbf}_{\lipdom_i}}
               {< t}{\grp}{\twsthom}
\\
u
  &\mapsto
  \biggl(
    \eigenproj{\nint{\weakbf}_{\lipdom_1}}{\leq t}
      u|_{\lipdom_1}, ~
    \eigenproj{\nint{\weakbf}_{\lipdom_2}}{< t}
      u|_{\lipdom_2}, ~
    \ldots, ~
    \eigenproj{\nint{\weakbf}_{\lipdom_n}}{< t} 
    u|_{\lipdom_n}
  \biggr)
\end{align*}
instead.
Note that
\begin{equation}
\label{restriction_obeys_syms_and_bcs}
u|_{\lipdom_i}
\in
\invproj{\grp}{\twsthom}\bigl(L^2(\lipdom_i,g)\bigr)
\cap
\sobd{\nint{\dbdy}\lipdom_i}(\lipdom_i,g)
\end{equation}
for each $i$;
in particular,
the left inclusion
and the commutativity
of $\invproj{\grp}{\twsthom}$
with each of the spectral projections
appearing in the definition
of $\nint{\iota}$ ensure
that the latter really is well-defined.
Suppose then that
$u$ belongs to the domain of
$\nint{\iota}$
and $\nint{\iota}u=(0,\ldots,0)$.
The second assumption
(making use of the right inclusion
in \eqref{restriction_obeys_syms_and_bcs}
in addition to \eqref{basic_spectral_facts})
implies
\begin{equation*}
\weakbf(u,u)
=
\sum_{i=1}^n
  \nint{\weakbf}_{\lipdom_i}
    (u|_{\lipdom_i}, u|_{\lipdom_i})
\geq
t\nm{u}_{L^2(\lipdom,g)}^2,
\end{equation*}
with equality possible
only when $u|_{\lipdom_1}=0$.
Recalling that, by assumption, $u\in \eigenspsym{\weakbf}{\leq t}{\grp}{\twsthom}$
we therefore conclude, appealing to \eqref{basic_spectral_facts}, that
$u \in \eigenspsym{\weakbf}{=t}{\grp}{\twsthom}$
and indeed this equality case holds.
In particular, $u$ satisfies the elliptic equation
$(\Delta_g + \potential + t)u=0$,
but then the condition $u|_{\lipdom_1}=0$
and the unique continuation principle
imply $u=0$, ending the proof.
\end{proof}

In particular, in our applications we will repeatedly (yet not always) appeal to the special case when $t=0$ and $\Omega$ (most often equal to the whole ambient manifold itself $M$) is partitioned in a finite collection of pairwise isometric domains:

\begin{corollary}
[Montiel--Ros index and nullity bounds from isometric pieces]
\label{cor:MontielRosHuman}
In the setting of the previous proposition
let us suppose the domains
$\lipdom_1, \ldots, \lipdom_n$
to be pairwise isometric
via isometries
of $\lipdom$.
Then 
\begin{enumerate}[label={\normalfont(\roman*)}]
\item\label{mrIsoPieces:below} 
$
 \symind{\grp}{\twsthom}(\weakbf)
 \geq
 n \symind{\grp}{\twsthom}
       (\dint{\weakbf}_{\lipdom_1})
   + (n-1)
   \symnul{\grp}{\twsthom}
       (\dint{\weakbf}_{\lipdom_1})
$, 
\item\label{mrIsoPieces:above} 
$
 \symind{\grp}{\twsthom}(\weakbf)
   +\symnul{\grp}{\twsthom}(\weakbf)
 \leq
  n \symind{\grp}{\twsthom}
      (\nint{\weakbf}_{\lipdom_1})
  + \symnul{\grp}{\twsthom}
        (\nint{\weakbf}_{\lipdom_1})
$.
\end{enumerate}
\end{corollary}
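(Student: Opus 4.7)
The plan is to specialize Proposition~\ref{mr} to the threshold $t = 0$ and then invoke the pairwise isometries to collapse every subdomain-indexed term on the right-hand side to its $\lipdom_1$-counterpart. For part~\ref{mrIsoPieces:below}, I would begin by observing that the left-hand side equals $\dim \eigenspsym{\weakbf}{<0}{\grp}{\twsthom} = \symind{\grp}{\twsthom}(\weakbf)$ by definition, and that Proposition~\ref{mr}\ref{mrLower} with $t = 0$ yields
\[
\symind{\grp}{\twsthom}(\weakbf)
\geq
\dim\eigenspsym{\dint{\weakbf}_{\lipdom_1}}{<0}{\grp}{\twsthom}
+ \sum_{i=2}^n \dim\eigenspsym{\dint{\weakbf}_{\lipdom_i}}{\leq 0}{\grp}{\twsthom},
\]
whose first summand equals $\symind{\grp}{\twsthom}(\dint{\weakbf}_{\lipdom_1})$ and whose subsequent summands split as $\symind{\grp}{\twsthom}(\dint{\weakbf}_{\lipdom_i}) + \symnul{\grp}{\twsthom}(\dint{\weakbf}_{\lipdom_i})$.

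For part~\ref{mrIsoPieces:above} the strategy is entirely parallel: the left-hand side is $\dim\eigenspsym{\weakbf}{\leq 0}{\grp}{\twsthom} = \symind{\grp}{\twsthom}(\weakbf) + \symnul{\grp}{\twsthom}(\weakbf)$, and Proposition~\ref{mr}\ref{mrUpper} at $t=0$ produces an analogous upper estimate in which the first right-hand summand reads $\symind{\grp}{\twsthom}(\nint{\weakbf}_{\lipdom_1}) + \symnul{\grp}{\twsthom}(\nint{\weakbf}_{\lipdom_1})$ and each term with $i \geq 2$ reduces to $\symind{\grp}{\twsthom}(\nint{\weakbf}_{\lipdom_i})$.

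The one substantive task that remains is to verify the identifications
\[
\symind{\grp}{\twsthom}(\dint{\weakbf}_{\lipdom_i}) = \symind{\grp}{\twsthom}(\dint{\weakbf}_{\lipdom_1}),
\quad
\symnul{\grp}{\twsthom}(\dint{\weakbf}_{\lipdom_i}) = \symnul{\grp}{\twsthom}(\dint{\weakbf}_{\lipdom_1}),
\]
together with their Neumann-internalized analogues. I would obtain these by noting that any ambient isometry $\psi_i$ of $\lipdom$ carrying $\lipdom_1$ onto $\lipdom_i$ necessarily preserves---by the admissibility of the global data and the standing $\grp$-invariance hypotheses---the potentials $\potential$ and $\robinpotential$, the decomposition \eqref{bdydecomp} and hence also the induced decompositions \eqref{subdomainbdydecomps}, and the $\grp$-action together with its twist $\twsthom$; pullback by $\psi_i$ therefore implements a unitary isomorphism between the corresponding $(\grp,\twsthom)$-invariant Sobolev spaces that intertwines both the bilinear forms and the projections $\invproj{\grp}{\twsthom}$, so that the full $(\grp,\twsthom)$-spectra agree with multiplicity. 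Substituting these equalities into the two inequalities above yields exactly the bounds in~\ref{mrIsoPieces:below} and~\ref{mrIsoPieces:above}.

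The main (indeed, only) obstacle is the bookkeeping just sketched: making explicit that the ambient isometries implicit in the hypothesis carry not merely the Riemannian structure but the entire admissible tuple, including the equivariance data. Once this compatibility is in place, the remaining arithmetic is immediate, and no further analytic input beyond Proposition~\ref{mr} is required.
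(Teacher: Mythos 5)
Your argument is correct and is exactly the one the paper leaves implicit: specialize Proposition~\ref{mr} to $t=0$ and use the isometries $\psi_i\colon\lipdom_1\to\lipdom_i$ to identify the $(\grp,\twsthom)$-equivariant spectra of the subdomain forms, from which both inequalities follow by the arithmetic you carry out. One small point to be careful about is your assertion that $\psi_i$ automatically intertwines the $(\grp,\twsthom)$-action---this is an additional compatibility requirement implicit in the corollary's phrase ``isometries of $\lipdom$'' rather than a formal consequence of the standing hypotheses, but in the paper's applications the $\psi_i$ are always drawn from a larger ambient symmetry group (e.g.\ $\Ogroup(3)$) that makes this compatibility manifest, so the identification of spectra you invoke does hold.
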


\begin{remark}
We further, explicitly note how the two inequalities given in the previous corollary jointly imply the ``compatibility condition'' that
\begin{equation}\label{eq:compatibility}
(n-1)
   \symnul{\grp}{\twsthom}
       (\dint{\weakbf}_{\lipdom_1})-\symnul{\grp}{\twsthom}
        (\nint{\weakbf}_{\lipdom_1})\leq n\Bigl(\symind{\grp}{\twsthom}
      (\nint{\weakbf}_{\lipdom_1})-\symind{\grp}{\twsthom}
       (\dint{\weakbf}_{\lipdom_1})\Bigr)
    \end{equation}
which in general has non-trivial content.
\end{remark}

\begin{remark}\label{rem:Connected}
The requirement that the domains in question be $G$-invariant implies, in certain examples, that some of them may in fact have to be taken disconnected. We will however discuss, in the next subsection, how this nuisance may actually be avoided in the totality of our later applications.
\end{remark}

\subsection{Reduction and extension of domain under symmetries}

With our standing assumptions on
$(\lipdom,g)$, $\weakbf$ and $(\grp,\twsthom)$
in place, encoded in the requirement that they determine admissible data,
we again assume that
$\lipdom_1, \ldots, \lipdom_n \subset \lipdom$
are pairwise disjoint
Lipschitz domains 
whose closures cover $\lipdom$.
However, for the specific purposes of this section,
we assume $\lipdom$ connected and,
rather than assuming
$\grp$-invariance of each $\lipdom_i$,
we instead suppose that
$\grp$ preserves the collection $\{\lipdom_i\}_{i=1}^n$ (while -- as per our general postulate -- also respecting the decomposition \eqref{bdydecomp}, which dictates the boundary conditions \eqref{eq:BoundaryConditions}), and acts transitively
on its elements
(so in particular the $\lipdom_i$
are pairwise isometric).
The (possibly trivial) subgroup of $\grp$ which preserves
$\lipdom_1$ we call $\stabgrp$.
Note that $\stabgrp$ preserves
$\intbdy\lipdom_1$ in particular.

For each
$p \in \intbdy\lipdom_1$
we define
\begin{equation*}
\grp_p
\vcentcolon=
\{
  \phi \in \grp
  \st
  \exists  U
    \underset{\clap{\scriptsize open}}{\subseteq}
    \intbdy\lipdom_1
  \quad
  p \in U
  \mbox{ and }
  \phi_{|U}=\operatorname{id}
\}.
\end{equation*}
Then $\grp_p$ is a subgroup of $\grp$ 
having order at most $2$,
as we now explain.
Let $\phi_1,\phi_2 \in \grp_p$.
Then we have open neighborhoods
$U_1,U_2$ of $p$ in
$\intbdy\lipdom_1$
with $U_i$ fixed pointwise by $\phi_i$.
By the Lipschitz assumption
there exists $q \in U_1 \cap U_2$
at which $\intbdy\lipdom_1$
has a well-defined outward unit conormal
$\eta_q$.
Then
for each $i$ we have
$(d_q\phi_i)(\eta_q)=\epsilon_i \eta_q$
for some $\epsilon_i=\pm 1$.
If an $\epsilon_i=+1$,
then,
since $\phi_i$ fixes $U_i$ pointwise
and $\lipdom$ is connected,
$\phi_i$ must be the identity on $\lipdom$
(which comes essentially by arguing e.\,g. as in Lemma 4.5 of \cite{CarLi19}).
If $\epsilon_1=\epsilon_2=-1$,
then
similarly $\phi_1 \circ \phi_2^{-1}$
is the identity on $\lipdom$,
establishing our claim.
Note also that the set $\{p \st \abs{\grp_p}=2\}$
is open in $\intbdy\lipdom_1$
and that for each $\chi \in \stabgrp$
the map
$
 \phi
 \mapsto
 \chi \circ \phi \circ \chi^{-1}
$
defines an isomorphism
from $\grp_p$ to $\grp_{\chi(p)}$
which commutes with $\twsthom$.

For each $p$ we next set
\begin{equation*}
\twsthom_p
\vcentcolon=
\begin{cases}
\hphantom{-}0 &
  \text{ if $\abs{\grp_p}=1$,}
\\
\hphantom{-}1 &
  \text{ if $\abs{\grp_p}=2$
    but $\twsthom(\grp_p)=\{+1\}$,}
\\
-1
  &
  \text{ if $\abs{\grp_p}=2$
      and $\twsthom(\grp_p)=\{+1,-1\}$,}
\end{cases}
\end{equation*}
and we in turn define
the subsets
$
 \partial_+\lipdom_1,
 \partial_-\lipdom_1
 \subseteq
 \intbdy\lipdom_1
$
by letting (respectively)
\begin{equation*}
\partial_\pm\lipdom_1
\vcentcolon=
\twsthom_p^{-1}(\pm 1).
\end{equation*}
With the aid of the foregoing observations we see that
$\partial_+\lipdom_1$
and $\partial_-\lipdom_1$
are open and disjoint,
and each is preserved by $\stabgrp$.
We now impose the additional assumption
that their closures cover
$\intbdy\lipdom_1$,
and finally we set $\weakbf_{\lipdom_1}
  \vcentcolon=
  \weakbf
    [
      \lipdom_1,g,q,r,
      \dbdy\lipdom_1,\nbdy\lipdom_1,\rbdy\lipdom_1
    ]$, where
\begin{align*}
\dbdy\lipdom_1
 &\vcentcolon=
  \partial_-\lipdom_1
    \cup
    (\extbdy\lipdom_1 \cap \dbdy\lipdom),
\\
\nbdy\lipdom_1
&\vcentcolon=
  \partial_+\lipdom_1
    \cup
    (\extbdy\lipdom_1 \cap \nbdy\lipdom),
\\
\rbdy\lipdom_1
&\vcentcolon=
  \extbdy\lipdom_1 \cap \rbdy\lipdom.
\end{align*}

\begin{lemma}
[Reduction and extension of domain under symmetries]
\label{dom_red_ext}
Under the above assumptions,
for every integer $i \geq 1$
\begin{equation*}
\eigenvalsym{\weakbf}{i}{\grp}{\twsthom}
=
\eigenvalsym{\weakbf_{\lipdom_1}}{i}{\stabgrp}{\twsthom},
\end{equation*}
and 
the $(\stabgrp,\twsthom)$-invariant eigenfunctions
of $\weakbf_{\lipdom_1}$
are the restrictions to $\lipdom_1$
of the $(\grp,\twsthom)$-invariant eigenfunctions
of $\weakbf$.
\end{lemma}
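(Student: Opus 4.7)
The plan is to construct an explicit linear isomorphism
\[
E \colon \invproj{\stabgrp}{\twsthom}\bigl(\sobd{\dbdy\lipdom_1}(\lipdom_1, g)\bigr) \to \invproj{\grp}{\twsthom}\bigl(\sobd{\dbdy\lipdom}(\lipdom, g)\bigr)
\]
whose inverse is restriction to $\lipdom_1$, and which pulls back $\weakbf$ and $\nm{\cdot}_{L^2(\lipdom,g)}^2$ to $n\weakbf_{\lipdom_1}$ and $n\nm{\cdot}_{L^2(\lipdom_1,g)}^2$ respectively. Fixing coset representatives $\phi_1=\operatorname{id},\phi_2,\ldots,\phi_n$ of $\grp/\stabgrp$ with $\lipdom_i=\phi_i(\lipdom_1)$, I would set
\[
Eu|_{\lipdom_i} \vcentcolon= \twsthom(\phi_i)\,(u\circ\phi_i^{-1}),
\]
a definition that is independent of the chosen representative by virtue of the $(\stabgrp,\twsthom)$-invariance of $u$.

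The core technical step is to verify that $Eu\in\sob(\lipdom, g)$. Since each $\phi_i$ is an isometry of $(\closure{\lipdom},g)$, one has $Eu|_{\lipdom_i}\in\sob(\lipdom_i, g)$, and the standard $\sob$-gluing criterion reduces the claim to the matching of the one-sided traces of $Eu$ across each shared interior boundary. Given $p\in\partial_\pm\lipdom_1$, by construction there exists a non-identity $\phi\in\grp_p$ with $\twsthom(\phi)=\pm 1$ fixing an open neighborhood of $p$ in $\intbdy\lipdom_1$ pointwise; as the discussion preceding the lemma shows, $\phi$ must act locally as a reflection across $\intbdy\lipdom_1$ and therefore map $\lipdom_1$ near $p$ onto some $\lipdom_j$ on the other side. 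The resulting one-sided traces at $p$ are $u(p)$ from the $\lipdom_1$ side and $\twsthom(\phi)u(p)$ from the $\lipdom_j$ side: these coincide on $\partial_+\lipdom_1$, and both vanish on $\partial_-\lipdom_1$ thanks to the Dirichlet condition just imposed there. Since $\closure{\partial_+\lipdom_1}\cup\closure{\partial_-\lipdom_1}=\intbdy\lipdom_1$ by hypothesis, traces match $\hausmeas{d-1}{\cdot}$-almost everywhere and the gluing criterion applies. The vanishing of $Eu$ on $\dbdy\lipdom$ is then immediate from the vanishing of $u$ on $\extbdy\lipdom_1\cap\dbdy\lipdom$ together with the $\grp$-invariance of the decomposition \eqref{bdydecomp}.

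A direct coset computation -- writing $\psi\phi_i=\phi_j h$ with $h\in\stabgrp$ and invoking $(\stabgrp,\twsthom)$-invariance of $u$ -- gives $Eu(\psi x)=\twsthom(\psi)Eu(x)$ for all $\psi\in\grp$ and $x\in\lipdom$, so that $Eu$ is $(\grp,\twsthom)$-invariant; the two scaling identities announced above then follow by routine changes of variables on each $\lipdom_i$, exploiting the $\grp$-invariance of $g$, $\potential$, $\robinpotential$ and of the boundary decomposition. Applying the min-max characterization \eqref{symminmax} on both sides of this isomorphism at once yields the desired equality $\eigenvalsym{\weakbf}{i}{\grp}{\twsthom}=\eigenvalsym{\weakbf_{\lipdom_1}}{i}{\stabgrp}{\twsthom}$. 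For the eigenfunction statement, if $v$ is a $(\stabgrp,\twsthom)$-invariant eigenfunction of $\weakbf_{\lipdom_1}$ with eigenvalue $\lambda$, the scaling identities give $\weakbf(Ev,W)=\lambda\sk{Ev,W}_{L^2(\lipdom,g)}$ for every $(\grp,\twsthom)$-invariant $W$, and since both $\weakbf$ and the $L^2$ pairing are preserved by the twisted $\grp$-action the averaging identity $\weakbf(Ev,W)=\weakbf(Ev,\invproj{\grp}{\twsthom}W)$ (and likewise for the $L^2$ pairing) promotes this equation to arbitrary $W\in\sobd{\dbdy\lipdom}(\lipdom,g)$; the converse direction is handled analogously via restriction.

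The step I expect to be the main obstacle is the verification of Sobolev regularity in the second paragraph: the delicate partition $\intbdy\lipdom_1=\closure{\partial_+\lipdom_1}\cup\closure{\partial_-\lipdom_1}$, together with the Neumann/Dirichlet assignment on $\partial_\pm\lipdom_1$ dictated by the sign of $\twsthom$ on the (necessarily order-two) local stabilizers, is precisely the mechanism that must be exploited to synchronize the one-sided traces across the interior walls and thereby make $Eu$ genuinely lie in $\sob(\lipdom, g)$.
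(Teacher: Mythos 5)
Your proof is correct and follows the same high-level blueprint as the paper's (construct the $(\grp,\twsthom)$-invariant extension of a given $(\stabgrp,\twsthom)$-invariant function, establish that it lands in $\sob(\lipdom,g)$, and then pass between the two spectral problems), but it handles the crux — Sobolev regularity of the extension — by a genuinely different device. The paper verifies $\overline{u}\in\sob(\lipdom,g)$ by a self-contained distributional computation: it pairs the candidate weak gradient $\overline{\nabla_g u}$ against a compactly supported test field $X$, replaces $X$ by its $(\grp,\twsthom)$-average $Y$ (whose equivariance forces it to be $\hausmeas{d-1}{g}$-a.e.\ tangent along $\partial_+\lipdom_1$, while the trace of $u$ vanishes on $\partial_-\lipdom_1$), and integrates by parts on $\lipdom_1$ alone via the divergence theorem. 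You instead invoke the standard gluing criterion for Sobolev functions across Lipschitz interfaces and check matching of one-sided traces along $\intbdy\lipdom_1$, observing that the local reflection $\phi\in\grp_p$ with $\twsthom(\phi)=\pm1$ forces agreement of the two traces on $\partial_+\lipdom_1$ and forces both to vanish on $\partial_-\lipdom_1$. This is arguably more transparent about exactly where the $\partial_\pm\lipdom_1$ partition and the hypothesis $\closure{\partial_+\lipdom_1}\cup\closure{\partial_-\lipdom_1}=\intbdy\lipdom_1$ enter, and it isolates the issue pointwise; on the other hand it appeals to the gluing criterion as an external result (whose proof is, in the end, the same integration by parts) and needs the remark — which your computation of the $(\grp,\twsthom)$-invariance of $Eu$ supplies implicitly — that the $\grp$-orbit of $\intbdy\lipdom_1$ covers every interface of the decomposition, so the verification on $\intbdy\lipdom_1$ suffices. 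Your finish via the two scaling identities and the min-max characterization \eqref{symminmax}, rather than the paper's direct statement that restriction is a bijection $\eigenspsym{\weakbf}{=t}{\grp}{\twsthom}\to\eigenspsym{\weakbf_{\lipdom_1}}{=t}{\stabgrp}{\twsthom}$, is a cosmetic variation: the bijectivity claim rests on exactly the same identities.
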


\begin{proof}
First observe that
\begin{equation*}
v
  \in
  \invproj{\grp}{\twsthom}
    \sobd{\dbdy\lipdom}(\lipdom,g)
~\Rightarrow~
v|_{\lipdom_1}
  \in 
  \invproj{\stabgrp}{\twsthom}
    \sobd{\dbdy\lipdom_1}(\lipdom_1,g),
\end{equation*}
using in particular the fact that
any $(\grp,\twsthom)$-invariant
function in $\sob(\lipdom,g)$
must have vanishing trace along
$\partial_-\lipdom_1$.
Next observe that our assumptions guarantee
that each $(\stabgrp,\twsthom)$-invariant
function $u$
on $\lipdom_1$
has a unique $(\grp,\twsthom)$-invariant
extension $\overline{u}$
to $\lipdom$.
This is also true of vector fields,
the action being
$\phi.X \vcentcolon= \twsthom(\phi)\phi_*X$.
Now suppose
$
 u
 \in
 \invproj{\stabgrp}{\twsthom}
  \sobd{\dbdy\lipdom_1}(\lipdom_1,g)
$.
Obviously
$
 \overline{u}
 \in  
  L^2(\lipdom,g)
$,
and we next check that in fact
$\overline{u} \in \sob(\lipdom,g)$
with $\nabla_g \overline{u} = \overline{\nabla_g u}$.

For this let $X$ be a smooth vector field
with support contained in $\lipdom$.
Let $Y \vcentcolon= \invproj{\grp}{\twsthom}X$
(meaning we average as in \eqref{invariantprojection}
but with the appropriate action for vector fields,
as above). Then
(writing, with slight abuse of notation, $L^2(\lipdom,g)$
and $L^2(\lipdom_1,g)$ also
for the Hilbert spaces of $L^2$ vector fields
on $\lipdom$ and $\lipdom_1$ respectively,
in metric $g$)
\begin{equation*}
\begin{aligned}
\sk{X,\overline{\nabla_g u}}_{L^2(\lipdom,g)}
&=
\sk{Y,\overline{\nabla_g u}}_{L^2(\lipdom,g)}
=
n
  \sk{Y|_{\lipdom_1},\nabla_g u}_{L^2(\lipdom_1,g)}
\\
&=
n
  \sk{1,\dive (uY|_{\lipdom_1})}_{L^2(\lipdom_1,g)}
  -n\sk{u,\dive Y|_{\lipdom_1}}_{L^2(\lipdom_1,g)}
\\
&=
n
  \sk{
    u|_{\partial\lipdom_1},
    g(\conormal{\lipdom_1}{g},Y|_{\partial\lipdom_1})
  }_{L^2(\partial\lipdom_1,g)}
  -\sk{\overline{u},\dive Y}_{L^2(\lipdom,g)}
\\
&=
0-\sk{\dive X,\overline{u}}_{L^2(\lipdom,g)};
\end{aligned}
\end{equation*}
in the third line
we have used the divergence theorem
(see for example Theorem 4.6
of \cite{EvansGariepy}
for a statement serving our assumptions)
with
$u|_{\partial\lipdom_1}$ of course
the trace of $u$
and $\conormal{\lipdom_1}{g}$
the almost everywhere defined outward unit conormal,
and in the fourth line
we have used the fact that
the $(\grp,\twsthom)$-invariance of $Y$
forces it to be (almost everywhere) orthogonal
to this last conormal
on $\partial_+\lipdom_1$,
while on the other hand,
as already noted above,
$u|_{\partial\lipdom_1}$
vanishes on $\partial_-\lipdom_1$.
Thus every element of
$
 \invproj{\stabgrp}{\twsthom}
 \sobd{\dbdy\lipdom_1}(\lipdom_1,g)
$
extends uniquely to an element of
$
 \invproj{\grp}{\twsthom}
 \sobd{\dbdy\lipdom}(\lipdom,g)
$.
It is now straightforward to verify that
for all $t \in \R$
restriction to $\lipdom_1$
furnishes a bijection
$
 \eigenspsym{\weakbf}{=t}{\grp}{\twsthom}
 \to
 \eigenspsym{\weakbf_{\lipdom_1}}{=t}{\stabgrp}{\twsthom}
$,
which implies the claims.
\end{proof}

For the purposes of our later geometric applications, it is convenient to focus on two special cases, which correspond to the examples we presented in Section \ref{subs:GroupAction}.

\begin{example}[Actions of order-2 groups] \label{ex:Group2El_Meta}
With respect to our general setup, let $\Omega=M$ and consider $G=\sk{\phi}$ where $\phi$ is a (non-trivial) isometric involution of $M$. Suppose further (which is not true in general) that the set of fixed points of the action divides $M$ into two open regions, which we shall label $\Omega_1, \Omega_2$. Then note that, arguing as above, one must have $\phi(\Omega_1)=\Omega_2$ (as well as $\phi(\Omega_2)=\Omega_1$). In particular $H$ is the trivial subgroup, just consisting of the identity element.
That said, there are two cases depending on the choice of twisting homomorphism $\sigma\colon G\to\left\{-1,1\right\}$ we consider:
\begin{enumerate}[label={(\arabic*)}]
    \item if we let $\sigma(\phi)=+1$ then $\partial_+\lipdom_1=\intbdy\lipdom_1, \partial_-\lipdom_1=\emptyset$ so we are considering the (non-equivariant) spectrum of $T_{\lipdom_1}$ adding a Neumann boundary condition along $\intbdy\lipdom_1$;
    \item if we let $\sigma(\phi)=-1$ then $\partial_+\lipdom_1=\emptyset, \partial_-\lipdom_1=\intbdy\lipdom_1$ so we are considering the (non-equivariant) spectrum of $T_{\lipdom_1}$ adding a Dirichlet boundary condition along $\intbdy\lipdom_1$.
\end{enumerate}
\end{example}

\begin{example}[Actions of self-congruences of two-sided hypersurfaces] \label{ex:SelfCongr_Meta}
Here we follow-up on the discussion of Example \ref{ex:SelfCongr}, but specified to $\Omega=M$ for $N=\B^3$ and  $G=\pri_n$ (i.\,e.  we postulate the ambient manifold to be the Euclidean ball, and the surface $M$ to have prismatic symmetry). 
We refer the reader to the first part of Section \ref{sec:FBMSgen} for basic recollections about this group action, and related ones.
We let $\Omega_1$ to be an open fundamental domain for this action (so that $M$ is covered by the closures of exactly $4n$ pairwise isometric domains); it follows that again $H$ is the trivial subgroup.
Considering the sign homomorphism $\sigma\colon\pri_n\to\left\{-1,+1\right\}$ defined in Example \ref{ex:SelfCongr}, then it is readily checked that $\partial_+\lipdom_1=\intbdy\lipdom_1$, $\partial_-\lipdom_1=\emptyset$ and so -- when applied to this case -- Lemma~\ref{dom_red_ext} compares (and proves equality of) the (fully-)equivariant spectrum of the problem, with the spectrum of a fundamental domain, with Neumann boundary conditions added on each interior side.
\end{example}

\subsection{Spectral stability}
\label{subsec:continuity}

As it has been anticipated in the introduction, in our applications we will analyze the spectrum of free boundary minimal surfaces obtained by gluing certain constituting blocks. 
In that respect, we will need to derive from ``geometric convergence'' results some corresponding ``spectral convergence'' results. Suppose we have a sequence
$
 \{(
  \lipdom_n, g_n, \potential_n,
  \robinpotential_n, \dbdy\lipdom_n,
  \nbdy\lipdom_n, \rbdy\lipdom_n,
  \grp_n,\twsthom_n
  )\}
$
of admissible data, as well as ``limit data''
$
 (
  \lipdom, g, \potential, \robinpotential,
  \dbdy\lipdom, \nbdy\lipdom, \rbdy\lipdom,
  \grp_\infty,\twsthom_\infty
 ),
$
satisfying all our assumptions on admissible data except that $\grp_\infty$ is possibly allowed to have infinite order. 
For instance, in our later applications $\grp_\infty$ is the compact Lie group $\Ogroup(2)$.
Although we originally introduced the notation
$\eigenvalsym{\weakbf}{i}{\grp_\infty}{\twsthom_\infty}$,
with $\weakbf$ the bilinear form associated to the foregoing data,
for $\grp_\infty$ finite, the notion remains well-defined for infinite $\grp_\infty$.
The quantities
$\symind{\grp_\infty}{\twsthom_\infty}(\weakbf)$
and $\symnul{\grp_\infty}{\twsthom_\infty}(\weakbf)$
are likewise defined in this setting; as a special case,
we can in turn define
$\equivind{\grp_\infty}(\weakbf)$
and $\equivnul{\grp_\infty}(\weakbf)$
for $\grp_\infty$ a suitable
infinite-order symmetry group of a hypersurface (as per Example \ref{ex:SelfCongr}). That being said, alongside $\weakbf[
  \lipdom, g, \potential, \robinpotential,
  \dbdy\lipdom, \nbdy\lipdom, \rbdy\lipdom]$,
we then have the corresponding sequence
$\{\weakbf_n\}$ with
$
 \weakbf_n
 \vcentcolon=
 \weakbf[\lipdom_n,g_n,\potential_n, \robinpotential_n,
         \dbdy\lipdom_n,\nbdy\lipdom_n,\rbdy\lipdom_n]
$. We will present some conditions on the data
that ensure
\begin{equation}
\label{spectralCtyMeaning}
\lim_{n \to \infty}
\eigenvalsym{\weakbf_n}{i}{\grp_n}{\twsthom_n}
=
\eigenvalsym{\weakbf}{i}{\grp_\infty}{\twsthom_\infty}
\mbox{ for all } i.
\end{equation}
As we are especially interested
in index and nullity,
we immediately point out that
\eqref{spectralCtyMeaning} implies
\begin{equation}
\label{implicationsOfSpecConvForIndAndNul}
\begin{gathered}
\symind{\grp_\infty}{\twsthom_\infty}(\weakbf)
  \leq
  \liminf_{n \to \infty}
    \symind{\grp_n}{\twsthom_n}(\weakbf_n),
\qquad
\limsup_{n \to \infty}
    \symnul{\grp_n}{\twsthom_n}(\weakbf_n)
  \leq
  \symnul{\grp_\infty}{\twsthom_\infty}(\weakbf),
\\
\limsup_{n \to \infty}
    \Bigl(
      \symind{\grp_n}{\twsthom_n}(\weakbf_n)
      +\symnul{\grp_n}{\twsthom_n}(\weakbf_n)
    \Bigr)
 \leq
 \symind{\grp_\infty}{\twsthom_\infty}(\weakbf)
   +\symnul{\grp_\infty}{\twsthom_\infty}(\weakbf).
\end{gathered}
\end{equation}

\begin{proposition}
\label{spectralCtyWrtCoeffsAndSyms}
Let
$
 (
  \lipdom, g, \potential, \robinpotential,
  \dbdy\lipdom, \nbdy\lipdom, \rbdy\lipdom,
  \grp_\infty,\twsthom_\infty
 )
$
satisfy all our assumptions on admissible data
except that we allow $\grp_\infty$ to have infinite order;
let $\weakbf$ be the bilinear form determined by the data.
Let
$
\{
 (
  \lipdom, g_n, \potential_n, \robinpotential_n,
  \dbdy\lipdom, \nbdy\lipdom, \rbdy\lipdom,
  \grp_n,\twsthom_n
 )
\}
$
be a sequence of admissible data,
with corresponding sequence
$\{\weakbf_n\}$ of bilinear forms.
Assume
\begin{equation*}
\sup_n \sup_\lipdom
\left(
\abs{g_n}_g + \abs{g_n^{-1}}_g
  +\abs{\potential_n} + \abs{\robinpotential_n}
\right)
<
\infty
\quad \mbox{and} \quad
(g_n,\potential_n,\robinpotential_n)
  \xrightarrow[n \to \infty]{\text{a.e. on } \lipdom}
  (g,\potential,\robinpotential).
\end{equation*}
Assume further that
\begin{enumerate}[label={\normalfont(\arabic*)}]
    \item $\grp_n \leq \grp_\infty$ for all $n$, and $\twsthom_n(\phi_n)=\twsthom(\phi_n)$ for all $n$ and all $\phi_n \in \grp_n$;
    \item for each $\phi \in \grp_\infty$ there exists a sequence $\{\phi_n\}$ such that:
    \begin{enumerate}[label={\normalfont(\alph*)}]
 \item $\phi_n \in \grp_n$ for all $n$,
\item $\phi_n^*
  \xrightarrow[n \to \infty]{}
  \phi^*$ strongly as linear endomorphisms of $L^2(\lipdom, g)$,
  \item 
$\twsthom_n(\phi_n)=\twsthom(\phi)$ for all $n$.
    \end{enumerate}
\end{enumerate}
Then
\begin{equation*}
\lim_{n \to \infty}
\eigenvalsym{\weakbf_n}{i}{\grp_n}{\twsthom_n}
=
\eigenvalsym{\weakbf}{i}{\grp_\infty}{\twsthom_\infty}
\text{ for all } i.
\end{equation*}
\end{proposition}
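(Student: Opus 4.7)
The plan is to combine the min-max characterization \eqref{symminmax} with a standard compactness argument, proving the two opposite bounds $\limsup_n \eigenvalsym{\weakbf_n}{i}{\grp_n}{\twsthom_n} \leq \eigenvalsym{\weakbf}{i}{\grp_\infty}{\twsthom_\infty}$ and $\liminf_n \eigenvalsym{\weakbf_n}{i}{\grp_n}{\twsthom_n} \geq \eigenvalsym{\weakbf}{i}{\grp_\infty}{\twsthom_\infty}$ separately. The uniform $L^\infty$ bounds in the hypothesis guarantee that $\sob(\lipdom, g_n) = \sob(\lipdom, g)$ as sets with uniformly equivalent norms, and that the forms $\weakbf_n$ are uniformly bounded and uniformly coercive, with constants coming from \eqref{eq:bounded}, \eqref{eq:coercive}, and \eqref{interpolatingTraceInequality}.

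For the upper bound, by assumption (1) every $(\grp_\infty, \twsthom_\infty)$-invariant function is automatically $(\grp_n, \twsthom_n)$-invariant, since $\grp_n \leq \grp_\infty$ and the twisting homomorphisms agree on $\grp_n$. Thus any $i$-dimensional subspace $W \subset \invproj{\grp_\infty}{\twsthom_\infty}\bigl(\sobd{\dbdy\lipdom}(\lipdom,g)\bigr)$ approximately realising the min-max for $\eigenvalsym{\weakbf}{i}{\grp_\infty}{\twsthom_\infty}$ is admissible at every finite level $n$. The almost-everywhere convergence of $(g_n, \potential_n, \robinpotential_n)$ combined with dominated convergence (and the trace inequality for the Robin term) yields $\weakbf_n(w,w) \to \weakbf(w,w)$ and $\nm{w}_{L^2(\lipdom, g_n)} \to \nm{w}_{L^2(\lipdom, g)}$ uniformly on the finite-dimensional unit sphere of $W$, whence the $\limsup$ bound.

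For the lower bound, suppose for contradiction that along a subsequence $\eigenvalsym{\weakbf_n}{i}{\grp_n}{\twsthom_n} \to \mu < \eigenvalsym{\weakbf}{i}{\grp_\infty}{\twsthom_\infty}$. Select $L^2(\lipdom, g_n)$-orthonormal $(\grp_n, \twsthom_n)$-invariant eigenfunctions $e_n^{(1)}, \ldots, e_n^{(i)}$ of $\weakbf_n$ with eigenvalues $\lambda_n^{(j)} \leq \mu + o(1)$. Uniform coercivity provides a uniform $\sob$ bound, so after a diagonal extraction Rellich--Kondrachov delivers $e_n^{(j)} \rightharpoonup e^{(j)}$ in $\sob$ and $e_n^{(j)} \to e^{(j)}$ in $L^2$, while $\lambda_n^{(j)} \to \lambda^{(j)} \leq \mu$. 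The limits form an $L^2(\lipdom, g)$-orthonormal system, using strong $L^2$ convergence together with $g_n \to g$ a.e.\ and dominated convergence. To check $(\grp_\infty, \twsthom_\infty)$-invariance of $e^{(j)}$, I would rewrite the identity $\phi_n^* e_n^{(j)} = \twsthom_n(\phi_n) e_n^{(j)}$ and pass to the limit for an approximating sequence $\phi_n \in \grp_n$ supplied by assumption (2), exploiting the strong operator convergence $\phi_n^* \to \phi^*$ on $L^2(\lipdom, g)$ combined with the strong $L^2$ convergence of $e_n^{(j)}$. Finally, each $e^{(j)}$ solves the eigenvalue equation for $\weakbf$ with eigenvalue $\lambda^{(j)}$: testing $\weakbf_n(e_n^{(j)}, v) = \lambda_n^{(j)} \sk{e_n^{(j)}, v}_{L^2(\lipdom, g_n)}$ against a fixed $v \in \sobd{\dbdy\lipdom}(\lipdom, g)$, every term passes to the limit by combining weak $L^2$ convergence of $\nabla e_n^{(j)}$, strong $L^2$ and trace convergence of $e_n^{(j)}$, and dominated convergence for the coefficients paired with the multiplier $v$. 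Hence $\{e^{(j)}\}$ furnishes $i$ linearly independent $(\grp_\infty, \twsthom_\infty)$-invariant eigenfunctions of $\weakbf$ with eigenvalues $\leq \mu$, forcing $\eigenvalsym{\weakbf}{i}{\grp_\infty}{\twsthom_\infty} \leq \mu$ and yielding the desired contradiction.

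The most delicate step lies in the passage to the limit in the gradient part of the eigenvalue equation: working in a fixed atlas, the key observation is that the multiplier $g_n^{ik} \sqrt{\det g_n}\, \partial_k v$ converges strongly in $L^2$ to its limit counterpart by dominated convergence (uniform bounds plus a.e.\ convergence of $g_n$, against the fixed $v$), which pairs with the weak $L^2$ convergence of $\partial_i e_n^{(j)}$ to yield convergence of the integral, thereby circumventing the need for general weak lower semicontinuity theorems with varying coefficients. A minor adjustment is required to cover the edge case of Remark \ref{rem:SpectralPathology}, in which $\eigenvalsym{\weakbf}{i}{\grp_\infty}{\twsthom_\infty} = +\infty$ by convention; the same compactness argument then precludes any bounded subsequence of $\eigenvalsym{\weakbf_n}{i}{\grp_n}{\twsthom_n}$, since the resulting $i$ linearly independent limit functions would contradict the finite-dimensionality of the ambient $(\grp_\infty, \twsthom_\infty)$-invariant subspace.
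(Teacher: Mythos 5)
Your proof matches the paper's approach in both structure and substance: the upper semicontinuity comes from the min-max characterization \eqref{symminmax} and dominated convergence on a finite-dimensional competitor subspace, while the lower semicontinuity follows from uniform coercivity, weak $\sob$ compactness (strong $L^2$ convergence of eigenfunctions plus weak convergence of gradients paired with strongly convergent coefficient multipliers), and passage to the limit in the weak eigenvalue equation, with the $(\grp_\infty,\twsthom_\infty)$-invariance of the limits handled exactly as in the paper -- hypothesis (1) for the upper bound, hypothesis (2) and strong $L^2$ convergence for the lower bound. Your explicit treatment of the $+\infty$ edge case of Remark \ref{rem:SpectralPathology} is a worthwhile detail that the paper leaves implicit.
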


\begin{proof}
For expository convenience, we will first focus on the case when $\grp_n=\grp_\infty$ and $\sigma_n=\sigma_\infty$ for all $n$, thereby implicitly assuming  $(
  \lipdom, g, \potential, \robinpotential,
  \dbdy\lipdom, \nbdy\lipdom, \rbdy\lipdom,
  \grp_\infty,\twsthom_\infty
 )$ to be admissible data (in our standard sense); we shall simply denote by $G$ the group in question, and by $\sigma$ the associated homomorphism.

Fix the index $i\geq 1$.
We will start by showing that
\begin{equation}
\label{spectralUpperSemicty}
\limsup_{n \to \infty}
  \eigenvalsym{\weakbf_n}{i}{\grp}{\twsthom}
\leq
\eigenvalsym{\weakbf}{i}{\grp}{\twsthom}.
\end{equation}
For this we start with an
$L^2(\lipdom,g)$-orthonormal set
$\{u_j\}_{j=1}^i$
such that $u_j$ is
a $(\grp,\twsthom)$-invariant
eigenfunction of $\weakbf$
with eigenvalue
$\eigenvalsym{\weakbf}{j}{\grp}{\twsthom}$.
Then our assumptions on the coefficients
together with the dominated convergence theorem
imply that for all $1 \leq j,k \leq i$
\begin{equation*}
\begin{gathered}
\lim_{n \to \infty}
  \sk{u_j,u_k}_{L^2(\lipdom,g_n)}
  =
  \sk{u_j,u_k}_{L^2(\lipdom,g)},
\quad
\lim_{n \to \infty}
  \sk{u_j, \potential_n u_k}_{L^2(\lipdom,g_n)}
  =
  \sk{u_j, \potential u_k}_{L^2(\lipdom,g)},
\\
\lim_{n \to \infty}
  \int_\lipdom
  g_n(\nabla_{g_n}u_j,\nabla_{g_n}u_k)
  \,
  \hausint{d}{g_n}
  =
  \int_\lipdom
  g(\nabla_g u_j, \nabla_g u_k)
  \,
  \hausint{d}{g},
\\
\lim_{n \to \infty}
  \int_{\rbdy\lipdom}
  \robinpotential_n u_ju_k
  \,
  \hausint{d-1}{g_n}
  =
  \int_{\rbdy\lipdom}
  \robinpotential u_ju_k
  \,
  \hausint{d-1}{g}.
\end{gathered}
\end{equation*}

In conjunction with the min-max characterization
\eqref{symminmax}
this proves \eqref{spectralUpperSemicty}. 
To conclude it thus suffices to prove the complementary inequality
\begin{equation}
\label{spectralLowerSemicty}
\liminf_{n \to \infty}
  \eigenvalsym{\weakbf_n}{i}{\grp}{\twsthom}
\geq
\eigenvalsym{\weakbf}{i}{\grp}{\twsthom}.
\end{equation}
By \eqref{spectralUpperSemicty} the sequence $\eigenvalsym{\weakbf_n}{i}{\grp}{\twsthom}$ is bounded from above uniformly in $n$,
and by the min-max characterization
\eqref{symminmax}
of eigenvalues along with the assumed uniform bounds on
$\potential_n$ and $\robinpotential_n$ and the trace inequality \eqref{interpolatingTraceInequality}
it is also bounded from below. 
Therefore the left-hand side
of \eqref{spectralLowerSemicty} is a real number,
and, by passing to a subsequence of the data
if necessary (without renaming),
we in fact assume without loss of generality that
\begin{equation}
\label{wlogConvergence}
\{\eigenvalsym{\weakbf_n}{j}{\grp}{\twsthom}\}
\text{ converges to $\lambda_j^\infty \in \R$
      for each } j \leq i,
\end{equation}
with $\lambda_j^\infty$
the $\liminf$ of the $j$\textsuperscript{th}
$(\grp,\twsthom)$-eigenvalue of the original sequence.

For each $j \leq i$ and each $n$ let $v_j^{(n)}$ be a $(\grp,\twsthom)$-invariant
eigenfunction of $\weakbf_n$ with eigenvalue
$\eigenvalsym{\weakbf_n}{j}{\grp}{\twsthom}$
such that for each $n$ the set
$\{v_j^{(n)}\}_{j=1}^i$
is $L^2(\lipdom,g_n)$-orthonormal.
It follows from the assumed unit
$L^2(\lipdom,g_n)$
bounds on the $v_j^{(n)}$,
the definitions of eigenvalues and eigenfunctions,
the eigenvalue bound following from \eqref{wlogConvergence},
and the assumed bounds
on $\potential_n$ and $\robinpotential_n$
as well as $g_n$ and $g_n^{-1}$
that the sequence
$\nm{v_j^{(n)}}_{\sob(\lipdom,g_n)}$
is bounded uniformly in $n$.
(The assumptions on the metrics is needed there
to ensure that the constants
in the trace inequality
\eqref{interpolatingTraceInequality},
as applied here,
can be chosen independently of $n$.)
It then follows, in turn,
using again the assumed bounds on
$g_n$ and $g_n^{-1}$
that $\nm{v_j^{(n)}}_{\sob(\lipdom,g)}$
is likewise bounded.
Consequently,
passing to a further subsequence
if needed,
for each $j \leq i$
there exists
$v_j \in \sob(\lipdom,g)$
which is simultaneously
a limit in $L^2(\lipdom,g)$
and a weak limit in $\sob(\lipdom,g)$
of $v_j^{(n)}$ as $n\to\infty$.
Note in particular that each $v_j$
is $(\grp,\twsthom)$-invariant.

The dominated convergence theorem,
our assumptions on the metrics,
and the $L^2(\lipdom,g)$-convergence
for each $j$
of $\{v_j^{(n)}\}$ to $v_j$
imply that
$\{v_j\}_{j=1}^i$
is $L^2(\lipdom,g)$-orthonormal,
so in particular this finite family is linearly independent.
In the same fashion,
but also appealing to the assumptions
on the $\potential_n$,
we get for all $1 \leq j \leq i$
and all $w \in L^2(\lipdom,g)$
\begin{align*}
\lim_{n \to \infty}
  \sk[\Big]{
    v_j^{(n)}, w
  }_{L^2(\lipdom,g_n)}
&=\sk[\Big]{v_j, w}_{L^2(\lipdom,g)},
&
\lim_{n \to \infty}
  \sk[\Big]{
    \potential_n v_j^{(n)}, w
  }_{L^2(\lipdom,g_n)}
&=\sk[\Big]{\potential v_j, w}_{L^2(\lipdom,g)}.
\end{align*}
Thanks to the weak convergence
in $\sob(\lipdom,g)$
of $\{v_j^{(n)}\}$ to $v_j$
for each $j$
(and again using the dominated convergence theorem,
the assumptions on the metrics,
and the $L^2$ convergence of each $\{v_j^{(n)}\}$),
we further conclude
that for all $1 \leq j \leq i$
and all $w \in \sob(\lipdom,g)$
\begin{equation*}
\lim_{n \to \infty}
  \int_\lipdom
  g_n(\nabla_{g_n} v_j^{(n)}, \nabla_{g_n}w)
  \,
  \hausint{d}{g_n}
=
\int_\lipdom
  g(\nabla_g v_j, \nabla_g w)
  \,
  \hausint{d}{g}.
\end{equation*}
We use the trace inequality
\eqref{interpolatingTraceInequality}
in conjunction with boundedness in
$\sob(\lipdom,g)$
of $\{v_j^{(n)}\} \cup \{v_j\}$
and the convergence in
$L^2(\lipdom,g)$
for each $j$ of $v_j^{(n)}$
to $v_j$ to deduce that
we also have $L^2(\partial\lipdom,g)$-convergence of the traces.
As one consequence
we see that each $v_j$
in fact belongs to
$\sobd{\dbdy\lipdom}(\lipdom,g)$.
As another,
by virtue of the assumptions on the
$\robinpotential_n$
and once again
the dominated convergence theorem,
we obtain
for all $1 \leq j \leq i$
and $w \in \sob(\lipdom,g)$
\begin{equation*}
\lim_{n \to \infty}
  \int_{\rbdy\lipdom} \robinpotential_n v_j^{(n)}w
  \,
  \hausint{d-1}{g_n}
=
\int_{\rbdy\lipdom}
  \robinpotential v_j w
  \,
  \hausint{d-1}{g_n}.
\end{equation*}
From the definition
of the $v_j^{(n)}$,
the assumption \eqref{wlogConvergence},
and the above three displayed equations
we conclude that for all
$1 \leq j \leq i$
and $w \in \sobd{\dbdy\lipdom}(\lipdom,g)$
we eventually have
\begin{equation*}
\weakbf(v_j,w)
=
\lim_{n \to \infty} \weakbf_n(v_j^{(n)},w)
=
\lim_{n \to \infty}
  \eigenvalsym{\weakbf_n}{j}{\grp}{\twsthom}
  \sk[\big]{v_j^{(n)},w}_{L^2(\lipdom,g_n)}
=
\lambda_j^\infty \sk[\big]{v_j,w}_{L^2(\lipdom,g)}.
\end{equation*}
Specifically, for the second equality above we have used the fact that $v_j^{(n)}$ is an eigenfunction of $T_n$; together, the inequalities then show that $v_j$ is an eigenfunction of $T$. Since $\{v_j\}_{j=1}^i$ is a linearly independent
subset of
$\invproj{\grp}{\twsthom}\sobd{\dbdy\lipdom}(\lipdom,g)$,
it follows that
$\lambda_i^\infty \geq \eigenvalsym{\weakbf}{i}{\grp}{\twsthom}$,
completing the proof in the case of ``fixed symmetry group''.  

However, it is actually straightforward to generalize
the above argument to capture also continuity
in the symmetries. The proof above
goes through with mostly superficial modification, and
we address the only two salient points.
First, in proving \eqref{spectralUpperSemicty},
but with $(\grp,\twsthom)$
replaced on the left by
$(\grp_n,\twsthom_n)$
and on the right by
$(\grp_\infty,\twsthom_\infty)$,
note that each $u_j$,
now assumed $(\grp_\infty,\twsthom_\infty)$-invariant,
is by our hypotheses
also $(\grp_n,\twsthom_n)$-invariant for each $n$.
Second,
in proving the corresponding analogue
of \eqref{spectralLowerSemicty}
note that each $v_j$
is, as the $L^2(\lipdom,g)$ limit
of a sequence whose $n$\textsuperscript{th}
term is $(\grp_n,\twsthom_n)$-invariant,
by our hypotheses, itself
$(\grp_\infty,\twsthom_\infty)$-invariant.
\end{proof}

We now turn our attention to the related, yet different problem of handling controlled changes in the domain. We switch to slightly different notation, that is again tailor-made to best fit our later applications.

\begin{proposition}
\label{spectralContinuityWrtExcision}
Let
$
 (
  \lipdom, g, \potential, \robinpotential,
  \dbdy\lipdom, \nbdy\lipdom, \rbdy\lipdom,
  \grp,\twsthom
 )
$
be admissible data,
with corresponding bilinear form $\weakbf$. Suppose that
for any $\delta>0$ less than the injectivity radius of $(M,g)$, say $\delta_0$, we are given a Lipschitz domain $\Omega_{\delta}\subset \Omega$ such that $
 (
  \lipdom_\delta, g, \potential, \robinpotential,
  \dbdy\lipdom_\delta, \nbdy\lipdom_\delta, \rbdy\lipdom_\delta,
  \grp,\twsthom
 )
$
are also admissible data (with suitable restrictions of tensors and functions tacitly understood), and 
whose complement $K_{\delta}\vcentcolon=\Omega\setminus \Omega_{\delta}$ satisfies 
\begin{equation}\label{eq:containment}
\bigcup_{p\in S}\overline{B_{f_1(\delta)}(p)}\subset K_{\delta}\subset \bigcup_{p\in S}\overline{B_{f_2(\delta)}(p)}
\end{equation}
for some finite set of points $S\subset\overline{\Omega}$ and monotone functions $f_1, f_2\colon\Interval{0,\delta_0}\to\R_{\geq 0}$ such that $\lim_{\delta \to 0}f_2(\delta)=0$.
Consider the sets as in \eqref{subdomainbdydecomps} with $\Omega_{\delta}$ in lieu of $\Omega_1$
as well as the associated bilinear form
\[
\dint{\weakbf}_{\lipdom_\delta}
  \vcentcolon=
  \weakbf[
    \lipdom_\delta,g,\potential,\robinpotential,
    \dint{\dbdy}\lipdom_\delta,
    \dint{\nbdy}\lipdom_\delta,
    \dint{\rbdy}\lipdom_\delta
  ].
\]
Then for each integer $i \geq 1$ 
\begin{equation}\label{eq:MonotDir}
\eigenvalsym{\dint{\weakbf}_{\lipdom_\delta}}{i}{\grp}{\twsthom}\geq \eigenvalsym{\weakbf}{i}{\grp}{\twsthom},
\end{equation}
and we have
\begin{equation}\label{eq:ConvDir}
\lim_{\delta\to 0}
  \eigenvalsym{\dint{\weakbf}_{\lipdom_\delta}}{i}{\grp}{\twsthom}
=
\eigenvalsym{\weakbf}{i}{\grp}{\twsthom}.
\end{equation}
\end{proposition}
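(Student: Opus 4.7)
The plan is to prove the two statements of the proposition separately: first the monotonicity \eqref{eq:MonotDir}, which is immediate from a zero-extension argument, and then the limit \eqref{eq:ConvDir}, which I would establish by constructing suitable capacitary cutoffs and applying the min-max characterization \eqref{symminmax}.

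\textbf{Monotonicity.} For the first part, I would observe that every $v \in \sobd{\dint{\dbdy}\lipdom_\delta}(\lipdom_\delta, g)$ extends by zero to a function $\tilde v \in \sobd{\dbdy\lipdom}(\lipdom, g)$: the vanishing Dirichlet trace of $v$ along $\intbdy\lipdom_\delta \subset \dint{\dbdy}\lipdom_\delta$ is precisely what is needed to preserve $H^1$ regularity across $\intbdy\lipdom_\delta$, and the other Dirichlet condition on $\extbdy\lipdom_\delta \cap \dbdy\lipdom$ is inherited. Since $\grp$ preserves $\lipdom_\delta$ by hypothesis, this zero-extension intertwines the two $(\grp,\twsthom)$-actions; moreover it preserves both the bilinear form, $\weakbf(\tilde v, \tilde v) = \dint{\weakbf}_{\lipdom_\delta}(v,v)$, and the $L^2$ norm. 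Thus any $i$-dimensional $(\grp,\twsthom)$-invariant minimizing subspace for $\dint{\weakbf}_{\lipdom_\delta}$ embeds into an $i$-dimensional competitor for the min-max characterization \eqref{symminmax} of $\eigenvalsym{\weakbf}{i}{\grp}{\twsthom}$ with the same maximal Rayleigh quotient, and \eqref{eq:MonotDir} follows.

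\textbf{Construction of the cutoffs.} In view of \eqref{eq:MonotDir}, for \eqref{eq:ConvDir} it suffices to prove $\limsup_{\delta \to 0}\eigenvalsym{\dint{\weakbf}_{\lipdom_\delta}}{i}{\grp}{\twsthom} \leq \lambda_i \vcentcolon= \eigenvalsym{\weakbf}{i}{\grp}{\twsthom}$. I would fix $L^2(\lipdom, g)$-orthonormal $(\grp,\twsthom)$-invariant eigenfunctions $u_1, \ldots, u_i$ of $\weakbf$ with eigenvalues $\lambda_1 \leq \cdots \leq \lambda_i$. After replacing $S$ by its finite orbit $\grp \cdot S$ (which still verifies a containment of the form \eqref{eq:containment}), I may assume $S$ is $\grp$-invariant, so that $d_g(\cdot, S)$ is $\grp$-invariant too. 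Choosing scales $r_\delta \vcentcolon= f_2(\delta)$ and $R_\delta$ with $r_\delta \ll R_\delta \to 0$, the plan is to build $\grp$-invariant Lipschitz cutoffs $\chi_\delta \colon \closure{\lipdom} \to [0,1]$, depending only on $d_g(x, S)$, equal to $0$ on $\{d_g(\cdot, S) \leq r_\delta\}$, equal to $1$ on $\{d_g(\cdot, S) \geq R_\delta\}$, and satisfying $\chi_\delta \to 1$ in $\sob(\lipdom, g)$. The crucial ingredient here is that points of $\lipdom$ have vanishing $H^1$-capacity in dimension $d \geq 2$: for $d \geq 3$ a linear interpolation on the annulus already yields $\nm{\nabla \chi_\delta}_{L^2(\lipdom, g)}^2 = O(R_\delta^{d-2}) \to 0$, while for $d = 2$ a logarithmic interpolation with $R_\delta/r_\delta \to \infty$ gives the analogous estimate $O\bigl(\lvert\log(R_\delta/r_\delta)\rvert^{-1}\bigr) \to 0$.

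\textbf{Min-max conclusion and main obstacle.} Since $\chi_\delta$ vanishes on a neighborhood of $K_\delta \supset \intbdy\lipdom_\delta$ while $u_j$ vanishes on $\dbdy\lipdom$, each $(\chi_\delta u_j)|_{\lipdom_\delta}$ will automatically lie in $\sobd{\dint{\dbdy}\lipdom_\delta}(\lipdom_\delta, g)$ and be $(\grp,\twsthom)$-invariant. I would then verify $\chi_\delta u_j \to u_j$ in $\sob(\lipdom, g)$: the $L^2$ convergence is immediate from dominated convergence, and decomposing $\nabla(\chi_\delta u_j) - \nabla u_j = (\chi_\delta - 1)\nabla u_j + u_j \nabla \chi_\delta$, the first summand tends to $0$ in $L^2$ by dominated convergence while the second is handled by the capacity estimate above, combined with interior elliptic regularity of $u_j$ and a truncation at level $N \to \infty$. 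This last step is really the main obstacle of the proof, since eigenfunctions of mixed boundary value problems on Lipschitz domains need not be bounded near boundary corners or near points of $S \cap \partial\lipdom$; the resolution goes through the classical fact that finite point sets are $H^1$-removable in dimension $d \geq 2$. Once the $\sob$ convergence is secured, the Gram matrix $\bigl(\sk{\chi_\delta u_j, \chi_\delta u_k}_{L^2(\lipdom_\delta, g)}\bigr)_{j,k}$ converges to the identity, so for $\delta$ small the span $W_\delta$ of $\{(\chi_\delta u_j)|_{\lipdom_\delta}\}_{j=1}^i$ is $i$-dimensional and the maximal Rayleigh quotient of $\dint{\weakbf}_{\lipdom_\delta}$ on $W_\delta$ converges to $\lambda_i$; feeding $W_\delta$ into \eqref{symminmax} produces the desired $\limsup$ inequality, which together with \eqref{eq:MonotDir} yields \eqref{eq:ConvDir}.
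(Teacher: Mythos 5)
Your proof is correct and takes essentially the same route as the paper: zero-extension gives the monotonicity inequality \eqref{eq:MonotDir}, and a logarithmic cutoff combined with the min-max characterization \eqref{symminmax} yields the convergence \eqref{eq:ConvDir}. The paper's proof applies the cutoff directly to arbitrary $(\grp,\twsthom)$-invariant $H^1$ test functions after first reducing to $f_1=f_2$ by monotonicity, and is silent on the boundedness/truncation issue near $S\cap\partial\lipdom$ that you correctly identify as the main technical point and sketch a resolution for.
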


The conclusion simply relies on the fact that points have null $W^{1,s}$-capacity in $\R^n$ for $1\leq s\leq n$ and so, in particular, have null $W^{1,2}$-capacity in $\R^n$ for any $n\geq 2$; for the sake of completeness, we provide a self-contained argument focusing on the case of surfaces ($d=2$), where a logarithmic cutoff trick is required, and omit the simpler modifications for $d\geq 3$.

\begin{proof} 
Given any $u_{\delta}, v_{\delta}\in H^1_{\dint{\dbdy}\lipdom_\delta}(\lipdom_\delta)$, postulated to be $(\grp,\twsthom)$-invariant, it is standard to note that their extensions by $0$, say $\overline{u}_{\delta}, \overline{v}_{\delta}$ respectively, belong to $H^1_{\dbdy\lipdom}(\lipdom)$, that such functions are themselves $(\grp,\twsthom)$-invariant, and for any $\delta\in (0,\delta_0)$ there hold $\sk{u_\delta,v_\delta}_{L^2(\lipdom_\delta,g)}=\sk{\overline{u}_\delta,\overline{v}_\delta}_{L^2(\lipdom,g)}$ and $\dint{\weakbf}_{\lipdom_\delta}(u_\delta,u_\delta)=\weakbf(\overline{u}_\delta,\overline{u}_\delta)$. Hence, it follows at once from the variational characterization of eigenvalues, \eqref{symminmax}, that for each integer $i \geq 1$ we have indeed
$\eigenvalsym{\dint{\weakbf}_{\lipdom_\delta}}{i}{\grp}{\twsthom}\geq \eigenvalsym{\weakbf}{i}{\grp}{\twsthom}$, which is the first claim. 
Appealing again to the domain monotonicity, it actually suffices to check \eqref{eq:ConvDir} in the case when $K_\delta$ is in fact a union of metric balls, namely when we have equality in \eqref{eq:containment}, for $f_1=f_2$. To simplify the notation we can (without loss of generality, up to reparametrization) assume in fact $f_2(\delta)=\delta$ for any $\delta$ in the assumed domain. That said, given any $\overline{u}, \overline{v}\in H^1_{\dbdy\lipdom}(\lipdom)$, $(\grp,\twsthom)$-invariant, and $\delta>0$ (small as in the statement) one can simply define $u_\delta=\overline{u}\varphi_{\delta}, v_\delta=\overline{v}\varphi_{\delta} $ where (for $r\vcentcolon=d_g(p,q)$ and $p\in S$) we set
\[
\varphi_{\delta}(q)=
\begin{cases}
0 & \text{ if } r\leq \delta^{3/4} \\
3-4\frac{\log r}{\log \delta} & \text{ if } \delta^{3/4}\leq r\leq \delta^{1/2} \\
1 & \text{ otherwise. } 
\end{cases}
\]
It is then clear that $u_\delta, v_{\delta}\in H^1_{\dint{\dbdy}\lipdom_\delta}(\lipdom_\delta)$, that such functions are $(\grp,\twsthom)$-invariant, and, in addition, 
\[
\lim_{\delta\to 0}\dint{\weakbf}_{\lipdom_\delta}(u_\delta,u_\delta)
=T(\overline{u},\overline{u}), \
\lim_{\delta\to 0}\sk{u_\delta,v_\delta}_{L^2(\lipdom_\delta,g)}
=\sk{\overline{u},\overline{v}}_{L^2(\lipdom,g)}.
\]

Hence, again appealing to \eqref{symminmax}, we must conclude 
\begin{equation}\label{eq:USC}
\limsup_{\delta\to 0}
  \eigenvalsym{\dint{\weakbf}_{\lipdom_\delta}}{i}{\grp}{\twsthom}
  \leq 
\eigenvalsym{\weakbf}{i}{\grp}{\twsthom}.
\end{equation}
whence, combining this inequality with the one above, the conclusion follows.
\end{proof}

\begin{corollary}
\label{cor:MonotInd}
Given the setting and the assumptions of Proposition 
\ref{spectralContinuityWrtExcision}, we have
\[
\lim_{\delta\to 0}
  \symind{\grp}{\twsthom}(\dint{\weakbf}_{\lipdom_\delta})
=
\symind{\grp}{\twsthom}(\weakbf).
\]
\end{corollary}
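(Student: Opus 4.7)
The plan is to deduce the corollary from Proposition \ref{spectralContinuityWrtExcision} essentially as a counting argument at the threshold $\lambda = 0$: one-sided domain monotonicity gives an upper bound on the index that is valid for every $\delta$, while eigenvalue convergence supplies the matching lower bound for $\delta$ small. Throughout, I would keep in mind the convention of Remark \ref{rem:SpectralPathology} that $\eigenvalsym{\weakbf}{i}{\grp}{\twsthom}=+\infty$ beyond the available eigenvalues, which poses no issue since both inequalities \eqref{eq:MonotDir} and \eqref{eq:ConvDir} survive in this extended sense.

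Set $I\vcentcolon=\symind{\grp}{\twsthom}(\weakbf)$, which is a non-negative integer by discreteness of the spectrum of $\weakbf$ and boundedness below. By definition of the index we have $\eigenvalsym{\weakbf}{I}{\grp}{\twsthom}<0$ whenever $I\geq 1$ and $\eigenvalsym{\weakbf}{I+1}{\grp}{\twsthom}\geq 0$ in every case. First, I would observe that the monotonicity estimate \eqref{eq:MonotDir} applied at the index $I+1$ yields, for every admissible $\delta$,
\[
\eigenvalsym{\dint{\weakbf}_{\lipdom_\delta}}{I+1}{\grp}{\twsthom}
\geq
\eigenvalsym{\weakbf}{I+1}{\grp}{\twsthom}
\geq 0,
\]
which gives directly the uniform upper bound $\symind{\grp}{\twsthom}(\dint{\weakbf}_{\lipdom_\delta})\leq I$.

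For the matching lower bound, which needs to hold only eventually, I would invoke the continuity statement \eqref{eq:ConvDir} at the index $I$: assuming $I\geq 1$, one has
\[
\lim_{\delta\to 0}
\eigenvalsym{\dint{\weakbf}_{\lipdom_\delta}}{I}{\grp}{\twsthom}
=\eigenvalsym{\weakbf}{I}{\grp}{\twsthom}<0,
\]
so there exists $\delta_0>0$ such that for all $\delta\in\interval*{0,\delta_0}$ the first $I$ eigenvalues of $\dint{\weakbf}_{\lipdom_\delta}$ are strictly negative, whence $\symind{\grp}{\twsthom}(\dint{\weakbf}_{\lipdom_\delta})\geq I$; if $I=0$ this bound is trivial. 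Combining the two bounds, $\symind{\grp}{\twsthom}(\dint{\weakbf}_{\lipdom_\delta})=I$ for all sufficiently small $\delta>0$, so the limit exists and equals $\symind{\grp}{\twsthom}(\weakbf)$.

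I do not anticipate a substantial obstacle here: the entire content of the corollary is packaged into Proposition \ref{spectralContinuityWrtExcision}, and the only thing to watch is that the index is defined as the count of \emph{strictly} negative eigenvalues, so the monotonicity inequality is used at level $I+1$ (where the limit eigenvalue is only known to be non-negative) while the convergence is used at level $I$ (where the limit eigenvalue is strictly negative). These asymmetric roles of the two halves of Proposition \ref{spectralContinuityWrtExcision} are precisely what allows the two bounds to meet, despite any nullity that $\weakbf$ may carry at the threshold $0$.
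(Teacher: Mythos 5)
Your proof is correct and is precisely the routine deduction from Proposition \ref{spectralContinuityWrtExcision} that the paper has in mind (the corollary is stated without proof). The asymmetric use of the two halves of that proposition — monotonicity \eqref{eq:MonotDir} at level $I+1$ for the uniform upper bound, convergence \eqref{eq:ConvDir} at level $I$ for the eventual lower bound — is exactly what is needed, and you have correctly flagged the two edge cases ($I=0$, and the $+\infty$ convention of Remark \ref{rem:SpectralPathology}) where the argument degenerates harmlessly.
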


\subsection{Conformal change in dimension two}
\label{subsec:conformal}

In this section we suppose, in addition
to the assumptions above, that
$d=\dim M=2$
and that we are given a smooth,
strictly positive,
$\grp$-invariant
function
$\conffact$ on $\closure{\lipdom}$.
Note that the above bilinear form $\weakbf$
of \eqref{bilinear_form_def}
is invariant under scaling, namely under the simultaneous transformations 
$g\mapsto \conffact^2g$,  
$\potential\mapsto \conffact^{-2}\potential$ and 
$\robinpotential \mapsto \conffact^{-1}\robinpotential$:
\begin{equation*}
\weakbf
  \bigl[
    \lipdom,
    \conffact^2 g,
    \conffact^{-2}\potential,
    \conffact^{-1} \robinpotential,
        \dbdy\lipdom,\nbdy\lipdom,\rbdy\lipdom\bigr]
=
\weakbf\bigl[\lipdom,g,\potential,\robinpotential,
        \dbdy\lipdom,\nbdy\lipdom,\rbdy\lipdom\bigr]
\end{equation*}
with the corresponding domains
$\sobd{\dbdy{\lipdom}}(\lipdom, \conffact^2g)$
and
$\sobd{\dbdy{\lipdom}}(\lipdom,g)$
agreeing as sets of functions
and having equivalent norms. This claim needs a clarification: the standard $H^1$-norms of  $\sobd{\dbdy{\lipdom}}(\lipdom, \conffact^2g)$ and $\sobd{\dbdy{\lipdom}}(\lipdom,g)$ are only equivalent up to constants that depend on the extremal (inf and sup) values of the conformal factor $\rho$. 

In general, the eigenvalues (as defined in Subsection \ref{subsec:BasicSpectral}) will be affected by the conformal scaling, and yet the index and nullity are nonetheless invariant when this operation is performed:

\begin{proposition}
[Invariance of index and nullity under conformal
change in dimension two]
\label{conformalInvariance}
With assumptions as in the preceding paragraph
\begin{equation*}
\symind{\grp}{\twsthom}(\weakbf, \conffact^2 g)
  =
  \symind{\grp}{\twsthom}(\weakbf, g)
\quad \mbox{ and } \quad
\symnul{\grp}{\twsthom}(\weakbf, \conffact^2 g)
  =
  \symnul{\grp}{\twsthom}(\weakbf, g).
\end{equation*}
\end{proposition}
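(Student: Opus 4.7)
The plan is to exploit the scale invariance of the bilinear form $\weakbf$ already pointed out in the statement, together with the variational characterization \eqref{symminmax} of eigenvalues, observing that while the $L^2$-pairing $\sk{\cdot,\cdot}_{L^2(\lipdom,g)}$ is genuinely altered by the replacement $g\mapsto \conffact^2 g$ (its density being multiplied by $\conffact^2$), the \emph{sign} of the Rayleigh quotient $\weakbf(w,w)/\nm{w}_{L^2(\lipdom,g)}^2$ is not: both numerator and denominator are blind to rescaling of the $L^2$ density by a strictly positive factor. Since $\conffact$ is smooth and strictly positive on the compact set $\closure{\lipdom}$, the Sobolev spaces $\sobd{\dbdy\lipdom}(\lipdom,g)$ and $\sobd{\dbdy\lipdom}(\lipdom,\conffact^2 g)$ coincide as sets (with equivalent norms), and the assumed $\grp$-invariance of $\conffact$ ensures that $\invproj{\grp}{\twsthom}$ acts compatibly in both cases.

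Given this, I would first invoke the equivariant min-max formula \eqref{symminmax} to rewrite the index purely in terms of the sign of $\weakbf$ on subspaces:
\begin{equation*}
\symind{\grp}{\twsthom}(\weakbf,g)
=
\max\Bigl\{
  i \in \N \st
  \exists\, W \subspace \invproj{\grp}{\twsthom}\bigl(\sobd{\dbdy\lipdom}(\lipdom,g)\bigr),\
  \dim W = i,\
  \weakbf(w,w) < 0 \text{ for all } 0 \neq w \in W
\Bigr\},
\end{equation*}
which follows at once from the observation that $\eigenvalsym{\weakbf}{i}{\grp}{\twsthom}<0$ if and only if there is an $i$-dimensional subspace of invariant test functions on which the Rayleigh quotient is everywhere negative, equivalently on which $\weakbf$ is negative-definite. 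Since neither $\weakbf$ nor the space of invariant test functions changes under the simultaneous replacements $g\mapsto \conffact^2 g$, $\potential\mapsto \conffact^{-2}\potential$, $\robinpotential\mapsto\conffact^{-1}\robinpotential$, the right-hand side is manifestly independent of the metric, yielding the asserted equality for the index.

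For the nullity the argument is even more direct. By the last clause of \eqref{basic_spectral_facts} together with the eigenfunction definition \eqref{eigendef} specialized to $\lambda=0$, the space $\eigenspsym{\weakbf}{=0}{\grp}{\twsthom}$ can be identified with
\begin{equation*}
\Bigl\{
  u \in \invproj{\grp}{\twsthom}\bigl(\sobd{\dbdy\lipdom}(\lipdom,g)\bigr)
  \st
  \weakbf(u,v) = 0\ \text{for all}\ v \in \invproj{\grp}{\twsthom}\bigl(\sobd{\dbdy\lipdom}(\lipdom,g)\bigr)
\Bigr\},
\end{equation*}
whose description makes no reference to any $L^2$-inner product and is thus patently invariant under the conformal scaling. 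I do not foresee a substantive obstacle; the only conceptual point to be isolated is that index and nullity, unlike the eigenvalues themselves, depend solely on the sign (respectively, vanishing) of $\weakbf$ on subspaces of the test space, both of which are intrinsic to $\weakbf$. The equivariance contributes nothing beyond what was verified when defining $\invproj{\grp}{\twsthom}$, since $\conffact$ is $\grp$-invariant and the projection therefore commutes with every structure in sight.
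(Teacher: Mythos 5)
Your proof is correct and follows essentially the same variational route as the paper's: both arguments reduce to the observation that the nullity is captured by a $\weakbf$-annihilation condition making no reference to the $L^2$-pairing, while the index is determined by the sign of the Rayleigh quotient on finite-dimensional subspaces, which is insensitive to rescaling the $L^2$ density. The only cosmetic difference is that you reformulate the index as the maximal dimension of a negative-definite subspace and note its manifest metric-independence, whereas the paper proves one inequality via a ``competitor subspace'' and obtains the other by swapping $\conffact \leftrightarrow \conffact^{-1}$; these are two presentations of the same idea.
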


\begin{proof}
By definition
$u \in \eigenspsym{\weakbf,g}{=0}{\grp}{\twsthom}$
if and only if $u$ is $(\grp,\twsthom)$-invariant
and $\weakbf(u,v)=0$
for all $v \in \sobd{\dbdy\lipdom}(\lipdom,g)$
(and likewise if each $g$ is replaced
by $\rho^2g$),
so the nullity equality is clear.
For the index,
because we can reverse
the roles of $g$ and $\conffact^2 g$
by replacing $\conffact$
with $\conffact^{-1}$,
it suffices to check
that the claim holds
with $\geq$ in place of $=$.
This follows at once
from the min-max characterization
\eqref{symminmax}
applied to the
$(\grp,\twsthom)$-eigenvalues
of $(\weakbf,\conffact^2 g)$,
by considering the ``competitor'' subspace
$
 \eigenspsym{\weakbf,g}{<0}{\grp}{\twsthom}
$ in the minimization problem therein, for $i=\symind{\grp}{\twsthom}(\weakbf, g)$.
\end{proof}

\section{Free boundary minimal surfaces in the ball: a first application}\label{sec:FBMSgen}

From now on, we specialize our study to the case when
$\closure{\Omega}=M$ is a properly embedded
free boundary minimal surface, henceforth denoted by $\Sigma$,
of the closed unit ball
$\B^3 \vcentcolon= \{(x,y,z)\in\R^3\st x^2+y^2+z^2 \leq 1\}$
in Euclidean space $(\R^3,\geuc)$.
Observe that, by the maximum principle,
every embedded free boundary minimal surface
is properly embedded.

As anticipated in the introduction, our task here will be to obtain quantitative estimates on the Morse index of free boundary minimal surfaces, hence our Schr\"{o}dinger operator
is the Jacobi (or stability) operator on $\Sigma$
acting on functions
subject to the Robin condition
\begin{equation}
\label{fbmsRobinConditionInBall}
du(\conormal{\Sigma}{\geuc})=u\quad\text{ on }\partial \Sigma,
\end{equation}
namely: $\potential=\abs{\twoff{\Sigma}}^2$,
the squared norm of the second fundamental form of $\Sigma$, and
$\dbdy\Sigma=\nbdy\Sigma=\emptyset$,
$\rbdy\Sigma=\partial\Sigma$,
$\robinpotential=1$. Correspondingly, as our bilinear form $\weakbf$
we will consider
the index (or stability or Jacobi) form of $\Sigma$,
which we will denote by $\indexform{\Sigma}$. 
We define the index and nullity of $\Sigma$ in the usual way, setting
\begin{equation*}
\ind(\Sigma)
  \vcentcolon=
  \ind(\indexform{\Sigma})
\quad \text{ and } \quad
\nul(\Sigma)
  \vcentcolon=
  \nul(\indexform{\Sigma}),
\end{equation*}
and we likewise define
the $\grp$-equivariant index and nullity of $\Sigma$,
$\equivind{\grp}(\Sigma)$
and $\equivnul{\grp}(\Sigma)$, in the sense of \eqref{eq:MeaningEquivariance},
when given a group $\grp<\Ogroup(3)$
of symmetries of $\Sigma$ one considers the associated sign homomorphism. More generally, we will also study the
$(\grp,\twsthom)$-index and $(\grp,\twsthom)$-nullity
of $\Sigma$,
$\symind{\grp}{\twsthom}(\Sigma)$
and $\symnul{\grp}{\twsthom}(\Sigma)$,
when given a group $\grp$ and, further, a homomorphism
$\twsthom \colon \grp \to \Ogroup(1)$
(thus, in either case, these expressions are to be understood by replacing $\Sigma$ by $\indexform{\Sigma}$).

It has already been mentioned above how
general lower bounds for the index,
linear in the topological data (genus and number of boundary components),
have been obtained in
\cite{AmbCarSha18-Index}, and by Sargent in \cite{Sar17} in the special case when the ambient manifold is a convex body in Euclidean $\R^3$. We begin this section by presenting an alternative lower bound
(Proposition \ref{indBelowPyr} below)
in terms of symmetries,
which, though much less general in nature,
nevertheless yields sharper lower bounds
for many of the known examples (in terms of the coefficients describing the linear growth rate as a function of the topological data).
Before proceeding, we pause to explain some notation we will find convenient.

\paragraph{Cylindrical coordinates and wedges.}
We shall describe points in Euclidean $\R^3$, endowed with standard Cartesian coordinates $(x,y,z)$, also in terms of cylindrical coordinates $(r,\theta,z)$, so that the point with cylindrical coordinates $(r_0,\theta_0,z_0)$ has Cartesian coordinates
$(x,y,z)=(r_0 \cos \theta_0,r_0 \sin \theta_0,z_0)$.  However we wish to stress that, for our purposes, it will be convenient to allow arbitrary real values for both $r$ and $\theta$; thus the triples $(r,\theta,z)$ and $(-r,\theta+\pi,z)$ describe the same point in Euclidean space.
Given real numbers $\alpha \leq \beta$,
we also define the closed wedge
\begin{equation}
\label{wedge}
\Wedge{\alpha}{\beta}
\vcentcolon=
\{
  (r \cos \theta, r \sin \theta, z)
  \st
  r \geq 0, ~
  \theta \in \IntervaL{\alpha,\beta}, ~
  z \in \R
\},
\end{equation}
with the half-plane
$\Wedge{\alpha}{\alpha}$
accommodated as a degenerate wedge. 
In particular, our convention implies  
\[
\{\theta=\alpha\}=\Wedge{\alpha}{\alpha}\cup\Wedge{\alpha+\pi}{\alpha+\pi}.
\]

\paragraph{Notation for symmetries.}
Given a plane $\Pi \subset \R^3$ through the origin,
we write $\refl_\Pi \in \Ogroup(3)$
for reflection through $\Pi$.
Similarly, given a directed line $\xi \subset \R^3$ through the origin
and an angle $\theta \in \R$,
we write $\rot_\xi^\theta$
for rotation about $\xi$ through angle $\alpha$
in the usual right-handed sense.
Typically we will be interested
not exclusively in
such a rotation $\rot_\xi^\theta$
but rather in the cyclic subgroup it generates,
with the result
that it will never really be important
to associate a direction to $\xi$.
Given symmetries
$\mathsf{T}_1, \ldots, \mathsf{T}_n \in \Ogroup(3)$,
we write
$\sk{\mathsf{T}_1, \ldots, \mathsf{T}_n}$
for the subgroup they generate.

The order-$2$ groups generated by reflections through planes
will figure repeatedly
in the sequel (beginning with the following proposition),
so for succinctness of notation,
given a plane $\Pi \subset \R^3$ through the origin,
we agree to set
$
 \cycgrp{\Pi}
 \vcentcolon=
 \sk{\refl_\Pi}
$.
In such context, consistently with the general convention we defined above, we will employ the apex $+$ (respectively: $-$) to denote functions that are even (respectively: odd) with respect to the reflection through $\Pi$.
Similarly (but less frequently),
if $\xi$ is a line through the origin in $\R^3$,
we will write
$\cycgrp{\xi}$
for the order-$2$ group
generated by reflection
$\refl_\xi$
through $\xi$
(equivalently rotation through angle $\pi$
in either sense about $\xi$).

We also pause to name the following
three subgroups of $\Ogroup(3)$,
which will be realized as subgroups
of the symmetry groups
of the examples we study below
and which partly pertain
to the statement of the next proposition:
for each integer $k \geq 1$ we set
\begin{equation}
\label{standardSymGrps}
\begin{aligned}
&\pyr_k
  \vcentcolon=
  \sk[\Big]{
    \refl_{\{\theta = -\frac{\pi}{2k}\}}, ~
    \refl_{\{\theta = \frac{\pi}{2k}\}}
  }
  \quad
  &&\text{(pyramidal group of order $2k$)},
\\
&\pri_k
  \vcentcolon=
  \sk[\Big]{
    \refl_{\{\theta = -\frac{\pi}{2k}\}}, ~
    \refl_{\{\theta = \frac{\pi}{2k}\}}, ~
    \refl_{\{z=0\}}
  }
  \quad
  &&\text{(prismatic group of order $4k$)},
\\
&\apr_k
  \vcentcolon=
  \sk[\Big]{
    \refl_{\{\theta = \frac{\pi}{2k}\}}, ~
    \rot_{\{y=z=0\}}^\pi
  }
  \quad
  &&\text{(antiprismatic group of order $4k$)}.
\end{aligned}
\end{equation}
Note in particular that we have
$
\pyr_k
=
\pri_k \cap \apr_k
$.

\begin{remark}
The above three groups
are so named because
they are the (maximal) symmetry groups
of, respectively,
a right pyramid, prism, or antiprism
over a regular $k$-gon.
See e.\,g. Section 2 of \cite{CSWnonuniqueness}
for pictures and further details,
but we caution that
the above definition
of the subgroup $\pri_k$
differs slightly from that given in
\cite{CSWnonuniqueness}:
the two subgroups
are conjugate to one another
via rotation through angle
$\pi/(2k)$ about the $z$-axis.
\end{remark}

With this terminology and notation in place, we can then proceed with the aforementioned lower index bound, which illustrates the Montiel--Ros methodology as developed in Section \ref{sec:theory} and is interesting in its own right.

\begin{proposition}
[Index lower bounds under pyramidal and prismatic symmetry;
cf. \cites{ChoeVision90,KapouleasWiygulIndex}]
\label{indBelowPyr}
Let $\Sigma$ be a connected, embedded
free boundary minimal surface in $\B^3$.
Assume that $\Sigma$ is not a disc
or critical catenoid,
that $\Sigma$ is invariant
under reflection through a plane $\Pi_1$,
and that
$\Sigma$ is also invariant
under rotation
through an angle
$\alpha \in \interval{0,2\pi}$
about a line $\xi \subset \Pi_1$.
Then $\alpha$ 
is a rational multiple of $2\pi$,
there is a largest integer $k \geq 2$
such that rotation about $\xi$
through angle $2\pi/k$
is also a symmetry of $\Sigma$,
and
\begin{enumerate}[label={\normalfont(\roman*)}]
\item\label{prop:indBelowPyr-i}
  $\ind(\Sigma) \geq 2k-1$,
\item\label{prop:indBelowPyr-ii}
 $\symind{\cycgrp{\Pi_1}}{-}(\Sigma) \geq k-1$,
 and
\item\label{indBelowPri}
  if $\Sigma$ is additionally invariant
under reflection through a plane $\Pi_\perp$ orthogonal to $\xi$,
then in fact
$\symind{\cycgrp{\Pi_\perp}}{+}(\Sigma) \geq 2k-1$.
\end{enumerate}
\end{proposition}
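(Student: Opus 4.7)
The plan is to produce, via the rotational Killing field about $\xi$, a non-trivial Jacobi function on $\Sigma$ that vanishes along the intersection of $\Sigma$ with each of the $k$ reflection planes through $\xi$ of the ambient dihedral group $\pyr_k$, and then to feed this function into the Montiel--Ros machinery of Corollary \ref{cor:MontielRosHuman}. I begin by securing the rationality of $\alpha/(2\pi)$ by contradiction: were $\alpha/(2\pi)$ irrational, then $\sk{\rot_\xi^\alpha}$ would be dense in $\SOgroup(2)$; since the isometry group of $\Sigma$ is closed in $\Ogroup(3)$, the surface $\Sigma$ would be invariant under every rotation about $\xi$, hence a surface of revolution, and so by Nitsche's classification a disc through the origin or the critical catenoid, which are both excluded. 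Rationality then forces the subgroup of $\SOgroup(2)$ of rotations about $\xi$ preserving $\Sigma$ to be a finite cyclic group $\sk{\rot_\xi^{2\pi/k}}$ for some maximal $k\geq 2$; together with $\refl_{\Pi_1}$ these rotations generate $\pyr_k$, whose $k$ reflection planes through $\xi$ partition $\B^3$ (and therefore $\Sigma$, via intersection) into $2k$ pairwise isometric wedges $W_1,\ldots,W_{2k}$ of angular width $\pi/k$.

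Now let $K$ denote the Killing field generating rotations about $\xi$ and set $u\vcentcolon=\geuc(K,\nu)$ on $\Sigma$. Because $K$ is everywhere tangent to $\partial\B^3$, the function $u$ satisfies the Jacobi equation subject to the Robin condition \eqref{fbmsRobinConditionInBall}. At any point of $\Sigma\cap\Pi_j$ the field $K$ is normal to $\Pi_j$ while $\nu$ lies in $\Pi_j$ (the tangent plane to $\Sigma$ there being $d\refl_{\Pi_j}$-invariant), so $u$ vanishes on $\Sigma\cap\Pi_j$ for every $j=1,\ldots,k$; moreover $u\not\equiv 0$, for otherwise $K$ would be tangent to $\Sigma$ everywhere and $\Sigma$ would be rotationally symmetric, which is excluded. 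A direct calculation exploiting $\refl_{\Pi_1 *}K=-K$, $\refl_{\Pi_\perp *}K=K$, and the identities $\nrmlsgn{\nu}(\refl_{\Pi_1})=\nrmlsgn{\nu}(\refl_{\Pi_\perp})=+1$ (the latter relying on $\Sigma\cap\Pi_\perp$ being non-empty by a connectedness argument, which forces $\nu\in\Pi_\perp$ along that curve) then yields $u\circ\refl_{\Pi_1}=-u$ and $u\circ\refl_{\Pi_\perp}=u$.

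With these ingredients in hand I invoke Corollary \ref{cor:MontielRosHuman}\ref{mrIsoPieces:below} with the wedge partition above in three distinct symmetry sectors. For \ref{prop:indBelowPyr-i}, take $\grp$ trivial and $n=2k$: the restriction $u|_{W_1}$ is a non-zero null-eigenfunction of the Dirichlet-plus-Robin problem on $W_1$, whence $\nul(\dint{\indexform{\Sigma}}_{W_1})\geq 1$ and $\ind(\Sigma)\geq 2k-1$. For \ref{prop:indBelowPyr-ii}, I first apply Lemma \ref{dom_red_ext} with $\grp=\cycgrp{\Pi_1}$ and $\twsthom=-$ to identify $\symind{\cycgrp{\Pi_1}}{-}(\Sigma)$ with the plain Morse index of the Jacobi form on the fundamental half $\Sigma^+$ of $\Sigma$, endowed with Dirichlet on $\Sigma\cap\Pi_1$ and Robin elsewhere; then, partitioning $\Sigma^+$ by the remaining $k-1$ planes $\Pi_2,\ldots,\Pi_k$ into $n=k$ congruent sub-wedges on each of which $u$ again contributes a null-eigenfunction, I obtain $\symind{\cycgrp{\Pi_1}}{-}(\Sigma)\geq k-1$. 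For \ref{indBelowPri}, every wedge $W_j$ is already $\cycgrp{\Pi_\perp}$-invariant since $\refl_{\Pi_\perp}$ commutes with rotations about $\xi$, and $u|_{W_1}$ is $(\cycgrp{\Pi_\perp},+)$-invariant, so the same reasoning with $n=2k$ gives $\symind{\cycgrp{\Pi_\perp}}{+}(\Sigma)\geq 2k-1$. The most delicate bookkeeping will lie in the sign identities $\nrmlsgn{\nu}(\refl_{\Pi_j})=+1$, needed to place $u$ in the correct equivariance classes, together with the precise appeal to Nitsche's classification used to dispose of the irrational $\alpha$ case.
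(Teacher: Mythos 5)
Your argument is correct and follows essentially the same strategy as the paper: exclude rotational symmetry about $\xi$ to get a maximal finite $k$, show $\Sigma$ meets the $k$ reflection planes $\Pi_1,\ldots,\Pi_k$ of $\pyr_k$ orthogonally and is thereby partitioned into $2k$ pairwise isometric wedges, observe that the rotational Jacobi field $\kappa_\xi=\geuc(K_\xi,\nu)$ is a non-trivial null eigenfunction on each wedge with Dirichlet data on the internal boundaries, and feed this into the Montiel--Ros machinery. For item \ref{prop:indBelowPyr-ii} you take a slightly different technical route than the paper: rather than applying Proposition \ref{mr} directly to $\Sigma$ with $\grp=\cycgrp{\Pi_1}$, $\twsthom=-1$ and the paired partition $\lipdom_{i+1}\cup\lipdom_{2k-i}$, $i=0,\ldots,k-1$ (which is what the paper does, at the cost of disconnected pieces as flagged in Remark \ref{rem:Connected}), you first invoke Lemma \ref{dom_red_ext} to reduce to a fundamental half $\Sigma^+$ with Dirichlet on $\Sigma\cap\Pi_1$ and then partition $\Sigma^+$ into $k$ congruent sub-wedges. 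Both are valid and equivalent in substance; your version is arguably tidier since it avoids disconnected pieces, but note that for that last step you should really cite Proposition \ref{mr} rather than Corollary \ref{cor:MontielRosHuman}, since the $k$ sub-wedges of $\Sigma^+$ are isometric as boundary value problems but in general not via isometries of $\Sigma^+$ itself (the stated hypothesis of the corollary).

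Two minor points you flag as ``delicate bookkeeping'' but should really discharge: first, the sign identities $\nrmlsgn{\nu}(\refl_{\Pi_1})=\nrmlsgn{\nu}(\refl_{\Pi_\perp})=+1$ rest on the orthogonal intersection of $\Sigma$ with each symmetry plane, which in turn follows from embeddedness and the exclusion of the disc (the paper makes this explicit); second, you establish $\kappa_\xi\not\equiv 0$ on $\Sigma$ but then quietly use $\kappa_\xi|_{W_1}\neq 0$, which needs an additional appeal to unique continuation for the Jacobi operator. You should also note, as the paper does, that each $\lipdom_i$ is non-empty and connected (by symmetry and a reflection-and-replace path argument). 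None of these affect the correctness of the overall scheme.
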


Note that the symmetries assumed in the preamble
of Proposition \ref{indBelowPyr}
generate, up to conjugacy in $\Ogroup(3)$,
the group $\pyr_k$ from \eqref{standardSymGrps},
while one instead obtains
(again up to conjugacy)
the group $\pri_k$
by adjoining
the additional symmetry
assumed in item \ref{indBelowPri}.

The proof below is an abstraction
and transplantation to the free boundary setting
of some index lower bounds obtained
in the course of \cite{KapouleasWiygulIndex}
and
drawing on ideas from \cite{MontielRos}.
The estimates ultimately
depend on a lower bound
on the number of nodal domains
of a suitable Jacobi field,
which was also the basis
for earlier index estimates
(of complete minimal surfaces in $\R^3$
and closed minimal surfaces in $\Sp^3$)
established by Choe in \cite{ChoeVision90}.

\begin{proof}
By excluding the discs and critical catenoids
we ensure that $\Sigma$ is not
$\Sp^1$-invariant about $\xi$,
implying the claim on $\alpha$
and the existence
of the rotational symmetry
about $\xi$ through angle
of the form $2\pi/k$,
as follows.
First,
if the cyclic subgroup
generated by rotation
about $\xi$ through angle $\alpha$
were not finite,
then it would be dense
in the $\SOgroup(2)$
subgroup of rotations about $\xi$,
but the symmetry group of $\Sigma$
is closed in $\Ogroup(3)$;
yet, as already observed,
our assumptions ensure that
$\Sigma$ has no $\SOgroup(2)$
symmetry subgroup.
Thus $\alpha$ must be
a rational multiple of $2\pi$, as claimed.
Now let $\beta$ be the least angle
in $\interval{0,2\pi}$
through which rotation about $\xi$
is generated by the assumed rotational symmetry
through angle $\alpha$,
and let $k$ be the least positive integer
such that $k\beta \geq 2\pi$.
Then rotation through
angle $k\beta-2\pi$,
which lies in $\Interval{0,\beta}$,
is also generated
by the assumed rotational symmetry.
The presumed minimality of $\beta$
then forces $\beta=2\pi/k$.

By composing the assumed symmetries,
it follows that $\Sigma$ is also invariant
under reflection through each of the
$k-1$ planes $\Pi_2,\ldots,\Pi_k$
containing $\xi$ and there meeting $\Pi_1$
at angle an integer multiple of $\pi/k$.
Now suppose $\Pi \in \{\Pi_i\}_{i=1}^k$.
We necessarily have
$\Pi \cap \Sigma \neq \emptyset$
(for example
since $\Pi$ separates $\B^3$ into two components
and is a plane of symmetry for $\Sigma$,
which is assumed to be connected).
Because $\Pi$ is a plane of symmetry
and $\Sigma$ is embedded,
these two surfaces must intersect
either orthogonally or tangentially,
but in the latter case
$\Sigma$ must be a disc,
which possibility we have excluded by assumption;
consequently, the intersection is orthogonal.
Moreover, by the symmetries 
each of the $2k$ components
$W_1, \ldots, W_{2k}$
of $\B^3 \setminus \bigcup_{i=1}^k \Pi_i$
then has non-trivial intersection
$\lipdom_i \vcentcolon= \Sigma \cap W_i$
with $\Sigma$. Without loss of generality, let us agree to label the domains under consideration in counterclockwise order such that $\lipdom_1,\ldots,\lipdom_k$ all lie on the same side of $\Pi_1$.

Note that the members of the family $\{\lipdom_i\}_{i=1}^{2k}$
are pairwise isometric
and each is connected.
(Indeed,
$\Sigma$ is itself connected,
so any two points in any single $\lipdom_i$
can be joined by some path in $\Sigma$,
but this path can leave $\lipdom_i$
only through the latter's intersection
with planes of symmetry,
so we can always produce a path connecting the two points
that is entirely contained in $\lipdom_i$,
by repeated reflection and replacement, if necessary.)
Furthermore, each $\lipdom_i$
has Lipschitz boundary contained in
$\Sp^2 \cup \bigcup_{i=1}^k \Pi_i$, because the intersection of $\Sigma$ with either $\Sp^2$ and any of the planes $\Pi_1,\ldots,\Pi_k$ is orthogonal (thus transverse),
and exactly $k$ of the $\lipdom_i$
lie on each side of $\Pi_1$.

Next,
letting $\kappa_\xi$ be a choice of (scalar-valued) Jacobi field
on $\Sigma$ induced by the rotations about $\xi$
and again using
the fact that $\Sigma$ is not rotationally symmetric
(and so, in particular, not planar either),
we conclude that $\kappa_\xi$ vanishes on
$\Sigma \cap \bigcup_{i=1}^k \Pi_i$ (because of the aforementioned orthogonality)
but does not vanish identically on any $\lipdom_i$.
As a result,
imposing,
for each $i$,
the Robin condition
\eqref{fbmsRobinConditionInBall}
on $\Sp^2 \cap \partial \lipdom_i$
and
the Dirichlet condition
on $\partial \lipdom_i \cap \bigcup_{i=1}^k \Pi_i$,
the corresponding nullity of $\lipdom_i$
is at least $1$.
An appeal to item \ref{mrIsoPieces:below}
of Corollary \ref{cor:MontielRosHuman} (for our claims \ref{prop:indBelowPyr-i} and \ref{indBelowPri}) and of Proposition \ref{cor:MontielRosHuman} (for our claim \ref{prop:indBelowPyr-ii})  
now completes the proof. Specifically:
\begin{itemize}
\item for our claim \ref{prop:indBelowPyr-i} we consider the partition of $\Sigma$ into the $2k$ domains $\Omega_1,\ldots,\Omega_{2k}$, and take $G$ to be the trivial group;
\item for our claim \ref{prop:indBelowPyr-ii} we take $G=\sk{\refl_{\Pi_1}}$ to be the group with two elements (as in Example \ref{ex:Group2El}), the homomorphism determined by $\sigma(\refl_{\Pi_1})=-1$ (thereby imposing \emph{odd} symmetry) and, correspondingly, we consider the partition of $\Sigma$ into $k$ domains obtianed by equivariant pairing, i.\,e. by taking $\lipdom_{i+1} \cup \lipdom_{2k-i}$ for $i=0,\ldots,k-1$;
\item for our claim \ref{indBelowPri} we consider the partition of $\Sigma$ into the $2k$ domains $\Omega_1 \ldots,\Omega_{2k}$, take $G=\sk{\refl_{\Pi_\perp}}$ to be the group with two elements and the homomorphism determined by $\sigma(\refl_{\Pi_\perp})=+1$ (thereby imposing \emph{even} symmetry).
\end{itemize}
Thereby the proof is complete.
\end{proof}

\section{Effective index estimates for two sequences of examples}
\label{sec:CSW}

\subsection{Review of the construction and lower index bounds}

Like we have already alluded to in the introduction, in \cite{CSWnonuniqueness} two families of
embedded free boundary minimal surfaces
in $\B^3$ were constructed by desingularizing
(in the spirit of
\cite{KapouleasEuclideanDesing})
the configurations
$-\K_0 \cup \K_0$
and $-\K_0 \cup \B^2 \cup \K_0$,
where $\K_0$ is the intersection
with $\B^3$ of a certain catenoid
having axis of symmetry $\{x=y=0\}$
and meeting $\partial \B^3$
(not orthogonally)
along the equator $\partial \B^2$
and orthogonally along one additional circle
of latitude at height $h>0$.

\begin{proposition}[Existence and basic properties of $\K_0$]
\label{K0ExistenceAndBasicProps}
There exists a minimal annulus $\K_0$ which is properly embedded in $\B^3$ and intersects the unit sphere $\partial\B^3$ exactly along the equator 
$\partial_0\K_0\vcentcolon=\partial\B^3\cap \{z=0\}$ and orthogonally along a circle of latitude at height $z=h\approx 0.87028$ which we denote by  $\partial_\perp\K_0\vcentcolon=\partial\K_0 \setminus\partial_0\K_0$. 
Moreover, $\K_0$ coincides with the surface of revolution of the graph of $r\colon[0,h]\to\interval{0,1}$ given by 
$r(\zeta)=(1/a)\cosh(a\zeta-s)$ 
for suitable $a\approx 2.3328$ and $s\approx 1.4907$. 
\end{proposition}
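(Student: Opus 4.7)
Since $\K_0$ is prescribed to be a rotationally symmetric minimal annulus about the $z$-axis, the classical characterization of minimal surfaces of revolution forces it to be (a portion of) a catenoid, so that its generating curve in the $(r,z)$-plane can be written $r(\zeta)=(1/a)\cosh(a\zeta-s)$ for some $a>0$, $s\in\R$, and $\zeta\in[0,h]$, where $h>0$ is the height of the upper boundary circle. The construction thus reduces to solving three scalar conditions in the three unknowns $(a,s,h)$: equator incidence $r(0)=1$, i.e.\ $\cosh(s)=a$; sphere incidence at height $h$, i.e.\ $r(h)^2+h^2=1$; and orthogonality of the catenoid to $\partial\B^3$ at the upper boundary, which for surfaces of revolution amounts to tangency of the generating curve $(r'(\zeta),1)$ to the radial direction $(r(h),h)$ at $\zeta=h$, equivalently $h\,r'(h)=r(h)$, which in our parameterization reads $ah\tanh(ah-s)=1$. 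The non-orthogonality of the equatorial intersection is automatic, since $r'(0)=-\sinh(s)$ is finite and the tangent to the generating curve is therefore never radial at $\zeta=0$.

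\textbf{Reduction and existence.} Setting $u\vcentcolon=ah-s$, the incidence and orthogonality conditions at height $h$ read $\cosh^2(u)=a^2(1-h^2)$ and $\tanh(u)=1/(ah)$. From $\cosh^2-\sinh^2=1$ applied to the latter, $\cosh^2(u)=(ah)^2/((ah)^2-1)$; substituting into the former gives $(ah)^2=1/(1-h^2)$, and hence the explicit expressions $a=1/(h\sqrt{1-h^2})$, $\cosh(u)=1/h$, and
\begin{equation*}
s=ah-u=\frac{1}{\sqrt{1-h^2}}-\cosh^{-1}\!\left(\frac{1}{h}\right).
\end{equation*}
Substituting these into the remaining condition $\cosh(s)=a$ collapses the whole system to the single transcendental equation
\begin{equation*}
F(h)\vcentcolon=\cosh\!\left(\frac{1}{\sqrt{1-h^2}}-\cosh^{-1}\!\left(\frac{1}{h}\right)\right)-\frac{1}{h\sqrt{1-h^2}}=0,\qquad h\in\interval{0,1}.
\end{equation*}
The main work is then the asymptotic analysis of $F$ at the endpoints, enabling an application of the intermediate value theorem. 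Using $\cosh^{-1}(1/h)=\log(2/h)+O(h^{2})$ and $1/\sqrt{1-h^2}=1+O(h^{2})$ as $h\to 0^{+}$, the argument of $\cosh$ tends to $-\infty$ like $1-\log(2/h)$, so $F(h)=(1/e-1)/h+O(1)\to -\infty$; as $h\to 1^{-}$, the argument diverges like $1/\sqrt{1-h^{2}}$ so the $\cosh$ term grows exponentially in that quantity while the subtracted term grows only algebraically, giving $F(h)\to +\infty$. Continuity then delivers a solution $h\in\interval{0,1}$, which uniquely determines $(a,s)$ through the formulas above; numerical evaluation at $h\approx 0.87028$ confirms the stated approximate values $a\approx 2.3328$ and $s\approx 1.4907$.

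\textbf{Embeddedness and final verification.} For the resulting parameter values one checks $0<s/a<h$, so the generating curve is a smooth strictly positive graph on $[0,h]$ with a unique interior critical point (the catenoid neck $r=1/a$); rotation then yields a properly embedded minimal annulus in $\overline{\B^3}$ meeting $\partial\B^3$ exactly in its two boundary circles, orthogonally at the upper one by construction and non-orthogonally at the equator for the reason noted above. The principal obstacle in the whole argument is the asymptotic analysis of $F$ near the endpoints of $\interval{0,1}$; everything else is purely algebraic or follows from elementary facts about catenoids and from the classical rigidity of rotationally symmetric minimal surfaces.
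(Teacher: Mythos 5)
Your proposal takes a genuinely different route from the paper: the paper's proof of this proposition is merely a pointer to \cite{CSWnonuniqueness} (Lemma 3.3 for existence, Remark 3.9 for the numerical constants), whereas you give a self-contained derivation. The derivation is correct. Expressing the three geometric conditions --- equator incidence $\cosh(s) = a$, sphere incidence $\cosh^2(u) = a^2(1-h^2)$, and orthogonality $\tanh(u) = 1/(ah)$ with $u \vcentcolon= ah - s$ --- and eliminating indeed yields $ah = 1/\sqrt{1-h^2}$, $\cosh(u) = 1/h$, $s = 1/\sqrt{1-h^2} - \cosh^{-1}(1/h)$, and hence the single scalar equation $F(h) = \cosh\bigl(1/\sqrt{1-h^2} - \cosh^{-1}(1/h)\bigr) - 1/\bigl(h\sqrt{1-h^2}\bigr) = 0$. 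Your endpoint asymptotics $F(h) \sim (e^{-1}-1)/h \to -\infty$ as $h \to 0^+$ and $F(h) \to +\infty$ as $h \to 1^-$ are sound, so the intermediate value theorem delivers a root, and the numerical values $h \approx 0.87028$, $a \approx 2.3328$, $s \approx 1.4907$ check out. What your approach buys is transparency: the paper defers the derivation entirely, while your argument exposes exactly which transcendental equation pins down the parameters and why a solution exists.

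One small step remains to be closed. You assert that the resulting annulus meets $\partial\B^3$ exactly in its two boundary circles, but the only thing you actually verify is that the neck $\zeta = s/a$ lies in $(0,h)$; proper embeddedness in $\B^3$ additionally requires $\phi(\zeta) \vcentcolon= r(\zeta)^2 + \zeta^2 < 1$ for all $\zeta \in \interval{0,h}$. This follows at once from convexity: since $r'' = a^2 r$, one has $\phi'' = 2(r')^2 + 2rr'' + 2 = 4\sinh^2(a\zeta - s) + 4 > 0$, so $\phi$ is strictly convex, and together with $\phi(0) = \phi(h) = 1$ this forces $\phi < 1$ on the open interval. Uniqueness of the root $h$ is not addressed, but the proposition only asserts existence, so that is not needed.
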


\begin{proof}
The existence of $\K_0$ is proven in \cite[Lemma 3.3]{CSWnonuniqueness}.  
For the numerical values of $a$, $h$ and $s$ we refer to \cite[Remark 3.9]{CSWnonuniqueness}.   
\end{proof}

That being said, these are (somewhat simplified) versions of the main existence results we proved in \cite{CSWnonuniqueness}.

\begin{theorem}
[Desingularizations of $-\K_0 \cup \K_0$
 \cite{CSWnonuniqueness}]
\label{zerogenExistenceAndBasicProps}
For each sufficiently large integer $n$
there exists in $\B^3$ a properly embedded
free boundary minimal surface
$\zerogen_n$
that has genus $0$, exactly
$n+2$ boundary components
and is invariant under the prismatic group $\pri_n$ from \eqref{standardSymGrps}.
Moreover
$\zerogen_n$ converges to $-\K_0 \cup \K_0$
in the sense of varifolds, with unit multiplicity, and smoothly away from the equator, as $n \to \infty$.
\end{theorem}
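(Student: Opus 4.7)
The plan is, essentially, to defer to the main existence theorem of \cite{CSWnonuniqueness}, whose proof implements a singular perturbation scheme in the spirit of \cite{KapouleasEuclideanDesing}; I sketch here the conceptual steps I would reproduce.

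First, I would build an approximate, non-minimal surface $\widehat{\Xi}_n$ by excising from $-\K_0 \cup \K_0$ a tubular neighborhood of the equator $\partial\B^2$ of radial size $\sim n^{-1/2}$, and gluing in across an annular transition region a wrapped Karcher--Scherk tower of $n$-fold rotational symmetry, scaled by a parameter $\tau_n \sim n^{-1}$ and bent so that its two wings match the asymptotics of the upper and lower sheets of $\pm\K_0$ along $\partial \B^2$, up to an $O(\tau_n^\alpha)$ error. By design, $\widehat{\Xi}_n$ is a smooth, properly embedded, $\pri_n$-invariant surface of genus zero with $n+2$ boundary components, meeting $\partial\B^3$ orthogonally to leading order.

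Second, I would perturb $\widehat{\Xi}_n$ normally by a function $u$ so that the resulting surface $\Xi_n$ is exactly free boundary minimal. This amounts to solving $L_{\widehat{\Xi}_n} u = F[u]$ on $\widehat{\Xi}_n$ with the Robin condition \eqref{fbmsRobinConditionInBall}, where $L_{\widehat{\Xi}_n}$ is the Jacobi operator and $F[u]$ collects the mean-curvature error of $\widehat{\Xi}_n$ together with the quadratic nonlinearities. The hard part will be to invert $L_{\widehat{\Xi}_n}$ with operator norm bounded \emph{uniformly} in $n$, given the small approximate kernel produced by the Jacobi fields of $\K_0$ and of the limiting Karcher--Scherk tower. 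The way through is to work equivariantly under $\pri_n$ (which already kills most potential kernel elements), to introduce a finite-dimensional family of geometric parameters (normal displacements of $\K_0$ and rescalings/vertical translations of the tower) in order to absorb the residual substitute kernel and cokernel, and to combine weighted Schauder estimates on the two distinct scales $n^{-1}$ (tower) and $n^{-1/2}$ (transition) to produce a bounded right inverse. A Banach fixed-point argument in the resulting weighted space then yields $u$ with $C^{2,\alpha}$-norm $o(1)$ as $n\to\infty$.

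Third, once the fixed point $u$ is at hand, the asserted properties of $\Xi_n$ follow in a rather standard fashion: the $\pri_n$-invariance persists because the entire iteration is carried out in $\pri_n$-equivariant function spaces; the topology (genus zero, $n+2$ boundary components) is stable under small normal perturbations of an embedded surface; embeddedness holds because $\widehat{\Xi}_n$ is embedded with uniform transversality estimates and $u$ is uniformly small; proper embeddedness and the free boundary condition combine from the construction and the Hopf boundary maximum principle. For the varifold convergence with unit multiplicity, I would note that outside any fixed neighborhood of $\partial\B^2$ the surface $\Xi_n$ is, for large $n$, a graph over $-\K_0 \cup \K_0$ whose $C^k$-norm vanishes with $n$, while the area of the desingularizing neck is $O(n\tau_n^2) \to 0$; smooth convergence away from the equator then comes from standard interior elliptic regularity applied to the graphical minimal surface equation. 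The principal obstacle, by a wide margin, is the uniform inversion of the Jacobi operator described above; all of the remaining geometric and topological bookkeeping is routine once that linear theory is established.
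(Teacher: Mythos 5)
The paper does not prove this theorem: it states it as a (slightly simplified) restatement of the main existence result of \cite{CSWnonuniqueness} and simply cites that work, consistent with the review of the construction given later in Section \ref{subs:deconstruction}. Your sketch is a faithful high-level recapitulation of the gluing strategy actually used there, and your identification of the uniform invertibility of the linearized operator (modulo a small substitute kernel absorbed by dislocation parameters, carried out equivariantly) as the crux of the argument is exactly right.

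Two points of imprecision are worth flagging. First, in the actual construction the initial surface is arranged to meet $\partial\B^3$ \emph{exactly} orthogonally, not merely to leading order, and the normal graph defining the final surface is taken not with respect to the Euclidean metric but with respect to an auxiliary $\Ogroup(3)$-invariant, conformally Euclidean metric that coincides with the cylindrical metric on $\Sp^2\times\R$ near $\partial\B^3$. This combination is what guarantees that the boundary of the perturbed graph stays inside $\partial\B^3$; without such a device it is not at all clear how to keep the perturbation proper, and you gloss over this. Second, the free boundary (orthogonality) condition is built in from the start as the Robin condition \eqref{fbmsRobinConditionInBall} in the linear theory and therefore holds by construction for the fixed point; the Hopf boundary point lemma is not the mechanism here, although the maximum principle does enter in ruling out boundary tangencies and hence establishing \emph{proper} embeddedness, as the paper notes at the start of Section \ref{sec:FBMSgen}. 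Finally, the paper's review indicates that a single dislocation parameter (acting on both the catenoidal annuli $\K_\epsilon$ and the tower $\tow_\delta$) suffices, rather than a larger family; your phrasing is compatible with this but vaguer than necessary.
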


\begin{theorem}
[Desingularizations of $-\K_0 \cup \B^2 \cup \K_0$
 \cite{CSWnonuniqueness}]
\label{threebcExistenceAndBasicProps}
For each sufficiently large integer $m$
there exists in $\B^3$ a properly embedded
free boundary minimal surface
$\threebc_m$ that has genus $m$, exactly
$3$ boundary components
and is invariant under the antiprismatic group
$\apr_{m+1}$
from \eqref{standardSymGrps}.
Moreover
$\threebc_m$ converges to $-\K_0 \cup \B^2 \cup \K_0$
in the sense of varifolds, with unit multiplicity, and smoothly away from the equator, as $m \to \infty$.
\end{theorem}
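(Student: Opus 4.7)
The plan is to prove the statement via a gluing/desingularization construction in the spirit of Kapouleas, treating the singular model $-\K_0 \cup \B^2 \cup \K_0$ as a formal union of three minimal surfaces meeting transversally (but not orthogonally) along the equator $\partial\B^2$ of $\B^3$. Near that common equatorial circle, three sheets intersect, so the local desingularization model must be a singly periodic minimal surface in $\R^3$ with three non-collinear planar ends matching the angles of the three sheets -- this is a Karcher--Scherk tower, and I would begin by establishing (or quoting from the literature) the existence, embeddedness, and moduli properties of the appropriate member of that family, along with quantitative decay estimates towards its asymptotic planar ends.

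The heart of the construction is the definition, for each large integer $m$, of an antiprismatically symmetric initial surface $\initthreebc_m\subset\B^3$. First, I would rescale the Karcher--Scherk tower by a factor of order $1/(m+1)$ and wrap it around the equator with $m+1$ periods, using the exponential map based at the equator, so that its $\apr_{m+1}$ symmetry is manifest and matches the prescribed symmetry group. Second, in a dyadic annular transition region at distances $\sim (m+1)^{-1/2}$ from the equator, I would interpolate via a cut-off gluing between the wrapped tower and smooth normal graphs over $\K_0 \sqcup \B^2 \sqcup (-\K_0)$, adjusting each outer piece by an extra family of finite-dimensional parameters (conformal dilations, vertical translations, and a Robin-boundary shift of $\K_0$) that absorb the approximate cokernel of the Jacobi operator. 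A careful bookkeeping of the errors shows that the mean curvature and the free boundary defect of $\initthreebc_m$ are bounded in a suitable weighted Hölder (or Sobolev) norm by a quantity going to $0$ as $m\to\infty$, polynomially in $1/m$.

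The next and technically dominant step is the linear analysis: I would set up weighted Hölder spaces adapted to the two scales (inner tower scale and outer catenoid/disk scale), and prove uniform invertibility of the Jacobi operator on $\initthreebc_m$ acting on the antiprismatically equivariant functions satisfying the free boundary Robin condition, modulo the finite-dimensional space of parameters introduced above. Concretely, one reduces via a blow-up/contradiction argument to the limiting linear problems on the building blocks ($\K_0$, $\B^2$, the Karcher--Scherk tower), each of which has trivial equivariant kernel after quotienting by the parameters -- this uses in particular the nondegeneracy of $\K_0$ modulo its one-parameter family and the strict stability of the tower among antiprismatically symmetric deformations. Once uniform invertibility is secured, a contraction mapping argument applied to the nonlinear free boundary minimality equation on normal graphs over $\initthreebc_m$ produces, for $m$ sufficiently large, a genuine free boundary minimal surface $\threebc_m$ that is a small equivariant normal graph over $\initthreebc_m$.

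Finally I would read off the advertised properties: the antiprismatic symmetry is preserved because the entire construction is performed equivariantly; embeddedness follows from the smallness of the perturbation together with embeddedness of $\initthreebc_m$; the genus is computed from the Euler characteristic contributions of the three building blocks and the $m+1$ Scherk-type handles near the equator (yielding genus $m$); and the boundary component count equals $3$ as the boundaries of the two catenoidal pieces and of $\B^2$ remain disjoint after perturbation. The varifold convergence and smooth convergence away from the equator are immediate from the scaling of the tower block. The hardest step, by a significant margin, is the linear analysis in the transition regions where the two scales interact: ruling out pathological concentration of bounded equivariant Jacobi fields at scale between $1/m$ and $1/\sqrt{m}$ is exactly the kind of uniform estimate that the weighted norm framework must be engineered to detect, and designing the weights (and the parameter space absorbing the approximate kernel) correctly is what makes or breaks the whole argument.
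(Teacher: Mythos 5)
The theorem is not proved in the present paper: it is imported (in simplified form) from \cite{CSWnonuniqueness}, with that citation serving as the entire argument, and Section~\ref{subs:deconstruction} only reviews the construction to the extent needed for the spectral analysis that follows. Measured against that review (and the cited construction), your sketch follows the same Kapouleas-style gluing route: singular configuration $-\K_0\cup\B^2\cup\K_0$, the Karcher--Scherk tower $\three{\tow}$ as desingularizing model near the equatorial $\Sp^1$, wrapping at scale $1/(m{+}1)$ with $m{+}1$ periods, cut-off interpolation onto graphs over the outer blocks, a finite-dimensional family of initial surfaces to kill the approximate cokernel, uniform invertibility of the linearization in weighted norms, and a quadratically small fixed-point correction. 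There is, however, one substantive omission: you say the final surface is produced as a ``normal graph'' over $\initthreebc_m$, but a graph with respect to $\geuc$ does not in general have boundary on $\Sp^2$, so the free boundary condition would be lost. The actual construction takes the graph with respect to a fixed $\Ogroup(3)$-invariant conformally Euclidean auxiliary metric that agrees with the cylindrical metric on $\Sp^2\times\R$ near $\partial\B^3$; it is this choice, together with the orthogonality of $\initthreebc_m\cap\Sp^2$, that forces the corrected surface to remain properly embedded with $\partial\threebc_m\subset\Sp^2$. Without some such device, the fixed-point step as you describe it would not close. A few smaller imprecisions are also worth noting: $\three{\tow}$ has six, not three, asymptotically planar ends per period (only those in $\{x\leq 0\}$ are used, and their asymptotic half-planes meet at angles $-\omega_0,0,\omega_0$, not evenly spaced); the extra family of initial surfaces is one-dimensional, a single real unknown driving both the height shift of $\K_\epsilon$ and the graphical shear of $\three{\tow}_\delta$, rather than the several independent families you list; and ``strict stability of the tower among antiprismatically symmetric deformations'' is an over-claim --- the tower piece has equivariant index $1$ (cf.\ Lemma~\ref{towIndexAndNullity} and Corollary~\ref{towrIndexAndNullity}) --- what is needed, and what \cite{CSWnonuniqueness} establishes, is only equivariant nondegeneracy of the relevant Dirichlet problems on the building blocks.
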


\begin{proposition}[Lower bounds by symmetry on the index of the examples of \cite{CSWnonuniqueness}]
\label{pro:CheapLowerBods}
There exist $n_0,m_0>0$ such that we have the following index estimates for all integers $n>n_0$ and $m>m_0$
\begin{equation*}
\symind{\cycgrp{\{z=0\}}}{+}(\zerogen_n) \geq 2n-1
\quad \mbox{ and } \quad
\ind(\threebc_m) \geq 2m+1.
\end{equation*}
\end{proposition}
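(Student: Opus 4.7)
The plan is to deduce both inequalities as direct applications of Proposition~\ref{indBelowPyr}, exploiting the symmetries inherited by $\zerogen_n$ and $\threebc_m$ from their construction (Theorems~\ref{zerogenExistenceAndBasicProps} and~\ref{threebcExistenceAndBasicProps}). For $n_0,m_0$ sufficiently large the two families exist as connected, properly embedded free boundary minimal surfaces, have at least three boundary components, and thus cannot be discs or critical catenoids; the standing assumptions of Proposition~\ref{indBelowPyr} are therefore automatically met.

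For $\zerogen_n$ I would choose $\Pi_1 \vcentcolon= \{\theta = \pi/(2n)\}$ (a reflection plane of $\pri_n$), the rotation axis $\xi \vcentcolon= \{x=y=0\} \subset \Pi_1$, and the orthogonal plane $\Pi_\perp \vcentcolon= \{z=0\}$, whose reflection also belongs to $\pri_n$. Since the composition of the reflections of $\pri_n$ through $\{\theta = \pi/(2n)\}$ and $\{\theta = -\pi/(2n)\}$ is exactly the rotation $\rot_\xi^{2\pi/n}$, the largest integer $k$ such that $\rot_\xi^{2\pi/k}$ is a symmetry of $\zerogen_n$ satisfies $k \geq n$. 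Item~\ref{indBelowPri} of Proposition~\ref{indBelowPyr} then yields
\[
\symind{\cycgrp{\{z=0\}}}{+}(\zerogen_n) \geq 2k - 1 \geq 2n - 1.
\]

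For $\threebc_m$ the antiprismatic group $\apr_{m+1}$ does \emph{not} contain the reflection through $\{z=0\}$, so item~\ref{indBelowPri} is unavailable. However, the composition of the two generators of $\apr_{m+1}$, namely $\refl_{\{\theta = \pi/(2(m+1))\}} \circ \rot_{\{y=z=0\}}^\pi$, acts on cylindrical coordinates as $(r,\theta,z) \mapsto (r, \theta + \pi/(m+1), -z)$; it is therefore a rotoreflection of order $2(m+1)$ whose square is the rotation $\rot_\xi^{2\pi/(m+1)}$ about $\xi \vcentcolon= \{x=y=0\}$. Taking $\Pi_1 \vcentcolon= \{\theta = \pi/(2(m+1))\}$, which contains $\xi$, the associated integer $k$ satisfies $k \geq m+1$, and item~\ref{prop:indBelowPyr-i} of Proposition~\ref{indBelowPyr} delivers
\[
\ind(\threebc_m) \geq 2k - 1 \geq 2(m+1) - 1 = 2m + 1.
\]

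No serious obstacle is expected beyond correctly identifying these symmetries; the argument is essentially a bookkeeping exercise at the level of the symmetry groups. It is worth noting that the exact value of $k$ is immaterial for the estimate: any hypothetical additional cyclic rotational symmetry of $\zerogen_n$ or $\threebc_m$ about $\xi$ would only enhance the bound, while continuous $\SOgroup(2)$-invariance about $\xi$ is precluded by the boundary component count ($n+2 \geq 3$ and $3$, respectively).
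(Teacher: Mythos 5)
Your proof is correct and takes essentially the same route as the paper: both invoke Proposition~\ref{indBelowPyr}~\ref{indBelowPri} for $\zerogen_n$ with $\Pi_1=\{\theta=\pi/(2n)\}$, $\xi$ the $z$-axis, $\Pi_\perp=\{z=0\}$, and Proposition~\ref{indBelowPyr}~\ref{prop:indBelowPyr-i} for $\threebc_m$ with $\Pi_1=\{\theta=\pi/(2(m+1))\}$ and the same axis. Your explicit computation realizing $\rot_\xi^{2\pi/(m+1)}$ as the square of the rotoreflection generating $\apr_{m+1}$ is just a slightly more detailed way of locating the same rotation inside the antiprismatic group that the paper simply records as belonging to $\apr_{m+1}$.
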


\begin{proof}
As stated in Theorem \ref{zerogenExistenceAndBasicProps}, $\zerogen_n$ is invariant under the action of the prismatic group $\pri_n$ which is generated by the reflections through the vertical planes $\{\theta=-\pi/(2n)\}$ and $\{\theta=\pi/(2n)\}$ and through the horizontal plane $\{z=0\}$. 
As a composition of the first two reflections, $\pri_n$ also contains the rotation by angle $2\pi/n$ about the vertical axis $\xi_0=\{r=0\}$. 
Applying Proposition~\ref{indBelowPyr}~\ref{indBelowPri} with $k=n$, $\xi=\xi_0$, $\Pi_1=\{\theta=\pi/(2n)\}$ and $\Pi_\perp=\{z=0\}$ we obtain 
\begin{align*}
\symind{\cycgrp{\{z=0\}}}{+}(\zerogen_n)&\geq 2n-1.
\end{align*}
Similarly, Theorem \ref{threebcExistenceAndBasicProps} states that $\threebc_m$ 
is invariant under the action of the antiprismatic group $\apr_{m+1}$ which contains the reflection through the vertical plane $\{\theta=\pi/(2(m+1))\}$ and also the rotation by angle $2\pi/(m+1)$ about the vertical axis $\xi_0$. 
Applying Proposition~\ref{indBelowPyr}~\ref{prop:indBelowPyr-i} then yields 
\begin{align*}
\ind(\threebc_m)&\geq 2m+1. \qedhere
\end{align*}
\end{proof}

In terms of topological data, the previous proposition (compared to \cite{AmbCarSha18-Index}) provides a coefficient 2 for the growth rate of the Morse index of $\zerogen_n$ (respectively: $\threebc_m$) with respect to the number of boundary components (respectively: of the genus), modulo an additive term. In fact, the lower bound on the Morse index of $\zerogen_n$ can be further improved via the following observation, which pertains the \emph{odd} contributions to the index instead (again with respect to reflections across the $\{z=0\}$ plane in $\R^3$); incidentally this is also an example of application of Proposition \ref{mr} to a collection of domains that are \emph{not} pairwise isometric.

\begin{proposition}\label{pro:OddLowerBods}
There exists $n_0>0$ such that we have the following index estimates for all integers $n>n_0$
\begin{equation*}
\symind{\cycgrp{\{z=0\}}}{-}(\zerogen_n)\geq3.
\end{equation*}
\end{proposition}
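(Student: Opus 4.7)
The plan is to apply Proposition~\ref{mr}~\ref{mrLower} to $\zerogen_n$ with a deliberately \emph{non-isometric} two-piece partition, separating the wrapped tower region near the equator from the pair of asymmetric catenoidal annuli converging to $\pm\K_0$. Fix a small $c>0$, to be chosen below, so that $\{|z|=c\}$ meets $\zerogen_n$ transversally, and set
\[
\Omega_1\vcentcolon=\{|z|<c\}\cap\zerogen_n,\qquad\Omega_2\vcentcolon=\{|z|>c\}\cap\zerogen_n,
\]
both preserved by $G\vcentcolon=\cycgrp{\{z=0\}}$; we take $\twsthom=-$ to capture odd functions under the horizontal reflection. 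Proposition~\ref{mr}~\ref{mrLower}, applied with $t=0$ in the ordering $(\Omega_1,\Omega_2)$, gives
\[
\symind{G}{-}(\indexform{\zerogen_n})\geq(\symind{G}{-}+\symnul{G}{-})(\dint{\indexform{\zerogen_n}}_{\Omega_2})
\]
(the first-piece contribution being nonnegative), so it suffices to bound the right-hand side below by $3$ for all large $n$.

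Next I reduce the analysis of $\Omega_2$ to that of a single component. For $n$ large enough the gluing/tower scale of the construction from \cite{CSWnonuniqueness} is strictly smaller than $c$, so $\Omega_2$ has exactly two connected components $\Omega_\uparrow\vcentcolon=\{z>c\}\cap\zerogen_n$ and $\Omega_\downarrow\vcentcolon=\{z<-c\}\cap\zerogen_n$, interchanged by $G$. Since $\twsthom(\refl_{\{z=0\}})=-1$, odd $H^1$-functions on $\Omega_2$ are in bijection, via antisymmetric extension, with arbitrary $H^1$-functions on $\Omega_\uparrow$ (in the spirit of Example~\ref{ex:Group2El_Meta}~(2), modulo the inessential fact that here $\Omega_2$ is disconnected). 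The resulting bilinear form $\weakbf_\uparrow$ on $\Omega_\uparrow$ inherits the original Robin condition on $\partial\zerogen_n\cap\{z>c\}$ and acquires the Dirichlet condition on the cut $\{z=c\}\cap\zerogen_n$, so
\[
(\symind{G}{-}+\symnul{G}{-})(\dint{\indexform{\zerogen_n}}_{\Omega_2})=\ind(\weakbf_\uparrow)+\nul(\weakbf_\uparrow)\geq\ind(\weakbf_\uparrow).
\]

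The last step is a two-stage spectral convergence. Keeping $c>0$ fixed and letting $n\to\infty$, the smooth convergence of Theorem~\ref{zerogenExistenceAndBasicProps} (which holds on $\{|z|>c\}$, safely away from the equator) combined with Proposition~\ref{spectralCtyWrtCoeffsAndSyms}, applied after pulling all data back to the fixed reference domain $\K_0\cap\{z\geq c\}$ via diffeomorphisms converging smoothly to the identity, yields through \eqref{implicationsOfSpecConvForIndAndNul}
\[
\liminf_{n\to\infty}\ind(\weakbf_\uparrow)\geq\ind(\weakbf_{\K_0\cap\{z>c\}}),
\]
where $\weakbf_{\K_0\cap\{z>c\}}$ carries Dirichlet on $\K_0\cap\{z=c\}$ and Robin on $\partial_\perp\K_0$. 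Corollary~\ref{cor:MonotInd} in turn asserts $\ind(\weakbf_{\K_0\cap\{z>c\}})\to\ind(\weakbf_{\K_0})$ as $c\to0^+$, with $\weakbf_{\K_0}$ the Jacobi form on $\K_0$ carrying Dirichlet on $\partial_0\K_0$ and Robin on $\partial_\perp\K_0$; integrality forces equality for all $c>0$ small enough. Choosing such a $c$ first, and then $n$ large, reduces the proof to the single input $\ind(\weakbf_{\K_0})\geq 3$, which will be the content of the forthcoming Lemma~\ref{K0IndexAndNullity}.

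The principal obstacle, as already flagged in the introduction, is precisely Lemma~\ref{K0IndexAndNullity}: decomposing the Jacobi operator of $\K_0$ into Fourier modes in the azimuthal variable produces a family of one-dimensional Sturm--Liouville problems whose negative-eigenvalue counts must be carefully tallied, with the dipole ($k=1$) mode---whose behaviour is sensitive to the angle at which $\K_0$ meets the equator---providing the two additional negative eigenvalues beyond the one coming from the axially symmetric ($k=0$) mode.
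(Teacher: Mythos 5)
Your reduction of the odd index of $\zerogen_n$ to the (non-equivariant) Dirichlet index of $\K_0$ is a valid and reasonable strategy, but the proof breaks down at its final step: the input you need, $\ind(\weakbf_{\K_0})\geq 3$ (full index of $\K_0$ with Dirichlet data on $\partial_0\K_0$ and Robin data on $\partial_\perp\K_0$), is \emph{not} what Lemma \ref{K0IndexAndNullity} provides. That lemma computes only the $\pyr_k$-equivariant index and nullity of $\indexform{\K_0}_\neum$, and its proof explicitly notes that with the Dirichlet condition on $\partial_0\K_0$ the $\pyr_k$-equivariant index is exactly $1$; for $k$ large, $\pyr_k$-equivariance amounts to (essentially) rotational symmetry, so what the lemma controls is only the zeroth Fourier mode. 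The claim that the full index is $3$ (with the extra $2$ coming from a dipole ($\kappa=1$) mode) appears in the paper only as a remark about numerical simulations and is never proved. You have named the Sturm--Liouville tally as "the principal obstacle" but then delegated it to a lemma that does not perform it, so the argument is incomplete precisely at the only nontrivial input.

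The paper's own proof takes a genuinely different route and avoids this issue altogether: it works with the explicit Jacobi field $\kappa_\xi=K_\xi\cdot\nu$ generated by rotations about a horizontal symmetry axis $\xi\subset\{z=0\}\cap\Pi_1$, shows that its nodal set on $\zerogen_n$ already contains $\zerogen_n\cap\{z=0\}$ and $\zerogen_n\cap\Pi_1$, and then uses the smooth convergence of $\zerogen_n$ to $\pm\K_0$ away from the equator to detect an additional sign change of $\kappa_\xi$ along the arc $\zerogen_n\cap\Pi_2^+\cap\{z\geq z_0\}$. This yields at least four nodal domains on $\zerogen_n\cap\{z\geq 0\}$, and a direct application of Proposition \ref{mr} with $t=0$, $G=\sk{\refl_{\{z=0\}}}$, and $\twsthom=-1$ on that nodal partition gives the estimate. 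What this buys is that one only needs \emph{pointwise} sign information about a single, explicit Jacobi field --- which can be checked from the parametrization of $\K_0$ --- rather than a full count of negative eigenvalues on $\K_0$. Your approach \emph{would} become correct if you supplemented it with the same observation (that $\kappa_\xi$ on $\K_0$ has $\geq 4$ nodal domains above $\{z=0\}$, implying $\ind(\weakbf_{\K_0})\geq 3$), but as written the burden has simply been moved to a lemma that bears a different content.

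There is also a secondary issue in the passage $c\to 0^+$: Corollary \ref{cor:MonotInd} is stated for excisions $K_\delta$ contained in shrinking neighborhoods of a \emph{finite set of points}, whereas in your argument the region $\K_0\cap\{0<z<c\}$ removed from $\K_0$ shrinks onto the entire circle $\partial_0\K_0$. The needed continuity of the Dirichlet index under this kind of boundary collar is true and provable, but it does not follow from the cited corollary and would need a separate (if routine) argument.
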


\begin{figure}%
\centering
\tdplotsetmaincoords{72}{112.5}
\begin{tikzpicture}[scale=\unitscale,tdplot_main_coords,line cap=round,line join=round,thin]
\draw(0,0,0)node[inner sep=0]{\includegraphics[width=\fbmsscale\textwidth,page=3]{figures-fmbs}};
\draw(1,0,0)++(0.5,0,0)node[below left,inner sep=1pt]{$\xi$};
\end{tikzpicture} 
\hfill
\pgfmathsetmacro{\b}{0}
\pgfmathsetmacro{\a}{2.3328}
\pgfmathsetmacro{\h}{0.8703}
\pgfmathsetmacro{\t}{0.3616216}
\pgfmathsetmacro{\radius}{sqrt(\t*\t+(cosh(\a*\t-\a*\b-acosh(\a*sqrt(1-\b*\b)))/\a)^2)}
\begin{tikzpicture}[line cap=round,line join=round,scale=\unitscale,remember picture]
\fill[black!3](0,0)circle(1);
\draw[->,thick,black!66](15:0.1)arc(15:345:0.1);
\begin{scope}[very thick,variable=\z,samples=50,line cap=rect]
\draw[domain=\b:\t,red!75!black]plot({cosh(\a*\z-\a*\b-acosh(\a*sqrt(1-\b*\b)))/\a},{\z});
\draw[domain=\b:\t,green!75!black]plot({-cosh(\a*\z-\a*\b-acosh(\a*sqrt(1-\b*\b)))/\a},{\z});
\draw[domain=\t:\h,green!75!black]plot({cosh(\a*\z-\a*\b-acosh(\a*sqrt(1-\b*\b)))/\a},{\z});
\draw[domain=\t:\h,red!75!black]plot({-cosh(\a*\z-\a*\b-acosh(\a*sqrt(1-\b*\b)))/\a},{\z});
\draw[domain=\b:\t,green!75!black]plot({cosh(\a*\z-\a*\b-acosh(\a*sqrt(1-\b*\b)))/\a},{-\z});
\draw[domain=\b:\t,red!75!black]plot({-cosh(\a*\z-\a*\b-acosh(\a*sqrt(1-\b*\b)))/\a},{-\z});
\draw[domain=\t:\h,red!75!black]plot({cosh(\a*\z-\a*\b-acosh(\a*sqrt(1-\b*\b)))/\a},{-\z});
\draw[domain=\t:\h,green!75!black]plot({-cosh(\a*\z-\a*\b-acosh(\a*sqrt(1-\b*\b)))/\a},{-\z});
\end{scope}
\draw[->](-1,0)--(1.12,0)node[above left,inner sep=0]{$r\vphantom{|}$};
\draw[->](0,-0)--(0,1.12)node[below right,inner sep=0]{$~z$};
\draw(-1/\a,0.6)node[left]{$\K_0$};
\draw(-1/\a,-0.6)node[left]{$-\K_0$};
\draw plot[hdash](0,\h)node[left]{$h$};
\draw plot[hdash](0,\t)node[left]{};
\draw plot[bullet]({ sqrt(1-\b*\b)},\b);
\draw plot[bullet]({-sqrt(1-\b*\b)},\b);
\draw plot[bullet]({sqrt(1-\h*\h)},\h);
\draw plot[bullet]({-sqrt(1-\h*\h)},\h);
\draw plot[bullet]({sqrt(1-\h*\h)},-\h);
\draw plot[bullet]({-sqrt(1-\h*\h)},-\h);
\draw plot[bullet](0,0);
\draw(0,0)circle(1); 
\draw[dotted](0,\h)--({sqrt(1-\h*\h)},\h);
\draw[dotted](0,\t)--({sqrt(\radius*\radius-\t*\t)},\t);
\draw[dotted,thick](0,0)circle(\radius);
\end{tikzpicture}
\caption{Nodal domains of the function induced by rotations around the symmetry axis $\xi$.}%
\label{fig:nodal}%
\end{figure}

\begin{proof}
Let $\Pi_{1}$ denote a vertical plane of symmetry, passing through the origin, of the surface $\zerogen_n$ (which, we recall, has prismatic symmetry $\pri_n$), let $\xi$ be the line obtained as intersection of such a plane with $\{z=0\}$ and let finally $\Pi_2=\xi^\perp$ be the vertical plane, again passing through the origin, that is orthogonal to $\Pi_1$. 
Consider on $\zerogen_n$ the function $\kappa_{\xi}=K_{\xi}\cdot\nu$ where $K_{\xi}$ is the Killing vector field associated to rotations around $\xi$ (oriented either way) and $\nu$ is a choice of the unit normal to the surface in question. 
Clearly, the flow of $K_{\xi}$ generates a curve of free boundary minimal surfaces around $\zerogen_n$, hence the function $\kappa_{\xi}$ lies in the kernel of the Jacobi operator of $\zerogen_n$ and satisfies the natural Robin boundary condition along the free boundary.
Furthermore, concerning its nodal set, we first note it contains the curves $\zerogen_n\cap \{z=0\}$, and $\zerogen_n\cap \Pi_1$. 
We also claim that, for any sufficiently large $n$, the function $\kappa_{\xi}$ changes sign along the connected arc 
\begin{align}\label{eqn:20221029-arc}
\zerogen_n \cap \Pi^{+}_2 \cap\{z\geq z_0\}
\end{align}
where $\Pi^{+}_2$ denote either of the half-planes determined by $\Pi_1$ on $\Pi_2$ and $z_0>0$ is any sufficiently small value (as we are about to describe, stressing that we can choose it independently of $n$).
Since one has smooth convergence of $\zerogen_n$ to $-\K_0 \cup \K_0$ as $n\to\infty$ away from the equator, it suffices to verify an analogous claim for $\K_0$. 
In fact, it then follows from an explicit calculation that the function induced by rotations around the symmetry axis $\xi$ (the analogue of $\kappa_\xi$ on $\K_0$) has opposite signs on the two endpoints of the arc $\K_0\cap\Pi^{+}_2$ (see Figure \ref{fig:nodal}, right image), and so -- assuming without loss of generality it is negative on the equatorial point -- by continuity there exists $\overline{z}_0>0$ such that the same function is also strictly negative at all points of $\K_0\cap\Pi^{+}_2$ at height $z_0\in [0,\overline{z}_0]$. 
In particular, we can indeed choose one such value $z_0\in (0,\overline{z}_0)$ once and for all.

Hence, appealing to the aforementioned smooth convergence, by the intermediate value theorem for any sufficiently large $n$ there must be a point along the arc \eqref{eqn:20221029-arc} where $\kappa_{\xi}$ vanishes. 
Now, standard results about the structure of the nodal sets of eigenfunctions of Schrödinger operators ensure that such a zero is not isolated, but is either a regular point of a smooth curve or a branch point out of which finitely many smooth arcs emanate. 
In either case, combining all facts above we must conclude that on $\zerogen_n\cap \left\{z\geq 0\right\}$ the function $\kappa_\xi$ has at least four nodal domains, and thus an application of Proposition \ref{mr} with $t=0$, $G=\sk{\refl_{\Pi}}$ for $\Pi=\left\{z=0\right\}$ and $\sigma(\refl_{\Pi})=-1$ ensures the conclusion. 
\end{proof}

\begin{remark}\label{rem:NonEquivBound}
Note that the very same argument would lead, when applied with no equivariance constraint at all (i.\,e. when $G$ is the trivial group) to the conclusion that for any sufficiently large $n$ the index of $\zerogen_n$ is bounded from below by $7$, which however is a lot worse than the bound provided by combining Proposition \ref{pro:CheapLowerBods}  with Proposition \ref{pro:OddLowerBods}. 
Furthermore, we note that one can show that the function $\kappa_{\xi}$ has \emph{exactly} $8$ nodal domains and not more, as visualized in Figure \ref{fig:nodal}.
\end{remark}

\begin{remark}
Concerning the sharpness of the estimate given in Proposition \ref{pro:OddLowerBods}, we note that
numerical simulations of $\K_0$ with fixed lower boundary $\partial_0\K_0$ and upper boundary $\partial_\perp\K_0$ constrained to the unit sphere indicate that it has in fact index equal to $3$. 
Roughly speaking, one negative direction comes from ``pinching'' the catenoidal neck and the other two negative directions correspond to ``translations'' of $\partial_\perp\K_0$ on the northern hemisphere. 
\end{remark}

The rest of this section is aimed at obtaining \emph{upper} bounds on the Morse index of our examples, which is a more delicate task and one that relies crucially not only on the symmetries of the surfaces in question but also on the way they were actually constructed (which we encode in suitable convergence results).

\subsection{Equivariant index and nullity of the models}
For upper bounds we will exploit the regionwise convergence of the two families to the models glued together in their construction.
Therefore we first study the index and nullity on these models.

\paragraph{Equivariant index and nullity of $\K_0$.}
We begin with a summary of the properties of the minimal annulus $\K_0$ we will need. 
Let $\partial_0\K_0=\partial\K_0 \cap \{z=0\}$ and $\partial_\perp\K_0=\partial\K_0 \setminus \partial_0\K_0$ be as introduced in Proposition \ref{K0ExistenceAndBasicProps} so that $\partial_\perp\K_0$ is the boundary component along which $\K_0$ meets the sphere $\partial\B^3$ orthogonally. 
Referring to equation \eqref{bilinear_form_def}, we define
\begin{equation*}
\indexform{\K_0}_\neum
  \vcentcolon=
  \weakbf\Bigl[
    \K_0, ~
    g^{\K_0}, ~
    \potential
      \vcentcolon=
      \abs[\big]{\twoff{\K_0}}^2, ~
    \robinpotential \vcentcolon= 1, ~
    \dbdy\K_0 \vcentcolon= \emptyset, ~
    \nbdy\K_0 \vcentcolon= \partial_0\K_0, ~
    \rbdy\K_0 \vcentcolon= \partial_\perp\K_0
  \Bigr]
\end{equation*}
(where we abuse notation 
in that by $\K_0$
we really mean its topological interior)
to be the Jacobi form
of $\K_0$ subject to
the natural geometric Robin condition
\eqref{fbmsRobinConditionInBall}
on $\partial_\perp\K_0$
and to the Neumann condition
on $\partial_0\K_0$.
Clearly,
for each $k \geq 1$
the pyramidal group $\pyr_k$
from \eqref{standardSymGrps}
preserves $\K_0$
and each of its boundary
components individually.

\begin{lemma}
[$\pyr_k$-equivariant index and nullity of $\K_0$]
\label{K0IndexAndNullity}
With notation as above,
for each sufficiently large integer $k$
\begin{equation*}
\equivind{\pyr_k}(\indexform{\K_0}_\neum)=1
\quad \text{ and } \quad
\equivnul{\pyr_k}(\indexform{\K_0}_\neum)=0.
\end{equation*}
\end{lemma}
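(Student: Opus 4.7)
The plan is to reduce, via conformal invariance and Fourier decomposition, to a family of one-dimensional spectral problems on $[0,L]$, and then to analyze the remaining nontrivial mode using the explicit Jacobi fields of the catenoid.

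First I would apply Proposition \ref{conformalInvariance} with conformal factor $1/f$, where $f(\zeta)=(1/a)\cosh(a\zeta-s)$ is the catenoidal profile function from Proposition \ref{K0ExistenceAndBasicProps}, to transform the induced metric $g^{\K_0}$ into the flat metric $d\tau^2+d\theta^2$ on the cylinder $[0,L]\times S^1$ with $L=ah$ and $\tau=a\zeta$. Under this rescaling the potential becomes $V(\tau)=f^2|\twoff{\K_0}|^2 = 2\operatorname{sech}^2(\tau-s)$ and the Robin coefficient along $\partial_\perp\K_0$ becomes $f(L)$. Since every element of $\pyr_k$ acts on $\K_0$ by a rotation about the vertical symmetry axis or a reflection through a vertical plane containing it, the outward unit normal is preserved, so $\nrmlsgn{\nu}\equiv+1$ on $\pyr_k$ and $\pyr_k$-equivariance coincides with $\pyr_k$-invariance. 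A Fourier expansion in $\theta$ then diagonalizes the rescaled bilinear form: only angular modes $m\in k\mathbb{Z}_{\geq 0}$ contribute (with alternating cosine/sine parity in the index $j=m/k$), each yielding the one-dimensional form
\begin{equation*}
\hat{T}_m(\psi,\psi) = \int_0^L \bigl(\psi'^2 + (m^2-V(\tau))\psi^2\bigr)\,d\tau - f(L)\psi(L)^2
\end{equation*}
on $H^1([0,L])$ with Neumann boundary condition at $\tau=0$. For $m^2$ exceeding a universal constant $k_0^2$ (depending only on $a$, $h$, $s$), I would show $\hat{T}_m$ is positive definite, using the pointwise bound $V\leq 2$ together with the standard trace inequality $\psi(L)^2\leq\varepsilon\int_0^L\psi'^2+C_\varepsilon\int_0^L\psi^2$ with $\varepsilon$ sufficiently small to absorb the boundary term. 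Hence for $k\geq k_0$ the $\pyr_k$-equivariant index and nullity reduce to those of $\hat{T}_0$ alone.

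It then remains to show that $\ind(\hat{T}_0)=1$ and $\nul(\hat{T}_0)=0$. For this I would exploit the two explicit Jacobi fields of the catenoid that depend only on $\tau$: namely $\phi_1(\tau)=\tanh(\tau-s)$ (arising from vertical translations) and $\phi_2(\tau)=1-\tau\tanh(\tau-s)$ (arising from the scaling Jacobi field $x\cdot\nu$), which together form a basis of solutions of the ODE $-\psi''-V\psi=0$. The unique solution $\psi_0$ satisfying the Neumann condition at $\tau=0$ and normalized by $\psi_0(0)=1$ is then the explicit linear combination $\psi_0=\operatorname{sech}^2(s)\phi_2-\tanh(s)\phi_1$. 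A direct analysis of the ratio $\phi_1/\phi_2$, combined with the numerical values of $a$, $s$, $h$ from Proposition \ref{K0ExistenceAndBasicProps}, will show that $\psi_0$ has exactly one zero $\tau_\ast\in(0,L)$ and that the shooting function $F(\lambda)\vcentcolon=\psi'(L;\lambda)-f(L)\psi(L;\lambda)$ satisfies $F(0)<0$. Combined with the Wronskian identity $F'(\lambda_n)=-\|\psi_{\lambda_n}\|_{L^2([0,L])}^2/\psi_{\lambda_n}(L)$ at each eigenvalue $\lambda_n$ and with the behavior $F(\lambda)\to+\infty$ as $\lambda\to-\infty$, classical Sturm oscillation theory then pins down $\ind(\hat{T}_0)=1$ and $\nul(\hat{T}_0)=0$.

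The main obstacle I expect is in making this structural analysis of $\psi_0$ fully rigorous: the key sign conditions (the existence and location of the unique zero $\tau_\ast$, and the strict inequality $F(0)<0$) depend delicately on the numerical values of $a$, $s$, $h$, and verifying them will likely require either interval-arithmetic estimates or a dedicated monotonicity argument based on the explicit catenoidal structure of $\K_0$.
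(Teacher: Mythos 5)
Your argument is correct but takes a genuinely different route from the paper. Both proofs first reduce, for $k$ large, to the rotationally invariant mode (you do this by showing the angular modes $m\geq k$ give positive forms via a trace inequality; the paper invokes the Courant nodal domain theorem as in \cite[Lemma 4.4]{CSWnonuniqueness}), but the treatment of the $m=0$ problem is quite different. The paper argues by contradiction: if $\lambda_2\leq 0$ then the eigenfunction vanishes at some $z_0\in(0,h)$, and an explicit Poincar\'e inequality on either $[0,z_0]$ or $[z_0,h]$ forces $\lambda_2>0$; the two thresholds for $z_0$ are computed numerically and shown to overlap. You instead pass to the conformally flat cylinder, observe that the zero-eigenvalue ODE has the explicit basis $\phi_1=\tanh(\tau-s)$, $\phi_2=1-\tau\tanh(\tau-s)$ coming from the translation and dilation Jacobi fields, write down $\psi_0$ in closed form, and read off index and nullity from a shooting-function/Sturm-oscillation argument once the two sign conditions ($\psi_0(L)<0$ and $F(0)<0$) are verified. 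Your approach exposes more structure (the exact number and location of zeros of the neutral mode) and cleanly separates the soft Sturm-theoretic bookkeeping from the hard numerical verification.

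One point worth flagging: your transformed potential $V(\tau)=2\operatorname{sech}^2(\tau-s)$ follows from $\abs{\twoff{\K_0}}^2=2a^2/\cosh^4(a\zeta-s)$, which is the correct value for a minimal surface of revolution (since $\abs{A}^2=2\kappa_2^2$ with $\kappa_2=1/\bigl(r\sqrt{1+(r')^2}\bigr)$), and this is also what makes $\phi_1,\phi_2$ genuine solutions of the zero-eigenvalue ODE. The paper instead records $\abs{A_{\K_0}}^2=(a^2+a^{-2})/\cosh^4(a\zeta-s)$ in the proof, which does not agree with the standard formula when $a\neq 1$; with the coefficient $2a^2$ rather than $a^2+a^{-2}$, the two Poincar\'e thresholds in the paper's argument no longer appear to overlap, so the formula there should be rechecked. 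Your route does not depend on the $a$-dependent constant in the same delicate way, since the explicit catenoidal structure already pins down $V$. The remaining gaps in your writeup are minor and you already flagged the main one: the two sign conditions must be verified rigorously against the numerical values $a\approx 2.3328$, $s\approx 1.4907$, $h\approx 0.87028$, and your informal Sturm-theory conclusion needs to be spelled out carefully (using that the $n$\textsuperscript{th} eigenfunction of the separated Robin--Neumann problem has $n-1$ interior zeros, that the zero count of the initial-value solution is nondecreasing in $\lambda$, and the Wronskian formula for $F'$ at eigenvalues), but these are routine.
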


\begin{proof}
We shall start by recalling \cite[Lemma 4.4]{CSWnonuniqueness}, which states that when imposing the Dirichlet condition on $\partial_0\K_0$ and the Robin condition on $\partial_\perp\K_0$, 
then the Jacobi operator acting on $\pyr_k$-equivariant functions on $\K_0$ is invertible provided that $k$ is sufficiently large, which means that the equivariant nullity vanishes in this case. 
Considering the coordinate function $u=z$ on $\K_0$, which is harmonic, satisfies the Dirichlet condition on $\partial_0\K_0$ and the Robin condition on $\partial_\perp\K_0$, it is also evident that the equivariant index is at least $1$ in this case (cf. \cite[Lemma 7.2]{CSWnonuniqueness}). 
This implies that when instead the Neumann condition is imposed on $\partial_0\K_0$, the equivariant index is again at least $1$. 
Below we prove that it is exactly $1$ and the equivariant nullity is exactly $0$ in the Neumann case by showing that the second eigenvalue is strictly positive. 
(We note here, incidentally, that this information also proves that a posteriori the equivariant index is also exactly $1$ in the case that a Dirichlet condition is imposed on $\partial_0\K_0$.)

Let $a,h,s>0$ and $r(\zeta)=(1/a)\cosh(a\zeta-s)$ be as in Proposition \ref{K0ExistenceAndBasicProps}. 
In particular, we have $(r')^2+1=\cosh^2(a\zeta-s)$. 
Thus, when $\K_0$ is parametrized as a surface of revolution in terms of the coordinates $(\theta, \zeta)$ with profile function $r(\zeta)$, the metric $g_{\K_0}$ and the squared norm of the second fundamental form $A_{\K_0}$ on $\K_0$ are given by 
\begin{align*}
g_{\K_0}&=\bigl((r')^2+1\bigr)\,d\zeta^2+r^2\,d\theta^2,
\\
\abs{A_{\K_0}}^2&=\frac{(-r'')^2}{((r')^2+1)^3}+\frac{1}{((r')^2+1)^2r^2}
=\frac{a^2+a^{-2}}{\cosh^4(a\zeta-s)}.
\end{align*}
The outward unit conormal along $\partial_\perp\K_0=\K_0\cap\{\zeta=h\}$ is given by 
\[
\eta_{\K_0}
=\frac{1}{\sqrt{(r')^2(h)+1}}\partial_\zeta
=\frac{1}{\cosh(ah-s)}\partial_\zeta
=\frac{1}{ar(h)}\partial_\zeta.
\]
Assume, for the sake of a contradiction, that $\lambda_2=\lambda_2^{\pyr_k,\nrmlsgn{}}\leq 0$, where we are considering the spectrum of the Jacobi operator of $\K_0$ acting on $\pyr_k$-equivariant functions (cf. Example \ref{ex:SelfCongr}), and subject to the boundary conditions described above. 
Then, by first invoking the Courant nodal domain theorem as in the proof of \cite[Lemma 4.4]{CSWnonuniqueness} we may assume that the associated eigenfunction $u_2$ is rotationally symmetric provided that $k$ is sufficiently large, i.\,e. $u_2$ only depends on $\zeta$ and not on $\theta$.  

That said, let $u$ be a function on $\K_0$ which is rotationally symmetric, i.\,e. constant in $\theta$. 
Then 
\begin{align*}
\Delta_{\K_0}u&=\frac{1}{\cosh^2(a\zeta-s)}\frac{\partial^2u}{\partial\zeta^2},
\end{align*}
and we shall consider the Jacobi operator $J=\Delta_{\K_0}+\abs{A_{\K_0}}^2$ 
and the eigenvalue problem 
\begin{align*}
\left\{\begin{aligned}
Ju&=-\lambda u \\
u'(0,\cdot)&=0                        &(\text{Neumann condition on }&\partial_0\K_0)     \\		
u'(h,\cdot)&=\cosh(a h-s)\,u(h,\cdot) &(\text{Robin   condition on }&\partial_\perp\K_0)
\end{aligned}\right.
\end{align*}
Since $u_2$ must change sign, there exists $z_0\in\interval{0,h}$ such that $u_2(z_0)=0$. 
Multiplying the eigenvalue equation 
\begin{align}
\label{eqn:eigenvalue-equation}
\frac{\partial^2u_2}{\partial\zeta^2}
+\frac{a^2+a^{-2}}{\cosh^2(a\zeta-s)}u_2
=-\lambda_2u_2\cosh^2(a\zeta-s)
\end{align}
with $u_2$ and integrating from $\zeta=0$ to $\zeta=z_0$, we obtain 
\begin{align*}
\int^{z_0}_{0}-\lambda_2u_2^2\cosh^2(a\zeta-s)\,d\zeta
&=-\int^{z_0}_{0}\abs{u_2'}^2\,d\zeta
+\int^{z_0}_{0}\frac{a^2+a^{-2}}{\cosh^2(a\zeta-s)}u_2^2\,d\zeta.
\end{align*}
Since $u(z_0)=0$, we can obtain the Poincaré-type inequality 
\begin{align*}
\int^{z_0}_{0}\abs{u_2(\zeta)}^2\,d\zeta
&=\int^{z_0}_{0}\abs[\Big]{\int^{\zeta}_{z_0}u_2'(t)\,dt}^2\,d\zeta	
\leq \int^{z_0}_{0}(z_0-\zeta)\int^{z_0}_{\zeta}\abs{u_2'(t)}^2\,dt\,d\zeta
\leq \frac{z_0^2}{2}\int^{z_0}_{0}\abs{u_2'(\zeta)}^2\,d\zeta. 
\end{align*}
Hence, 
\begin{align*}
\int^{z_0}_{0}-\lambda_2u_2^2\cosh^2(a\zeta-s)\,d\zeta
&\leq\int^{z_0}_{0}\biggl(\frac{a^2+a^{-2}}{\cosh^2(a\zeta-s)}-\frac{2}{z_0^2}\biggr)u_2^2\,d\zeta.
\end{align*}
The right-hand side is negative if 
\begin{align*}
z_0 <\sqrt{\frac{2}{a^2+a^{-2}}}\approx0.5962	
\end{align*} 
and so, in this case, we conclude $\lambda_2>0$, a contradiction.

Integrating the eigenvalue equation \eqref{eqn:eigenvalue-equation} instead from $\zeta=z_0$ to $\zeta=h$ and recalling the Robin condition $u'(h)=\cosh(ah-s)u(h)$ along $\partial_\perp\K_0$  we obtain the alternative estimate
\begin{align*}
\int^{h}_{z_0}-\lambda_2u_2^2\cosh^2(a\zeta-s)\,d\zeta
&=\abs{u_2(h)}^2\cosh(ah-s)
-\int^{h}_{z_0}\abs{u_2'}^2\,d\zeta
+\int^{h}_{z_0}\frac{a^2+a^{-2}}{\cosh^2(a\zeta-s)}u_2^2\,d\zeta
\\
&\leq
\Bigl((h-z_0)\cosh(ah-s)-1\Bigr)\int^{h}_{z_0}\abs{u_2'}^2\,d\zeta
+\int^{h}_{z_0}\frac{a^2+a^{-2}}{\cosh^2(a\zeta-s)}u_2^2\,d\zeta
\\
&\leq
\biggl( a^2+a^{-2} +\frac{2}{(h-z_0)^2}\Bigl((h-z_0)\cosh(ah-s)-1\Bigr)\biggr)
\int^{h}_{z_0}u_2^2\,d\zeta
\end{align*}
provided that $(h-z_0)\cosh(ah-s)-1<0$. 
Now the right-hand side is negative if $z_0>0.4443$.

Since the intervals $[0,0.5962]$ and $[0.4443,h]$ intersect, we anyway obtain a contradiction. 
Thus, we confirm the claim $\lambda_2>0$, as desired. 
\end{proof}

Observing (as we have already done in the previous proof) that any eigenfunction ``generating'' the index
in Lemma \ref{K0IndexAndNullity} is rotationally invariant,
we have the following obvious corollary
(which in fact can conversely
be used to prove the lemma,
with the aid of
Proposition \ref{spectralCtyWrtCoeffsAndSyms}).
In the statement $\grp^{\K_0}$ denotes the subgroup of $\Ogroup(3)$ preserving $\K_0$. 
Note that $\grp^{\K_0}$ consists of rotations about the $z$-axis and reflections through planes containing the $z$-axis.
In particular $\grp^{\K_0}$ is isomorphic to $\Ogroup(2)$, and each element of $\grp^{\K_0}$ preserves either choice of unit normal of $\K_0$.

\begin{corollary}
[Fully equivariant index and nullity of $\K_0$]
\label{K0RotSymIndexAndNullity}
With notation as above
and recalling the comments immediately preceding
Proposition \ref{spectralCtyWrtCoeffsAndSyms}, there holds
\begin{equation*}
\equivind{\grp^{\K_0}}(\indexform{\K_0}_\neum)=1
\quad \text{ and } \quad
\equivnul{\grp^{\K_0}}(\indexform{\K_0}_\neum)=0.
\end{equation*}
\end{corollary}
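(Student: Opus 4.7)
The plan is to sandwich the $\grp^{\K_0}$-equivariant index and nullity between upper bounds inherited from the finite pyramidal subgroups, via Lemma~\ref{K0IndexAndNullity}, and an elementary lower bound from a constant test function.

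First, as noted right before the statement, every element of $\grp^{\K_0}$ preserves the chosen unit normal $\nu$, so the sign homomorphism $\nrmlsgn{\nu}$ is identically $+1$ on $\grp^{\K_0}$ and, a fortiori, on each subgroup $\pyr_k$. Consequently $(\grp^{\K_0},\nrmlsgn{\nu})$-invariance is just ordinary $\grp^{\K_0}$-invariance, and similarly for $\pyr_k$. Since $\pyr_k$ is a (finite) subgroup of $\grp^{\K_0}$, every $\grp^{\K_0}$-invariant function in $\sob(\K_0,g^{\K_0})$ is automatically $\pyr_k$-invariant. Running the min-max characterization~\eqref{symminmax} over this smaller family of admissible subspaces yields
\[
\eigenvalsym{\indexform{\K_0}_\neum}{i}{\pyr_k}{\nrmlsgn{\nu}}
\leq
\eigenvalsym{\indexform{\K_0}_\neum}{i}{\grp^{\K_0}}{\nrmlsgn{\nu}}
\]
for every $i \geq 1$, and at the same time the $(\grp^{\K_0},\nrmlsgn{\nu})$-kernel of $\indexform{\K_0}_\neum$ sits inside its $(\pyr_k,\nrmlsgn{\nu})$-kernel. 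Choosing $k$ large enough for Lemma~\ref{K0IndexAndNullity} to apply, one concludes
\[
\equivind{\grp^{\K_0}}(\indexform{\K_0}_\neum) \leq 1
\quad\text{and}\quad
\equivnul{\grp^{\K_0}}(\indexform{\K_0}_\neum) = 0.
\]

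For the complementary lower bound on the index, I will exhibit a single $\grp^{\K_0}$-invariant test function on which $\indexform{\K_0}_\neum$ is strictly negative. Since $\dbdy \K_0 = \emptyset$, no Dirichlet constraint is imposed, and so the constant function $u \equiv 1$ belongs to $\sob(\K_0,g^{\K_0})$ and is manifestly $\grp^{\K_0}$-invariant. Substituting into the definition~\eqref{bilinear_form_def} of $\indexform{\K_0}_\neum$ gives
\[
\indexform{\K_0}_\neum(1,1)
= -\int_{\K_0} \abs[\big]{\twoff{\K_0}}^2 \, \hausint{2}{g^{\K_0}}
  - \int_{\partial_\perp \K_0} 1 \, \hausint{1}{g^{\K_0}}
< 0,
\]
both terms on the right being strictly negative. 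By~\eqref{symminmax} this forces the first $(\grp^{\K_0},\nrmlsgn{\nu})$-eigenvalue to be negative, whence $\equivind{\grp^{\K_0}}(\indexform{\K_0}_\neum) \geq 1$. Combined with the upper bound this yields the asserted equality to $1$.

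All the genuine analytic content has already been absorbed into the proof of Lemma~\ref{K0IndexAndNullity}: the corollary simply leverages the subgroup inclusion $\pyr_k \leq \grp^{\K_0}$ to pass from the finite-group setting to the full $\Ogroup(2)$-equivariant one, supplemented by the constant-function competitor. There is accordingly no substantive obstacle beyond citing the lemma; the only point to be careful about is the direction of monotonicity of eigenvalues with respect to the symmetry group, which here goes in the favourable direction because enlarging the group shrinks the admissible test space.
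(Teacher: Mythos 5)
Your proof is correct. The upper bound argument — monotonicity of $(\grp,\twsthom)$-eigenvalues in $\grp$ via~\eqref{symminmax}, combined with the fact that $\nrmlsgn{\nu}$ is trivial on $\grp^{\K_0}$ and $\pyr_k \leq \grp^{\K_0}$ — is essentially what the paper has in mind. For the lower bound, however, you take a genuinely different and slightly more self-contained route: you exhibit the constant function $u \equiv 1$ as an $\grp^{\K_0}$-invariant competitor with $\indexform{\K_0}_\neum(1,1) = -\int_{\K_0}\abs{\twoff{\K_0}}^2 - \hausmeas{1}{g^{\K_0}}(\partial_\perp\K_0) < 0$, which immediately forces $\equivind{\grp^{\K_0}} \geq 1$. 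The paper instead appeals to the fact, extracted via Courant's nodal domain theorem from the proof of Lemma~\ref{K0IndexAndNullity}, that the eigenfunction generating the negative eigenvalue is itself rotationally invariant, hence already lives in the $\grp^{\K_0}$-invariant space. Both deliver the same conclusion; your competitor-based argument has the advantage of not re-examining the internal structure of the eigenfunction constructed in the lemma (it needs only the sign of $\lambda_2$ from there), at the cost of being a one-off trick that would not recover the sharper fact, used by the authors for the converse implication via Proposition~\ref{spectralCtyWrtCoeffsAndSyms}, that the $\pyr_k$- and $\grp^{\K_0}$-equivariant first eigenvalues actually coincide.
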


\paragraph{Equivariant index and nullity of $\B^2$.} 
The analysis for the flat disc $\B^2$
(featured in the construction of just
one of the families)
is trivial, and the conclusions are as follows;
in the statement we write
$\indexform{\B^2}_N$
for the index form of $\B^2$
as a minimal surface with boundary
in $(\R^3,\geuc)$
subject to the Neumann boundary condition, namely
\begin{equation*}
\indexform{\B^2}_\neum
  \vcentcolon=
  \weakbf\Bigl[
    \B^2, ~
    g^{\B^2}, ~
    0, ~
    0, ~
     \emptyset, ~
    \nbdy\B^2 \vcentcolon= \partial\B^2, ~
    \emptyset
  \Bigr].
\end{equation*}

\begin{lemma}
[($\apr_{m+1}$-equivariant) index and nullity of $\B^2$]
\label{DiscIndexAndNullity}
With  notation as above,
\begin{equation*}
\ind(\indexform{\B^2}_N)=0
\quad \text{ and } \quad
\nul(\indexform{\B^2}_N)=1.
\end{equation*}
Moreover, for each integer $m \geq 0$ the antiprismatic group $\apr_{m+1}$ preserves $\B^2$ and
\begin{equation*}
\equivind{\apr_{m+1}}(\indexform{\B^2}_N)
=\equivnul{\apr_{m+1}}(\indexform{\B^2}_N)
=0.
\end{equation*}
\end{lemma}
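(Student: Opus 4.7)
The proof plan is largely elementary since the index form here degenerates to pure Dirichlet energy. First I would observe that, as $\B^2 \subset \{z=0\}\subset \R^3$ is totally geodesic, we have $\abs{\twoff{\B^2}}^2\equiv 0$, while the boundary specification already has $r\equiv 0$ and $\rbdy\B^2=\emptyset$. Hence for all $u,v\in \sob(\B^2,g^{\B^2})$
\begin{equation*}
\indexform{\B^2}_\neum(u,v)
=\int_{\B^2}g^{\B^2}(\nabla u,\nabla v)\,\hausint{2}{g^{\B^2}},
\end{equation*}
so $\indexform{\B^2}_\neum$ is non-negative, whence $\ind(\indexform{\B^2}_\neum)=0$ at once.

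Next, for the nullity, an element $u\in \eigensp{\indexform{\B^2}_\neum}{=0}$ must satisfy $\int_{\B^2}\abs{\nabla u}^2\,\hausint{2}{g^{\B^2}}=0$; this forces $u$ to be constant, and conversely every constant lies in the kernel. Thus $\eigensp{\indexform{\B^2}_\neum}{=0}$ is exactly the line of constants and $\nul(\indexform{\B^2}_\neum)=1$.

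For the $\apr_{m+1}$-equivariant statement, the key point is to identify the sign homomorphism $\nrmlsgn{\nu}$ for a chosen unit normal of $\B^2$, say $\nu=\partial_z$. Among the generators of $\apr_{m+1}$ listed in \eqref{standardSymGrps}, the reflection $\refl_{\{\theta=\pi/(2(m+1))\}}$ through a vertical plane fixes $\partial_z$, so $\nrmlsgn{\nu}$ takes the value $+1$ there, whereas the half-turn $\rot_{\{y=z=0\}}^\pi$ sends $(x,y,z)\mapsto(x,-y,-z)$ and thus flips $\partial_z$, giving $\nrmlsgn{\nu}(\rot_{\{y=z=0\}}^\pi)=-1$. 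Every $\apr_{m+1}$-equivariant function must therefore be odd under the half-turn, and in particular the only $\apr_{m+1}$-equivariant constant is $0$. Combining this with the fact already established that $\eigensp{\indexform{\B^2}_\neum}{=0}$ consists solely of constants gives $\equivnul{\apr_{m+1}}(\indexform{\B^2}_\neum)=0$, while $\equivind{\apr_{m+1}}(\indexform{\B^2}_\neum)=0$ is immediate from the absence of negative eigenvalues. No step here is genuinely hard; the only care required is in tracking the action of $\apr_{m+1}$ on the normal bundle to correctly compute $\nrmlsgn{\nu}$ as in Example~\ref{ex:SelfCongr}.
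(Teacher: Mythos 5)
Your proof is correct and follows essentially the same route as the paper's. The only small departure is that you rule out index directly by observing that the index form degenerates to the non-negative Dirichlet energy, whereas the paper suggests appealing to the Hopf boundary point lemma; your argument is arguably the more direct of the two. For the equivariant part you flesh out the paper's one-line remark (``the constants are not $\apr_{m+1}$-equivariant'') by explicitly computing $\nrmlsgn{\partial_z}$ on the generators of $\apr_{m+1}$, which is exactly the intended content. All steps check out.
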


\begin{proof}
The first line of equalities is clear,
since the Jacobi operator on $\B^2$
is simply the standard Laplacian,
whose Neumann kernel is spanned by the constants (to rule out index one can for instance just appeal to the Hopf boundary point lemma).
The invariance of $\B^2$ under each $\apr_{m+1}$
is obvious,
and the proof is then completed
by the observation that the constants
are not $\apr_{m+1}$-equivariant
(for any $m \geq 0$).
\end{proof}

From Proposition \ref{DiscIndexAndNullity}
we immediately obtain,
analogously to Corollary \ref{K0RotSymIndexAndNullity}
from Proposition \ref{K0IndexAndNullity},
the following corollary.
In the statement $\Ogroup(2)$ refers
to the group of intrinsic isometries of $\B^2$
(extended to isometries of $\R^2$),
rather than to some subgroup of $\Ogroup(3)$,
and we write
$1$ and $\det$
for respectively the trivial
and determinant homomorphisms
$\Ogroup(2) \to \Ogroup(1)$.
The $(\Ogroup(2),1)$-invariant
functions on $\B^2$ are thus
the radial functions,
while the space of $(\Ogroup(2),\det)$-invariant
functions is trivial.

\begin{corollary}
[Indices and nullities of $\B^2$ under $\Ogroup(2)$ actions]
\label{DiscRotSymIndexAndNullity}
With notation as above we have
\begin{align*}
\symind{\Ogroup(2)}{1}(\indexform{\B^2}_\neum)&=0,
&
\symnul{\Ogroup(2)}{1}(\indexform{\B^2}_\neum)&=1,
&
\symind{\Ogroup(2)}{\det}(\indexform{\B^2}_\neum)
=\symnul{\Ogroup(2)}{\det}(\indexform{\B^2}_\neum)&=0.
\end{align*}
\end{corollary}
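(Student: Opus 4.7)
The plan is to deduce the corollary directly from Lemma \ref{DiscIndexAndNullity} by characterizing, for each of the two homomorphisms $\twsthom\in\{1,\det\}$, the subspace of $(\Ogroup(2),\twsthom)$-invariant functions in $\sob(\B^2,\met{\B^2})$ and restricting the already established spectral picture to it.

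For the trivial homomorphism $\twsthom=1$, the $(\Ogroup(2),1)$-invariant functions are precisely the rotationally symmetric ones. The constant functions lie in this subspace and already span the full one-dimensional kernel of $\indexform{\B^2}_\neum$ produced by Lemma \ref{DiscIndexAndNullity}, so $\symnul{\Ogroup(2)}{1}(\indexform{\B^2}_\neum)=1$. Moreover, since $\eigenspsym{\indexform{\B^2}_\neum}{<0}{\Ogroup(2)}{1}$ is a subspace of $\eigensp{\indexform{\B^2}_\neum}{<0}$, its dimension is bounded by $\ind(\indexform{\B^2}_\neum)=0$, hence $\symind{\Ogroup(2)}{1}(\indexform{\B^2}_\neum)=0$ as well.

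For the determinant homomorphism $\twsthom=\det$, any $(\Ogroup(2),\det)$-invariant function $u$ on $\B^2$ satisfies $u\circ\phi=\det(\phi)\,u$ for all $\phi\in\Ogroup(2)$. Restricting this identity to proper rotations, for which $\det=+1$, forces $u$ to be radially symmetric; restricting it instead to any reflection $\phi$ (for which $\det=-1$) gives $u=-u\circ\phi$, but $u\circ\phi=u$ by radial symmetry, so $u\equiv 0$. Thus the subspace of $(\Ogroup(2),\det)$-invariant functions in $\sob(\B^2,\met{\B^2})$ is trivial, immediately yielding $\symind{\Ogroup(2)}{\det}(\indexform{\B^2}_\neum)=\symnul{\Ogroup(2)}{\det}(\indexform{\B^2}_\neum)=0$. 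The argument is essentially bookkeeping on invariant subspaces and presents no substantial obstacle.
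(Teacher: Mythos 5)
Your proof is correct and takes essentially the same approach as the paper: the paper's preamble to the corollary records exactly the two facts you use, namely that $(\Ogroup(2),1)$-invariant functions are the radial ones and that the space of $(\Ogroup(2),\det)$-invariant functions is trivial, and then reads off the result from Lemma~\ref{DiscIndexAndNullity}. Your version simply spells out the short bookkeeping that the paper leaves implicit.
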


\paragraph{Equivariant index and nullity of $\two{\tow}$ and $\three{\tow}$.}

We recall how, away from the equator $\Sp^1$,
the surfaces $\zerogen_n$ and $\threebc_m$ are constructed
as graphs over (subsets of) $-\K_0 \cup \K_0$ and $-\K_0 \cup \B^2 \cup \K_0$.
In the vicinity of $\Sp^1$ the surfaces are instead modeled on certain singly periodic minimal surfaces
that belong to a family discovered by Karcher \cite{KarcherScherk}
and generalize the classical singly periodic minimal surfaces of Scherk \cite{Scherk}.
We now summarize the key properties of such models, to the extent needed later. 

\begin{proposition}[Desingularizing models]
\label{towExistenceAndBasicProps}
There exist in $\R^3$
complete, connected, properly embedded
minimal surfaces
$\two{\tow}$ and $\three{\tow}$
having the following properties,
which uniquely determine the surfaces
up to congruence:
\begin{enumerate}[label={\normalfont(\roman*)}]
\item
$\two{\tow}$ and $\three{\tow}$
are periodic in the $y$ direction
with period $2\pi$
and the corresponding quotient surfaces
have genus zero. 
\item
$\two{\tow}$ and $\three{\tow}$
are invariant under
$\refl_{\{x=0\}}$,
$\refl_{\{y=\pi/2\}}$,
and $\refl_{\{y=-\pi/2\}}$. 
\item
$\two{\tow}$ is invariant under
$\refl_{\{z=0\}}$
and $\three{\tow}$ under 
$\refl_{\{y=z=0\}}$. 
\item
$\two{\tow}$ has four ends
and $\three{\tow}$ has six ends,
all asymptotically planar. 
\item
Each of $\two{\tow}$ 
and $\three{\tow}$
has an end contained in
$\{x \leq 0\} \cap \{z \geq 0\}$
whose asymptotic plane intersects $\{z=0\}$
at the same angle $\omega_0>0$ 
at which $\K_0$ intersects $\B^2$,
and $\three{\tow}$ has additionally
$\{z=0\}$ as an asymptotic plane. 
\item\label{towExistenceAndBasicProps-fb}
$\two{\towdom}\vcentcolon=\two{\tow}\cap\{x \leq 0\} \cap \{\abs{y} \leq \pi/2\}$ and 
$\three{\towdom}\vcentcolon=\three{\tow}\cap\{x \leq 0\} \cap \{\abs{y} \leq \pi/2\}$ 
are connected free boundary minimal surfaces
in the half slab
$\{x \leq 0\} \cap \{\abs{y} \leq \pi/2\}$,
with
$\two{\towdom}$ invariant under $\refl_{\{z=0\}}$
and $\three{\towdom}$ invariant under $\refl_{\{y=z=0\}}$ 
(cf. Figure \ref{fig:Mfb}).
\item
Each of $\two{\towdom} \setminus \{z=0\}$ and $\three{\towdom} \setminus \{y=z=0\}$ has exactly two connected components. 
\item
$\two{\tow}$ has no umbilics,
while the set of umbilic points of $\three{\tow}$
is $\{(0,n\pi,0)\st n \in \Z\}$. 
\item
The Gauss map $\two{\nu}$ of $\two{\tow}$
restricted to the closure of either component
of $\two{\towdom} \setminus \{z=0\}$
is a bijection onto a solid spherical triangle with all sides geodesic segments of length $\pi/2$
(in other words: a quarter hemisphere),
less a point in the interior of one side. 
\item
The Gauss map $\three{\nu}$ of $\three{\tow}$
restricted to the closure of either component
of $\three{\towdom} \setminus \{y=z=0\}$
is a bijection onto
a spherical lune of dihedral angle $\pi/2$
(in other words: a half hemisphere),
less one vertex and a point in the interior of one side.
\end{enumerate}
\end{proposition}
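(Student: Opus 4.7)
The plan is to construct both surfaces within Karcher's family of Scherk-type saddle towers \cite{KarcherScherk} via the Weierstrass--Enneper representation. Passing to the quotient by the translation of period $2\pi$ in the $y$-direction, the underlying Riemann surface will be $\Sp^2 \setminus \{p_1,\ldots,p_N\}$, with $N=4$ for $\two{\tow}$ and $N=6$ for $\three{\tow}$; the punctures correspond to the asymptotically planar ends. The Gauss map will be chosen, after a M\"obius normalization, as the identity map $\Sp^2 \to \Sp^2$, so that the images of the punctures prescribe the unit normals of the asymptotic planes (giving the angle $\omega_0$ of property (v)), and the height differential is taken to be the unique meromorphic $1$-form with simple poles at the $p_i$ whose residues are dictated by the planar-end condition combined with the imposed symmetries.

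\textbf{Verification of the listed properties.} Items (i)--(v) then follow immediately from the structure of the Weierstrass data: the periodicity in $y$ from the imaginary period of the height differential, the symmetries from the equivariant choice of punctures and data (so that the three reflection symmetries become antiholomorphic involutions of the punctured sphere), the end count from the number of punctures, and the asymptotic angles from the prescribed Gauss-map values at the $p_i$. The free boundary property (vi) is a direct consequence of the Schwarz reflection principle: the planes $\{x=0\}$ and $\{y=\pm\pi/2\}$ lie inside the fixed-point sets of reflection symmetries of $\tow$, so $\tow$ meets each of them orthogonally, whence $\two{\towdom}$ and $\three{\towdom}$ meet every bounding side of the half-slab at right angles, and connectedness follows from the (fundamental-domain) description above. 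Item (vii) is obtained by analyzing the intersection of $\tow$ with the additional symmetry plane $\{z=0\}$ (respectively the axis $\{y=z=0\}$), which by Schwarz reflection is a smooth curve dividing $\towdom$ into exactly two components. The umbilic set (viii) is read off from the common zero set of $\omega$ and $dg$; it is empty for $\two{\tow}$ and discrete, periodic, and located at the claimed points for $\three{\tow}$. Finally, (ix) and (x) follow from describing the image of the identity Gauss map on one component of $\towdom \setminus\{z=0\}$ (respectively $\towdom \setminus \{y=z=0\}$): the symmetry count together with Riemann--Hurwitz identifies this image with a spherical triangle of three $\pi/2$ geodesic sides (respectively a lune of dihedral angle $\pi/2$), and the puncture(s) of $\Sp^2$ lying in the interior of a side of that region correspond precisely to the planar end(s) whose asymptotic normal is not itself a vertex of the region.

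\textbf{Main obstacle and uniqueness.} The genuine difficulty is verifying Scherk's period condition, namely that the periods of $\tfrac{1}{2}(1-g^2)\omega$, $\tfrac{i}{2}(1+g^2)\omega$ and $g\omega$ around each nontrivial cycle yield only the single translation $2\pi\partial_y$ with all other horizontal periods vanishing. Thanks to the imposed reflection symmetries, horizontal periods automatically cancel in pairs, leaving just one real modulus to tune; this parameter is then pinned down by the requirement that the realized asymptotic angle equals $\omega_0$. I plan to handle this step by invoking Karcher's explicit parametrization of the saddle-tower moduli, combined with a monotonicity / implicit function theorem argument showing that the map from the free parameter to the realized angle is a homeomorphism onto an interval containing $\omega_0$; embeddedness is then a consequence of Karcher's criteria. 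Uniqueness up to congruence is obtained along the same lines: the symmetry hypothesis constrains the quotient Riemann surface and the Gauss map up to a finite ambiguity, which is resolved by the value $\omega_0$, after which the height differential is forced by its pole data and the period condition.
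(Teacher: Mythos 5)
The paper itself does not prove this proposition: the text immediately following the statement is a pointer to Section 3 and Appendix A of \cite{CSWnonuniqueness}, where the two towers are constructed. Your plan -- Weierstrass representation on a punctured sphere, symmetries realized as antiholomorphic involutions, Schwarz reflection for the free boundary and nodal claims -- is indeed the methodology used there, so the framework you choose is the right one.

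There is, however, a concrete error in your normalization of the Gauss map, which would derail the construction of $\three{\tow}$. You propose to take the Gauss map as the identity on $\Sp^2$, after a M\"obius normalization, for both towers. For $\two{\tow}$ this is consistent: it is umbilic-free by item (viii), so the Gauss map of the compactified quotient is an unbranched holomorphic self-map of $\Sp^2$, hence a biholomorphism, and item (ix) sends a fundamental eighth of the quotient onto a quarter hemisphere of area $4\pi/8$. But for $\three{\tow}$ item (viii) records two umbilics per period, and the umbilics of a minimal surface are precisely the branch points of its Gauss map; moreover item (x) sends a fundamental eighth of the quotient bijectively onto a half hemisphere of area $\pi$, so the Gauss map of the compactified quotient has total degree two (indeed, a degree-two self-map of $\Sp^2$ with two simple branch points is exactly what Riemann--Hurwitz permits). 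A degree-two map is not the identity. Your Weierstrass data for $\three{\tow}$ -- in particular your recipe for the height differential, and your reading of the umbilic set as the ``common zero set of $\omega$ and $dg$'' -- would have to be rebuilt on this corrected holomorphic picture: the branch points are the zeros of $dg$, and the degree and branching scheme of $g$ must be fixed first, before the residue conditions on the height differential make sense.

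Two further steps are too thin. The reduction of Scherk's period problem to ``one real modulus'' is genuine for $\two{\tow}$ (Karcher's one-parameter family of four-ended towers), but for $\three{\tow}$ -- six ends, two horizontal and four tilted at prescribed angle $\omega_0$ -- the assertion that all horizontal periods cancel by symmetry leaving a single modulus is not automatic from the reflection symmetries alone; it must be established by the residue and period computations carried out in the reference. Likewise your appeal to Riemann--Hurwitz to identify the Gauss image of a fundamental domain with a quarter or half hemisphere proves only a degree count, not a geometric identification of the image; the correct argument is a boundary-matching one: each boundary arc of a fundamental domain lies in a symmetry plane or axis, so by Schwarz reflection the Gauss map sends it into a great circle, and the resulting geodesic polygon together with the behavior of the normal at the punctures pins down the triangle or lune.
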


We refer the reader to Section 3 and Appendix A of \cite{CSWnonuniqueness} for further details and a fine analysis of the properties of both surfaces in question. 
The free boundary minimal surfaces $\two{\towdom}$ and $\three{\towdom}$ are visualized in Figure \ref{fig:Mfb}. 

\begin{figure}%
\pgfmathsetmacro{\thetaO}{45}
\pgfmathsetmacro{\phiO}{135}
\pgfmathsetmacro{\angle}{25.38}
\pgfmathsetmacro{\globalscale}{0.75}
\tdplotsetmaincoords{\thetaO}{\phiO}
\pgfmathsetmacro{\offset}{0.64}
\tdplottransformmainscreen{-sin(\angle)}{cos(\angle)}{0}
\pgfmathsetmacro{\VecHx}{\tdplotresx}
\pgfmathsetmacro{\VecHy}{\tdplotresy}
\tdplottransformmainscreen{-cos(\angle)}{-sin(\angle)}{0}
\pgfmathsetmacro{\VecVx}{\tdplotresx}
\pgfmathsetmacro{\VecVy}{\tdplotresy}
\tdplottransformmainscreen{0}{0}{1}
\pgfmathsetmacro{\VecZx}{\tdplotresx}
\pgfmathsetmacro{\VecZy}{\tdplotresy} 
\begin{tikzpicture}[tdplot_main_coords,line cap=round,line join=round,scale=\globalscale]
\draw(0,0,0)node[scale=\globalscale]{\includegraphics[page=1]{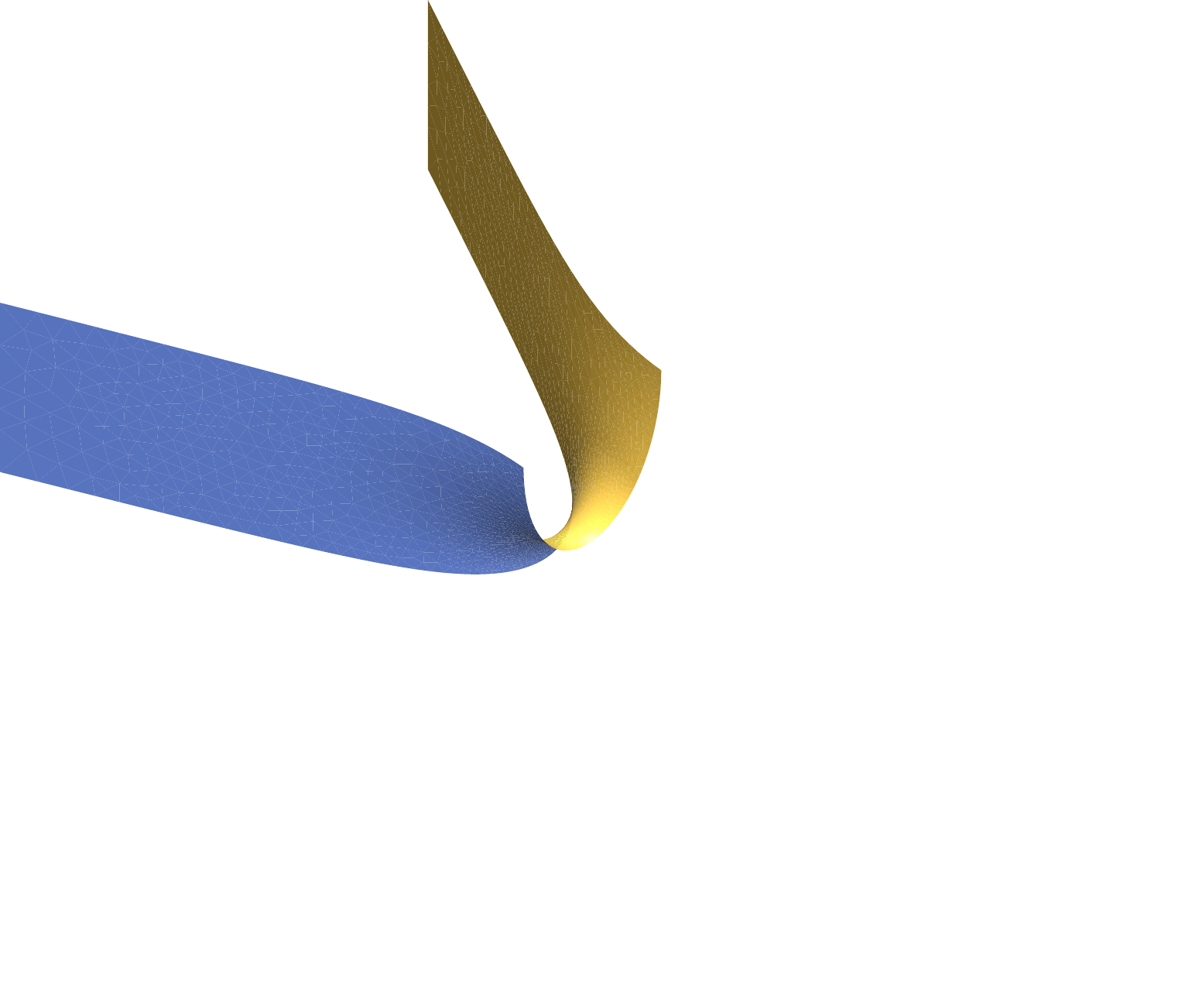}};
\pgfresetboundingbox
\draw[->,semithick,semithick](0,-0,0)--(0,2,0)node[below right,inner sep=1pt]{$x$};
\draw[->,semithick,semithick](0,0,-pi/2)--(0,0,5*pi/2+0.5)node[above,inner sep=1pt]{$y$};
\draw[->,semithick,semithick](-pi,0,0)--(pi+1,0,0)node[left,inner sep=1pt]{$z$};
\pgfmathsetmacro{\radius}{7.75}
\tdplotdrawarc[thick]{(\offset,0,pi/2)}{4}{-90}{-90+\angle}{cm={\VecHx ,\VecHy ,\VecVx ,\VecVy ,(0,0)},anchor=165}{$\omega_0$}
\draw[densely dashed]
(-0,0,     pi/2)--(\offset,0,pi/2) 
(\offset,0,pi/2)--({\radius*sin(\angle)+\offset},{-\radius*cos(\angle)},pi/2)coordinate(P1)
(\offset,0,pi/2)--(\offset,-4,pi/2)
(-0,0,     -pi/2)--(\offset,0,-pi/2) 
(\offset,0,-pi/2)--({\radius*sin(\angle)+\offset},{-\radius*cos(\angle)},-pi/2)coordinate(P2);
\draw[stealth-stealth,thick](P1)--(P2)node[midway,left,inner sep=1.5pt,cm={\VecHx ,\VecHy ,\VecZx ,\VecZy ,(0,0)}]{$\pi$}; 
\end{tikzpicture}
\hfill
\pgfmathsetmacro{\offset}{(tan(\angle)*(cos(\angle)-1/2)*ln((2/cos(\angle))-1)-2*sin(\angle)*ln((1/cos(\angle))-1))}
\begin{tikzpicture}[tdplot_main_coords,line cap=round,line join=round,scale=\globalscale]
\draw(0,0,pi/2)node[scale=\globalscale]{\includegraphics[page=2]{figures-fmbs.pdf}};
\pgfresetboundingbox
\draw[->,semithick](0,0,-pi/2)--(0,0,5*pi/2+0.5)node[above,inner sep=1pt]{$y$};
\draw[->,semithick](-pi,0,0)--(pi+1,0,0)node[left,inner sep=1pt]{$z$};
\draw[->,semithick](0,-8.66,0)--(0,2,0)node[below right,inner sep=1pt]{$x$}; 
\tdplottransformmainscreen{-cos(\angle)}{-sin(\angle)}{0}
\pgfmathsetmacro{\VecVx}{\tdplotresx}
\pgfmathsetmacro{\VecVy}{\tdplotresy}
\pgfmathsetmacro{\radius}{7}
\tdplotdrawarc[thick]{(\offset,0,pi/2)}{4}{-90}{-90+\angle}{cm={\VecHx ,\VecHy ,\VecVx ,\VecVy ,(0,0)},anchor=165}{$\omega_0$}
\draw[densely dashed](-0,0,pi/2)--(\offset,0,pi/2) 
(\offset,0,pi/2)--({\radius*sin(\angle)+\offset},{-\radius*cos(\angle)},pi/2)coordinate(P1)
(\offset,0,pi/2)--(\offset,-4,pi/2)
(-0,0,-pi/2)--(\offset,0,-pi/2) 
(\offset,0,-pi/2)--({\radius*sin(\angle)+\offset},{-\radius*cos(\angle)},-pi/2)coordinate(P2);
,semithick,semithick\draw[stealth-stealth,thick](P1)--(P2)node[midway,left,inner sep=1.5pt,cm={\VecHx ,\VecHy ,\VecZx ,\VecZy ,(0,0)}]{$\pi$}; 
\end{tikzpicture}
\caption{The minimal surfaces $\two{\towdom}$ (left) and $\three{\towdom}$ (right) as defined in Proposition \ref{towExistenceAndBasicProps} \ref{towExistenceAndBasicProps-fb}.}%
\label{fig:Mfb}%
\end{figure}

Next, we want to examine
the index and nullity
of $\two{\towdom}$
and $\three{\towdom}$
as free boundary minimal surfaces
in the half slab
$\{x \leq 0\} \cap \{\abs{y} \leq \pi/2\}$.
Because the boundary of such a domain is piecewise planar,
the corresponding Robin condition
associated with the index forms
of these surfaces is in fact homogeneous (Neumann). 

Let us prove an ancillary result.
We will observe
(in the proof of Lemma \ref{towIndexAndNullity},
to follow shortly)
that by virtue of
the behavior of the Gauss maps
described in Proposition \ref{towExistenceAndBasicProps}
the analysis of the index and nullity
of $\two{\towdom}$ and $\three{\towdom}$
reduces to the following
index and nullity computations
for boundary value problems on suitable Lipschitz domains of $\Sp^2$.

\begin{lemma}
[Index and nullity of $\Delta_{\gsph}+2$ on images of Gauss maps of $\two{\towdom}$ and $\three{\towdom}$]
\label{lowSpectraInSphere}
Set
\begin{align*}
\two{\lipdom_{\Sp^2}}
&\vcentcolon=\Sp^2\cap\{x>0\}\cap\{y>0\}\cap\{z>0\},
\\
\three{\lipdom_{\Sp^2}}
&\vcentcolon=\Sp^2\cap\{x>0\}\cap\{y>0\}.
\end{align*}
Then we have the following indices and nullities,
where 
the final row holds for any
$\zeta \in \interval{-1,1}$
and, throughout,
$\weakbf$ 
is the bilinear form \eqref{bilinear_form_def}
with $\lipdom$ as indicated,
$g=\gsph$ the round metric,
$\potential=2$
(so associated to the Schr\"{o}dinger operator
$\Delta_{\gsph}+2$),
$
 \rbdy\lipdom
 =
 \emptyset
$,
$\dbdy\lipdom$
as indicated,
and
$
 \nbdy\lipdom
 =
 \partial\lipdom
   \setminus
   \closure{\dbdy\lipdom}
$:
\begin{equation}\label{fig:ChartSphere}
\begin{array}{c|c||c|c|}
\lipdom & \dbdy\lipdom & \ind(\weakbf) & \nul(\weakbf) \\
\hline\hline
\multirow{2}{*}{$\two{\lipdom_{\Sp^2}}$} 
& \emptyset & 1 & 0 \\
\cline{2-4}
& \{z=0\} & 0 & 1 \\
\hline
\multirow{3}{*}{$\three{\lipdom_{\Sp^2}}$} 
& \emptyset & 1 & 1 \\
\cline{2-4} 
& \{x=0\} & 0 & 1 \\
\cline{2-4}
& \{x=0\} \cap \{z>\zeta\} & 1 & 0 \\
\hline
\end{array}
\end{equation}
\end{lemma}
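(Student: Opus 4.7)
My plan is to handle the first four rows by a reflection argument that reduces the problem to counting spherical harmonics on $\Sp^2$, and to handle the final row (with partial Dirichlet along $\{x=0\}\cap\Sp^2$) by a strict eigenvalue monotonicity argument supplemented by an explicit test-function construction.

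For rows~1--4, given an eigenfunction $u$ with eigenvalue $\lambda$, I would successively extend $u$ across each coordinate-plane arc of $\partial\lipdom$, using even reflection on Neumann arcs and odd reflection on Dirichlet arcs. Each such reflection preserves the equation $\Delta_{\gsph}u+2u=0$, and the matching condition on the trace (vanishing value or vanishing normal derivative) makes the extension $H^1$-regular. After finitely many steps, $u$ extends to an $H^1(\Sp^2)$ weak solution of $\Delta_{\gsph}u+2u=0$, hence to a spherical harmonic of degree $\ell$ with $\lambda=\ell(\ell+1)-2$. Only $\ell=0$ (constants) gives $\lambda=-2<0$, and only $\ell=1$ (restrictions of $x$, $y$, $z$) gives $\lambda=0$. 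The stated indices and nullities follow by counting which of these survive the $\Z_2$-parity constraints (even under Neumann reflections, odd under Dirichlet ones) dictated by the boundary decomposition.

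Row~5 is the main obstacle: the boundary type changes within the single arc $\{x=0\}\cap\partial\lipdom$, and neither a globally even nor odd reflection across $\{x=0\}$ gives an $H^1(\Sp^2)$ weak solution. For the nullity I would instead use strict monotonicity under penalization of the Dirichlet arc: the row-3 problem has simple second eigenvalue $\lambda_2^{(3)}=0$ realized (uniquely up to scalar) by the restriction of $z$, which does not vanish identically on the Dirichlet arc $D_\zeta\vcentcolon=\{x=0\}\cap\{z>\zeta\}\cap\partial\lipdom$ for any $\zeta\in\interval{-1,1}$. Approximating row~5 by a Robin penalty with parameter $s\to+\infty$, the Feynman--Hellmann formula gives $\frac{d\lambda_2}{ds}\big|_{s=0}=\int_{D_\zeta}z^2\,ds>0$, whence by monotonicity $\lambda_2^{(5)}>0$ strictly. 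For the index, the inclusion of admissible spaces gives $\ind(\weakbf^{(5)})\leq\ind(\weakbf^{(3)})=1$; for the matching lower bound I would take the test function $u=x+\epsilon\psi$, with $\psi\in C^\infty(\closure{\lipdom})$ a non-negative bump supported near a point of the Neumann arc $\{x=0\}\cap\{z<\zeta\}\cap\partial\lipdom$ and vanishing on $D_\zeta$. Integration by parts using $(\Delta_{\gsph}+2)x=0$ yields $\weakbf(x,x)=0$ and $\weakbf(x,\psi)=-\int_{\{x=0\}\cap\{z<\zeta\}\cap\partial\lipdom}\psi\,ds<0$, so $\weakbf(u,u)<0$ for small $\epsilon>0$, hence $\ind(\weakbf^{(5)})\geq 1$. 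Combining, $\lambda_1^{(5)}<0<\lambda_2^{(5)}$, yielding $\ind=1$ and $\nul=0$ as claimed.
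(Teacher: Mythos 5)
Your treatment of the first four rows is, in substance, what the paper does: the paper simply invokes Lemma~\ref{dom_red_ext} (reduction/extension of domain under symmetries) rather than carrying out the reflections by hand, but both amount to identifying the problem on the fundamental domain with the parity-constrained spectral problem for $\Delta_{\Sp^2}+2$ on the whole sphere and then reading off the answer from degree-$0$ and degree-$1$ spherical harmonics.

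The final row is where you diverge. The paper's argument is a single min-max sandwich: since the admissible space of row~5 sits between those of rows~4 and~3, every eigenvalue of row~5 is trapped between the corresponding eigenvalues of rows~3 and~4, and the unique continuation principle forces both of those inequalities to be strict (equality would produce a solution with both vanishing trace and vanishing conormal derivative on a nontrivial arc of $\{x=0\}$). Since rows~3 and~4 give $\lambda_1^{(3)}<0=\lambda_1^{(4)}$ and $\lambda_2^{(3)}=0<\lambda_2^{(4)}$, strictness yields $\lambda_1^{(5)}<0<\lambda_2^{(5)}$ at one stroke, hence $\ind=1$, $\nul=0$. Your route instead establishes the two halves of this by separate arguments: $\lambda_2^{(5)}>0$ via Feynman--Hellmann applied to a Robin penalization of row~3 at the simple eigenvalue $0$ (using that $z|_{D_\zeta}\not\equiv 0$), and $\ind\geq 1$ via the explicit competitor $x+\epsilon\psi$ (which also makes the min-max upper bound $\ind\leq\ind(\weakbf^{(3)})$ redundant once $\nul=0$ is known). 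Both strategies are correct. The paper's is shorter and gets both strict inequalities from one principle; yours is more constructive and displays the mechanism behind the strictness concretely. Two small things to make your row-5 argument airtight: you should record (with a reference or a short proof) the standard facts that the Robin eigenvalues $\lambda_2(s)$ are monotone nondecreasing in $s$ and converge to $\lambda_2^{(5)}$ as $s\to\infty$, since the Feynman--Hellmann derivative at $s=0$ alone gives strict positivity only for small $s$; and you should verify that $x$ is indeed admissible for the row-5 problem, which it is because it vanishes on all of $\{x=0\}$ and has vanishing conormal derivative on $\{y=0\}\cap\Sp^2$.
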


\begin{proof}
By Lemma \ref{dom_red_ext}
we can fill in the first four rows
by identifying the index and nullity
of $\Delta_{\gsph}+2$ on the entire sphere
subject to appropriate symmetries,
the relevant spherical harmonics
being simply the restrictions
of affine functions on $\R^3$.
Lemma \ref{dom_red_ext}
is not directly applicable to the final row,
but
by the min-max characterization
\eqref{symminmax}
of eigenvalues
the $i$\textsuperscript{th}
eigenvalue for the bilinear form
specified in that row
must lie between
the $i$\textsuperscript{th}
eigenvalues of the forms
specified in the two preceding rows
($\geq$ that of the third row
and $\leq$ that of the fourth);
moreover,
the unique continuation principle
implies that both inequalities must be strict
($>$ and $<$).
The entries of the final row now follow,
concluding the proof.
\end{proof}

We shall fix components
of $\two{\towdom} \setminus \{z=0\}$
and $\three{\towdom} \setminus \{y=z=0\}$ once and for all
and write $\two{\lipdom}$ and $\three{\lipdom}$
for their respective interiors:
it follows from
Proposition \ref{towExistenceAndBasicProps}
that $\two{\nu}|_{\two{\lipdom}}$
and $\three{\nu}|_{\three{\lipdom}}$
are diffeomorphisms onto their images,
which we can and will identify
with, respectively,
the triangle $\two{\lipdom_{\Sp^2}}$
and lune $\three{\lipdom_{\Sp^2}}$
of Lemma \ref{lowSpectraInSphere}, and in particular
\begin{align*}
\{x=0\} \cap \partial \two{\lipdom_{\Sp^2}}
  &=
  \two{\nu}(\{x=0\} \cap \partial \two{\lipdom}),\\
\{y=0\} \cap \partial \two{\lipdom_{\Sp^2}} 
  &=
  \closure{\two{\nu}(\{y=\pm \pi/2\} \cap \partial \two{\lipdom})},\\
 \{z=0\} \cap \partial \two{\lipdom_{\Sp^2}}
  &=
  \two{\nu}(\{z=0\} \cap \partial \two{\lipdom}), 
\shortintertext{and} 
\{x=0\} \cap \partial \three{\lipdom_{\Sp^2}}
  &=
  \closure{
    \three{\nu}
      (
        (\{x=0\} \cup \{y=z=0\})
        \cap
        \partial \three{\lipdom}
      )
  }, \\
 \{y=0\} \cap \partial \three{\lipdom_{\Sp^2}} 
  &=
  \closure{\three{\nu}(\{y=\pm \pi/2\} \cap \partial \three{\lipdom})}.
\end{align*}
In what follows, recalling e.\,g. that the index of a minimal surface, when finite, can be computed by exhaustion (cf. \cite{Fis85}) we conveniently introduce this notation, which pertains 
certain truncations of $\two{\tow}$, $\three{\tow}$,
$\two{\towdom}$, and $\three{\towdom}$. 
\begin{figure}\centering
\pgfmathsetmacro{\angle}{25.38}
\pgfmathsetmacro{\offset}{0.64}
\pgfmathsetmacro{\radius}{6}
\pgfmathsetmacro{\zvec}{0.7}
\pgfmathsetmacro{\Rpar}{3}
\pgfmathsetmacro{\spar}{6}
\pgfmathsetmacro{\globalscale}{0.85}
\pgfmathsetmacro{\xmax}{\textwidth/2.001cm/\globalscale}
\begin{tikzpicture}[line cap=round,line join=round,scale=\globalscale]
\draw(0,\offset)++(\angle:\radius)coordinate(W1)  
     (0,\offset)++(180-\angle:\radius)coordinate(W2) 
     (0,-\offset)++(180+\angle:\radius)coordinate(W3) 
     (0,-\offset)++(-\angle:\radius)coordinate(W4)
; 
\draw[fill=black!45](W1)
..controls+(180+\angle:0.78*\radius)and+(0,\zvec)..(0.83,0)
..controls+(0,-\zvec)and+(180-\angle:0.78*\radius)..(W4)
..controls(0,-\offset)and(0,-\offset)..(W3) 
..controls+(\angle:0.78*\radius)and+(0,-\zvec)..(-0.83,0)
..controls+(0,\zvec)and+(-\angle:0.78*\radius)..(W2)
..controls(0,\offset)and(0,\offset)..cycle; 
\begin{scope}
\clip(0,0)circle(\Rpar); 
\draw[fill=black!15](W1)
..controls+(180+\angle:0.78*\radius)and+(0,\zvec)..(0.83,0)
..controls+(0,-\zvec)and+(180-\angle:0.78*\radius)..(W4)
..controls(0,-\offset)and(0,-\offset)..(W3) 
..controls+(\angle:0.78*\radius)and+(0,-\zvec)..(-0.83,0)
..controls+(0,\zvec)and+(-\angle:0.78*\radius)..(W2)
..controls(0,\offset)and(0,\offset)..cycle; 
\end{scope}
\begin{scope}[color={cmyk,1:magenta,0.5;cyan,1},semithick]
\draw[dotted]
(0,0)--++( \angle: \spar)node[below,pos=0.95,sloped]{$s$}--([turn] 90:1.33)
(0,0)--++(-\angle: \spar)node[above,pos=0.95,sloped]{$s$}--([turn]-90:1.33)
(0,0)--++( \angle:-\spar)node[above,pos=0.95,sloped]{$s$}--([turn] 90:1.33)
(0,0)--++(-\angle:-\spar)node[below,pos=0.95,sloped]{$s$}--([turn]-90:1.33)
;
\draw[-stealth](0,0)--++( \angle: 1)node[right]{$\tau^{(1)}$};
\draw[-stealth](0,0)--++(-\angle: 1)node[right]{$\tau^{(4)}$};
\draw[-stealth](0,0)--++( \angle:-1)node[left]{$\tau^{(3)}$};
\draw[-stealth](0,0)--++(-\angle:-1)node[left]{$\tau^{(2)}$};
\end{scope}
\draw[->](-\xmax,0)--(\xmax,0)node[above left]{$x$};
\draw[->](0,-4.5)--(0,4.5)node[below right]{$z$};
\draw[dashed](0,0)circle(\Rpar);
\draw[thick,latex-latex,black!50](0,0)--(0,-\Rpar)node[near end,right,inner sep=1pt]{$R_1$};
\draw[densely dotted](0,\offset)--++(\angle:\radius) 
     (0,\offset)--++(180-\angle:\radius) 
     (0,-\offset)--++(180+\angle:\radius) 
		 (0,-\offset)--++(-\angle:\radius) 
;
\draw[](W1)--++(\angle:3)   node[midway,sloped,above]{$W_1$} node[pos=-0.5,sloped,above]{$W_1(s)$}
     (W2)--++(180-\angle:3) node[midway,sloped,above]{$W_2$} node[pos=-0.5,sloped,above]{$W_2(s)$}
     (W3)--++(180+\angle:3) node[midway,sloped,below]{$W_3$} node[pos=-0.5,sloped,below]{$W_3(s)$}
		 (W4)--++(-\angle:3)    node[midway,sloped,below]{$W_4$} node[pos=-0.5,sloped,below]{$W_4(s)$}
;
\path(W4)..controls(0,-\offset)and(0,-\offset)..(W3)
node[midway,above right=1ex,fill=black!15,circle,inner sep=0pt]{$C$};
\draw(\radius*0.7,0)arc(0:\angle:\radius*0.7)node[midway,left]{$\omega_0$};
\end{tikzpicture}
\caption{A view of $\two{\tow}(s)$.} 
\label{fig:wings}%
\end{figure}%
To do so, we first fix $R_1>0$ large enough such that $\two{\tow} \setminus \{x^2+z^2=R_1^2\}$ consists of five connected components, one component $C$ in $\{x^2+z^2<R_1^2\}$
and four components $W_1$, $W_2$, $W_3$, $W_4$ in the complement,
each of which is a graph over (a subset of)
an asymptotic half plane (see Figure \ref{fig:wings}).
For each $W_i$ let $\tau^{(i)}$
be a unit vector parallel to the 
asymptotic half plane
of $W_i$, perpendicular to the $y$-axis
(the axis of periodicity), and directed
away 
from $\partial W_i$ toward the corresponding end, namely (up to relabeling)
\begin{align*}
\tau^{(1)}&=(\cos\omega_0,~0,~\sin\omega_0)=-\tau^{(3)}, &
\tau^{(2)}&=(-\cos\omega_0,~0,~\sin\omega_0)=-\tau^{(4)},  
\end{align*}
where we recall that $\omega_0>0$ is the angle at which $\K_0$ intersects $\B^2$. 
Now, given $s>R_1$, we define the truncations
\begin{align}
\notag
W_i(s)&\vcentcolon=\mathrlap{
W_i\cap\{\tau^{(i)}\cdot (x,y,z)\leq s\},
}
\\\notag
\two{\tow}(s)&\vcentcolon=\overline{C} \cup \bigcup_{i=1}^4 W_i(s), 
&\three{\tow}(s)&\mbox{ analogously (for six ends)},
\\\label{truncatedTowdom}
\two{\tow}_{-}(s)&\vcentcolon=\two{\tow}(s) \cap \{x \leq 0\}, 
&\three{\tow}_{-}(s)&\vcentcolon=\three{\tow}(s) \cap \{x \leq 0\},
\\[1ex]\notag
\two{\towdom}(s)&\vcentcolon=\two{\tow}(s) \cap \two{\towdom},
&\three{\towdom}(s)&\vcentcolon=\three{\tow}(s) \cap \three{\towdom}.
\intertext{For each $\epsilon,\epsilon'>0$ we then set similarly 
$\three{\towdom}(\epsilon^{-1},\epsilon')\vcentcolon=\three{\towdom}(\epsilon^{-1})\cap\{ x^2 + y^2 + z^2 > \epsilon'\}$ and}
\notag
\two{\lipdom}(\epsilon)
&\vcentcolon=\two{\lipdom} \cap \two{\towdom}(\epsilon^{-1}),
&
\three{\lipdom}(\epsilon,\epsilon')
&\vcentcolon=\three{\lipdom}\cap \three{\towdom}(\epsilon^{-1},\epsilon'),
\end{align}
truncating $\two{\lipdom}$ and $\three{\lipdom}$
at (affine) distance $\epsilon^{-1}$
and excising from $\three{\lipdom}$
a disc with radius $\sqrt{\epsilon'}$ and center at the umbilic $(0,0,0)$. 
We then in turn define
$\two{\lipdom_{\Sp^2}}(\epsilon)\vcentcolon=
\two{\nu}\bigl(\two{\lipdom}(\epsilon)\bigr)\subset \two{\lipdom_{\Sp^2}}$ as well as
$\three{\lipdom_{\Sp^2}}(\epsilon,\epsilon')\vcentcolon=
\three{\nu}\bigl(\three{\lipdom}(\epsilon,\epsilon')\bigr)\subset \three{\lipdom_{\Sp^2}}$.
As a direct consequence of Lemma \ref{lowSpectraInSphere} and Proposition \ref{spectralContinuityWrtExcision} we get what follows.

\begin{corollary}\label{cor:ApproxCutSphere}
In the setting above, consider for any $\epsilon,\epsilon'>0$ the Schr\"{o}dinger operator
$\Delta_{\gsph}+2$ on the domains given, respectively, by $\two{\lipdom_{\Sp^2}}(\epsilon)$ and 
$\three{\lipdom_{\Sp^2}}(\epsilon,\epsilon')$ and 
subject to
any of the boundary conditions
specified in the table \eqref{fig:ChartSphere},
where the boundary is contained, respectively, in 
$\partial \three{\lipdom_{\Sp^2}}$
and $\partial \two{\lipdom_{\Sp^2}}$
and subject to Dirichlet conditions elsewhere.
In other words, let
$\two{\weakbf}$
be either bilinear form
corresponding to the top two rows
of \eqref{fig:ChartSphere},
let $\three{\weakbf}$
be any bilinear form
corresponding to the bottom three rows
of \eqref{fig:ChartSphere},
and consider also the bilinear forms
\begin{align*}
\two{\weakbf}_\epsilon
  &\vcentcolon=
  \dint{(\two{\weakbf})}_{\two{\lipdom_{\Sp^2}}(\epsilon)}
  =
  \weakbf
  \Bigl[
    \two{\lipdom_{\Sp^2}}(\epsilon),
    \gsph,
    2,
    0,
    \dbdy\two{\lipdom_{\Sp^2}}
      \cup
      (
        \partial \two{\lipdom_{\Sp^2}}(\epsilon)
        \setminus
        \partial \two{\lipdom_{\Sp^2}}
      ),
    \nbdy\two{\lipdom_{\Sp^2}},
    \emptyset
  \Bigr]
\\
\three{\weakbf}_{\epsilon,\epsilon'}
  &\vcentcolon=
  \dint{(\three{\weakbf})}_{\three{\lipdom_{\Sp^2}}(\epsilon,\epsilon')}
  =
  \weakbf
  \Bigl[
    \three{\lipdom_{\Sp^2}}(\epsilon,\epsilon'),
    \gsph,
    2,
    0,
    \dbdy\three{\lipdom_{\Sp^2}}
      \cup
      (
        \partial \three{\lipdom_{\Sp^2}}(\epsilon,\epsilon')
        \setminus
        \partial \three{\lipdom_{\Sp^2}}
      ),
    \nbdy\three{\lipdom_{\Sp^2}},
    \emptyset
  \Bigr]
\end{align*}
using the notation \eqref{DirAndNeumInternalizationsOfBilinearForm}.
Then there exists $\epsilon_0>0$ such that for all $0<\epsilon,\epsilon'<\epsilon_0$
\begin{equation*}
\ind(\two{\weakbf_{\epsilon}})=\ind(\two{\weakbf})
\quad \text{ and } \quad
\ind(\three{\weakbf_{\epsilon,\epsilon'}})=\ind(\three{\weakbf}).
\end{equation*}
\end{corollary}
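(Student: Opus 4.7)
The strategy is to view each bilinear form $\two{\weakbf}_{\epsilon}$, respectively $\three{\weakbf}_{\epsilon,\epsilon'}$, as the Dirichlet internalization (in the sense of \eqref{DirAndNeumInternalizationsOfBilinearForm}) of $\two{\weakbf}$ or $\three{\weakbf}$ on a subdomain of $\Sp^2$ whose complement in $\two{\lipdom_{\Sp^2}}$ or $\three{\lipdom_{\Sp^2}}$ shrinks, as $\epsilon,\epsilon'\to 0$, to a finite set of points in $\closure{\two{\lipdom_{\Sp^2}}}$ or $\closure{\three{\lipdom_{\Sp^2}}}$. Once the shape hypothesis \eqref{eq:containment} of Proposition \ref{spectralContinuityWrtExcision} is verified, Corollary \ref{cor:MonotInd} (applied with $\grp$ trivial) yields $\ind(\two{\weakbf}_\epsilon)\to\ind(\two{\weakbf})$ and $\ind(\three{\weakbf}_{\epsilon,\epsilon'})\to\ind(\three{\weakbf})$; since these quantities are integer-valued, the limits are attained for all sufficiently small $\epsilon,\epsilon'$, producing the required threshold $\epsilon_0$.

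The key remaining task is purely geometric. For the two-ended case, one notes that $\two{\lipdom_{\Sp^2}}\setminus\two{\lipdom_{\Sp^2}}(\epsilon)$ is the Gauss image, under $\two{\nu}$, of the portion of $\two{\lipdom}$ lying at affine distance greater than $\epsilon^{-1}$ in the wings. Since each such wing is a graph over its asymptotic half plane with exponentially decaying graph function (a standard feature of Karcher--Scherk towers), $\two{\nu}$ converges exponentially fast in the affine distance to a single point $p_\infty\in\partial\two{\lipdom_{\Sp^2}}$, namely the missing interior side point prescribed in Proposition \ref{towExistenceAndBasicProps}. This quantitative convergence sandwiches the excised region between two geodesic balls of $(\Sp^2,\gsph)$ centered at $p_\infty$ with radii tending to $0$ as $\epsilon\to 0$, giving \eqref{eq:containment} with $S=\{p_\infty\}$.

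For the three-ended case the same analysis at the asymptotic end provides a missing point $p_\infty$ in the interior of a side of the lune $\three{\lipdom_{\Sp^2}}$. It remains to study the excised disc of radius $\sqrt{\epsilon'}$ around the flat umbilic $(0,0,0)\in\closure{\three{\lipdom}}$: by Proposition \ref{towExistenceAndBasicProps} and the explicit Weierstrass-type description of $\three{\nu}$ near this flat point recorded in Appendix A of \cite{CSWnonuniqueness}, the Gauss image of such a disc is sandwiched between geodesic balls around the missing vertex $p_{\mathrm{v}}$ of $\three{\lipdom_{\Sp^2}}$ whose radii tend to $0$ with $\epsilon'$. Taking $S=\{p_\infty,p_{\mathrm{v}}\}$ then verifies \eqref{eq:containment}, and an application of Corollary \ref{cor:MonotInd} finishes the proof. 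The main delicate point is precisely the quantitative control of $\three{\nu}$ near the flat umbilic, which is the only place where it fails to be a local diffeomorphism; everywhere else the verification reduces to a diffeomorphic change of variables between the tower and the corresponding spherical domain.
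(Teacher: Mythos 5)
Your strategy — reduce to Proposition \ref{spectralContinuityWrtExcision} via Corollary \ref{cor:MonotInd}, verifying the containment hypothesis \eqref{eq:containment} with $S$ the set of degeneracy points of the Gauss map — is exactly what the paper does; the two-ended case $\two{\lipdom_{\Sp^2}}$ is handled correctly.

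For $\three{\lipdom_{\Sp^2}}$, however, the geometric accounting is off. The truncation $\three{\towdom}(\epsilon^{-1})$ cuts \emph{all} ends, so the excision from $\three{\lipdom}$ consists of \emph{three} pieces: the full end at angle $\omega_0$, the half-end asymptotic to $\{z=0\}$ (created by the cut along $\{y=z=0\}$), and the disc around the umbilic. Moreover, since $(0,0,0)$ lies in the closure of the relevant component of $\three{\towdom}\setminus\{y=z=0\}$, the bijectivity assertion in Proposition \ref{towExistenceAndBasicProps} forces its Gauss image to be a point that \emph{is attained} and therefore \emph{not} one of the two missing points of the lune; in view of the boundary identification $\{x=0\}\cap\partial\three{\lipdom_{\Sp^2}}=\overline{\three{\nu}\bigl((\{x=0\}\cup\{y=z=0\})\cap\partial\three{\lipdom}\bigr)}$ and the quarter-sector-to-half-sector behaviour of $\three{\nu}$ at a simple branch point, this image is an interior point $p_u$ of the $\{x=0\}$ side, not the missing vertex. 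The missing vertex is instead the limit of $\three{\nu}$ along the $\{z=0\}$ half-end, which your argument omits. So $S$ must be $\{p_\infty,p_v,p_u\}$ (three distinct boundary points of $\closure{\three{\lipdom_{\Sp^2}}}$) rather than $\{p_\infty,p_v\}$, with $p_v$ corresponding to the $\{z=0\}$ half-end and $p_u$ to the umbilic. This does not affect the validity of the conclusion — Proposition \ref{spectralContinuityWrtExcision} works for any finite $S$ — but the assertion that the umbilic's image is the missing vertex is wrong, and one excised piece is unaccounted for.
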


In particular, we can derive these geometric conclusions:

\begin{corollary}[Index of $\two{\towdom}$ and $\three{\towdom}$]
We have the following even and odd indices for $\two{\towdom}$ and $\three{\towdom}$.
\[
\begin{array}{c|c||c|c|}
S & \grp & \symind{\grp}{+}(S) & \symind{\grp}{-}(S)
\\\hline\hline
\two{\towdom} & \cycgrp{\{z=0\}} & 1 & 0
\\\hline
\three{\towdom} & \cycgrp{\{y=z=0\}} & 1 & 1
\\\hline
\end{array}
\]
\end{corollary}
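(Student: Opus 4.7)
The plan is to follow the chain \emph{symmetry reduction $\to$ Gauss map $\to$ spherical eigenvalue problem}, and then to read off the answer from the table in Lemma~\ref{lowSpectraInSphere}.

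First, I apply Lemma~\ref{dom_red_ext} to the involution $\refl_{\{z=0\}}$ acting on $\two{\towdom}$, respectively $\refl_{\{y=z=0\}}$ acting on $\three{\towdom}$. In either case the fixed locus on the surface is the (transverse, by Proposition \ref{towExistenceAndBasicProps}) intersection of the surface with the corresponding plane or line, the involution exchanges the two components of its complement, and the stabilizer $\stabgrp$ is trivial. Per Example \ref{ex:Group2El_Meta}, the $+$-part (respectively $-$-part) of the index spectrum on the full surface is then computed on one such component ($\two{\lipdom}$ or $\three{\lipdom}$) by adding a homogeneous Neumann condition (respectively Dirichlet condition) along the interior symmetry boundary. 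The rest of the boundary, sitting on the planar walls of the half slab, already carries a homogeneous Neumann condition since the walls meet the surface orthogonally and the Robin potential vanishes along them.

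Second, I transplant the problem to the sphere via the Gauss map. By Proposition~\ref{towExistenceAndBasicProps} the map $\two{\nu}$ (respectively $\three{\nu}$) restricts to a diffeomorphism from $\two{\lipdom}$ onto $\two{\lipdom_{\Sp^2}}$ minus one boundary point (respectively from $\three{\lipdom}$ onto $\three{\lipdom_{\Sp^2}}$ minus one vertex, the image of the umbilic at the origin, and one interior boundary point per end). Since the Gauss map of a minimal surface is conformal with $\nu^{*}\gsph=\tfrac{1}{2}\abs{\twoff{}}^{2} g$, the Jacobi operator $\Delta_g+\abs{\twoff{}}^{2}$ transforms precisely into $\Delta_{\gsph}+2$, and Proposition~\ref{conformalInvariance} ensures index and nullity are preserved. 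Because $\nu$ is an orientation-preserving conformal diffeomorphism onto its image, homogeneous Neumann conditions push forward to homogeneous Neumann conditions along each smooth boundary arc, and Dirichlet to Dirichlet.

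Third, I translate the boundary data arc by arc, using the identifications of boundary arcs already recorded in the excerpt. For $\two{\towdom}$ the three arcs of $\partial\two{\lipdom_{\Sp^2}}$ carry, in the $+$-case, Neumann on $\{z=0\}$ (from symmetry reduction), Neumann on $\{x=0\}$ and on $\{y=0\}$ (from the free boundary on $\{x=0\}$ and $\{y=\pm\pi/2\}$); this is the first row of the table in Lemma~\ref{lowSpectraInSphere}, so $\ind=1$. In the $-$-case only the $\{z=0\}$ arc switches to Dirichlet, giving the second row, so $\ind=0$. For $\three{\towdom}$ the $\{y=0\}$ arc of $\partial\three{\lipdom_{\Sp^2}}$ is entirely the image of the free boundary $\{y=\pm\pi/2\}$ and stays Neumann in both cases, while the $\{x=0\}$ arc is the Gauss image of the \emph{union} of the free-boundary arc $\{x=0\}$ and the interior symmetry arc $\{y=z=0\}$. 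In the $+$-case both halves are Neumann, matching the first row ($\ind=1$); in the $-$-case the portion coming from $\{y=z=0\}$ switches to Dirichlet while the portion coming from $\{x=0\}$ stays Neumann, realizing a configuration of the form $\{x=0\}\cap\{z>\zeta\}$ for some $\zeta\in\interval{-1,1}$, matching the third row ($\ind=1$).

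To make the argument fully rigorous in the presence of non-compactness, missing Gauss-image points, and the umbilic of $\three{\tow}$, I compute the index of the ambient non-compact surfaces by exhausting via the truncations $\two{\towdom}(s)$ and $\three{\towdom}(s)\cap\{x^{2}+y^{2}+z^{2}>\epsilon'\}$, imposing Dirichlet conditions on the artificial boundaries so that monotonicity and convergence to the full index hold by Proposition~\ref{spectralContinuityWrtExcision} and Corollary~\ref{cor:MonotInd}. Under the Gauss map these exhausting surface domains correspond exactly to $\two{\lipdom_{\Sp^2}}(\epsilon)$ and $\three{\lipdom_{\Sp^2}}(\epsilon,\epsilon')$ from Corollary~\ref{cor:ApproxCutSphere}, whose indices stabilize to those of the full spherical domains for sufficiently small parameters; combining the two stabilizations yields the claimed values. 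The main technical point is really only the bookkeeping of the arc-by-arc correspondence of boundary conditions, especially for $\three{\towdom}$ where free boundary and symmetry boundary both sit over the single arc $\{x=0\}$ in $\three{\lipdom_{\Sp^2}}$; no new analytic ingredient beyond the machinery already established in the paper is required.
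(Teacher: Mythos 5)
Your proof is correct and follows essentially the same route as the paper's: symmetry reduction via Lemma~\ref{dom_red_ext}, conformal transplantation via the Gauss map and Proposition~\ref{conformalInvariance} to the spherical problem $\Delta_{\gsph}+2$, identification with the appropriate rows of Lemma~\ref{lowSpectraInSphere}, and stabilization under truncation/excision via Corollary~\ref{cor:ApproxCutSphere} and Proposition~\ref{spectralContinuityWrtExcision}. Your exposition is somewhat more explicit in the arc-by-arc correspondence of boundary conditions than the paper, which only spells out one of the four cases; the one small slip is calling the Gauss map \emph{orientation-preserving} (it is anticonformal), but this is immaterial since the only property used is $\nu^*\gsph=\tfrac12\abs{\twoff{}}^2g$.
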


\begin{proof}
We will verify (as a sample) the even index asserted in the second row of the table; the other claims are checked in the same fashion. 
The Gauss map of a minimal surface in $\R^3$ is (anti)conformal away from its umbilics,
with conformal factor (one half of) the pointwise square
of the norm of its second fundamental form,
so by Proposition \ref{conformalInvariance},
for each $\epsilon,\epsilon'>0$,
the index of $\three{\lipdom}_{\Sp^2}(\epsilon,\epsilon')$
with the foregoing boundary conditions (as in Corollary \ref{cor:ApproxCutSphere}, according to the third row of the table in Lemma \ref{lowSpectraInSphere})
agrees also with the index
of $\three{\lipdom}(\epsilon,\epsilon')$
subject to the corresponding boundary conditions.
By Lemma \ref{dom_red_ext}
this last index agrees with the
$\cycgrp{\{y=z=0\}}$-even index
of $\three{\tow_{\mathrm{fb}}}(\epsilon^{-1},\epsilon')$
subject to the Dirichlet condition
along the excisions
and the Neumann condition everywhere else.
Hence, thanks to Corollary \ref{cor:ApproxCutSphere}, such a value of the index is equal to $1$ for any sufficiently small $\epsilon, \epsilon'$. 
We now conclude, first letting $\epsilon'\to 0$ and appealing to Proposition \ref{spectralContinuityWrtExcision} to control the effect
of the excision near $(0,0,0)$, and then appealing to the aforementioned characterization of the Morse index via exhaustions, 
that $\three{\towdom}$ indeed has
$\cycgrp{\{y=z=0\}}$-index $1$. 
\end{proof}

For use in the following subsection we fix a smooth cutoff function
$
 \Psi
 \colon
 \Interval{0,\infty}
 \to
 \IntervaL{0,1}
$
that is
constantly $1$
on $\{x \leq 1\}$
and
constantly $0$
on $\{x \geq 2\}$,
and we define on $\two{\tow}$ and $\three{\tow}$
the functions and metrics
\begin{equation}
\label{towerCutoffsConfFactsConfMets}
\begin{aligned}
\two{\psi}
&\vcentcolon=
  (\Psi \circ \abs{x})|_{\two{\tow}},
&\qquad
\two{\conffact}
  &\vcentcolon=
  \sqrt{
    \two{\psi}
      +\frac{1}{2}\abs[\Big]{\twoff{\two{\tow}}}^2
      (1-\two{\psi})
},
&\qquad
\two{h}
  &\vcentcolon=
  (\two{\conffact})^2\met{\two{\tow}},
\\
\three{\psi}
  &\vcentcolon=
  (\Psi \circ \abs{x})|_{\three{\tow}},
&
\three{\conffact}
  &\vcentcolon=
  \sqrt{
    \three{\psi}
      +\frac{1}{2}\abs[\Big]{\twoff{\three{\tow}}}^2
      (1-\three{\psi})
},
&
\three{h}
  &\vcentcolon=
  (\three{\conffact})^2\met{\three{\tow}}.
\end{aligned}
\end{equation}
Note that $\two{\conffact}$
is invariant under
$\refl_{\{z=0\}}$,
$\three{\conffact}$
under
$\refl_{\{y=z=0\}}$,
and both are invariant under
$\refl_{\{x=0\}}$,
$\refl_{\{y=-\pi/2\}}$,
and $\refl_{\{y=\pi/2\}}$. It is natural to associate to $\two{\towdom}$,
regarded as
a free boundary minimal surface
in the slab
$\{x \leq 0\} \cap \{\abs{y} \leq \pi/2\}$,
the stability form
$\indexform{\two{\towdom}}$,
defined at least on smooth functions of compact support
by
\begin{equation*}
\indexform{\two{\towdom}}(u,v)
\vcentcolon=
  \int_{\two{\towdom}}
    \met{\two{\tow}}\bigl(\nabla_{\met{\two{\tow}}}u, \nabla_{\met{\two{\tow}}}v\bigr)
    \, \hausint{2}{\met{\two{\tow}}}
  -\int_{\two{\towdom}}
    \abs[\Big]{\twoff{\two{\tow}}}_{\met{\two{\tow}}}^2uv
    \, \hausint{2}{\met{\two{\tow}}}.
\end{equation*}
From the identity
\begin{equation*}
\indexform{\two{\towdom}}(u,v)
=
  \int_{\two{\towdom}}
  \two{h}\bigl(\nabla_{\two{h}}u, \nabla_{\two{h}}v\bigr)
    \, \hausint{2}{\two{h}}
  -\int_{\two{\towdom}}
    \abs[\Big]{\twoff{\two{\tow}}}_{\two{h}}^2uv
    \, \hausint{2}{\two{h}}
\end{equation*}
and the manifest boundedness of
$\abs[\big]{\twoff{\two{\tow}}}_{\two{h}}^2 =(\two{\conffact})^{-2}\abs[\big]{\twoff{\two{\tow}}}_{\met{\two{\tow}}}^2$
we see that
$\indexform{\two{\towdom}}$
is in fact well-defined on
$\sob(\two{\towdom},\two{h})$.
Likewise,
the analogously defined
$\indexform{\three{\towdom}}$
is well-defined on
$\sob(\three{\towdom},\three{h})$.

We now point out that we can identify
the interiors of $\two{\towdom}$
and $\three{\towdom}$ under respectively the metrics
$\two{h}$ and $\three{h}$
as Lipschitz domains
as
in the setting of Section \ref{sec:nota}.
Concretely,
we first consider the Riemannian quotients
$\two{\widetilde{\tow}}$
and $\three{\widetilde{\tow}}$
of $(\two{\tow},\two{h})$
and $(\three{\tow},\three{h})$
under a fundamental period.
Then $\two{\widetilde{\tow}}$
is diffeomorphic to $\Sp^2$
with four points removed
and $\three{\widetilde{\tow}}$ to $\Sp^2$
with six points removed.
By virtue of \eqref{towerCutoffsConfFactsConfMets}
and the behavior of the Gauss maps
as outlined in Proposition \ref{towExistenceAndBasicProps},
we can in fact choose the last two diffeomorphisms
so that they are isometries on neighborhoods of the punctures.
In this way we obtain
smooth Riemannian compactifications.
By composing the defining projection
of each tower
onto its quotient by a fundamental period
with the corresponding embedding into the compactification
we identify (via isometric embedding)
the interior of $\two{\towdom}$ under $\two{h}$
and the interior of $\three{\towdom}$ under $\three{h}$
with Lipschitz domains
$\two{\liptowdom}$ and $\three{\liptowdom}$
in the two respective compactifications,
and we likewise identify
$\partial\two{\towdom}$ and $\partial\three{\towdom}$
with subsets of
$\partial \two{\liptowdom}$
and $\partial \three{\liptowdom}$
respectively. 
Of course, the role of the ``ambient manifold'' for such Lipschitz domains is played respectively by the Riemannian manifolds $(\Sp^2,\two{h})$ and $(\Sp^2,\three{h})$; here, with slight abuse of notation, we have tacitly extended the metrics in question across the four and six punctures respectively.

Next, recalling the definition of $T$ from \eqref{bilinear_form_def}, we define the bilinear form
\begin{equation*}
\indexform{\two{\liptowdom}}
\vcentcolon=
\weakbf
\left[
  \two{\liptowdom},~
  \two{h},~
  \potential
    =
    (\two{\conffact})^{-2}
    \abs[\Big]{\twoff{\two{\tow}}}_{\met{\two{\tow}}}^2,~
  \robinpotential=0,~
  \dbdy\two{\liptowdom}=\emptyset,~
  \nbdy\two{\liptowdom}=\partial\two{\liptowdom},~
  \rbdy\two{\liptowdom}=\emptyset
\right],
\end{equation*}
where
(as we shall do generally in the sequel
for functions defined on $\two{\tow}$ or $\three{\tow}$,
without further comment)
for the potential we tacitly interpret the right-hand side
as a function on $\two{\liptowdom}$; we define
$\indexform{\three{\liptowdom}}$ in analogous fashion.
We then have (cf. Section \ref{subsec:conformal}) the equalities
\begin{equation}\label{eq:ConformalInvJacobi}
\indexform{\two{\liptowdom}}
  =
  \indexform{\two{\towdom}}
  \text{ on } \sob(\two{\towdom},\two{h})
\quad \text{ and } \quad
\indexform{\three{\liptowdom}}
  =
  \indexform{\three{\towdom}}
  \text{ on } \sob(\three{\towdom},\three{h}).
\end{equation}

\begin{lemma}[Index and nullity of $\indexform{\two{\liptowdom}}$ and $\indexform{\three{\liptowdom}}$]
\label{towIndexAndNullity}
With definitions as in the preceding paragraph we have the following indices and nullities.
\[
\begin{array}{c|c||c|c||c|c|}
S & \grp
  & \symind{\grp}{+}(\indexform{S})
  & \symnul{\grp}{+}(\indexform{S})
  & \symind{\grp}{-}(\indexform{S})
  & \symnul{\grp}{-}(\indexform{S}) \\
\hline\hline
\two{\liptowdom} & \cycgrp{\{z=0\}} & 1 & 0 & 0 & 1 \\
\hline
\three{\liptowdom} & \cycgrp{\{y=z=0\}} & 1 & 1 & 1 & 0 \\
\hline
\end{array}
\]
\end{lemma}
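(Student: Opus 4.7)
The plan is to reduce each of the four equivariant spectral problems to one for $\Delta_{\gsph}+2$ on a spherical domain already treated in Lemma \ref{lowSpectraInSphere}, via the Gauss map together with the conformal invariance of Proposition \ref{conformalInvariance}.

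First, I would apply Lemma \ref{dom_red_ext} to the reflection group $\cycgrp{\{z=0\}}$ acting on $\two{\liptowdom}$ (respectively $\cycgrp{\{y=z=0\}}$ acting on $\three{\liptowdom}$), using the fundamental domain $\two{\lipdom}$ (respectively $\three{\lipdom}$) singled out just before the lemma. This translates the equivariance under the twisting homomorphism $\sigma=+1$ into a homogeneous Neumann condition along the inner boundary $\two{\lipdom}\cap\{z=0\}$ (respectively $\three{\lipdom}\cap\{y=z=0\}$), whereas $\sigma=-1$ translates into the Dirichlet condition there; along the remaining (outer) portion of the boundary, the Neumann condition inherited from $\partial\two{\liptowdom}$ (respectively $\partial\three{\liptowdom}$) is retained.

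Next, I would transport the reduced problem to $\Sp^2$ using the Gauss map. For a minimal surface in $\R^3$ the Gauss map pulls $\gsph$ back to $\tfrac{1}{2}\abs{A}^2$ times the induced metric (away from umbilics), so the Jacobi operator $\Delta+\abs{A}^2$ is, up to an overall positive function, conformally equivalent to $\Delta_{\gsph}+2$. Combining \eqref{eq:ConformalInvJacobi} with Proposition \ref{conformalInvariance} then allows one to replace $\indexform{\two{\liptowdom}}$ (respectively $\indexform{\three{\liptowdom}}$) restricted to the fundamental domain with the bilinear form \eqref{bilinear_form_def} for $\Delta_{\gsph}+2$ on the Gauss-map image. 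By Proposition \ref{towExistenceAndBasicProps}, this image equals $\two{\lipdom_{\Sp^2}}$ (respectively $\three{\lipdom_{\Sp^2}}$) less finitely many points---one point on $\partial\two{\lipdom_{\Sp^2}}$ for $\two{\tow}$, and the corresponding one together with the image of the umbilic $(0,0,0)$ for $\three{\tow}$; along the truncation and excision scheme of \eqref{truncatedTowdom}, Corollary \ref{cor:MonotInd} allows me to fill in these removed points without affecting the index or nullity.

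To conclude, I would translate the boundary conditions using the identifications between the sides of $\partial\two{\lipdom_{\Sp^2}}$, $\partial\three{\lipdom_{\Sp^2}}$ and the Gauss-map images of pieces of $\partial\two{\lipdom}$, $\partial\three{\lipdom}$ recorded just before Lemma \ref{lowSpectraInSphere}, and read the answers off from \eqref{fig:ChartSphere}. For $\two{\liptowdom}$ the $(+)$-case becomes pure Neumann (row 1, yielding $(1,0)$) and the $(-)$-case places Dirichlet only on the $\{z=0\}$-side (row 2, yielding $(0,1)$). For $\three{\liptowdom}$, since the Gauss map carries \emph{both} $\{x=0\}\cap\partial\three{\lipdom}$ and $\{y=z=0\}\cap\partial\three{\lipdom}$ into the $\{x=0\}$-side of $\partial\three{\lipdom_{\Sp^2}}$, the $(+)$-case is fully Neumann (row 3, yielding $(1,1)$), whereas the $(-)$-case produces precisely the mixed Dirichlet/Neumann configuration of row 5 (the cut-point $\zeta$ being determined by the Gauss image of the umbilic but the conclusion being $\zeta$-independent), yielding $(1,0)$. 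The main technical point to watch is the simultaneous handling of the conformal transplantation via the Gauss map, the global modification of the metric encoded by the cutoff $\two{\conffact}$, $\three{\conffact}$ in \eqref{towerCutoffsConfFactsConfMets} (which matches the Gauss-map pullback at infinity), and the spectral inertness of the finite point excisions; this last item rests on points having zero $H^1$-capacity in dimension two, as captured by Proposition \ref{spectralContinuityWrtExcision}.
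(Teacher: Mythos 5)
Your overall strategy---reduce via Lemma \ref{dom_red_ext} to the fundamental domain, transport by the Gauss map using conformal invariance (Proposition \ref{conformalInvariance}), and read the answer off the table in Lemma \ref{lowSpectraInSphere}---is precisely the one the paper follows, and your identification of the relevant rows (rows 1--2 for $\two{\liptowdom}$; rows 3 and 5 for $\three{\liptowdom}$, with row 5 arising because the Gauss map sends both $\{x=0\}\cap\partial\three{\lipdom}$ and $\{y=z=0\}\cap\partial\three{\lipdom}$ into the same side of the lune, split by the image of the umbilic) is correct. For $\two{\liptowdom}$, since $\two{\tow}$ has no umbilics, the conformal factor relating $\two{h}$ to $(\two{\nu})^*\gsph$ is bounded above and below on the compactification, so Proposition \ref{conformalInvariance} applies globally and yields both the index and the nullity entries in the first row; no excision is actually needed there.

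The genuine gap is in your treatment of the nullities for $\three{\liptowdom}$. You invoke Corollary \ref{cor:MonotInd} to ``fill in these removed points without affecting the index or nullity,'' but that corollary asserts convergence only of the \emph{index}, not the nullity, and the paper explicitly cautions that ``one cannot expect an analogue... to hold true in general.'' The reason is structural: Proposition \ref{spectralContinuityWrtExcision} gives $\eigenvalsym{\dint{\weakbf}_{\lipdom_\delta}}{i}{\grp}{\twsthom}\geq\eigenvalsym{\weakbf}{i}{\grp}{\twsthom}$ with convergence as $\delta\to 0$; this stabilizes the strictly negative eigenvalues and hence the index, but an eigenvalue that is exactly zero for $\weakbf$ may be strictly positive for every $\dint{\weakbf}_{\lipdom_\delta}$, so the nullity can drop discontinuously in the limit. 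The zero-capacity of points buys you eigenvalue continuity, not nullity continuity. To close the gap you would need the paper's ad hoc argument: obtain the lower bound $\symnul{\cycgrp{\{y=z=0\}}}{+}(\indexform{\three{\liptowdom}})\geq 1$ by exhibiting the bounded Jacobi field induced by vertical translations (which lies in $\sob(\three{\towdom},\three{h})$ and is $(\cycgrp{\{y=z=0\}},+)$-invariant), and obtain the upper bounds by taking any bounded smooth kernel element $u$, restricting to $\three{\lipdom}$, and pushing forward by the (anti)conformal Gauss map to produce a solution of $(\Delta_{\gsph}+2)u_0=0$ in the kernel of the appropriate spherical form, whence the map into the spherical nullity space is injective and Lemma \ref{lowSpectraInSphere} forces $\symnul{\cycgrp{\{y=z=0\}}}{+}\leq 1$ and $\symnul{\cycgrp{\{y=z=0\}}}{-}\leq 0$.
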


\begin{proof}
The first row follows from a direct application of Proposition \ref{conformalInvariance}
in conjunction with the first two rows of the table in Lemma \ref{lowSpectraInSphere}.
Indeed, in this case there are no umbilic points in play (for, recall, $\two{\tow}$ has no umbilic points) and the Gauss map
furnishes an (anti)conformal map from the compactified quotient onto $\Sp^2$. 
For $\three{\tow}$, however,
the corresponding conformal factor degenerates
at the umbilic at $(0,0,0)$, as all of its translates.
Nevertheless,
aided by Lemma \ref{dom_red_ext}
and Corollary \ref{cor:MonotInd}
we can verify the indices in the second row
in much the same fashion,
applying Proposition \ref{conformalInvariance}
on suitable subdomains (obtained by removing smaller and smaller neighborhoods of the origin).

For the nullities, however, we
employ an \emph{ad hoc} argument, since one cannot expect an analogue of the aforementioned Corollary \ref{cor:MonotInd} to hold true in general. That said, we observe first that
the translations in the $z$ direction
induce a non-trivial, smooth, bounded,
$(\cycgrp{\{y=z=0\}},+)$-invariant
(scalar-valued)
Jacobi field on $\three{\tow}$ 
which readily implies it to define an element of $\sob(\three{\towdom},\three{h})$. 
This shows,
in view of \eqref{eq:ConformalInvJacobi}, 
that the nullities in question
are at least the values indicated in the table. On the other hand,
(appealing to Lemma \ref{dom_red_ext}
for the regularity)
each element, say $u\colon \three{\liptowdom}\to\R$, of the eigenspace
with eigenvalue zero
corresponding to the nullities in question is smooth and bounded. 
If we restrict it to $\three{\lipdom} \subset\three{\liptowdom}$ and consider the precomposition with the inverse of the Gauss map (which, let us recall, yields an (anti)conformal diffeomorphism $\nu^{\three{\tow}}\colon\three{\lipdom}\to \three{\lipdom_{\Sp^2}}$), then the resulting function $u_0\vcentcolon = u\circ (\nu^{\three{\tow}})^{-1}$ satisfies $(\Delta_{\gsph}+2)u_0=0$ and so we get an element contributing to $\nul(T)$ where $T$ is as encoded in the third (respectively: the fifth) row of the table \eqref{fig:ChartSphere} when starting from the $(\cycgrp{\{y=z=0\}},+)$-invariant (respectively: the $(\cycgrp{\{y=z=0\}},-)$-invariant) problem on $\three{\liptowdom}$. 
It is clear that one thereby gets injective maps of vector spaces, and so from  Lemma \ref{lowSpectraInSphere}
\[
\symnul{\grp}{+}(\indexform{\three{\liptowdom} })\leq 1, \quad \symnul{\grp}{-}(\indexform{\three{\liptowdom}})\leq 0
\]
which in particular implies that such maps are, a posteriori, linear isomorphisms, and thus completes the proof.
\end{proof}

When we wish to consider
the sets $\two{\towdom}(s)$ and $\three{\towdom}(s)$
endowed respectively with
the metrics $\two{h}$ and $\three{h}$,
we shall denote them by
$\two{\liptowdom}(s)$ and $\three{\liptowdom}(s)$.
Recalling the notation of Subsection \ref{subsec:subdomains},
we further define
\begin{equation}
\label{dirNeumInternalizationsOnTowdom}
\indexform{\two{\liptowdom}(s)}_\dir
  \vcentcolon=
  \dint{
    \Bigl(
      \indexform{\two{\liptowdom}}
    \Bigr)
  }_{\two{\liptowdom}(s)}
\quad \text{ and } \quad
\indexform{\two{\liptowdom}(s)}_\neum
  \vcentcolon=
  \nint{
    \Bigl(
      \indexform{\two{\liptowdom}}
    \Bigr)
  }_{\two{\liptowdom}(s)}.
\end{equation}
In short, we are adjoining respectively Dirichlet or Neumann boundary conditions along the cuts.

\begin{lemma}[Spectra of $\indexform{\two{\liptowdom}(s)}$ and $\indexform{\three{\liptowdom}(s)}$]
\label{spectraOnTruncatedTowers}
For each integer $i \geq 1$
\begin{alignat*}{2}
\lim_{s \to \infty}
 \eigenvalsym
  {\indexform{\two{\liptowdom}(s)}_\dir}{i}
  {\cycgrp{\{z=0\}}}{\pm}
&=
\lim_{s \to \infty}
\eigenvalsym
  {\indexform{\two{\liptowdom}(s)}_\neum}{i}
  {\cycgrp{\{z=0\}}}{\pm}
&&=
\eigenvalsym
  {\indexform{\two{\liptowdom}}}{i}
  {\cycgrp{\{z=0\}}}{\pm},
\\[1ex]
\lim_{s \to \infty}
 \eigenvalsym
  {\indexform{\three{\liptowdom}(s)}_\dir}{i}
  {\cycgrp{\{y=z=0\}}}{\pm}
&=
\lim_{s \to \infty}
\eigenvalsym
  {\indexform{\three{\liptowdom}(s)}_\neum}{i}
  {\cycgrp{\{y=z=0\}}}{\pm}
&&=
\eigenvalsym
  {\indexform{\three{\liptowdom}}}{i}
  {\cycgrp{\{y=z=0\}}}{\pm},
\end{alignat*}
for any consistent choice of $+$ or $-$
on both sides of each equality.
\end{lemma}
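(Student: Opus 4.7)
The plan is to establish the Dirichlet half of each equality as a direct consequence of Proposition~\ref{spectralContinuityWrtExcision}, and to prove the Neumann half by separately showing an upper bound on the $\limsup$ and a matching lower bound on the $\liminf$ of the eigenvalues, the latter ultimately resting on the zero $W^{1,2}$-capacity of points in dimension two. I will focus on $\two{\liptowdom}$, since the argument for $\three{\liptowdom}$ is structurally identical.

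For the Dirichlet case, view $\two{\liptowdom}$ as a Lipschitz domain in its conformal compactification and observe that $\two{\liptowdom}(s)$ is obtained from $\two{\liptowdom}$ by removing a shrinking neighborhood of the finite set of puncture points corresponding to the asymptotic ends of $\two{\tow}$ contained in the closure of $\two{\liptowdom}$. This places us directly in the setting of Proposition~\ref{spectralContinuityWrtExcision}, whose conclusion \eqref{eq:ConvDir} yields the desired convergence at once (the $\cycgrp{\{z=0\}}$-equivariance being preserved throughout, since the punctures and their neighborhoods are globally respected by the symmetry).

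For the Neumann case, the upper bound $\limsup_{s\to\infty}\eigenvalsym{\indexform{\two{\liptowdom}(s)}_\neum}{i}{\cycgrp{\{z=0\}}}{\pm} \leq \eigenvalsym{\indexform{\two{\liptowdom}}}{i}{\cycgrp{\{z=0\}}}{\pm}$ follows by plugging the restrictions to $\two{\liptowdom}(s)$ of $i$ orthonormal $(\cycgrp{\{z=0\}},\pm)$-invariant eigenfunctions of $\indexform{\two{\liptowdom}}$ into the min-max characterization \eqref{symminmax} for the Neumann form: the cuts carry no boundary condition to satisfy, and both the bilinear form and the $L^2$ inner products differ from their full-domain counterparts only by integrals over the excised region, which vanish as $s\to\infty$ by dominated convergence (using the uniform boundedness of the potential $(\two{\conffact})^{-2}\abs{\twoff{\two{\tow}}}_{\met{\two{\tow}}}^2$ built into \eqref{towerCutoffsConfFactsConfMets}). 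For the reverse inequality I will argue by contradiction: assume along a subsequence $\eigenvalsym{\indexform{\two{\liptowdom}(s)}_\neum}{i}{\cycgrp{\{z=0\}}}{\pm}\to\lambda^\infty<\eigenvalsym{\indexform{\two{\liptowdom}}}{i}{\cycgrp{\{z=0\}}}{\pm}$, pick $L^2$-orthonormal $(\cycgrp{\{z=0\}},\pm)$-invariant Neumann eigenfunctions $u_1^{(s)},\ldots,u_i^{(s)}$, combine coercivity \eqref{eq:coercive} with the uniform upper bound on the eigenvalues to obtain uniform $\sob$ bounds, and via Rellich--Kondrachov and a diagonal argument along an exhaustion of $\two{\liptowdom}$ by subdomains bounded away from the punctures extract candidate limits $u_j$ on the punctured domain.

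The main obstacle is to upgrade each $u_j$ to an element of $\sob(\two{\liptowdom},\two{h})$ and to verify that it is a genuine eigenfunction of $\indexform{\two{\liptowdom}}$. The former is obtained from the uniform $H^1$ bounds along the exhaustion together with weak lower semicontinuity of the norm; the latter amounts to passing to the limit in the weak Neumann eigenfunction equation tested against $(\cycgrp{\{z=0\}},\pm)$-invariant smooth functions with compact support away from the punctures, and then invoking the density of such test functions in $\sob(\two{\liptowdom},\two{h})$ --- a density which rests precisely on the zero $W^{1,2}$-capacity of isolated points in dimension two (the same fact already exploited, via logarithmic cutoffs, in the proof of Proposition~\ref{spectralContinuityWrtExcision}). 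Once these steps are in place, $u_1,\ldots,u_i$ furnish $i$ linearly independent $(\cycgrp{\{z=0\}},\pm)$-invariant eigenfunctions of $\indexform{\two{\liptowdom}}$ with eigenvalue at most $\lambda^\infty$, contradicting \eqref{symminmax} and closing the argument.
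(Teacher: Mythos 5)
Your Dirichlet half and your $\limsup$ bound for the Neumann half both coincide with the paper's argument. The gap is in your $\liminf$ bound for the Neumann case: you extract weak limits of the eigenfunctions $u_j^{(s)}$ along an exhaustion of $\two{\liptowdom}$ by compacta away from the punctures, upgrade them to $\sob(\two{\liptowdom},\two{h})$, and then assert that $u_1,\ldots,u_i$ are linearly independent, but you never rule out the possibility that $L^2$-mass escapes to the punctures. Since each $u_j^{(s)}$ is normalized in $L^2(\two{\liptowdom}(s),\two{h})$ but lives on a \emph{varying} domain, Rellich--Kondrachov gives you nothing across domains, and a diagonal extraction only yields local convergence on compacta away from the punctures; if the mass concentrates near $\partial\two{\liptowdom}(s)\cap\two{\liptowdom}$ as $s\to\infty$, all the candidate limits could be zero, and the contradiction evaporates. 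Neither the uniform $H^1$ bound nor weak lower semicontinuity addresses this; nor does zero capacity, which controls trial functions, not the possible degeneration of eigenfunctions.

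The paper resolves precisely this point by constructing a family of $(\cycgrp{\{z=0\}},\pm)$-symmetric extension operators $E_s\colon \sob(\two{\liptowdom}(s))\to\sob(\two{\liptowdom})$ uniformly bounded in $s$, built via even reflection of the pulled-back functions across the truncation boundary of the asymptotic half-planes, combined with a fixed cutoff; the uniform bound rests on the conformal invariance of the two-dimensional Dirichlet energy and the monotone decay of $\two{\conffact}$ toward the ends. With these extensions in hand, the sequence $E_s\,u_j^{(s)}$ is bounded in $\sob$ of the \emph{fixed} domain $\two{\liptowdom}$, so Rellich--Kondrachov gives strong $L^2(\two{\liptowdom},\two{h})$-convergence, which preserves orthonormality of the limits and lets the paper simply re-run the argument for \eqref{spectralLowerSemicty} from Proposition~\ref{spectralCtyWrtCoeffsAndSyms}. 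If you want to salvage your approach without extension operators, you would instead need a direct non-concentration estimate (e.g.\ a weighted Poincar\'e or a decay estimate near the ends in the $\two{h}$-metric exploiting the smallness of $\hausmeas{2}{\two{h}}$ on $\two{\liptowdom}\setminus\two{\liptowdom}(s_0)$ together with the uniform potential bound), which you do not supply.
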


\begin{proof}
We will write down the proof of the two equalities in the first line for the $+$ choice,
as the remaining cases can be proved
in the same way.
First note that
Proposition \ref{spectralContinuityWrtExcision}
gives us
\begin{equation*}
\lim_{s \to \infty}
 \eigenvalsym
  {\indexform{\two{\liptowdom}(s)}_\dir}{i}
  {\cycgrp{\{z=0\}}}{+}
=
\eigenvalsym
  {\indexform{\two{\liptowdom}}}{i}
  {\cycgrp{\{z=0\}}}{+}.
\end{equation*}
Using the min-max characterization
\eqref{symminmax} of eigenvalues
we then also get
\begin{equation*}
\limsup_{s \to \infty}
 \eigenvalsym
  {\indexform{\two{\liptowdom}(s)}_\neum}{i}
  {\cycgrp{\{z=0\}}}{+}
\leq
\limsup_{s \to \infty}
 \eigenvalsym
  {\indexform{\two{\liptowdom}(s)}_\dir}{i}
  {\cycgrp{\{z=0\}}}{+}
=
\eigenvalsym
  {\indexform{\two{\liptowdom}}}{i}
  {\cycgrp{\{z=0\}}}{+}.
\end{equation*}
The key step now toward the goal of establishing
\begin{equation*}
\liminf_{s \to \infty}
 \eigenvalsym
  {\indexform{\two{\liptowdom}(s)}_\neum}{i}
  {\cycgrp{\{z=0\}}}{+}
\geq
\eigenvalsym
  {\indexform{\two{\liptowdom}}}{i}
  {\cycgrp{\{z=0\}}}{+}
\end{equation*}
(which completes the proof)
is to construct a family
of (appropriately symmetric)
linear extension operators
$
 E_s
 \colon
 \sob(\two{\liptowdom}(s))
 \to 
 \sob(\two{\liptowdom})
$
uniformly bounded in $s$,
assuming $s \geq s_0$ for some universal $s_0>0$. 
With these extensions in hand it is straightforward,
for example, to adapt the argument for
\eqref{spectralLowerSemicty}
in the proof of Proposition \ref{spectralCtyWrtCoeffsAndSyms}.

We now outline the construction of the $E_s$ extension operators.
By the imposed symmetry
(in the case under discussion
even reflection through $\{z=0\}$)
and by taking $s$ large enough,
it suffices to specify the extension
on a single end $W$,
a graph over a subset of the corresponding
asymptotic plane $\Pi$ (with $\tau$ the corresponding defining vector, recalling the notation
preceding \eqref{truncatedTowdom}).
Let $\varpi \colon W \to \Pi$
be the associated projection.
By partitioning the given function
using appropriately chosen
smooth cutoff functions
(fixed independently of $s$),
it in fact suffices
to consider the extension
problem for a function
$v \in \sob(W \cap \two{\towdom}(s),\two{h})$
such that the support of $\varpi^*v$
is compactly contained in the rectangle
(expressed in the notation
of \eqref{truncatedTowdom})
\begin{equation*}
\{
  0
  <
  \tau\cdot (x,y,z)
  \leq
  s
\}
\cap
\{-\pi \leq 2y \leq \pi\}.
\end{equation*}
We can extend $\varpi^*v$
via even reflection through
the $s$ side of the above rectangle,
thereby obtaining an extension of
$v$ to an element of
$\sob(W,\two{h})$.
The asymptotic convergence
of $W$ to $\Pi$,
the monotonic decay
of $\two{\conffact}$ along
$W$ toward $\infty$,
and the conformal invariance
(in the current two-dimensional setting)
of the Dirichlet energy
ensure that this extension
has the desired properties.
\end{proof}

\subsection{Deconstruction of the surfaces and regionwise geometric convergence}
\label{subs:deconstruction}

We first take a moment to briefly review the constructions of the surfaces from \cite{CSWnonuniqueness}.
First 
(cf. \cite[Section 3]{CSWnonuniqueness}),
an approximate minimal surface in $\B^3$,
called the initial surface,
whose boundary is contained in $\partial\B^3$
and which meets $\partial\B^3$ exactly orthogonally,
is fashioned by hand,
via suitable interpolations, from the models
($\K_0$, $\two{\tow}$ or $\three{\tow}$,
and for $\threebc_m$ also $\B^2$).
Second
(cf. \cite[Section~5]{CSWnonuniqueness}),
the final exact solution
is identified as the normal graph of a small function
over the approximate solution. For what pertains this second step
we wish only to highlight that
the assignment of graph to function
is made using not
the usual Euclidean metric $\geuc$
but instead an $\Ogroup(3)$-invariant metric
(fixed once and for all,
independently of the data $n$ or $m$)
conformally Euclidean,
and
called the auxiliary metric.
On a neighborhood of the origin
this metric agrees
exactly with the Euclidean one,
while on a neighborhood
of $\partial\B^3=\Sp^2$
it agrees exactly
with the cylindrical metric
on $\Sp^2 \times \R$;
this last property
and the orthogonality of the intersection
of the initial surface with $\partial\B^3$
ensure that the boundary of the resulting graph
is also in $\partial\B^3$.
We will write
$\initzerogen_n$ and $\initthreebc_m$
for the initial surfaces
and
$
 \two{\varpi}_n
 \colon
 \zerogen_n \to \initzerogen_n
$
and
$
 \three{\varpi}_m
 \colon
 \threebc_m \to \initthreebc_m
$
for the nearest-point projections
under the above auxiliary metric.

Turning to the first step, actually
(because of the presence of a cokernel)
one constructs for each given $n$ or $m$
not just a single initial surface
but a (continuous) one-parameter family of them.
In the construction this parameter is treated
as an unknown
and is determined only in the second step,
simultaneously with the defining function
for the final surface.
Here, however, we can take the construction for granted
and accordingly speak of a single initial surface,
whose defining parameter value
is some definite (though not explicit) function
of $n$ or $m$ as appropriate. Nevertheless we must explain that this parameter
enters the construction
at the level of the building blocks,
except for $\B^2$, which is unaffected,
as follows.
First, the catenoidal annulus $\K_0$
is just one in a family $\K_\epsilon$
(cf. the beginning of Subsection 3.1
in \cite{CSWnonuniqueness})
of such annuli, all rotationally symmetric about the $z$-axis,
depending smoothly on $\epsilon$.
The details are not critical here,
but each $\K_\epsilon$ is the intersection
with $\B^3$ of a complete catenoid
with axis the $z$-axis,
and $\K_\epsilon$ meets $\Sp^2$
at two circles of latitude,
the upper one a circle of orthogonal intersection
and the lower one the circle at height $z=\epsilon$.
Similarly, from $\two{\tow}$ and $\three{\tow}$
we define, by explicit graphical deformation,
families which here we will call
$\two{\tow}_\delta$ and $\three{\tow}_\delta$
(cf. the beginning of Subsection 3.2
of \cite{CSWnonuniqueness}).
These deformations are the identity on
the ``cores''
of $\two{\tow}$ and $\three{\tow}$ 
and smoothly transition to translations on the ends,
in the $z$-direction,
up or down depending on the end,
and
through a displacement
determined by $\delta$.
Importantly, all the $\two{\tow}_\delta$
and $\three{\tow}_\delta$
have the same symmetries as $\two{\tow}$
and $\three{\tow}$ respectively. Now the datum $n$ determines building blocks
$\two{\tow}_{\two{\delta}(n)}$ and
$\K_{\two{\epsilon}(n)}$,
while the datum $m$ determines building blocks
$\three{\tow}_{\three{\delta}(m)}$,
$\K_{\three{\epsilon}(m)}$,
and $\B^2$.

We next define maps 
$\two{\Phi}_n$ and $\three{\Phi}_m$
(\cite[(3.37)]{CSWnonuniqueness})
from neighborhoods
of
$
 \frac{1}{n}\two{\tow}_{\two{\delta}(n)}
 \cap
 \{x \leq 0\}
$
and
$
 \frac{1}{m+1}\three{\tow}_{\three{\delta}(m)}
 \cap
 \{x \leq 0\}
$
respectively into $\B^3$,
so as  to ``wrap'' the cores of these surfaces
around the equator $\Sp^1$ approximately isometrically
but to take their asymptotic half planes
(in $\{x \leq 0\}$) onto
$\pm \K_{\two{\epsilon}(n)}$
in the first case
and onto $\pm \K_{\three{\epsilon}(m)}$
and $\B^2$ in the second. Thus, just referring to the family $\zerogen_n$ for the sake of brevity, we  truncate the surface $\two{\tow}_{\two{\delta}(n)}$
by intersecting with $\{x \geq -n^{3/4}\}$, and then apply $\two{\Phi}_n$ to the scaled-down by a factor $1/n$ truncated surface. The image is embedded (for $n$ large enough)
and contained in the ball,
in fact contained in a tubular neighborhood
of $\Sp^1$ with radius of order $n^{-1/4}$.

Near the two truncation boundary components
the surface is a small graph over either
$\pm\K_{\two{\epsilon}(n)}$.
We smoothly cut off the defining function
in a $\frac{1}{n}$-neighborhood of the boundary
to make the surface exactly catenoidal there
and then extend using these annuli
on the other side of the truncation boundary
all the way to $\partial\B^3$.
The result is our initial surface
$\initzerogen_n$.
The initial surface $\initthreebc_m$
is constructed analogously,
now also smoothly transitioning from the middle
truncation boundary to coincide with $\B^2$
on neighborhood of the origin. In what follows we will distill those objects and ancillary results that are needed for the spectral convergence theorems we will prove in Section \ref{subs:RegionSpectConv}.

\paragraph{Decompositions.}
Recalling \eqref{truncatedTowdom}
for the definition of the below domains,
our construction in \cite{CSWnonuniqueness} provides, in particular, smooth maps
\begin{align*}
\varphi^{\two{\towr_n}}
  \colon
  \two{\tow}_{-}(n^{5/8})
&\to\zerogen_n,
&
\varphi^{\three{\towr_m}}
  \colon
  \three{\tow}_{-}((m+1)^{5/8})
&\to\threebc_m,
\end{align*}
which are smooth coverings of their images. 
For all $0<s\leq\sqrt{n}$ or, respectively, $0<s\leq \sqrt{m+1}$ we in turn define
\begin{align*}
\two{\towr_n}(s)
&\vcentcolon=
  \varphi^{\two{\towr_n}}\bigl(\two{\tow}_{-}(s)\bigr)
  \subset
  \zerogen_n,
&
\three{\towr_m}(s)
&\vcentcolon=
  \varphi^{\three{\towr_m}}\bigl(\three{\tow}_{-}(s)\bigr)
  \subset
  \threebc_m.
\end{align*}
In practice,
in addition to the upper bound required on $s$,
we will be interested only in $s$ greater
than a universal constant set by
$\two{\tow}$ and $\three{\tow}$:
we want to truncate far enough out
(in the domain)
that near and beyond the truncation boundary
the surface is already the graph
of a small function over the asymptotic planes.
In a typical application to follow
we will take $s$ large in absolute terms
and then take $n$ or $m$ large with respect to $s$,
so we will not always repeat either restriction.
When they do hold,
$\zerogen_n \setminus \two{\towr}_n(s)$
consists of two connected components
and
$\threebc_m \setminus \three{\towr}_m(s)$
consists of three,
and we define
\begin{equation*}
\begin{aligned}
\two{\catr}_n(s)
  &\vcentcolon=
  \text{the closure of the component of }
  \zerogen_n \setminus \two{\towr}_n(s)
  \text{ on which $z$ is maximized,}
\\
\three{\catr}_m(s)
  &\vcentcolon=
  \text{the closure of the component of }
  \threebc_m \setminus \three{\towr}_m(s)
  \text{ on which $z$ is maximized,}
\\
\three{\discr}_m(s)
  &\vcentcolon=
  \text{the closure of the component of }
  \threebc_m \setminus \three{\towr}_m(s)
  \text{ that contains the origin.}
\end{aligned}
\end{equation*}
Observe that each $\two{\towr_n}(s)$ is invariant under $\refl_{\{z=0\}}$,
that the interiors of
$\two{\towr_n}(s)$,
$\two{\catr_n}(s)$,
and $\refl_{\{z=0\}}\two{\catr_n}(s)$
are pairwise disjoint,
and that
the last three regions
cover $\zerogen_n$. In particular, considering the interior of such sets, one thereby determines a candidate partition for the application of Proposition \ref{mr}.
Similarly,
$\three{\towr_m}(s)$
and $\three{\discr_m}(s)$
are invariant under $\refl_{\{y=z=0\}}$;
the interiors of
$\three{\towr_m}(s)$,
$\three{\discr_m}(s)$,
$\three{\catr_m}(s)$,
and
$\refl_{\{y=z=0\}}\three{\catr_m}(s)$
are pairwise disjoint,
also such four surfaces
cover $\threebc_m$. 

We agree to distinguish the choices $s=\sqrt{n}$
and $s=\sqrt{m+1}$ by omission of the parameter value:
\begin{align*}
\two{\towr_n}&\vcentcolon=\two{\towr_n}(\sqrt{n}),
&
\two{\catr}_n&\vcentcolon=\two{\catr}_n(\sqrt{n}),
\\[.5ex]
\three{\towr_m}&\vcentcolon=\three{\towr_m}(\sqrt{m+1}),
&
\three{\catr}_m&\vcentcolon=\three{\catr}_m(\sqrt{m+1}),
&
\three{\discr}_m&\vcentcolon=\three{\discr}_m(\sqrt{m+1}),
\end{align*}
as visualized in Figure \ref{fig:decomposition}. 
We also define the dilated truncations
(cf. Figure \ref{fig:rescaled-pieces})
\begin{align*}
\two{\towrdom{n}}(s)
  &\vcentcolon=
  n\Bigl(
    \two{\towr}_n (s)
    \cap
    \Wedge{-\pi/(2n)}{\pi/(2n)}
  \Bigr)
  =
  n\varphi^{\two{\towr_n}}(\two{\towdom}(s)),
&
\two{\towrdom{n}}
  &\vcentcolon=
  \two{\towrdom{n}}(\sqrt{n}),
\\
\three{\towrdom{m}}(s)
  &\vcentcolon=
(m+1)\Bigl(
  \three{\towr}_m(s)
  \cap
  \Wedge{-\pi/(2(m+1))}{\pi/(2(m+1))}
\Bigr)
=
(m+1)\varphi^{\three{\towr_m}}(\three{\towdom}(s)),
&
\three{\towrdom{m}}
  &\vcentcolon=
  \three{\towrdom{m}}(\sqrt{m+1}),
\end{align*}
where the notation for wedges has been given in \eqref{wedge},
and finally introduce the transition regions
\begin{align*}
\two{\transr_n}(s)
&\vcentcolon=
  \closure{
    \two{\towrdom{n}}
    \setminus
    \two{\towrdom{n}}(s)
  },
&
\three{\transr_m}(s)
&\vcentcolon=
  \closure{
    \three{\towrdom{m}}
    \setminus
    \three{\towrdom{m}}(s)
  }. 
\end{align*}

\begin{figure}%
\pgfmathsetmacro{\thetaO}{72}
\pgfmathsetmacro{\phiO}{90-180/20} 
\tdplotsetmaincoords{\thetaO}{\phiO}
\begin{tikzpicture}[tdplot_main_coords,line cap=round,line join=round,scale=\unitscale,baseline={(0,0,0)}]
\draw(0,0,0)node{\includegraphics[width=\fbmsscale\textwidth,page=4]{figures-fmbs}};
\draw(1,0,0.125)node{$\two{\towr}_n$};
\draw(0,0,0.33)node{$\two{\catr}_n$};
\end{tikzpicture}
\hfill
\pgfmathsetmacro{\phiO}{157} 
\tdplotsetmaincoords{\thetaO}{\phiO}
\begin{tikzpicture}[tdplot_main_coords,line cap=round,line join=round,scale=\unitscale,baseline={(0,0,0)}]
\draw(0,0,0)node{\includegraphics[width=\fbmsscale\textwidth,page=5]{figures-fmbs}};
\tdplottransformmainscreen{-1}{0}{0}
\pgfmathsetmacro{\VecHx}{\tdplotresx}
\pgfmathsetmacro{\VecHy}{\tdplotresy}
\tdplottransformmainscreen{0}{-1}{0}
\pgfmathsetmacro{\VecVx}{\tdplotresx}
\pgfmathsetmacro{\VecVy}{\tdplotresy}
\draw(0,-0.2,0)node[cm={\VecHx ,\VecHy ,\VecVx ,\VecVy ,(0,0)}]{\huge$\three{\discr}_m$};
\draw(0,0,0.87)node{$\three{\catr}_m$};
\draw(0.9,0,0.125)node{$\three{\towr}_m$};
\end{tikzpicture}
\caption{Decomposition of $\zerogen_n$ (left) and $\threebc_m$ (right, cutaway view). }%
\label{fig:decomposition}%
\vspace*{3em}
\pgfmathsetmacro{\globalscale}{1.33}
\pgfmathsetmacro{\thetaO}{72}
\pgfmathsetmacro{\phiO}{45}
\tdplotsetmaincoords{\thetaO}{\phiO}
\pgfmathsetmacro{\height}{4.33}
\begin{tikzpicture}[scale=\globalscale,tdplot_main_coords,line cap=round,line join=round,baseline={(0,0,0)},semithick]
\draw(0,0,0)--({-sqrt(20)},0,0)coordinate(x0);
\draw[loosely dotted](x0)--++(-0.6,0,0);
\begin{scope}[thin,densely dotted]
\draw({20*(-1+cos(9/2))},{20*sin( 9/2)},0)--({(-20+(20-sqrt(20))*cos(9/2))},{(20-sqrt(20))*sin( 9/2)},0);
\draw(0,0,0)node[scale=\globalscale]{\includegraphics[page=6]{figures-fmbs}};
\draw({20*(-1+cos(9/2))},{20*sin(-9/2)},0)--({(-20+(20-sqrt(20))*cos(9/2))},{(20-sqrt(20))*sin(-9/2)},0);
\coordinate(NW)at(current bounding box.north west);
\coordinate (Shift) at (-20,0,0);
\tdplotsetrotatedcoordsorigin{(Shift)}
\tdplotsetthetaplanecoords{180/20/2}
\tdplotdrawarc[tdplot_rotated_coords]{(0,0,0)}{20}{90+11}{90-11}{}{} 
\tdplotsetthetaplanecoords{-180/20/2}
\tdplotdrawarc[tdplot_rotated_coords]{(0,0,0)}{20}{90+11}{90-11}{}{} 
\tdplotdrawarc{(Shift)}{20}{-180/20/2}{180/20/2}{}{} 
\end{scope}
\pgfresetboundingbox
\path(NW);
\draw[->](0,0,0)--(2,0,0)node[below]{$x$};
\tdplottransformmainscreen{0}{1}{0}
\pgfmathsetmacro{\VecHx}{\tdplotresx}
\pgfmathsetmacro{\VecHy}{\tdplotresy} 
\tdplottransformmainscreen{-cos(26)}{0}{sin(26)}
\pgfmathsetmacro{\VecVx}{\tdplotresx}
\pgfmathsetmacro{\VecVy}{\tdplotresy} 
\draw( -3.5,0,2.5)node[below=1ex, cm={\VecHx ,\VecHy ,\VecVx ,\VecVy ,(0,0)}]{\large$\two{\towrdom{n}}$}; 
\end{tikzpicture}
\hfill
\begin{tikzpicture}[scale=\globalscale,tdplot_main_coords,line cap=round,line join=round,baseline={(0,0,0)},semithick]
\draw[black!66,thin,densely dotted]({20*(-1+cos(9/2))},{20*sin( 9/2)},0)--({(-20+(20-sqrt(20))*cos(9/2))},{(20-sqrt(20))*sin( 9/2)},0)
;
\draw(0,0,0)node[scale=\globalscale]{\includegraphics[page=7]{figures-fmbs}};
\draw[](0,0,0)--({-sqrt(20)},0,0)coordinate(x0);
\draw[loosely dotted](x0)--++(-0.6,0,0);
\draw(0,0,0)node[scale=\globalscale]{\includegraphics[page=8]{figures-fmbs}};
\begin{scope}[thin,densely dotted]
\coordinate(NW)at(current bounding box.north west);
\coordinate (Shift) at (-20,0,0);
\tdplotsetrotatedcoordsorigin{(Shift)}
\tdplotsetthetaplanecoords{180/20/2}
\tdplotdrawarc[tdplot_rotated_coords]{(0,0,0)}{20}{90+11}{90-11}{}{} 
\tdplotsetthetaplanecoords{-180/20/2}
\tdplotdrawarc[tdplot_rotated_coords]{(0,0,0)}{20}{90+11}{90-11}{}{} 
\tdplotdrawarc{(Shift)}{20}{-180/20/2}{180/20/2}{}{} 
\draw({20*(-1+cos(9/2))},{20*sin(-9/2)},0)--({(-20+(20-sqrt(20))*cos(9/2))},{(20-sqrt(20))*sin(-9/2)},0);
\end{scope}
\pgfresetboundingbox
\path(NW)--++(0,0,-9.1);%
\draw[->](0,0,0)--(2,0,0)node[below]{$x$};
\tdplottransformmainscreen{0}{1}{0}
\pgfmathsetmacro{\VecHx}{\tdplotresx}
\pgfmathsetmacro{\VecHy}{\tdplotresy} 
\tdplottransformmainscreen{-cos(25.38)}{0}{ sin(25.38)}
\pgfmathsetmacro{\VecVx}{\tdplotresx}
\pgfmathsetmacro{\VecVy}{\tdplotresy} 
\draw( -3,0,3.5)node[below=1ex,cm={\VecHx ,\VecHy ,\VecVx ,\VecVy ,(0,0)}]{\large$\three{\towrdom{m}}$};
\end{tikzpicture}
\caption{The dilated truncations $\two{\towrdom{n}}$ (left) and $\three{\towrdom{m}}$ (right). }%
\label{fig:rescaled-pieces}%
\end{figure}

\paragraph{Geometric estimates.}
Before proceeding,
we declare the following
abbreviated notation
for the metrics and second fundamental forms
on $\two{\towrdom{n}}$ and $\three{\towrdom{m}}$
(induced by their inclusions in $(\R^3,\met{\R^3})$):
\begin{align*}
\two{g}_n
&\vcentcolon=
  \met{\two{\towrdom{n}}},
&
\three{g}_m
&\vcentcolon=
  \met{\three{\towrdom{m}}}, 
&
\two{A_n}
&\vcentcolon=
  \twoff{\two{\towrdom{n}}},
&
\three{A_m}
&\vcentcolon=
  \twoff{\three{\towrdom{m}}}.
\end{align*}
In analogy with
\eqref{towerCutoffsConfFactsConfMets}
we first write $\two{\psi}_n$, $\three{\psi}_m$
for the unique functions
on $\two{\towrdom{n}}$, $\three{\towrdom{m}}$
such that
\begin{equation*}
\two{\psi}
  =
  \left(n\circ\varphi^{\two{\towr}_n}\right)^*
  \two{\psi}_n,
\qquad
\three{\psi}
  =
  \left((m+1)\circ\varphi^{\three{\towr}_m}\right)^*
  \three{\psi}_m
\end{equation*}
and then in turn define
\begin{equation}
\label{towrCutoffsConfFactsConfMets}
\begin{aligned}
\two{\conffact_n}
  &\vcentcolon=
  \sqrt{
    \two{\psi_n}
    +\frac{1}{2}\abs[\big]{\two{A_n}}_{\two{g}_n}^2
      (1-\two{\psi_n})
    +e^{-2n} 
},
&\qquad
\two{h_n}
  &\vcentcolon=
  (\two{\conffact}_n)^2\two{g}_n,
\\
\three{\conffact_m}
  &\vcentcolon=
  \sqrt{
    \three{\psi_m}
    +\frac{1}{2}\abs[\big]{\three{A_m}}_{\three{g}_m}^2
      (1-\three{\psi_m})
    +e^{-2m}  
},
&
\three{h}_m
  &\vcentcolon=
  (\three{\conffact}_m)^2\three{g}_m.
\end{aligned}
\end{equation}
The terms $e^{-2n}$ and $e^{-2m}$ above
are included to ensure the conformal factors
vanish nowhere.
For the sake of brevity, and consistently with the notation adopted in the previous subsections, 
we set
\begin{equation*}
\begin{aligned}
&\two{\liptowrdom{n}}
  \vcentcolon=
  (\two{\towrdom{n}},\two{h}_n),
\qquad
&&\two{\liptowrdom{n}}(s)
  \vcentcolon=
  (\two{\towrdom{n}}(s),\two{h}_n)
\\
&\three{\liptowrdom{m}}
  \vcentcolon=
  (\three{\towrdom{m}},\three{h}_m),
\qquad
&&\three{\liptowrdom{m}}(s)
  \vcentcolon=
  (\three{\towrdom{m}}(s),\three{h}_m),
\end{aligned}
\end{equation*}
so that $\two{\liptowrdom{n}}$
and $\three{\liptowrdom{m}}$
and their truncations $\two{\liptowrdom{n}}(s)\subset \two{\towrdom{n}}$ and $\three{\towrdom{m}}(s)\subset \three{\towrdom{m}}$
are always understood as being equipped with the conformal metrics $\two{h}_n$ and $\three{h}_m$, rather than $\two{g}_n$ and $\three{g}_m$.

\begin{lemma}
[Convergence of $\two{\towrdom{n}}(s)$ and $\three{\towrdom{m}}(s)$]
\label{towConv}
For every $s>0$ there exists $m_s>0$ such that for every integer $m>m_s$
\begin{enumerate}[label={\normalfont(\roman*)}]
\item
the region
$\three{\towrdom{m}}(s)$
is defined
and is the diffeomorphic image
under $(m+1)\varphi^{\three{\towr_m}}$
of $\three{\towdom}(s)$,
\item
$
 (m+1)\varphi^{\three{\towr_m}}
 \bigl(\three{\towdom}(s) \cap \{x=0\}\bigr)
 =
 \three{\towrdom{m}}(s) \cap (m+1)\Sp^2
$,
\item
$\varphi^{\three{\towr_m}}$
commutes with $\refl_{\{z=0\}}$,
and
\item 
$
 \three{\towr_m}(s)
 =
 (m+1)^{-1}
 \apr_{m+1}\three{\towrdom{m}}(s)
$
is a surface with smooth boundary.
\end{enumerate}
Moreover, for every $s>0$ and $\alpha \in \interval{0,1}$ 
\begin{enumerate}[resume*]
\item
$
 \left((m+1)\circ\varphi^{\three{\towr_m}}\right)^*
 \three{g}_m
 \xrightarrow[m \to \infty]
   {C^{1,\alpha}(\three{\towdom}(s),\met{\three{\tow}})}
 \met{\three{\tow}}
$ and
\item
$
 \left((m+1)\circ \varphi^{\three{\towr_m}}\right)^*
 \three{A}_m
 \xrightarrow[m \to \infty]
   {C^{0,\alpha}(\three{\towdom}(s), \met{\three{\tow}})}
 \twoff{\three{\tow}}
$.
\end{enumerate}
All the above statements have analogues for $\zerogen_n$ in place of $\threebc_m$, mutatis mutandis.
\end{lemma}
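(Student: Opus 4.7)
The plan is to deduce the conclusions directly from the construction of $\threebc_m$ and $\zerogen_n$ recalled in Subsection \ref{subs:deconstruction}; since the argument is essentially the same for both families, I focus on $\threebc_m$. The map $\varphi^{\three{\towr_m}}$ factors as the composition of: (a) the graphical deformation $\three{\tow}\to\three{\tow}_{\three{\delta}(m)}$ followed by the scaling by $(m+1)^{-1}$; (b) the wrapping map $\three{\Phi}_m$ sending the scaled surface into $\B^3$ so as to produce a piece of the initial surface $\initthreebc_m$; and (c) the inverse of the nearest-point projection $\three{\varpi}_m\colon \threebc_m\to\initthreebc_m$ under the auxiliary metric $\auxmet$. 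Step (a) tends to the identity in $C^\infty$ on compact subsets as $\three{\delta}(m)\to 0$, and step (c) is well-defined and smooth on any fixed compact region for $m$ large, because the graph function defining $\threebc_m$ over $\initthreebc_m$ was shown in \cite{CSWnonuniqueness} to have $C^{2,\alpha}$-norm tending to zero as $m\to\infty$.

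Given this factorization, items (i), (ii) and (iv) follow directly. For (i), the composition is a smooth diffeomorphism onto its image for $m$ large with respect to $s$. For (ii), the auxiliary metric $\auxmet$ was designed to agree with a cylindrical metric on a neighborhood of $\Sp^2$, forcing the boundary of $\initthreebc_m$ (and hence of $\threebc_m$) to lie exactly on $\Sp^2$ and to correspond under the wrapping precisely to the tower boundary piece $\{x=0\}\cap\three{\towdom}(s)$. For (iv), the displayed equality is a definitional matter, while the smoothness of $\three{\towr}_m(s)$ inside the smooth surface $\threebc_m$ is automatic; its boundary curves are smooth because they are the local-diffeomorphism images of smooth transverse intersections in the tower, and the $\refl_{\{y=\pm\pi/2\}}$-symmetry of $\three{\tow}$ combined with the $\apr_{m+1}$-invariance of $\threebc_m$ ensures that neighboring $\apr_{m+1}$-images glue smoothly along the interior reflection planes $\{\theta=\pm\pi/(2(m+1))\}$.

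For (iii), the $\refl_{\{z=0\}}$-equivariance of $\varphi^{\three{\towr_m}}$ is hard-wired in the construction: the family $\three{\tow}_{\three{\delta}(m)}$ preserves the $\refl_{\{y=z=0\}}$-symmetry of $\three{\tow}$; the wrapping $\three{\Phi}_m$ was designed to intertwine $\refl_{\{y=z=0\}}$ on the scaled tower with $\refl_{\{z=0\}}$ on $\B^3$ (the periodic $y$-direction becoming the azimuthal direction along $\Sp^1$, so reversing $y$ corresponds to traversing the equator in the opposite sense while $z$ is unchanged); and the perturbation to $\threebc_m$ was chosen within the $\apr_{m+1}$-equivariant class, so that $\three{\varpi}_m$ also commutes with $\refl_{\{z=0\}}$.

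The quantitative assertions (v) and (vi) would be established by controlling each of the three factors in the above composition and then combining the estimates. The wrapping map $\three{\Phi}_m\circ((m+1)^{-1}\cdot)$ deviates from an isometry by terms of order $(m+1)^{-1}$ in $C^{2,\alpha}$-norm on any fixed $\three{\towdom}(s)$; the graphical deformation tends to the identity in $C^\infty$; and the pullback by the inverse of $\three{\varpi}_m$ deviates from the identity by the normal graph function, which is $C^{2,\alpha}$-small. Combining these yields the $C^{1,\alpha}$-convergence of the pulled-back metric and, since the second fundamental form loses one derivative relative to the metric, the $C^{0,\alpha}$-convergence of the pulled-back second fundamental form. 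The main technical burden lies in extracting the first of these three estimates from the explicit expression of the wrapping map in \cite{CSWnonuniqueness} (and verifying that the relevant constants depend only on $s$, which is why the statement requires $m_s$ depending on $s$); the other ingredients are a routine unpacking of the construction, as is the adaptation to the $\zerogen_n$ family.
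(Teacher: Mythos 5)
Your proof takes the same approach as the paper's (which simply cites the estimates (5.20) and Proposition 3.18 of \cite{CSWnonuniqueness} together with the parameter bound implicit in the former); your explicit factorization of $\varphi^{\three{\towr_m}}$ into the graphical-deformation, wrapping, and inverse-projection stages is a helpful unpacking of what the paper leaves implicit, and matches the paper's own parenthetical hint about the map $\varpi_{M_{m,\xi}}$ from \cite[(3.43)]{CSWnonuniqueness}.

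One slip in your treatment of (iii): the wrapping intertwines $\refl_{\{y=z=0\}}$ on $\three{\tow}$ with $\refl_{\{y=z=0\}}$ on $\B^3$ (the $\pi$-rotation about the $x$-axis, which is an element of $\apr_{m+1}$), not with $\refl_{\{z=0\}}$ --- indeed $\refl_{\{z=0\}}\notin\apr_{m+1}$, so it does not even preserve $\threebc_m$, and your own parenthetical description ($y$-reversal gives $\theta\mapsto-\theta$, while $\refl_{\{y=z=0\}}$ also flips $z$) describes exactly that $\pi$-rotation, not a horizontal reflection. The $\refl_{\{z=0\}}$ version is the correct one for the $\zerogen_n$ analogue, where $\two{\tow}$ and $\zerogen_n$ are genuinely $\refl_{\{z=0\}}$-invariant. (The lemma statement itself writes $\refl_{\{z=0\}}$ where $\refl_{\{y=z=0\}}$ must be intended for $\threebc_m$, so this confusion is not yours alone.) Apart from this, your argument is sound.
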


The first four claims are immediate from the definitions,
while the convergence assertions are ensured,
in the case of $\threebc_m$,
by the following estimates
from \cite{CSWnonuniqueness},
the case of $\zerogen_n$
being completely analogous.
Namely,
the estimate \cite[(5.20)]{CSWnonuniqueness}
provides $C^{2,\alpha}$
bounds for the defining function
of $\threebc_m$ as a graph over
the corresponding initial surface,
so controlling the
projection map
$\three{\varpi}_m$ from $\threebc_m$
to the initial surface.
The same estimate \cite[(5.20)]{CSWnonuniqueness}
also bounds the parameter value
for the initial surface
from the one-parameter family
that is selected
to produce the final one.
On the other hand,
\cite[Proposition 3.18]{CSWnonuniqueness}
provides estimates on the initial surface,
in terms of the datum $g$
as well as the value of the continuous parameter.
(As an aid to extracting the required information,
we point out that
the map $\varpi^{\vphantom{|}}_{M_{m,\xi}}$
in \cite[(3.43)]{CSWnonuniqueness}
is essentially
(that is: up to some quotienting
and the exact extent of the domains)
the inverse of the map
$\three{\varpi}_{m-1} \circ \varphi^{\three{\towr_{m-1}}}$
of the present article.)

Let us consider the other portions of our surfaces. 
By construction
$\two{\varpi}_n(\two{\catr}_n)$
and $\three{\varpi}_m(\three{\catr}_m)$
(subsets of the initial surfaces)
are graphs
(under the Euclidean metric $\geuc$)
over subsets of
$\K_{\two{\epsilon}(n)}$
and $\K_{\three{\epsilon}(m)}$,
and $\three{\varpi}_m(\three{\discr}_m)$
a graph over $\B^2$.
Thus, by composition with a further projection,
we obtain injective maps
$
 \two{\varpi}_n(\two{\catr}_n)
 \to
 \K_{\two{\epsilon}(n)}
$,
$
 \three{\varpi}_m(\three{\catr}_m)
 \to
 \K_{\three{\epsilon}(m)}
$,
and
$
 \three{\discr}_m
 \to 
 \B^2
$.
Moreover,
the image of each of these three maps
is $\Ogroup(2)$ invariant:
the image of the third is a disc
with radius tending to $1$ as $m \to \infty$,
the image of the second is a catenoidal annulus
with upper boundary circle
coinciding with that of
$\K_{\three{\epsilon}(m)}$
and lower boundary circle
tending to that of
$\K_{\three{\epsilon}(m)}$
as $m \to \infty$;
the image of the first
admits an analogous description.

In particular,
by composing further with dilations
of scale factor tending to $1$,
we obtain diffeomorphisms
\begin{equation*}
\varphi^{\three{\discr}_m}
\colon
\B^2\to
\three{\discr}_m;
\end{equation*}
similarly reparametrizing in the radial direction one also obtains diffeomorphisms
\begin{equation*}
\varphi^{\two{\catr}_n}
  \colon
  \K_0
  \to 
  \two{\catr}_n,
\qquad
\varphi^{\three{\catr}_m}
  \colon
  \K_0
  \to 
  \three{\catr}_m.
\end{equation*}
The inverses of these maps may be regarded
as small perturbations
(for $n$ and $m$ large)
of nearest-point projection
onto $\R^2 \subset \B^2$
or onto the complete catenoid
containing $\K_0$, as appropriate.
Somewhat more formally,
by reference to \cite{CSWnonuniqueness}
(specifically Proposition 3.18
and estimate (5.20) therein),
much as in the proof of
Lemma \ref{towConv},
we confirm the following properties
of $\two{\catr}_n$, $\three{\catr}_m$,
and $\three{\discr}_m$.

\begin{lemma}
[Convergence of $\two{\catr_n}$ and $\three{\catr_m}$]
\label{catConv}
There exists $m_0>0$ such that for each integer $m>m_0$

\begin{enumerate}[label={\normalfont(\roman*)}]
\item $\varphi^{\three{\catr}_m}$ is defined and
a diffeomorphism from $\K_0$ onto $\three{\catr}_m$,

\item $\varphi^{\three{\catr}_m}$
commutes with each element of $\pyr_{m+1}$,
and

\item $\varphi^{\three{\catr}_m}$ takes
the upper boundary component of $\K_0$
to the upper boundary component of $\three{\catr}_m$.
\end{enumerate}
Moreover,
for every $\alpha \in \interval{0,1}$
\begin{enumerate}[resume*]
\item
$
 (\varphi^{\three{\catr}_m})^*
   \met{\threebc_m}\Big|_{\three{\catr}_m}
 \xrightarrow[m \to \infty]{C^{1,\alpha}(\K_0,\met{\K_0})}
 \met{\K_0}
$ and

\item
$
 (\varphi^{\three{\catr}_m})^*
   {\twoff{\threebc_m}}\Big|_{\three{\catr}_m}
   \xrightarrow[m \to \infty]
     {C^{0,\alpha}(\K_0, \met{\K_0})}
   \twoff{\K_0}
$.
\end{enumerate}

All the above statements have analogues
for $\zerogen_n$ in place of $\threebc_m$,
mutatis mutandis.
\end{lemma}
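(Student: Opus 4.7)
The plan is to construct $\varphi^{\three{\catr}_m}\colon\K_0\to\three{\catr}_m$ as a composition of three maps and then read off each of the asserted properties from the corresponding property of one of the factors. Concretely, I will write
\[
\varphi^{\three{\catr}_m}
 =
 (\three{\varpi}_m)^{-1}\circ \Phi_m \circ R_m,
\]
where $R_m\colon \K_0\to \K_{\three{\epsilon}(m)}$ is the rotationally symmetric reparametrization (in the $z$-coordinate) mapping the upper boundary circle of $\K_0$ to the upper boundary circle of $\K_{\three{\epsilon}(m)}$ and the lower boundary circle of $\K_0$ to the lower boundary circle of $\K_{\three{\epsilon}(m)}$; $\Phi_m\colon \K_{\three{\epsilon}(m)}\supset U_m\to \initthreebc_m$ is the inverse of the nearest-point projection in the auxiliary metric (so the image is the catenoidal piece of the initial surface); and $\three{\varpi}_m$ is the nearest-point projection from $\threebc_m$ onto $\initthreebc_m$, used in the second step of the gluing construction. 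Since $\three{\epsilon}(m)\to 0$, the catenoid $\K_{\three{\epsilon}(m)}$ converges smoothly on compact pieces to $\K_0$, so $R_m$ is $C^\infty$-close to the identity.

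For item (i), invertibility of each factor, for $m$ large enough, follows from the gluing estimate \cite[(5.20)]{CSWnonuniqueness} (which makes $\three{\varpi}_m$ a near-identity diffeomorphism on $\three{\catr}_m$) together with the fact that $R_m$ is a smooth reparametrization. Item (ii) is immediate because all three factors respect the pyramidal symmetry: $R_m$ is manifestly rotationally and reflectively symmetric, the initial surface and hence $\Phi_m$ is $\pyr_{m+1}$-invariant by design, and $\three{\varpi}_m$, being nearest-point projection under an $\Ogroup(3)$-invariant metric onto a $\pyr_{m+1}$-invariant surface, also commutes with the action. Item (iii) follows from the construction of the initial surface, in which the upper boundary circle of the catenoidal annulus is, by the choice of the auxiliary metric and the orthogonality of $\K_0\cap\Sp^2$ at height $h$, exactly retained as (a component of) the free boundary of $\initthreebc_m$ and hence of $\threebc_m$.

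For the convergence claims (iv) and (v), the key input is \cite[Proposition 3.18]{CSWnonuniqueness}, which yields $C^{k,\alpha}$-bounds on the deviation of $\initthreebc_m$ from the singular limit $-\K_0\cup\B^2\cup\K_0$ on $\three{\catr}_m$ in terms of $\three{\epsilon}(m)$ (and the one-parameter family's parameter), combined with \cite[(5.20)]{CSWnonuniqueness} which controls the defining function of $\threebc_m$ as a $C^{2,\alpha}$-graph over $\initthreebc_m$ together with the parameter selected. Both the parameter and the defining function tend to zero as $m\to\infty$, and smoothness of the dependence of $\K_\epsilon$ on $\epsilon$ at $\epsilon=0$ then gives the asserted $C^{1,\alpha}$ convergence of the pullback metric and $C^{0,\alpha}$ convergence of the pullback second fundamental form. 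The case of $\zerogen_n$ is entirely analogous, using the corresponding entries of the cited references.

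The main obstacle I anticipate is purely bookkeeping: carefully verifying that the regularity in \cite[(5.20) and Proposition 3.18]{CSWnonuniqueness} is actually stated with norms strong enough to yield $C^{1,\alpha}$ and $C^{0,\alpha}$ convergence of the geometric quantities (and that those norms are taken on a region containing the \emph{full} $\three{\catr}_m$, not just its interior relative to the gluing seam near $\three{\towr}_m$). No genuinely new analytic difficulty arises; the proof is a straightforward transfer of the estimates proven in \cite{CSWnonuniqueness}, mirroring step-by-step the argument already sketched for $\three{\towrdom{m}}$ in Lemma \ref{towConv}.
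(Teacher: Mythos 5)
Your proof matches the paper's own argument: $\varphi^{\three{\catr}_m}$ is defined in the paper by exactly the composition you describe (radial reparametrization $\K_0\to\K_{\three{\epsilon}(m)}$, lift to the initial surface via the graph parametrization, lift to $\threebc_m$ via $(\three{\varpi}_m)^{-1}$), with the symmetry and convergence claims then resting on \cite[Proposition 3.18]{CSWnonuniqueness} and \cite[(5.20)]{CSWnonuniqueness}, precisely as you cite. The only small imprecision is that the paper records $\three{\varpi}_m(\three{\catr}_m)$ as a graph over $\K_{\three{\epsilon}(m)}$ under the Euclidean metric $\geuc$ rather than under the auxiliary metric (the auxiliary metric enters only through $\three{\varpi}_m$ itself), but this does not affect the structure or the conclusion of the argument.
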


\begin{lemma}
[Convergence of $\three{\discr_g}$]
\label{discConv}
There exists $m_0>0$ such that for each integer $m>m_0$

\begin{enumerate}[label={\normalfont(\roman*)}]
\item $\varphi^{\three{\discr_m}}$ is defined and
a diffeomorphism from $\B^2$ onto $\three{\discr}_m$
and

\item $\varphi^{\three{\discr}_m}$
commutes with each element of $\apr_{m+1}$.
\end{enumerate}

Moreover,
for each $\alpha \in \interval{0,1}$

\begin{enumerate}[resume*]
\item
$
 (\varphi^{\three{\discr}_m})^*
   \met{\threebc_m}\Big|_{\three{\discr}_m}
 \xrightarrow[m \to \infty]{C^{1,\alpha}(\B^2,\met{\B^2})}
 \met{\B^2}
$ and

\item
$
 (\varphi^{\three{\discr}_m})^*
   {\twoff{\threebc_m}}\Big|_{\three{\discr}_m}
   \xrightarrow[m \to \infty]{C^{0,\alpha}(\B^2,\met{\B^2})}
   0
$.
\end{enumerate}
\end{lemma}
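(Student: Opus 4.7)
The plan is to argue in close parallel with (indeed, more easily than) Lemma \ref{catConv}, exploiting the fact that in the construction of $\threebc_m$ the flat disc $\B^2$ enters as an \emph{undeformed} model, unlike the catenoidal annuli which belong to the parameter-dependent family $\K_\epsilon$.

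First, I would use the construction to extract $\varphi^{\three{\discr}_m}$. By definition, $\three{\varpi}_m(\three{\discr}_m)$ is a subset of the initial surface $\initthreebc_m$ which, by the design recalled at the beginning of Subsection \ref{subs:deconstruction}, is an axially symmetric (subset of a) disc contained in $\B^2$, agreeing with a centered round disc of radius tending to $1$ as $m \to \infty$ on a large neighborhood of the origin and interpolating smoothly to the inner cut of $\three{\catr}_m$ on an annular transition region. Composing $\three{\varpi}_m^{-1}|_{\three{\varpi}_m(\three{\discr}_m)}$ with a radial dilation of $\B^2$ of scale factor tending to $1$ yields the asserted diffeomorphism $\varphi^{\three{\discr}_m}\colon\B^2\to\three{\discr}_m$ for $m$ larger than some universal threshold $m_0$. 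This gives item (i). Item (ii) then follows immediately from the $\apr_{m+1}$-equivariance of the entire scheme: the initial surface $\initthreebc_m$ is $\apr_{m+1}$-invariant by construction, the defining function $u_m$ of $\threebc_m$ as a normal graph over $\initthreebc_m$ (in the auxiliary metric) is $\apr_{m+1}$-equivariant by the uniqueness built into the fixed-point argument of \cite[Section 5]{CSWnonuniqueness}, and both $\B^2$ and the radial dilations in question are preserved by every element of $\apr_{m+1}$.

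For (iii) and (iv), I would combine two inputs from \cite{CSWnonuniqueness}. The first is the $C^{2,\alpha}$-estimate \cite[(5.20)]{CSWnonuniqueness} on the defining function $u_m$, which decays as $m\to\infty$; this controls $\three{\varpi}_m$ in $C^{2,\alpha}$. The second is \cite[Proposition 3.18]{CSWnonuniqueness}, which, restricted to the region corresponding to $\three{\discr}_m$, implies that $\initthreebc_m$ converges in $C^{2,\alpha}$ to $\B^2$ (the central portion actually \emph{equals} a slightly shrunk disc, and the transition annulus smooths out with curvature tending to $0$). Composing with the dilation, we get that $\varphi^{\three{\discr}_m}$ converges to the identity on $\B^2$ in $C^{2,\alpha}$, so that pulling back the intrinsic metric yields the $C^{1,\alpha}$-convergence $(\varphi^{\three{\discr}_m})^*\met{\threebc_m}\to\met{\B^2}$, and pulling back the second fundamental form yields $C^{0,\alpha}$-convergence to $\twoff{\B^2}=0$.

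The main (and in fact quite mild) obstacle is simply a bookkeeping one: keeping track of the radial rescaling factors, and verifying that the transition annulus contributes nothing problematic in the relevant norms. Unlike the wrapped-tower lemma (Lemma \ref{towConv}), where one has to convert between the dilated blow-up geometry and the ambient geometry across scales of order $(m+1)^{-1}$ to $(m+1)^{-1/2}$, here everything lives at unit scale: the domain $\B^2$ is fixed and flat, and the perturbation is of pure size $o(1)$ in $C^{2,\alpha}$. Consequently no delicate interpolation or cutoff argument is required, and the conclusion follows directly from the cited estimates, exactly as the analogous statements for $\K_0$ in Lemma \ref{catConv} followed from the same references.
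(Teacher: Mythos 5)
Your proposal is correct and takes essentially the same route as the paper: the paper's own justification for this lemma is a one-sentence pointer (preceding Lemma \ref{catConv}) to Proposition 3.18 and estimate (5.20) of \cite{CSWnonuniqueness}, together with the construction of $\varphi^{\three{\discr}_m}$ sketched just before the lemma, and you reconstruct all of this faithfully, including the observation that $\B^2$ is parameter-independent so the argument is strictly simpler than for $\K_0$. One small notational slip worth flagging: $\three{\varpi}_m^{-1}|_{\three{\varpi}_m(\three{\discr}_m)}$ has domain a subset of $\initthreebc_m$, not of $\B^2$, so it cannot be composed directly with a radial dilation of $\B^2$ --- as the paper makes explicit, one also interposes the Euclidean graph projection of $\three{\varpi}_m(\three{\discr}_m)$ onto $\B^2$ (whose image is a disc of radius tending to $1$); this is purely a bookkeeping omission and does not affect the substance of the argument.
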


Last we focus on the transition regions.
Let us agree to write
$\two{t}_n$
and $\three{t}_m$
for the distance functions
on $n\K_{\two{\epsilon}(n)}$
and $(m+1)\K_{\three{\epsilon}(m)}$
from their respective lower boundary
circles.
By construction
(assuming $s$ large enough in absolute terms)
$n\two{\varpi}_n(n^{-1}\two{\transr_n}(s))$
has two connected components,
one a graph over the
catenoidal annular wedge
\begin{equation*}
\{
  s
  \leq
  \two{t}_n
  \leq
  \sqrt{n}
\}
\cap
\Wedge{-\pi/(2n)}{\pi/(2n)}
\subset n\K_{\two{\epsilon}(n)}
\end{equation*}
and the other 
the reflection of this last one
through $\{z=0\}$,
while
$
 (m+1)
 \three{\varpi}_m
 ((m+1)^{-1}\three{\transr_n}(s))
$
has three connected components,
one a graph over the
planar annular wedge
\begin{equation*}
\{
 s
 \leq
 (m+1)-r
 \leq
 \sqrt{m+1}
\}
\cap
\Wedge{-\pi/(2(m+1))}{\pi/(2(m+1))}
\cap (m+1)\B^2,
\end{equation*}
another
a graph over the
catenoidal annular wedge
\begin{equation*}
\{
  s
  \leq
  \three{t}_m
  \leq
  \sqrt{m+1}
\}
\cap \Wedge{-\pi/(2(m+1))}{\pi/(2(m+1))}
\subset (m+1)\K_{\three{\epsilon}(m)},
\end{equation*}
and the third the reflection
of this last one through
$\{y=z=0\}$.

Projecting onto these
rotationally invariant sets
and parametrizing them by arc length $t$
in the ``radial'' direction and
$
 \vartheta
 \vcentcolon=
 n\theta
$
or, respectively,
$
 \vartheta
 \vcentcolon=
 (m+1)\theta,
$
in the angular direction
(with $\theta$ restricted to
the appropriate interval containing $0$),
we obtain injective maps
\begin{equation*}
\varphi^{\two{\transr_n}(s),\K}
  \colon
  \IntervaL[\Big]{s,\sqrt{n}}
    \times \IntervaL[\Big]{-\frac{\pi}{2},\frac{\pi}{2}}
  \to
  \two{\transr}_n,
\qquad
\varphi^{\three{\transr_m}(s),\K},
\varphi^{\three{\transr_m}(s),\B^2}
  \colon
  \IntervaL[\Big]{s,\sqrt{m+1}}
    \times \IntervaL[\Big]{-\frac{\pi}{2},\frac{\pi}{2}}
  \to 
  \three{\transr}_m
\end{equation*}
whose images
are components of
$\two{\transr_n}(s)$
and $\three{\transr_m}(s)$
that generate the latter regions
under $\cycgrp{\{z=0\}}$
and $\cycgrp{\{y=z=0\}}$
respectively.

\begin{lemma}[Estimates on $\two{\transr_n}(s)$ and $\three{\transr_m}(s)$]
\label{transitionEstimates}
Let $\alpha \in \interval{0,1}$.
There exists $s_0>0$ such that for each $s>s_0$ there exists $m_s>0$ such that for every integer $m>m_s$
\begin{enumerate}[label={\normalfont(\roman*)}]
\item
\label{transr_cat_A}
$
 (\varphi^{\three{\transr_m}(s),\K})^*
   \abs{\three{A}_m}_{\three{g}_m}^2(t,\vartheta)
 =
 a_1(t)m^{-2}
   + a_2(t,\vartheta)e^{-t/4}
$
for some smooth functions $a_1,a_2$
having $C^{0,\alpha}(dt^2+d\vartheta^2)$ norm
bounded independently of $m$ and $s$,

\item\label{transr-ii}
$
 (\varphi^{\three{\transr_m}(s),\B^2})^*
   \abs{\three{A}_m}_{\three{g}_m}^2(t,\vartheta)
 =
 a_3(t,\vartheta)e^{-t/4}
$
for some smooth function $a_3$
having $C^{0,\alpha}(dt^2+d\vartheta^2)$ norm
bounded independently of $m$ and $s$,

\item
\label{transr_cat_met}
$
 (\varphi^{\three{\transr_m}(s),\K})^*\three{g}_m
 =
 dt^2
   +(1+m^{-1}tf^1(t))
        d\vartheta^2
 +f^1_{uv}(t,\vartheta)
   e^{-t/4} \, du \, dv
$
for some smooth functions $f^1,f^1_{uv}$
having $C^{1,\alpha}(dt^2+d\vartheta^2)$ norm
bounded independently of $m$ and $s$,

\item\label{transr-iv}
$
 (\varphi^{\three{\transr_m}(s),\B^2})^*\three{g}_m
 =
 dt^2
   +(1+m^{-1}tf^2(t))
     d\vartheta^2
 +f^2_{uv}(t,\vartheta)
   e^{-t/4} \, du \, dv
$
for some smooth functions $f^2,f^2_{uv}$
having $C^{1,\alpha}(dt^2+d\vartheta^2)$ norm
bounded independently of $m$ and $s$,

\item
\label{transr_cat_lap}
$
 \Delta_{(\varphi^{\three{\transr_m}(s),\K})^*\three{g}_m}
 =
 \partial_t^2
  +m^{-1}c_1^t(t)
    \partial_t 
  +
  (
    1
    +m^{-1/2}
      b_1^{\vartheta\vartheta}(t)
  )
      \partial_\vartheta^2
  +e^{-t/4}(
    b_2^{uv}(t,\vartheta)
      \partial_u\partial_v
    +c_2^u(t,\vartheta)
      \partial_u
   )
$
for some smooth functions
$b_1^{\vartheta\vartheta}, b_2^{uv},
c_1^t, c_2^u$
having $C^{0,\alpha}(dt^2+d\vartheta^2)$
norm bounded independently of $m$ and $s$,
and

\item\label{transr-vi}
$
 \Delta_{(\varphi^{\three{\transr_m}(s),\B^2})^*\three{g}_m}
 =
 \partial_t^2
  +m^{-1}c_3^t(t)
    \partial_t 
  +
  (1+m^{-1/2}
    b_3^{\vartheta\vartheta}(t)
  )\partial_\vartheta^2
  +e^{-t/4}(
    b_4^{uv}(t,\vartheta)
      \partial_u\partial_v
    +c_4^u(t,\vartheta)
      \partial_u
   )
$
for some smooth functions
$b_3^{\vartheta\vartheta}, b_4^{uv},
c_3^t, c_4^i$
having $C^{0,\alpha}(dt^2+d\vartheta^2)$
norm bounded independently of $m$ and $s$.
\end{enumerate}
It is understood that, in items \ref{transr_cat_met}, \ref{transr-iv}, \ref{transr_cat_lap}, \ref{transr-vi} one sums over $u,v \in \{t,\vartheta\}$. 

Furthermore,
\begin{enumerate}[resume*]
\item\label{transr-vii}
$
  \displaystyle{
    \lim_{s \to \infty} \lim_{m \to \infty}
  }
  \hausmeas{2}{\three{h_m}}(\three{\transr_m}(s))
 =
 0
$.
\end{enumerate}
The same claims hold for $\two{\transr_n}(s)$,
mutatis mutandis.
\end{lemma}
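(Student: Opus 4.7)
The plan is to derive items (i)--(vi) by direct computation from the graph description of the transition region furnished by the construction in \cite{CSWnonuniqueness}, combined with the exponential decay of the corresponding defining function; the area bound (vii) then follows by a short integration. I discuss the case of $\three{\transr_m}(s)$, the argument for $\two{\transr_n}(s)$ being entirely parallel.

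First, I would recall the explicit structure of the transition region from \cite{CSWnonuniqueness}. By the final interpolation step of Section 3 and the graph estimate \cite[(5.20)]{CSWnonuniqueness}, on each of the three components of the image of $\varphi^{\three{\transr_m}(s),\cdot}$ the rescaled surface is a normal graph (in the auxiliary metric) of a smooth function $\eta$ over the corresponding catenoidal or planar annular wedge in $(m+1)\K_{\three{\epsilon}(m)}$ or in $(m+1)\B^2$, with $\nm{e^{t/4}\eta}_{C^{2,\alpha}} \leq C$ uniformly in $m$ and $s$; the exponent $1/4$ is dictated by the decay of the wrapped Karcher--Scherk tower ends used in the interpolation, and any slower exponential rate also suffices. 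The background metric on $(m+1)\B^2$, in arc-length $t$ from the boundary circle and rescaled angle $\vartheta=(m+1)\theta$, equals $dt^2 + (1-t/(m+1))^2 d\vartheta^2$ and has vanishing second fundamental form; the analogous expansion of $(m+1)\K_{\three{\epsilon}(m)}$ yields a metric of the same shape, with correction coefficient uniformly bounded on $t\in \IntervaL{s,\sqrt{m+1}}$, and a second fundamental form whose squared norm is $a_1(t)m^{-2}$ by scaling.

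Second, I perform the graph computation. The standard normal-graph identities over a minimal background $(g_0, A_0)$ give schematically
\begin{align*}
g &= g_0 + \bigl(A_0(\cdot,\cdot)\eta + d\eta\otimes d\eta\bigr)\bigl(1 + O(\eta)\bigr),
&
A &= A_0 + \nabla^2_{g_0}\eta + O\bigl(\abs{A_0}\abs{\eta} + \abs{\nabla \eta}^2\bigr).
\end{align*}
Substituting the $e^{-t/4}$ decay of $\eta$ and its derivatives into these identities, together with the background expansions above, produces items (i)--(iv): every occurrence of $\eta$ or its derivatives contributes an $e^{-t/4}$ factor, yielding the perturbative summands $a_2(t,\vartheta) e^{-t/4}$, $a_3(t,\vartheta) e^{-t/4}$ and $f^j_{uv}(t,\vartheta)e^{-t/4}$, while the background contributions give the $a_1(t)m^{-2}$ term in (i) and the $(1+m^{-1}tf^j(t))\,d\vartheta^2$ diagonal in (iii), (iv); (ii) contains no $m^{-2}$ piece because $(m+1)\B^2$ is totally geodesic. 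The Laplacian expansions (v) and (vi) then follow from (iii) and (iv) via $\Delta_g = \abs{g}^{-1/2}\partial_i(\abs{g}^{1/2}g^{ij}\partial_j)$: the reciprocal of $1+m^{-1}tf^j(t)$ in the $\vartheta\vartheta$-slot produces a correction of size $m^{-1}t \leq m^{-1/2}$ throughout $t\in\IntervaL{s,\sqrt{m+1}}$, which is exactly the $m^{-1/2}b^{\vartheta\vartheta}(t)\partial_\vartheta^2$ piece; the $\partial_t$ coefficient inherits $\partial_t\log\sqrt{\abs{g}} = O(m^{-1})$ from the same expansion, giving $m^{-1}c^t(t)\partial_t$; and the off-diagonal $e^{-t/4}$ contributions supply the remaining summand.

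Finally, for (vii), since the cutoff $\three{\psi}_m$ vanishes on $\three{\transr_m}(s)$ as soon as $s$ exceeds the fixed threshold from the definition of $\Psi$, one has there $\three{\conffact}_m^2 = \tfrac12\abs{\three{A}_m}^2_{\three{g}_m} + e^{-2m}$. Integrating (i) against the volume form coming from (iii) yields
\begin{equation*}
\hausmeas{2}{\three{h}_m}(\three{\transr_m}(s))
\leq C\int_s^{\sqrt{m+1}}\!\!\int_{-\pi/2}^{\pi/2}\bigl(m^{-2}+e^{-t/4}+e^{-2m}\bigr)\,dt\,d\vartheta
\leq C\bigl(m^{-3/2}+e^{-s/4}+\sqrt{m+1}\,e^{-2m}\bigr),
\end{equation*}
which tends to $0$ as first $m\to\infty$ and then $s\to\infty$; the analogous estimate on the planar component uses (ii) and (iv) and is in fact smaller. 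The main technical obstacle is the bookkeeping: ensuring that the two a priori independent small parameters $m^{-1}$ and $e^{-t/4}$ are tracked separately and that every implicit constant is uniform in both $s$ and $m$ once $m \gg s$. Working systematically in the scaled $(t,\vartheta)$ coordinates makes the catenoidal and planar cases parallel and keeps the expansions legible.
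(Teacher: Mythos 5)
Your proposal is correct and follows essentially the same route as the paper's own argument: the expansions for the metric, second fundamental form, and Laplacian are read off from the graph structure of the transition region over the (scaled) catenoidal and planar models, with uniform constants supplied by \cite[Proposition~3.18, (5.20)]{CSWnonuniqueness} and the exponential decay of the transferred defining functions, and the area bound (vii) follows by integrating item (i) over the rectangle $[s,\sqrt{m+1}]\times[-\pi/2,\pi/2]$. Your write-up simply makes explicit the normal-graph bookkeeping and the coordinate computations that the paper treats as elementary.
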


\begin{proof}
Again the estimates are ultimately justified
by reference to the construction \cite{CSWnonuniqueness},
most specifically (5.20) and Proposition 3.18 therein. 
That said, we also note how claim \ref{transr_cat_lap} follows easily from \ref{transr_cat_met}, as does claim \ref{transr-vi} from \ref{transr-iv}; 
furthermore, it is clear that the justification of \ref{transr-ii} is analogous to (in fact simpler than) \ref{transr_cat_A}, and \ref{transr-iv} is analogous to \ref{transr_cat_met}.
As a result, we briefly explain the ideas behind
the elementary computations required for the proof,
in the case of $\three{\transr_m}(s)$, so with regard to items \ref{transr_cat_A} and \ref{transr_cat_met}.

The projection of this region onto the blown-up
initial surface $(m+1)\initthreebc_m$
is itself constructed as a graph over
$(m+1)\K_{\three{\epsilon}(m)}$ or $\B^2$.
Estimate \cite[(5.20)]{CSWnonuniqueness}
ensures that $m\three{\epsilon}(m)$
is bounded uniformly in $m$. The defining function of the above graph
is obtained by ``transferring''
the defining functions
of the corresponding ends of $\three{\tow}$
over their asymptotic planes.
These defining functions decay exponentially
in the distance along the planes.
In turn $\three{\transr_m}(s)$
is a graph over this portion of the initial surface
with defining function that is also guaranteed
(by \cite[(5.20)]{CSWnonuniqueness})
to decay exponentially, though
a priori at a slower rate;
we have chosen $1/4$ somewhat arbitrarily.
This accounts for all exponential factors
appearing in the estimates.

The $m$-dependent terms in the
estimates for the metric (and Laplacian)
arise simply from the choice of $(t,\vartheta)$
coordinates on disc and catenoidal models.
The $m^{-2}$ term in the first item
arises from scaling
the second fundamental form of the
``asymptotic'' catenoid to this component
(while the corresponding term for the disc
vanishes).
With the estimates for the second fundamental form
in place,
the final item -- the area estimate --
follows 
(recalling the definitions
\eqref{towrCutoffsConfFactsConfMets})
from the bound
\begin{equation*}
\int_{-\pi/2}^{\pi/2}
\int_s^{\sqrt{m+1}} \bigl(a_1m^{-2} + a_2e^{-t/4}\bigr) \, dt
\, d\vartheta
\leq
C\Bigl(m^{-3/2} + e^{-s/4}\Bigr),
\end{equation*}
and the analogous estimate concerning the disk-type component instead.
\end{proof}

\subsection{Regionwise spectral convergence}
\label{subs:RegionSpectConv}

For each region $S$ among $\two{\towr_n}$, $\three{\towr_m}$,
$\two{\catr_n}$, $\three{\catr_n}$,
and $\three{\discr_m}$ (depicted in Figure \ref{fig:decomposition})
we write
$\indexform{S}_\neum$
for the Jacobi form of $S$
as a minimal surface in $\B^3$
with boundary,
subject to the Robin condition
\eqref{fbmsRobinConditionInBall}
where $\partial S$ meets $\partial \B^3$
and subject to the Neumann condition elsewhere:
recalling \eqref{DirAndNeumInternalizationsOfBilinearForm}, we set
\begin{equation*}
\indexform{S}_\neum
\vcentcolon=
  \begin{cases}
    \nint{\Bigl(\indexform{\zerogen_n}\Bigr)}_S
      & \mbox{for } S \subset \zerogen_n
    \\[1ex]
    \nint{\Bigl(\indexform{\threebc_m}\Bigr)}_S
      & \mbox{for } S \subset \threebc_m
  \end{cases}
\end{equation*}
(where on the right-hand side we slightly abuse notation
in that in place of $S$ we really mean its interior).
Similarly,
for $S$ either $\two{\towrdom{n}}$
or $\three{\towrdom{m}}$
we write
$
 \indexform{S}_\neum
 $
for the Jacobi form of $S$
as a minimal surface in
either $n\B^3$ or $(m+1)\B^3$,
subject to the Robin condition
either $du(\eta)=n^{-1}u$ or $du(\eta)=(m+1)^{-1}u$
where $\partial S$ meets
either $n\Sp^2$ or $(m+1)\Sp^2$, respectively,
and subject to the Neumann condition elsewhere.
Keeping in mind the statement of Proposition \ref{mr}, we stress that the adjunction of Neumann conditions in the ``interior'' boundaries is motivated by our task of deriving \emph{upper} bounds on the Morse index of our examples. Recalling the notation $\two{\liptowrdom{n}}$ and $\three{\liptowrdom{n}}$, we remark that 
the bilinear forms
$\indexform{S}_\neum$
and
$\indexform{\widehat{S}}_\neum$
agree by definition
for each $S$ as above,
but whenever we refer to
the eigenvalues,
eigenfunctions,
index, and nullity
of the latter
we shall always mean
those defined with respect
to the $\two{h}_n$ or $\three{h}_m$ metric.

In the notation of \eqref{bilinear_form_def}
we have in particular
(cf. Proposition \ref{conformalInvariance})
\begin{equation}
\label{towrdomJacobiFormInTwoMetrics}
\begin{aligned}
\indexform{\two{\towrdom{n}}}_\neum
&=
\weakbf
  \Bigl[
    \two{\towrdom{n}},~
    \two{g}_n,~
    \two{q_n}=\abs{\two{A}_n}_{\two{g}_n}^2,~
    \two{r_n}=n^{-1},~
    \\
    &\qquad \qquad
    \dbdy\two{\towrdom{n}}=\emptyset,~
    \nbdy\two{\towrdom{n}}
      =
      \partial\two{\towrdom{n}}
        \setminus n\Sp^2,~
    \rbdy\two{\towrdom{n}}
      =
      \partial\two{\towrdom{n}}
        \setminus \closure{\nbdy\two{\towrdom{n}}}
  \Bigr]
\\
&=
\weakbf
  \Bigl[
    \two{\towrdom{n}},~
    \two{h_n},~
    \Bigl(\two{\rho_n}\Bigr)^{-2}\two{q_n},~
    \Bigl(\two{\rho_n}\Bigr)^{-1}n^{-1},~
    \emptyset,~
    \partial\two{\towrdom{n}}
        \setminus n\Sp^2,~
    \partial\two{\towrdom{n}}
        \setminus \closure{\nbdy\two{\towrdom{n}}}
  \Bigr]
  \\
  &=\indexform{\two{\liptowrdom{n}}}_\neum
\end{aligned}
\end{equation}
and similarly for
$ \indexform{\three{\towrdom{m}}}_\neum=\indexform{\three{\liptowrdom{m}}}_\neum$.
Observe further (cf. Lemma \ref{dom_red_ext}
and Proposition \ref{conformalInvariance})
\begin{align*}
\equivind{\pri_n}(\indexform{\two{\towr_n}}_\neum)
&=\symind{\cycgrp{\{z=0\}}}{+}
\Bigl(
\indexform{\two{\liptowrdom{n}}}_\neum
\Bigr),
&
\equivind{\pyr_n}(\indexform{\two{\towr_n}}_\neum)
&=
\ind
\Bigl(
\indexform{\two{\liptowrdom{n}}}_\neum
\Bigr),
\\
\equivind{\apr_{m+1}}(\indexform{\three{\towr_m}}_\neum)
&=\symind{\cycgrp{\{y=z=0\}}}{-}
\Bigl(
\indexform{\three{\liptowrdom{m}}}_\neum
\Bigr),
&
\equivind{\pyr_{m+1}}(\indexform{\three{\towr_m}}_\neum)
&=
\ind
\Bigl(
\indexform{\three{\liptowrdom{m}}}_\neum
\Bigr),
\end{align*}
and likewise for the corresponding nullities.

\begin{lemma}[Equivariant index and nullity on $\two{\catr_n}$, $\three{\catr_m}$, and $\three{\discr_m}$]
\label{catrAndDiscrIndexAndNullity}
There exist $n_0,m_0>0$ such that we have the following indices and nullities for all integers $n>n_0$ and $m>m_0$:
\[
\begin{array}{c|c||c|c|}
S & \grp
  & \equivind{\grp}(\indexform{S}_\neum)
  & \equivnul{\grp}(\indexform{S}_\neum) \\
\hline\hline
\two{\catr_n} & \pyr_n & 1 & 0 \\
\hline
\three{\catr_m} & \pyr_{m+1} & 1 & 0 \\
\hline
\three{\discr_m} & \apr_{m+1} & 0 & 0 \\
\hline
\end{array}
\]
Additionally,
still assuming $m>m_0$
we have the upper bound
\begin{equation*}
\equivind{\pyr_{m+1}}\Bigl(\indexform{\three{\discr_m}}_\neum\Bigr)
     +\equivnul{\pyr_{m+1}}\Bigl(\indexform{\three{\discr_m}}_\neum\Bigr)
\leq
1.
\end{equation*}
\end{lemma}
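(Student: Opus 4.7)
The strategy is to pull back each region's index form to the corresponding fixed limit model -- $\K_0$ for $\two{\catr_n}$ and $\three{\catr_m}$, $\B^2$ for $\three{\discr_m}$ -- via the diffeomorphisms $\varphi^{\two{\catr_n}}, \varphi^{\three{\catr_m}}, \varphi^{\three{\discr_m}}$ of Lemmas \ref{catConv} and \ref{discConv}, and then apply the spectral stability result of Proposition \ref{spectralCtyWrtCoeffsAndSyms}. The $C^{1,\alpha}$ convergence of the pulled-back metrics and $C^{0,\alpha}$ convergence of the second fundamental forms (hence also of the potentials $\abs{A}^2$) supply the coefficient hypotheses; the commutation of each $\varphi^S$ with the relevant pyramidal or antiprismatic group action, together with the fact that the interior boundary of each region is mapped to $\partial_0\K_0$ (respectively $\emptyset$, in the disc case) and the exterior boundary to $\partial_\perp\K_0$ (respectively $\partial\B^2$), identifies the limiting bilinear forms as $\indexform{\K_0}_\neum$ and $\indexform{\B^2}_\neum$ studied in Lemmas \ref{K0IndexAndNullity} and \ref{DiscIndexAndNullity}.

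For the first two rows of the table I would apply Proposition \ref{spectralCtyWrtCoeffsAndSyms} with fixed limit group $\grp^{\K_0}\cong\Ogroup(2)$ and approximating subgroups $\pyr_n$ (respectively $\pyr_{m+1}$), all with trivial twisting since every element of $\grp^{\K_0}$ preserves either unit normal of $\K_0$. The approximation hypothesis (2) of that proposition is verified by choosing $\phi_n\in\pyr_n$ to be the element closest in the $C^0$ topology to a given $\phi\in\Ogroup(2)$ and invoking dominated convergence for the strong $L^2$ convergence of the pullbacks. Corollary \ref{K0RotSymIndexAndNullity} supplies $\equivind{\grp^{\K_0}}(\indexform{\K_0}_\neum)=1$ and $\equivnul{\grp^{\K_0}}(\indexform{\K_0}_\neum)=0$; invoking \eqref{implicationsOfSpecConvForIndAndNul} then gives both $1\leq\liminf\equivind{\pyr_n}(\indexform{\two{\catr_n}}_\neum)$ and $\limsup(\equivind{\pyr_n}+\equivnul{\pyr_n})(\indexform{\two{\catr_n}}_\neum)\leq 1$, forcing the values stated in the first row for all large $n$; the second row follows identically. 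For the third row, the inclusion $\sk{\rot_{\{y=z=0\}}^\pi}\leq\apr_{m+1}$ for all $m$, combined with the fact that $\rot_{\{y=z=0\}}^\pi$ reverses the normal to $\B^2$ and that the Neumann Laplacian on $\B^2$ restricted to functions odd in the $y$-variable has strictly positive first eigenvalue, yields -- again via Proposition \ref{spectralCtyWrtCoeffsAndSyms}, now with the fixed subgroup $\sk{\rot_{\{y=z=0\}}^\pi}$ and $-1$ twisting -- the bound $\limsup(\equivind{\apr_{m+1}}+\equivnul{\apr_{m+1}})(\indexform{\three{\discr_m}}_\neum)\leq 0$, hence the claimed vanishing.

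The final inequality, concerning $\three{\discr_m}$ under $\pyr_{m+1}$, is the most delicate step since no fixed nontrivial subgroup common to all the $\pyr_{m+1}$ can be usefully employed; I would instead apply Proposition \ref{spectralCtyWrtCoeffsAndSyms} with varying $\grp_m=\pyr_{m+1}$ and the continuous limit group $\grp_\infty=\Ogroup(2)$ of intrinsic isometries of $\B^2$, with trivial twisting (every element of $\pyr_{m+1}$ preserves the normal to $\B^2$). Hypothesis (2) is verified once more from the density of the dihedral groups in $\Ogroup(2)$ together with the standard continuity of pullback by isometries in the strong operator topology on $L^2$. By Corollary \ref{DiscRotSymIndexAndNullity}, $\symind{\Ogroup(2)}{1}(\indexform{\B^2}_\neum)+\symnul{\Ogroup(2)}{1}(\indexform{\B^2}_\neum)=0+1=1$ (the Neumann kernel among radial functions being spanned by the constants), and \eqref{implicationsOfSpecConvForIndAndNul} delivers the asserted bound by $1$. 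This last approximation argument -- verifying the strong $L^2$ convergence condition when the limit group is continuous -- is the main point requiring some care, though it reduces to a routine density computation once the pullback framework is in place.
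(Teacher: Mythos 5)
Your proposal is correct and takes essentially the same route as the paper's one-sentence proof, which invokes the same chain of ingredients: the regionwise $C^{1,\alpha}/C^{0,\alpha}$ convergence of Lemmas \ref{catConv} and \ref{discConv}, the spectral stability Proposition \ref{spectralCtyWrtCoeffsAndSyms}, and the model computations on $\K_0$ and $\B^2$. You supply the details the paper leaves implicit, and you correctly identify and resolve the one genuine subtlety: since the group $\apr_{m+1}$ (respectively $\pyr_n$, $\pyr_{m+1}$) depends on the index parameter, one cannot apply the fixed-group case of Proposition \ref{spectralCtyWrtCoeffsAndSyms} to the sequence of pulled-back data directly with the original groups. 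Your three resolutions — passing to the limit group $\grp^{\K_0}\cong\Ogroup(2)$ for the two catenoidal rows (with trivial twisting, using Corollary \ref{K0RotSymIndexAndNullity}), restricting to the fixed order-$2$ subgroup $\sk{\rot_{\{y=z=0\}}^\pi}$ with $-1$ twisting for the $\apr_{m+1}$ row (which sidesteps the degenerate limit-group issue, since the $\apr_{m+1}$-invariant space sits inside the odd-in-$y$ subspace on which the Neumann Laplacian is strictly positive), and passing to the limit group $\Ogroup(2)$ of intrinsic isometries of $\B^2$ with trivial twisting for the final $\pyr_{m+1}$ bound (using Corollary \ref{DiscRotSymIndexAndNullity}) — are all valid, and the use of \eqref{implicationsOfSpecConvForIndAndNul} to sandwich the integer-valued index and nullity is the right way to extract equalities from the limsup/liminf inequalities. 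The only difference from the paper's own citations is that you appeal to the $\Ogroup(2)$-Corollaries rather than the fixed-$k$ Lemmas, but these follow immediately from one another and the underlying mechanism is identical.
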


\begin{proof}
We use the convergence described in
Lemma \ref{catConv} and Lemma \ref{discConv}
along with Proposition~\ref{spectralCtyWrtCoeffsAndSyms}
to compare the low eigenvalues
of the regions in question
with those of their limiting models,
as recorded in Lemma \ref{K0IndexAndNullity}
and Lemma \ref{DiscIndexAndNullity}.
\end{proof}

While we have cut the surfaces
$\zerogen_n$ and $\threebc_m$
in such a way that
the resulting regions
$\two{\catr_n}$ and $\three{\catr_m}$
converge uniformly
to $\K_0$
and likewise $\three{\discr_m}$ to $\B^2$,
thereby securing the preceding lemma
in a straightforward fashion,
the cases of $\two{\towr_n}$
and $\three{\towr_m}$ are more subtle.
Our approach here
(especially the proof of eigenfunction
bounds in Lemma \ref{UniformBoundsOnTower}
and their application to
Lemma \ref{towrdomLowerBounds})
draws inspiration from
the analysis Kapouleas makes
of the invertibilty of the Jacobi operator
on ``extended standard regions''
in many gluing constructions;
for a specific example,
concerning Scherk towers glued to catenoids,
we refer the reader to
the proof of 
\cite[Lemma 7.4]{KapouleasEuclideanDesing}.

To proceed, recalling
\eqref{DirAndNeumInternalizationsOfBilinearForm},
for each $s>0$ and each integer $n$
(sufficiently large in terms of $s$)
we define
\begin{equation*}
\indexform{\two{\liptowrdom{n}}(s)}_\dir
  \vcentcolon=
  \dint{
    \left(
      \indexform{\two{\liptowrdom{n}}}_\neum
    \right)_{\two{\liptowrdom{n}}(s)}
  }
\quad \mbox{and} \quad
\indexform{\two{\liptowrdom{n}}(s)}_\neum
  \vcentcolon=
  \nint{
    \left(
      \indexform{\two{\liptowrdom{n}}}_\neum
    \right)_{\two{\liptowrdom{n}}(s)}
  }
\end{equation*}
and analogously for $\three{\liptowrdom{m}}(s)$
in place of $\two{\liptowrdom{n}}(s)$.

\begin{lemma}
[Spectral convergence for $\two{\liptowrdom{n}}(s)$
and $\three{\liptowrdom{m}}(s)$]
\label{towrdomCptSpecConv}

With the above notation,
we have
\begin{equation*}
\begin{aligned}
\eigenvalsym{\indexform{\two{\liptowdom}}}
  {i}{\cycgrp{\{z=0\}}}{\pm}
&=
\lim_{s \to \infty}
  \lim_{n \to \infty}
    \eigenvalsym
      {\indexform{\two{\liptowrdom{n}}(s)}_\dir}
      {i}{\cycgrp{\{z=0\}}}{\pm}
\\
&=
\lim_{s \to \infty}
  \lim_{n \to \infty}
    \eigenvalsym
      {\indexform{\two{\liptowrdom{n}}(s)}_\neum}
      {i}{\cycgrp{\{z=0\}}}{\pm}
\end{aligned}
\end{equation*}
for each integer $i \geq 1$
and each common choice of sign $\pm$
on both sides of each equation.
The analogous statements hold,
mutatis mutandis,
for $\three{\liptowrdom{m}}$
in place of $\two{\liptowrdom{n}}$.
\end{lemma}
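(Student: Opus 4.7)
The plan is to prove both equalities by an iterated limit: with $s$ fixed, first send $n\to\infty$ (respectively $m\to\infty$) in order to identify the limit as the spectrum of the truncated model $\indexform{\two{\liptowdom}(s)}_\ast$ (respectively $\indexform{\three{\liptowdom}(s)}_\ast$), for $\ast\in\{\dir,\neum\}$; then let $s\to\infty$ and invoke Lemma~\ref{spectraOnTruncatedTowers}. For the inner limit we use the diffeomorphism $n\circ\varphi^{\two{\towr_n}}\colon \two{\towdom}(s)\to\two{\towrdom{n}}(s)$ (supplied by Lemma~\ref{towConv}) to pull everything back onto the common domain $\two{\towdom}(s)$, thereby reducing the problem to the setting of Proposition~\ref{spectralCtyWrtCoeffsAndSyms}.

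Concretely, Lemma~\ref{towConv} gives $C^{1,\alpha}$ convergence of $(n\varphi^{\two{\towr_n}})^\ast\two{g}_n$ to $\met{\two{\tow}}$ and $C^{0,\alpha}$ convergence of $(n\varphi^{\two{\towr_n}})^\ast\two{A}_n$ to $\twoff{\two{\tow}}$. Since $\two{\psi}_n$ pulls back to $\two{\psi}$ by construction, definition \eqref{towrCutoffsConfFactsConfMets} (compared with \eqref{towerCutoffsConfFactsConfMets}) yields $(n\varphi^{\two{\towr_n}})^\ast\two{\conffact}_n\to\two{\conffact}$ uniformly (the spurious $e^{-2n}$ correction being negligible), and hence $(n\varphi^{\two{\towr_n}})^\ast\two{h}_n\to\two{h}$ with uniform upper and lower bounds. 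Correspondingly, the pulled-back potential $(\two{\conffact}_n)^{-2}|\two{A}_n|^2_{\two{g}_n}$ converges to $(\two{\conffact})^{-2}|\twoff{\two{\tow}}|^2_{\met{\two{\tow}}}$, while the Robin coefficient $n^{-1}(\two{\conffact}_n)^{-1}$ on the portion of the boundary corresponding to $n\Sp^2$ tends to $0$ uniformly, which exactly matches the Neumann condition present there in $\indexform{\two{\liptowdom}(s)}_\ast$. Because $\varphi^{\two{\towr_n}}$ commutes with $\refl_{\{z=0\}}$ (point (iii) of Lemma~\ref{towConv}), the group $\grp_\infty=\cycgrp{\{z=0\}}$ acts identically at every level of the sequence. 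All hypotheses of Proposition~\ref{spectralCtyWrtCoeffsAndSyms} are thus satisfied and we obtain
\[
\lim_{n\to\infty}\eigenvalsym{\indexform{\two{\liptowrdom{n}}(s)}_\ast}{i}{\cycgrp{\{z=0\}}}{\pm}
=\eigenvalsym{\indexform{\two{\liptowdom}(s)}_\ast}{i}{\cycgrp{\{z=0\}}}{\pm}
\]
for each $s>0$, each integer $i\geq 1$, each $\ast\in\{\dir,\neum\}$ and each consistent choice of sign. Letting $s\to\infty$ and applying Lemma~\ref{spectraOnTruncatedTowers} finishes the proof for $\two{\liptowrdom{n}}$.

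The argument for $\three{\liptowrdom{m}}$ proceeds identically, substituting $\three{\tow}$ for $\two{\tow}$, $m+1$ for $n$, and $\cycgrp{\{y=z=0\}}$ for $\cycgrp{\{z=0\}}$, and invoking the $\three{\tow}$-analogue of Lemma~\ref{towConv} explicitly stated in its last sentence. The only point requiring a moment's thought is the umbilic of $\three{\tow}$ at the origin: there the conformal factor $\three{\conffact}$ degenerates, but the umbilic lies in the interior of $\three{\towdom}(s)$ for every fixed $s>0$, so on $\three{\towdom}(s)$ the factor $\three{\conffact}$ is bounded above and away from zero, uniformly in $m\gg m_s$, and the same applies to $(\varphi^{\three{\towr_m}})^\ast\three{\conffact}_m$. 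The anticipated main obstacle in the whole argument is precisely this bookkeeping: checking that the pullback of the conformal rescalings, together with the simultaneous Robin$\,\to\,$Neumann degeneration, really fit the uniform coercivity/boundedness framework of Proposition~\ref{spectralCtyWrtCoeffsAndSyms}; once those verifications are organized through \eqref{towerCutoffsConfFactsConfMets}–\eqref{towrCutoffsConfFactsConfMets} and Lemma~\ref{towConv}, the rest is a routine application of the spectral stability results of Section~\ref{subsec:continuity}.
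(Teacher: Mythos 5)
Your proof is correct and takes essentially the same approach as the paper: pull back via $n\varphi^{\two{\towr_n}}$ (resp.\ $(m+1)\varphi^{\three{\towr_m}}$), apply Lemma~\ref{towConv} together with Proposition~\ref{spectralCtyWrtCoeffsAndSyms} to obtain the inner limit $n\to\infty$, and then conclude the outer limit $s\to\infty$ with Lemma~\ref{spectraOnTruncatedTowers}. The paper states the argument far more tersely, citing only those three ingredients; your write-up fleshes out the hypothesis checks, which is a reasonable thing to do.

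One factual slip: you state that $\three{\conffact}$ degenerates at the umbilic $(0,0,0)$ of $\three{\tow}$ and then conclude, inconsistently, that it is nevertheless bounded away from zero on $\three{\towdom}(s)$. In fact $\three{\conffact}$ does \emph{not} degenerate there: the umbilic has $\abs{x}=0$, so $\three{\psi}=1$ and hence $\three{\conffact}=1$ on a whole neighbourhood of it. The entire point of the cutoff $\Psi$ in \eqref{towerCutoffsConfFactsConfMets}--\eqref{towrCutoffsConfFactsConfMets} is to interpolate between the flat metric near the core and the Gauss-map conformal factor $\tfrac12\abs{\twoff{\three{\tow}}}^2$ (which is what actually degenerates at the umbilic) far out on the wings, precisely so that the resulting conformal factor is everywhere positive and bounded. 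Your ultimate conclusion --- that the pulled-back metrics $\three{h}_m$ satisfy the uniform two-sided bounds required by Proposition~\ref{spectralCtyWrtCoeffsAndSyms} on the compact truncations --- is still correct, but for the reason just given, not because the umbilic lies in the interior.
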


\begin{proof}
Fix $i$.
By Lemma \ref{towConv}
and Proposition \ref{spectralCtyWrtCoeffsAndSyms}
for each $s>0$
we have
\begin{equation*}
\begin{aligned}
\lim_{n \to \infty}
  \eigenvalsym
    {\indexform{\two{\liptowrdom{n}}(s)}_\dir}
    {i}{\cycgrp{\{z=0\}}}{+}
=
\eigenvalsym
  {\indexform{\two{\liptowdom}(s)}_\dir}{i}
  {\cycgrp{\{z=0\}}}{+},
\\
\lim_{n \to \infty}
  \eigenvalsym
    {\indexform{\two{\liptowrdom{n}}(s)}_\neum}
    {i}{\cycgrp{\{z=0\}}}{+}
=
\eigenvalsym
  {\indexform{\two{\liptowdom}(s)}_\neum}{i}
  {\cycgrp{\{z=0\}}}{+}.
\end{aligned}
\end{equation*}
An application of
Lemma \ref{spectraOnTruncatedTowers}
completes the proof in this case,
and the proofs
of the remaining three cases
are structurally identical to this one.
\end{proof}

\begin{lemma}
[Eigenvalue upper bounds
on $\two{\liptowrdom{n}}$
and $\three{\liptowrdom{m}}$]
\label{towrdomUpperBounds}
With the above notation,
we have
\begin{align*}
\limsup_{n \to \infty}
    \eigenvalsym{\indexform{\two{\liptowrdom{n}}}_\neum}{i}{\cycgrp{\{z=0\}}}{\pm}
&\leq
  \eigenvalsym{\indexform{\two{\liptowdom}}}{i}{\cycgrp{\{z=0\}}}{\pm},
\\[1ex]
\limsup_{m \to \infty}
    \eigenvalsym{\indexform{\three{\liptowrdom{m}}}_\neum}{i}{\cycgrp{\{y=z=0\}}}{\pm}
&\leq
  \eigenvalsym{\indexform{\three{\liptowdom}}}{i}{\cycgrp{\{y=z=0\}}}{\pm}
\end{align*}
for each integer $i \geq 1$ and each common choice of sign $\pm$ on both sides of each equation.
\end{lemma}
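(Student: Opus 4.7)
The argument is based on the min-max characterization \eqref{symminmax}: we construct an $i$-dimensional subspace $W_n^{(s)} \subset \sob(\two{\liptowrdom{n}}, \two{h}_n)$ by transporting suitable cutoffs of model eigenfunctions through the diffeomorphism $n\varphi^{\two{\towr_n}}$ of Lemma \ref{towConv}, and then show that the maximum of the Rayleigh quotient on $W_n^{(s)}$ tends to $\lambda_i^\infty \vcentcolon= \eigenvalsym{\indexform{\two{\liptowdom}}}{i}{\cycgrp{\{z=0\}}}{\pm}$ under the iterated limit $\lim_{s\to\infty}\lim_{n\to\infty}$. Pick $L^2(\two{h})$-orthonormal $(\cycgrp{\{z=0\}},\pm)$-invariant eigenfunctions $u_1, \ldots, u_i$ of $\indexform{\two{\liptowdom}}$ with eigenvalues $\lambda_1^\infty \leq \ldots \leq \lambda_i^\infty$ (bounded by elliptic regularity on the compact Lipschitz manifold $\two{\liptowdom}$), and for large $s$ set $\chi_s \vcentcolon= \Psi(\abs{x}/s)$ on $\two{\tow}$, with $\Psi$ the cutoff already fixed in \eqref{towerCutoffsConfFactsConfMets}, and $v_j^{(s)} \vcentcolon= \chi_s u_j$.

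The crucial observation is that, although the conformal factor $\two{\conffact}$ decays exponentially in the ends of $\two{\tow}$ (so the endpoints of $\two{\liptowdom}$ have highly singular $\two{h}$-neighborhoods in the compactification), the Dirichlet integral is conformally invariant in dimension two, whence
\[
\int_{\two{\towdom}} \abs{\nabla\chi_s}_{\two{h}}^2 \, \hausint{2}{\two{h}} = \int_{\two{\towdom}} \abs{\nabla\chi_s}_{\two{g}^{\two{\tow}}}^2 \, \hausint{2}{\two{g}^{\two{\tow}}} \lesssim \frac{1}{s},
\]
the last inequality following since in the flat asymptotic half-planes the support of $\nabla\chi_s$ has area of order $s$ while $\abs{\nabla\chi_s}_{\two{g}^{\two{\tow}}} \lesssim 1/s$. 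Combining this estimate with the $L^\infty$ bound on each $u_j$, with the absolute continuity of $\int\abs{\nabla u_j}_{\two{h}}^2 \, \hausint{2}{\two{h}}$ (which forces $\nm{\nabla u_j}_{L^2(\two{\towdom}\setminus \two{\towdom}(s), \two{h})} \to 0$), and with Cauchy--Schwarz estimates on the cross-terms arising in $\indexform{\two{\liptowdom}}(v_j^{(s)}, v_k^{(s)})$, one obtains as $s \to \infty$ that $\sk[\big]{v_j^{(s)}, v_k^{(s)}}_{L^2(\two{h})} \to \delta_{jk}$ and $\indexform{\two{\liptowdom}}(v_j^{(s)}, v_k^{(s)}) \to \lambda_j^\infty \delta_{jk}$.

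For $n$ so large that $2s < \sqrt{n}$, Lemma \ref{towConv} provides a $\refl_{\{z=0\}}$-equivariant diffeomorphism $n\varphi^{\two{\towr_n}} \colon \two{\towdom}(2s) \to \two{\towrdom{n}}(2s)$. Since $v_j^{(s)}$ is supported strictly inside $\two{\towdom}(2s)$, the assignment $\widehat w_j^{(n,s)} \vcentcolon= v_j^{(s)} \circ (n\varphi^{\two{\towr_n}})^{-1}$ on $\two{\towrdom{n}}(2s)$ and $\widehat w_j^{(n,s)} \vcentcolon= 0$ on $\two{\transr_n}(2s)$ defines an element of $\sob(\two{\liptowrdom{n}}, \two{h}_n)$ that is $(\cycgrp{\{z=0\}},\pm)$-invariant. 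Using the $C^{1,\alpha}$-convergences of $\two{g}_n$ and $\two{A}_n$ granted by Lemma \ref{towConv}, together with $\robinpotential_n = n^{-1} \to 0$, the dominated convergence theorem then gives, for each fixed $s$,
\[
\sk[\big]{\widehat w_j^{(n,s)}, \widehat w_k^{(n,s)}}_{L^2(\two{h}_n)} \xrightarrow[n\to\infty]{} \sk[\big]{v_j^{(s)}, v_k^{(s)}}_{L^2(\two{h})}, \quad \indexform{\two{\liptowrdom{n}}}_\neum(\widehat w_j^{(n,s)}, \widehat w_k^{(n,s)}) \xrightarrow[n\to\infty]{} \indexform{\two{\liptowdom}}(v_j^{(s)}, v_k^{(s)}).
\]

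Feeding $W_n^{(s)} \vcentcolon= \Span(\widehat w_1^{(n,s)}, \ldots, \widehat w_i^{(n,s)})$ into \eqref{symminmax} expresses the maximum Rayleigh quotient on $W_n^{(s)}$ as the top generalized eigenvalue of the $i\times i$ pair formed by the bilinear form and the $L^2$-Gram matrix, which converges (as $n\to\infty$ with $s$ fixed) to the analogous quantity on $\Span(v_1^{(s)},\ldots,v_i^{(s)})$, and then (as $s\to\infty$) to $\lambda_i^\infty$. A standard diagonal argument delivers the stated upper bound for $\two{\liptowrdom{n}}$; the argument for $\three{\liptowrdom{m}}$ is structurally identical. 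The delicate point is precisely the vanishing of the $\two{h}$-Dirichlet energy of $\chi_s$ in the first step, which is only accessible via the two-dimensional conformal invariance of the Dirichlet integral, since the exponential decay of $\two{\conffact}$ prevents a direct bound in the compactified metric.
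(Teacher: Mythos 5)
Your proposal is correct, and it reaches the conclusion by a more hands-on route than the paper's own proof. The paper disposes of this lemma in essentially three lines: by the min-max characterization, extension by zero shows
$\eigenvalsym{\indexform{\two{\liptowrdom{n}}}_\neum}{i}{\cycgrp{\{z=0\}}}{+}
\leq
\eigenvalsym{\indexform{\two{\liptowrdom{n}}(s)}_\dir}{i}{\cycgrp{\{z=0\}}}{+}$
(domain monotonicity with a Dirichlet condition at the cut), and then one invokes Lemma~\ref{towrdomCptSpecConv}, which was tailor-made to record the iterated limit
$\lim_{s\to\infty}\lim_{n\to\infty}\eigenvalsym{\indexform{\two{\liptowrdom{n}}(s)}_\dir}{i}{\cycgrp{\{z=0\}}}{\pm}=\eigenvalsym{\indexform{\two{\liptowdom}}}{i}{\cycgrp{\{z=0\}}}{\pm}$. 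What you do instead is inline the content of that lemma: you build the test functions $\widehat w_j^{(n,s)}$ directly from cut-offs of the limit eigenfunctions, transplant them through $n\varphi^{\two{\towr_n}}$, and push the Rayleigh quotients through the two iterated limits. This is a perfectly valid self-contained argument. It is worth observing that the two approaches use the same core technical ingredients: the conformal invariance of the two-dimensional Dirichlet energy (which the paper uses inside the proof of Lemma~\ref{spectraOnTruncatedTowers} and, in log-cutoff guise, in Proposition~\ref{spectralContinuityWrtExcision}; your $\chi_s=\Psi(\abs{x}/s)$ is in fact a logarithmic cutoff in the compactified $\two{h}$-radial coordinate, because $\two{\conffact}$ decays exponentially along the ends) and the $C^{1,\alpha}$ geometric convergence of Lemma~\ref{towConv}. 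What the paper's modularization buys is brevity here and reuse of Lemma~\ref{towrdomCptSpecConv} in the companion lower bound; what your approach buys is transparency, at the cost of essentially re-proving the upper-semicontinuity half of that lemma on the spot.

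One minor inaccuracy worth flagging: the cutoff $\chi_s=\Psi(\abs{x}/s)$ is supported in $\{\abs{x}\leq 2s\}$, but $\two{\towdom}(2s)$ is truncated by $\tau^{(i)}\cdot(x,y,z)\leq 2s$, and on the ends $\tau^{(i)}\cdot(x,y,z)\approx \abs{x}/\cos\omega_0 > \abs{x}$; so $\{\abs{x}\leq 2s\}$ is \emph{not} contained in $\two{\towdom}(2s)$, and you should transplant into $\two{\towrdom{n}}(Cs)$ for some fixed $C>2/\cos\omega_0$, choosing $n$ large enough that $Cs<\sqrt{n}$. This is a cosmetic fix that does not affect the structure of the argument.
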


\begin{proof}
We give the proof for the $+$ choice
on both sides of the top equation,
the proofs for the remaining three cases
being identical in structure to this one.
Fix $i \geq 1$.
By \eqref{symminmax},
considering extensions by zero
of functions corresponding
to the right-hand side below
to obtain valid test functions
corresponding to the left, we get at once the inequality
\begin{equation*}
\eigenvalsym
  {\indexform{\two{\liptowrdom{n}}}_\neum}{i}
  {\cycgrp{\{z=0\}}}{+}
\leq
\eigenvalsym
  {\indexform{\two{\liptowrdom{n}}(s)}_\dir}
  {i}{\cycgrp{\{z=0\}}}{+}
\end{equation*}
for all $s>0$ and all $n$ sufficiently large
in terms of $s$
that $\two{\liptowrdom{n}}(s)$ is defined.
We then finish by applying Lemma \ref{towrdomCptSpecConv}.
\end{proof}

\begin{lemma}
[Uniform bounds on eigenvalues and eigenfunctions of
$\indexform{\two{\liptowrdom{n}}}_\neum$
and
$\indexform{\three{\liptowrdom{m}}}_\neum$]
\label{UniformBoundsOnTower}
For each integer $i \geq 1$
there exist
$C_i,k_i>0$
such that
for each integer $k>k_i$
and
whenever
$\lambda_i^{(k)}$
is the $i$\textsuperscript{th}
eigenvalue
of
$\indexform{\two{\liptowrdom{k}}}_\neum$
or
$\indexform{\three{\liptowrdom{k}}}_\neum$
and $v_i^{(k)}$ is any corresponding eigenfunction
of unit $L^2$-norm
(under either
$\two{h}_k$
or $\three{h}_k$
as appropriate),
we have the bounds
\begin{equation*}
\max
 \Bigl\{
    \abs{\lambda_i^{(k)}},
    \nm{v_i^{(k)}}_{\sob},
    \nm{v_i^{(k)}}_{C^0}
  \Bigr\}\leq C_i
\end{equation*}
(where the $\sob$ norm
is defined via
either
$\two{h_n}$ or $\three{h_m}$
as applicable
and we emphasize that $C_i$
does not depend on $k$).
\end{lemma}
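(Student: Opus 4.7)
The plan is to establish the three bounds in turn, exploiting the conformally adjusted metric in \eqref{towrCutoffsConfFactsConfMets} together with the geometric control provided by Lemma \ref{towConv} and Lemma \ref{transitionEstimates}. We discuss only the family $\two{\liptowrdom{k}}$, the other being analogous. The upper bound $\lambda_i^{(k)}\leq C_i$ is immediate from Lemma \ref{towrdomUpperBounds}. For the lower bound we invoke the coercivity estimate \eqref{eq:coercive} applied to $\indexform{\two{\liptowrdom{k}}}_\neum$. The decisive point is that the constant $C(\lipdom,g,\potential,\robinpotential)$ appearing there can be chosen independently of $k$: the rescaled potential $(\two{\conffact_k})^{-2}\two{q_k}$ is uniformly bounded by the very definition of $\two{\conffact_k}$, the Robin coefficient $k^{-1}(\two{\conffact_k})^{-1}$ is uniformly bounded, and the trace constant in \eqref{interpolatingTraceInequality} is uniform in $k$ thanks to the uniform Lipschitz geometry of $\partial\two{\liptowrdom{k}}$ implied by Lemma \ref{towConv} along the ``core'' part and Lemma \ref{transitionEstimates}\ref{transr_cat_met} along the truncation edge. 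Applied to $v_i^{(k)}$, this yields $\lambda_i^{(k)}\geq -C$; the $\sob$ bound then follows from the identity $\indexform{\two{\liptowrdom{k}}}_\neum(v_i^{(k)},v_i^{(k)})=\lambda_i^{(k)}$ and a second invocation of coercivity.

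For the $C^0$ bound we argue by contradiction. Assume, after passing to a subsequence, that $\|v_i^{(k)}\|_{C^0}\to\infty$ and set $w_k\vcentcolon=v_i^{(k)}/\|v_i^{(k)}\|_{C^0}$, so that $\|w_k\|_{C^0}=1$ and $\|w_k\|_{L^2}\to 0$; note that $w_k$ satisfies the same eigenvalue equation with the same eigenvalue $\lambda_i^{(k)}$. Let $p_k$ realize the maximum of $|w_k|$. If, along a further subsequence, $p_k$ remains in $\two{\liptowrdom{k}}(s_0)$ for some fixed $s_0$, then the smooth convergence of metrics, second fundamental forms and potentials from Lemma \ref{towConv}, together with the already-established uniform $\sob$ bound and uniform ellipticity, gives $C^{1,\alpha}$ subsequential convergence (after transplanting via $\varphi^{\two{\towr_k}}$) to a continuous limit $w_\infty$ on $\two{\liptowdom}(s_0)$ with $|w_\infty(p_\infty)|=1$; on the other hand the vanishing of $\|w_k\|_{L^2}$ forces $w_\infty\equiv 0$, a contradiction. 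The remaining case is when $p_k\in\two{\transr_k}(s)$ eventually for every $s>0$. Here the precise asymptotic expansions of Lemma \ref{transitionEstimates} are essential: in the $(t,\vartheta)$ coordinates the metric is a small $C^{1,\alpha}$ perturbation of the flat product $dt^2+d\vartheta^2$ and the Laplacian is a correspondingly small perturbation of $\partial_t^2+\partial_\vartheta^2$, while the potential $(\two{\conffact_k})^{-2}\two{q_k}$ is uniformly bounded. Classical Moser iteration for weak solutions of uniformly elliptic equations with bounded coefficients then yields, on a unit coordinate ball around $p_k$, an estimate $|w_k(p_k)|^2\leq C\int_{B(p_k,1)}|w_k|^2$ with $C$ uniform in $k$; when $p_k$ is close to the ``vertical'' boundary sides or the truncation edge, even reflection (respectively, the Neumann condition paired with the Robin coefficient bounded by $k^{-1}$) yields the same conclusion. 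The right-hand side tends to zero with $\|w_k\|_{L^2}$, contradicting $|w_k(p_k)|=1$.

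The main obstacle is precisely this last step: carrying out Moser iteration with constants uniform in $k$ while simultaneously accommodating the mixed Neumann/Robin boundary conditions across a family of domains that is degenerating as $k\to\infty$. What rescues us is the quantitative nature of Lemma \ref{transitionEstimates}, which reduces the spectral analysis in the transition region to a small $C^{0,\alpha}$ perturbation of a constant-coefficient elliptic problem on a flat rectangle, a setting in which the required mean-value-type comparisons are classical.
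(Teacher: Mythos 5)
Your treatment of the eigenvalue bounds and the $\sob$ bound is correct and matches the paper's: the upper bound on $\lambda_i^{(k)}$ comes from Lemma \ref{towrdomUpperBounds}, the lower bound from coercivity with constants made uniform by the boundedness of $(\two{\conffact_k})^{-2}\abs{\two{A}_k}^2_{\two{g}_k}$, the Robin coefficient, and the trace constant; the $\sob$ bound then follows from the eigenvalue identity. Your Case 1 of the $C^0$ argument (maximum point trapped in a fixed core $\two{\liptowrdom{k}}(s_0)$) is also sound, since there $\two{\conffact_k}$ is bounded below, so the $\two{h}_k$-- and $\two{g}_k$--$L^2$ norms are comparable and the blow-up/compactness argument closes.

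The gap is in Case 2, i.e.\ when the maximum point $p_k$ escapes into $\two{\transr_k}(s)$ for every fixed $s$. The Moser mean-value estimate you invoke, $\abs{w_k(p_k)}^2 \leq C\int_{B(p_k,1)} \abs{w_k}^2$, is formulated on a unit ball in the $(t,\vartheta)$ coordinates --- equivalently, with respect to $\two{g}_k$, which is the metric in which Lemma \ref{transitionEstimates} provides uniform ellipticity and bounded coefficients. But the normalization $\nm{v_i^{(k)}}_{L^2(\two{h}_k)}=1$ controls the $\two{h}_k$--$L^2$ norm, and on $\two{\transr_k}(s)$ the conformal factor satisfies
\[
(\two{\conffact_k})^2 = \tfrac{1}{2}\abs{\two{A}_k}^2_{\two{g}_k} + e^{-2k}
\ \sim\ a_1(t)\,k^{-2} + a_2(t,\vartheta)\,e^{-t/4},
\]
which degenerates as $k\to\infty$ and $t\to\infty$. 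Consequently $\int_{B(p_k,1)}\abs{w_k}^2\, dt\,d\vartheta$ is \emph{not} controlled by $\nm{w_k}_{L^2(\two{h}_k)}^2$ (the latter carries the collapsing weight $(\two{\conffact_k})^2$), and there is no reason for the right-hand side of the mean-value inequality to tend to zero. A constant function already illustrates the failure: it has unit $C^0$ norm, $O(1)$ $\two{g}_k$--$L^2$ mass on a unit coordinate ball, yet arbitrarily small $\two{h}_k$--$L^2$ norm on $\two{\transr_k}(s)$.

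The paper circumvents this by \emph{not} attempting to pass a local $L^2$ bound into the transition region. Instead it first obtains the $C^0$ bound \eqref{BoundOnCore} on a fixed core $\two{\towrdom{n}}(s)$ (where $\two{h}_n$ and $\two{g}_n$ are comparable), and then propagates it outward through $\two{\transr_n}(s)$ treating the pulled-back eigenvalue equation \eqref{PoissonOnRect} on the rectangle $\Interval{s,\sqrt{n}}\times\IntervaL{-\pi/2,\pi/2}$ as a small perturbation of $\partial_t^2+\partial_\vartheta^2$ with Dirichlet data at $t=s$ (controlled by the core bound $\gamma(s+1)$) and exponentially small inhomogeneous Neumann data at $t=\sqrt{n}$. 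The Fourier-mode decomposition and the explicit representations \eqref{zero_coeff}--\eqref{nonzero_coeffs} then yield $\nm{w^{(n)}}_{C^0}\leq C\gamma(s+1)+o(1)\nm{w^{(n)}}_{C^0}$, which closes by absorption. To rescue your contradiction scheme you would need a substitute mechanism that transfers the $C^0$ control from $t=s$ to the max point $p_k$; as written, the mean-value comparison is measuring the wrong $L^2$ norm.
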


\begin{proof}
We will give the proof for
$\two{\liptowrdom{n}}$,
that for $\three{\liptowrdom{m}}$
being identical in structure.
Fix $i \geq 1$
and let $\lambda^{(n)}$ and $v^{(n)}$
be as in the statement
for each integer $n$
(suppressing the fixed index $i$);
it is our task to show that
by assuming $n$ large enough in terms of just $i$
we can ensure the asserted bounds
on $\lambda^{(n)}$ and $v^{(n)}$.
In particular our assumptions include
the normalization
$\nm{v^{(n)}}_{L^2(\two{\towrdom{n}},\two{h_n})}=1$. 

Lemma \ref{towrdomUpperBounds}
provides an upper bound
on $\lambda^{(n)}$,
independent of $n$.
We deduce a lower bound on $\lambda^{(n)}$
as follows. Keeping in mind the min-max characterization
\eqref{symminmax}
we observe that in the ratio
\begin{equation}
\label{lowestEvalRayleigh}
\frac
{\sk[\big]{u, \potential_nu}_{L^2(\two{\towrdom{n}},\two{h}_n)}
  +\sk[\big]{
    u|_{\rbdy\two{\towrdom{n}}},
    \robinpotential_n
      u|_{\rbdy\two{\towrdom{n}}}
  }_{L^2(\rbdy\two{\towrdom{n}},\two{h}_n)}
}
{\nm{u}_{L^2(\two{\towrdom{n}},\two{h}_n)}^2},
\end{equation}
with
\begin{align*}
\robinpotential_n
&\vcentcolon=
  \left(\two{\conffact}_n\right)^{-1}\Big|_{
    \rbdy\two{\towrdom{n}}
  } n^{-1}
  =
  (1+e^{-2n})^{-1/2}
  n^{-1},
&
\potential_n
&\vcentcolon=
  \left(\two{\conffact}_n\right)^{-2}
  \abs[\Big]{\two{A}_n}_{\two{g}_n}^2.
\end{align*}
we have not only a uniform upper bound on
$\robinpotential_n$, but also,
by inspecting
\eqref{towrCutoffsConfFactsConfMets}
and bearing in mind the
convergence described in Lemma \ref{towConv}
as well as the
boundedness (with decay)
of the second fundamental form of
$\two{\tow}$,
\begin{equation*}
\sup_n \
 \nm{\potential_n}_{C^0(\two{\towrdom{n}})}
<
\infty.
\end{equation*}
In addition, the convergence
in Lemma \ref{towConv}
further ensures that the constants
appearing in \eqref{interpolatingTraceInequality},
with
$(\lipdom,g)=(\two{\towrdom{n}},\two{h}_n)$
and $\rbdy\lipdom$ in place of $\partial\lipdom$
can be chosen uniformly in $n$: thus, employing such a trace inequality and exploiting the foregoing uniform bounds
we secure the promised uniform
lower bound on $\lambda^{(n)}$.

In turn, from the definitions of eigenvalues and eigenfunctions
and the normalization of $v^{(n)}$ we have
\begin{align*}
\nm[\Big]{
\nabla_{\two{h}_n}v^{(n)}
}_{L^2(\two{\towrdom{n}},\two{h}_n)}^2
=\lambda^{(n)}
  &+\sk[\Big]{
    v^{(n)}, \potential_n v^{(n)}
  }_{L^2(\two{\towrdom{n}},\two{h}_n)}
\\&+\sk[\Big]{
    v^{(n)}|_{\rbdy\two{\towrdom{n}}},
    \robinpotential_n
      v^{(n)}|_{\rbdy\two{\towrdom{n}}}
  }_{L^2(\rbdy\two{\towrdom{n}},\two{h}_n)}.
\end{align*}
The uniform bound on
$\nm{v^{(n)}}_{\sob(\two{\towrdom{n}},\two{h_n})}$
now follows, in view of the above equality, from
the upper bound on $\lambda^{(n)}$
as well as again the above uniform bounds
on $\potential_n$ and $\robinpotential_n$.

It remains to establish the uniform $C^0$ bound.
To start,
by Lemma \ref{dom_red_ext}
and standard elliptic regularity
$v^{(n)}$ is smooth up to the boundary: indeed,
it satisfies
\begin{equation}
\label{hMetricEigenvalueEqn}
\left\{\begin{aligned}
\Bigl(
  \Delta_{\two{h_n}}
  +\left(\two{\rho_n}\right)^{-2}
    \abs[\Big]{\two{A_n}}_{\two{g}_n}^2
  +\lambda^{(n)}
\Bigr)
  v^{(n)}
&=0 &&\text{ in } \two{\towrdom{n}},
\\
\two{h}_n
  \Bigl(
    \two{\eta}_n,
    \nabla_{\two{h}_n}v^{(n)}
  \Bigr)
  &=
  (1+e^{-2n})^{-1/2}n^{-1}v^{(n)}
  &&\text{ on } \rbdy \two{\towrdom{n}},
\\
\two{h}_n
  \Bigl(
    \two{\eta}_n,
    \nabla_{\two{h}_n}v^{(n)}
  \Bigr)
  &=0 &&\text{ on } \nbdy \two{\towrdom{n}},
\end{aligned}\right.
\end{equation}
with $\two{\eta}_n$
the outward
$\two{h}_n$ unit conormal
to $\two{\towrdom{n}}$.
As established above,
we have bounds independent of $n$
on $\abs{\lambda^{(n)}}$
and the $\potential_n$ and $\robinpotential_n$
functions.
By Lemma \ref{towConv}
(and the uniform geometry of $\two{\tow}$)
we also have uniform control over
the geometry
of $(\two{\towrdom{n}}(s),\two{h}_n)$
for each $s>0$ and all $n$ sufficiently large
in terms of $s$.

Standard elliptic regularity
therefore
ensures
that for every $s>0$
there exist $n_s>0$ and $\gamma(s)>0$
so that
\begin{equation}
\label{BoundOnCore}
\nm[\Big]{
  v^{(n)}\Big|_{\two{\towrdom{n}}(s)}
}_{C^0(\two{\towrdom{n}}(s),\two{h}_n)}
\leq
\gamma(s)
\text{ for every integer }
n>n_s.
\end{equation}

Since we do not have uniform control
on the geometry of
$(\two{\towrdom{n}}=\two{\towrdom{n}}(\sqrt{n}),\two{h}_n)$,
we do not obtain a global bound independent of $n$
in the same fashion.
Instead
the proof will be completed
by securing a
$C^0$ bound for $v^{(n)}$, independent of $n$,
on $\two{\transr_n}(s)$
for some $s>0$
to be determined. In the remainder of the proof $\gamma(s)$ will continue
to denote the above constant, depending on $s$,
while $C$ will denote a strictly positive constant
whose value may change from instance to instance
but can always be selected independently of $s$ and $n$.

To proceed
we multiply both sides of the PDE in
\eqref{hMetricEigenvalueEqn}
by $\left(\two{\rho}_n\right)^2$
to get
\begin{equation}
\label{h_eval_eqn_in_g_met}
\left(
  \Delta_{\two{g}_n}
  +\abs[\Big]{\two{A_n}}_{\two{g_n}}^2
  +\lambda^{(n)} \left(\two{\rho_n}\right)^2
\right)
v^{(n)}
=
0,
\end{equation}
and we aim to bound $v^{(n)}$
on $\two{\transr_n}(s)$
on the basis of this equation,
with unknown but controlled
(as we explain momentarily)
Dirichlet data on
 the portion of $\partial\two{\transr_n}(s)$
contained in the interior of $\two{\towrdom{n}}$
and with
homogeneous Neumann data on the rest of the boundary.
By the symmetries
it suffices to establish the estimate
on just
the component of $\two{\transr_n}(s)$
that is a graph over a subset of $n\K_0$.
(For $\three{\transr_m}(s)$
one must also consider
the component which is a graph
over a subset of $(m+1)\B^2$,
but this case does not differ
in substance from the one
we treat now.)

Recall the map
\begin{equation*}
\varphi^{\two{\transr_n}(s),\K}
  \colon
  \IntervaL[\Big]{s,\sqrt{n}}
    \times \IntervaL[\Big]{-\frac{\pi}{2},\frac{\pi}{2}}
  \to 
  \transr_n(s)
\end{equation*}
introduced above Lemma \ref{transitionEstimates}
and continue to write $(t,\vartheta)$
for the standard coordinates on its domain.
For the remainder of this proof
we abbreviate
$\varphi^{\two{\transr_n}(s),\K}$
to
$\varphi_{n,s}$
and its domain to
$R_{n,s}$.
Setting
$
 w^{(n)}
 \vcentcolon=
 \varphi_{n,s}^*
 v^{(n)}
$,
we pull back
\eqref{h_eval_eqn_in_g_met}
to get
\begin{equation*}
\Delta_{\varphi_{n,s}^*\two{g}_n}w^{(n)}
=
-w^{(n)}\varphi_{n,s}^*
  \left(
    \abs[\Big]{\two{A_n}}_{\two{g_n}}^2
    +\lambda^{(n)} \left(\two{\rho_n}\right)^2
  \right).
\end{equation*}
From the uniform bound on $\lambda^{(n)}$,
the expression for the conformal factor
in \eqref{towrCutoffsConfFactsConfMets},
and 
item \ref{transr_cat_A} of
Lemma \ref{transitionEstimates}
we in turn obtain
\begin{equation}
\label{PoissonOnRect}
\Delta_{\varphi_{n,s}^*\two{g}_n}w^{(n)}
=
\bigl(c_{n,s} e^{-t/4}+d_{n,s} n^{-2}\bigr)w^{(n)}
\end{equation}
for some smooth functions
$c_{n,s}, d_{n,s}$
having $C^{0,\alpha}(dt^2+d\vartheta^2)$
norms
uniformly bounded in $n$ and $s$,
with $\alpha \in \interval{0,1}$
now fixed for the rest of the proof.
(Here and below when referring to items of
Lemma \ref{transitionEstimates}
we have in mind of course the corresponding
statements for $\two{\transr_n}(s)$
in place of $\three{\transr_m}(s)$.)

Noting that we have
\eqref{PoissonOnRect}
for all sufficiently large $s$,
it now follows from
the $C^0$ bound
\eqref{BoundOnCore}
and
standard
interior Schauder estimates
(using also 
item \ref{transr_cat_met}
of Lemma \ref{transitionEstimates})
that
\begin{equation}
\label{DirEst}
\nm[\Big]{
  w^{(n)}(s,\cdot)
}_{C^{2,\alpha}(d\vartheta^2)}
\leq
C\gamma(s+1)
\mbox{ for every integer }
n>n_{s+1}.
\end{equation}

Since $v^{(n)}$
satisfies the homogeneous Neumann condition
along
$\partial \two{\towrdom{n}}$,
with the aid of
item \ref{transr_cat_met}
of Lemma \ref{transitionEstimates}
we have
\begin{align}
\label{NeumEstInhom}
(\partial_t w^{(n)})(\sqrt{n},\vartheta)
  &=
  e_{n,s}\,e^{-\sqrt{n}/4}
    (\partial_\vartheta w^{(n)})(\sqrt{n},\vartheta),
\\
\label{NeumEstHom}
(\partial_\vartheta w^{(n)})(\cdot, \pm \pi/2)
  &=
  0
\end{align}
for some smooth function
$e_{n,s}$
having $C^{1,\alpha}(dt^2+d\vartheta^2)$
norm bounded independently of $n$ and $s$.
(For \eqref{NeumEstHom}
we simply use the fact
that $\varphi_{n,s}$
has been constructed
by composing and restricting
maps which commute
with the symmetries of the construction,
including the reflections through planes
corresponding to
$\vartheta=\pm \pi/2$.)

Appealing again to standard Schauder estimates,
now also up to the boundary,
we can conclude from
\eqref{PoissonOnRect},
\eqref{DirEst},
\eqref{NeumEstInhom},
and \eqref{NeumEstHom}
that
\begin{equation}
\label{C2alpha_by_C0}
\nm{w^{(n)}}_{C^{2,\alpha}(dt^2+d\vartheta^2)}
\leq
C\bigl(\gamma(s+1)+\nm{w^{(n)}}_{C^0}\bigr)
\end{equation}
for 
$n$ and $s$ 
sufficiently large
in terms of the bounds assumed
on the functions
$c_{n,s}$, $d_{n,s}$, and $e_{n,s}$,
as well as constants,
which can be chosen uniformly,
that appear in local Schauder estimates
on $R_{n,s}$.
If we exploit \eqref{C2alpha_by_C0}
in \eqref{NeumEstInhom}
we get
\begin{equation}
\label{NeumEstInhomImproved}
\nm{
  (\partial_t w^{(n)})(\sqrt{n},\cdot)
}_{C^{1,\alpha}(d\vartheta^2)}
\leq
Ce^{-\sqrt{n}/4}
  \bigl(\gamma(s+1)+\nm{w^{(n)}}_{C^0}\bigr),
\end{equation}
once again for $n$ and $s$
assumed large enough in terms
of absolute constants.

We next decompose $w^{(n)}$ into
\begin{equation*}
w^{(n)}_0
  \vcentcolon=
  \frac{1}{\pi}
    \int_{-\pi/2}^{\pi/2}
      w^{(n)}(\cdot,\vartheta)
      \, d\vartheta,
\qquad
w^{(n)}_\perp
  \vcentcolon=
  w^{(n)}-w^{(n)}_0.
\end{equation*}
From \eqref{PoissonOnRect},
\eqref{C2alpha_by_C0},
and
item \ref{transr_cat_lap}
of Lemma \ref{transitionEstimates}
we obtain
\begin{equation}
\label{invariant_source_est}
\begin{aligned}
&\partial_t^2 w^{(n)}_0
  =
  a_{n,s}^0 e^{-t/4}
  +b_{n,s}^0 n^{-2}
  +c_{n,s}^0 n^{-1}\partial_t w^{(n)}_0,
\\
&\mbox{with}
\quad
\frac{
  \nm{a_{n,s}^0}_{C^0}
  +\nm{b_{n,s}^0}_{C^0}
}
{\gamma(s+1)+\nm{w^{(n)}}_{C^0}}
+\nm{c_{n,s}^0}_{C^0}
\leq
C
\end{aligned}
\end{equation}
and
\begin{equation}
\label{higher_source_est}
\nm{
  \Delta_{dt^2+d\vartheta^2}w^{(n)}_\perp
}_{C^0}
\leq
C\Bigl(e^{-s/4}+n^{-1/2}\Bigl)
  \bigl(\gamma(s+1)+\nm{w^{(n)}}_{C^0}\bigr).
\end{equation}
For \eqref{invariant_source_est}
we have in particular integrated
\eqref{PoissonOnRect}
in $\vartheta$,
making use
of the $\vartheta$-invariance
(see item (v) of Lemma \ref{transitionEstimates})
of the coefficients of the $n^{-1}\partial_t$
and $n^{-1/2}\partial_\vartheta^2$ terms
and observing that
the $n^{-1/2}\partial_\vartheta^2$
term integrates to zero
because of
\eqref{NeumEstHom};
for \eqref{higher_source_est}
we have made use of the fact that
$
 \nm{
   \Delta_{dt^2+d\vartheta^2}
     w^{(n)}_\perp
 }_{C^0}
 \leq
 2\nm{
  \Delta_{dt^2+d\vartheta^2}
  w^{(n)}
 }_{C^0}
$
and then appealed to \eqref{PoissonOnRect}.

To complete the analysis
we will need some 
basic estimates
for $\Delta_{dt^2+d\vartheta^2}=\partial_t^2+\partial_\vartheta^2$
on $R_{n,s}$.
For any bounded (real-valued) function $f$
on $R_{n,s}$
and for each non-negative
integer $\kappa$
let us define
on
$\IntervaL{s,\sqrt{n}}$
the Fourier coefficients
$f_\kappa$ by
\begin{equation*}
f_\kappa(t)
\vcentcolon=
\begin{cases}
\frac{1}{\pi}
  \int_{-\pi/2}^{\pi/2}
  f(t,\vartheta) \, d\vartheta
  &\mbox{for $\kappa=0$}
\\[1ex]
\frac{2}{\pi}
  \int_{-\pi/2}^{\pi/2}
  f(t,\vartheta)
  \cos \kappa(\vartheta-\pi/2)
  \, d\vartheta
  &\mbox{for $\kappa>0$}.
\end{cases}
\end{equation*}
Then the Fourier coefficients of any
$u \in C^2(R_{n,s},dt^2+d\vartheta^2)$ satisfying $(\partial_\vartheta u)=0$ at $\vartheta=\pm \pi/2$
admit the representations
\begin{align}
\notag
u_0(t)
& =
u_0(s)
  +
  (\partial_t u_0)(\sqrt{n}) \cdot (t-s)
  +\int_s^t \int_{\sqrt{n}}^\tau
    \partial_{t}^2 u_0(\sigma)
    \, d\sigma
    \, d\tau, 
\\[.5ex]\label{zero_coeff}
& =u_0(s)
  +
  (\partial_t u_0)(\sqrt{n}) \cdot (t-s)
  +\int_s^t \int_{\sqrt{n}}^\tau
    (\Delta_{dt^2+d\vartheta^2}u)_0 (\sigma)
    \, d\sigma
    \, d\tau,
\\[2ex]
\notag
u_{\kappa \neq 0}(t)
  &=
  \frac{u_\kappa(s)}{\cosh \kappa(\sqrt{n}-s)}
    \cosh \kappa(t-\sqrt{n})
  +\frac{(\partial_t u_\kappa)(\sqrt{n})}
    {\kappa \cosh \kappa(\sqrt{n}-s)}
    \sinh \kappa(t-s)
  \\[.5ex]\label{nonzero_coeffs}
  &\hphantom{{}={}}
  -\frac{\cosh \kappa(t-\sqrt{n})}
    {\kappa \cosh \kappa(\sqrt{n}-s)}
  \int_s^t (\Delta_{dt^2+d\vartheta^2}u)_\kappa(\tau)
    \sinh \kappa(\tau-s) \, d\tau
\\[.5ex]\notag
  &\hphantom{{}={}}
  -\frac{\sinh \kappa(t-s)}
    {\kappa \cosh \kappa(\sqrt{n}-s)}
  \int_t^{\sqrt{n}}
    (\Delta_{dt^2+d\vartheta^2}u)_\kappa(\tau)
    \cosh \kappa(\tau-\sqrt{n}) \, d\tau.
\end{align}
In particular
\eqref{nonzero_coeffs} implies, for any $\kappa \geq 1$ the inequality
\begin{equation}
\label{nonzero_coeffs_est}
\abs{u_\kappa(t)}
\leq
\abs{u_\kappa(s)}
  +\frac{1}{\kappa}
    \abs[\big]{(\partial_t u_\kappa)(\sqrt{n})}
  +\frac{1}{\kappa^2}
    \nm[\big]{(\Delta_{dt^2+d\vartheta^2}u)_\kappa}_{C^0}. 
\end{equation}
Since $u$ is $C^2$
the Fourier series
$
 \sum_{\kappa=0}^\infty
   u_\kappa(t)
   \cos \kappa(\vartheta-\pi/2)
$
converges (at least) pointwise to $u(t,\vartheta)$; furthermore 
(again appealing to the $C^2$ assumption
in order to control the first two terms
of \eqref{nonzero_coeffs_est})
we obtain the implication \begin{equation}
\begin{gathered}
\int_{-\pi/2}^{\pi/2} u(\cdot,\vartheta) \, d\vartheta
  =
  0
\\[-.5ex]
\Downarrow
\\
\nm{u}_{C^0}
  \leq
  C
  \Bigl(
    \nm{u(s,\cdot)}_{C^2(d\vartheta^2)}
    +\nm{(\partial_t u)(\sqrt{n},\cdot)}_{C^1(d\vartheta^2)}
    +\nm{\Delta_{dt^2+d\vartheta^2}u}_{C^0}
  \Bigr).
\end{gathered}
\end{equation}
This last estimate in conjunction with
\eqref{higher_source_est},
\eqref{DirEst}, 
and \eqref{NeumEstInhomImproved} yields
\begin{equation}
\label{wnperp_est}
\nm{w^{(n)}_\perp}_{C^0}
\leq
C\left(\gamma(s+1)
  +(e^{-\sqrt{n}/4}
    +e^{-s/4}
    +n^{-1/2})
  \nm{w^{(n)}}_{C^0}\right).
\end{equation}
On the other hand,
differentiating \eqref{zero_coeff} with respect to $t$
and applying
\eqref{invariant_source_est}
and \eqref{NeumEstInhomImproved}
we find
\[
\nm{\partial_t w^{(n)}_0}_{C^0}
\leq C\bigl(\gamma(s+1)+\nm{w^{(n)}}_{C^0}\bigr)
\Bigl(e^{-\sqrt{n}/4}+e^{-s/4}+n^{-3/2}\Bigr)
+Cn^{-1/2}\nm{\partial_t w^{(n)}_0}_{C^0}
\]
and therefore,
by absorption,
\begin{equation}
\label{first_derivative_est}
\nm{\partial_t w^{(n)}_0}_{C^0}
\leq C\bigl(\gamma(s+1)+\nm{w^{(n)}}_{C^0}\bigr)\Bigl(e^{-s/4}+n^{-3/2}\Bigr)
\end{equation}
for $n$ sufficiently large
in terms of $s$ and
the constants appearing in the above estimate.
Feeding \eqref{first_derivative_est}
into
\eqref{invariant_source_est}
and applying the result,
along with
\eqref{DirEst}
and \eqref{NeumEstInhomImproved},
in
\eqref{zero_coeff},
we get
\begin{equation}
\label{wn0_est}
\nm{w^{(n)}_0}_{C^0}
\leq
C\left(\gamma(s+1)+
  (\sqrt{n}e^{-\sqrt{n}/4}+e^{-s/4}+n^{-1})
  \nm{w^{(n)}}_{C^0}\right).
\end{equation}
Finally, since
$
 \nm{w^{(n)}}_{C^0}
 \leq
 \nm{w^{(n)}_0}_{C^0}
   +\nm{w^{(n)}_\perp}_{C^0}
$,
estimates
\eqref{wn0_est}
and \eqref{wnperp_est}
jointly imply the desired bound on the $C^0$ norm of $w^{(n)}$
provided we first choose $s$ and then, in turn, $n$ sufficiently large,
in terms of the absolute constants
appearing in the two estimates,
to be able to absorb the
$\nm{w^{(n)}}_{C^0}$
terms appearing on their right-hand sides. This ends the proof.
\end{proof}

\begin{lemma}[Eigenvalue lower bounds on
$\two{\liptowrdom{n}}$ and $\three{\liptowrdom{m}}$]
\label{towrdomLowerBounds}
For each integer $i \geq 1$
\begin{align*}
\liminf_{n \to \infty}
  \eigenvalsym
    {\indexform{\two{\liptowrdom{n}}}_\neum}
    {i}{\cycgrp{\{z=0\}}}{\pm}
&\geq
\eigenvalsym
  {\indexform{\two{\liptowdom}}}
  {i}{\cycgrp{\{z=0\}}}{\pm},
\\[1ex]
\liminf_{m \to \infty}
  \eigenvalsym
    {\indexform{\three{\liptowrdom{m}}}_\neum}
    {i}{\cycgrp{\{y=z=0\}}}{\pm}
&\geq
\eigenvalsym{\indexform{\three{\liptowdom}}}{i}{\cycgrp{\{y=z=0\}}}{\pm}
\end{align*}
for each common choice of sign $\pm$ on both sides of each equation.
\end{lemma}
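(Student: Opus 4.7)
The plan is to complement the upper bound of Lemma \ref{towrdomUpperBounds} by extracting, from any normalised sequence of low eigenfunctions on $\two{\liptowrdom{n}}$ (respectively $\three{\liptowrdom{m}}$), an orthonormal family of functions in the appropriate symmetry class on $\two{\liptowdom}$ (respectively $\three{\liptowdom}$) whose Rayleigh quotients control $\eigenvalsym{\indexform{\two{\liptowdom}}}{i}{\cycgrp{\{z=0\}}}{\pm}$ from above by $\liminf_n \eigenvalsym{\indexform{\two{\liptowrdom{n}}}_\neum}{i}{\cycgrp{\{z=0\}}}{\pm}$. I will write the argument for $\two{\liptowrdom{n}}$ and the $+$ sign; the remaining cases are structurally identical.

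Fix $i\geq 1$. For each large $n$, let $\lambda_j^{(n)}\vcentcolon=\eigenvalsym{\indexform{\two{\liptowrdom{n}}}_\neum}{j}{\cycgrp{\{z=0\}}}{+}$ for $j\leq i$, and select corresponding $(\cycgrp{\{z=0\}},+)$-invariant eigenfunctions $v_j^{(n)}$ that are $L^2(\two{\liptowrdom{n}},\two{h_n})$-orthonormal. By Lemma \ref{UniformBoundsOnTower} the numbers $\lambda_j^{(n)}$ and the norms $\nm{v_j^{(n)}}_{\sob}$ and $\nm{v_j^{(n)}}_{C^0}$ are bounded independently of $n$. After passing to a subsequence (without renaming) we may assume $\lambda_j^{(n)}\to\lambda_j^\infty\in\R$ for each $j\leq i$. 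For each fixed $s>0$, the geometric convergence furnished by Lemma \ref{towConv} identifies $\two{\liptowrdom{n}}(s)$ (after pullback by $n\varphi^{\two{\towr_n}}$) with the fixed truncated domain $\two{\liptowdom}(s)$ carrying metrics converging in $C^{1,\alpha}$ to $\two{h}$ and potentials converging in $C^{0,\alpha}$; then the uniform $\sob$ bounds, Rellich--Kondrachov, and a diagonal argument over an exhaustion $\{s_\ell\}_{\ell\in\N}\to\infty$ yield, up to a further subsequence, functions $v_j\in \sob_{\loc}(\two{\liptowdom})$ such that the pullbacks of $v_j^{(n)}$ converge to $v_j$ strongly in $L^2$ and weakly in $\sob$ on each $\two{\liptowdom}(s)$, and that $v_j$ is $(\cycgrp{\{z=0\}},+)$-invariant.

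The crucial step, and the main potential obstacle, is to show that no mass is lost in the transition region as $n\to\infty$ and $s\to\infty$, so that $\{v_j\}_{j=1}^i$ is again $L^2(\two{\liptowdom},\two{h})$-orthonormal and in particular linearly independent. For this I will combine the uniform $C^0$ bound of Lemma \ref{UniformBoundsOnTower} with item \ref{transr-vii} of Lemma \ref{transitionEstimates}: since $\nm{v_j^{(n)}}_{C^0}\leq C_i$ and $\hausmeas{2}{\two{h_n}}(\two{\transr_n}(s))\to 0$ first as $n\to\infty$ and then as $s\to\infty$, we obtain
\[
\lim_{s\to\infty}\limsup_{n\to\infty}\nm{v_j^{(n)}}_{L^2(\two{\transr_n}(s),\two{h_n})}^2 = 0,
\]
which, together with the uniform $C^0$ and area bounds used to pass the inner products to the limit on $\two{\liptowdom}(s)$, yields $\sk{v_j,v_k}_{L^2(\two{\liptowdom},\two{h})}=\delta_{jk}$. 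Similarly $\sob$ boundedness ensures $v_j\in\sob(\two{\liptowdom},\two{h})$, since the Dirichlet energies on $\two{\liptowdom}(s)$ are uniformly bounded and are invariant under the conformal rescaling implicit in passing from $\two{g}_n$ to $\two{h}_n$.

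Once orthonormality is secured, a standard computation using the eigenvalue equation satisfied by each $v_j^{(n)}$ and the strong $L^2$ / weak $\sob$ convergence on compacta (and, for the Robin term along the $\rbdy$ portion of the boundary, the fact that $\two{r_n}=(1+e^{-2n})^{-1/2}n^{-1}\to 0$, so that this boundary integral is negligible in the limit) shows
\[
\indexform{\two{\liptowdom}}(v_j,w)=\lambda_j^\infty\sk{v_j,w}_{L^2(\two{\liptowdom},\two{h})}
\]
for every $w\in \sob(\two{\liptowdom},\two{h})$ of compact support and $(\cycgrp{\{z=0\}},+)$-invariant; density of such $w$ in the full $(\cycgrp{\{z=0\}},+)$-invariant Sobolev space, together with the variational characterisation \eqref{symminmax} applied to the $i$-dimensional span of $\{v_j\}_{j=1}^i$, then gives $\eigenvalsym{\indexform{\two{\liptowdom}}}{i}{\cycgrp{\{z=0\}}}{+}\leq \lambda_i^\infty$, which is the desired inequality. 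The main obstacle throughout is the uniform control on the eigenfunctions across the transition region, but this is precisely what Lemma \ref{UniformBoundsOnTower} and Lemma \ref{transitionEstimates} are designed to provide.
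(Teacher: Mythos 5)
Your proof is correct, but it takes a genuinely different route from the paper's. Both arguments hinge on the same two key inputs — the uniform $C^0$ bounds on eigenfunctions from Lemma \ref{UniformBoundsOnTower} and the vanishing-area estimate for the transition regions from item \ref{transr-vii} of Lemma \ref{transitionEstimates} — but they deploy them quite differently. The paper never constructs a limit eigenfunction: it restricts the full-domain eigenfunctions $v_j^{(n)}$ (and arbitrary elements of their span) to the truncated piece $\two{\towrdom{n}}(s)$, uses the $C^0$ bound and the area estimate to show that this restriction changes the $L^2$ norm and the zeroth-order parts of the Rayleigh quotient by at most $O(C^2 i\epsilon)$ while the Dirichlet energy only decreases, concludes via min-max that $\eigenvalsym{\indexform{\two{\liptowrdom{n}}(s)}_\neum}{j}{\cycgrp{\{z=0\}}}{\pm}$ is bounded above in terms of $\lambda_j^{(n)}$, and then invokes the double-limit Lemma \ref{towrdomCptSpecConv} to bring $\eigenvalsym{\indexform{\two{\liptowdom}}}{i}{\cycgrp{\{z=0\}}}{\pm}$ into the picture. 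In contrast, you bypass Lemma \ref{towrdomCptSpecConv} entirely: you extract limit eigenfunctions $v_j$ on $\two{\liptowdom}$ by Rellich plus a diagonal argument, use the two key inputs to show no $L^2$ mass escapes (so $\{v_j\}$ stays orthonormal), verify the limit weak eigenvalue equation by testing against compactly supported invariant $w$, and then feed $\Span\{v_1,\ldots,v_i\}$ into the min-max formula \eqref{symminmax}. What the paper's route buys is a shorter argument that stays entirely on the level of quadratic forms and never has to identify the limit object or worry about density of test functions; what your route buys is a more transparent "compactness and identification of the limit" structure closer in spirit to the proof of Proposition \ref{spectralCtyWrtCoeffsAndSyms}.

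Two small points worth tightening in your write-up: the density of compactly supported $(\cycgrp{\{z=0\}},+)$-invariant functions in $\sob(\two{\liptowdom},\two{h})$ is not automatic and should be justified by the null $W^{1,2}$-capacity of points (the compactified punctures), exactly as in Proposition \ref{spectralContinuityWrtExcision}; and when you assert the Robin boundary term vanishes because $(1+e^{-2n})^{-1/2}n^{-1}\to 0$, you should note explicitly that this is already the Robin potential in the $\two{h}_n$ metric (since $\two{\rho_n}\equiv(1+e^{-2n})^{1/2}$ on the free boundary, where $\two{\psi_n}\equiv 1$) and that the $\two{h}_n$-length of $\rbdy\two{\towrdom{n}}$ is bounded, so that the uniform $C^0$ bound on $v_j^{(n)}$ really does force the boundary integral to zero.
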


\begin{proof}
We give the proof for the $+$ choice
on both sides of the top equation,
the argument for the remaining three cases
being identical in structure to this one.
Fix $i \geq 1$,
and for each $n$ let
$\{v_j^{(n)}\}_{j=1}^i$
be an $L^2(\two{\towrdom{n}},\two{h}_n)$
orthonormal set
such that
each
$v_j^{(n)}$
is a $j$\textsuperscript{th}
$(\cycgrp{\{z=0\}},+)$-invariant
eigenfunction
of $\indexform{\two{\liptowrdom{n}}}_\neum$.
Fix $C>0$, as afforded by
Lemma \ref{UniformBoundsOnTower},
such that
\begin{equation*}
\sup_n \sup_{1 \leq j \leq i}
\left(
\nm{v_j^{(n)}}_{C^0}
  +\eigenvalsym
    {\indexform{\two{\liptowrdom{n}}}}{j}
    {\cycgrp{\{z=0\}}}{+}
\right)
\leq
C.
\end{equation*}
Given any $\epsilon>0$ (fixed from now on)
and taking $s>0$ and correspondingly $n_s>0$ large enough,
as afforded by
Lemma \ref{transitionEstimates}
and Lemma \ref{towrdomCptSpecConv}, we have
\begin{align}
\label{epsilonSmall}
\hausmeas{2}{\two{h}_n}(\transr_n(s))
&<\epsilon,
&
\eigenvalsym{\indexform{\two{\liptowdom}}}{i}{\cycgrp{{\{z=0\}}}}{+}
&<\eigenvalsym{\indexform{\two{\liptowrdom{n}}(s)}_\neum}
    {i}{\cycgrp{\{z=0\}}}{+}
+\epsilon.
\end{align}

Now, for $n>n_s$
and any $v$ in the span of $\{v_j^{(n)}\}_{j=1}^i$
we estimate
\begin{equation*}
\begin{aligned}
\nm[\Big]{v}_{L^2(\two{\towrdom{n}},\two{h}_n)}^2
 -\nm[\Big]{v|_{\two{\towrdom{n}}(s)}}_{L^2(\two{\towrdom{n}}(s),\two{h}_n)}^2
&\leq
C^2i\epsilon\nm{v}_{L^2(\two{\towrdom{n}},\two{h}_n)}^2,
\\
\nm[\Big]{
  \nabla_{\two{h}_n}v|_{\two{\towrdom{n}}(s)}
  }_{L^2(\two{\towrdom{n}}(s),\two{h}_n)}
&\leq
\nm[\Big]{
\nabla_{\two{h}_n}v
  }_{L^2(\two{\towrdom{n}},\two{h}_n)},
\\
\sk[\Big]{
    v|_{\two{\towrdom{n}}(s)},
    \Bigl(\two{\conffact}_n\Bigr)^{-2}
      \abs[\Big]{\two{A}_n}_{\two{g}_n}^2
      v|_{\two{\towrdom{n}}(s)}
  }_{L^2(\two{\towrdom{n}}(s),\two{h}_n)}
&\geq
\sk[\Big]{
    v,
    \Bigl(\two{\conffact}_n\Bigr)^{-2}
      \abs[\Big]{\two{A}_n}_{\two{g}_n}^2v
  }_{L^2(\two{\towrdom{n}},\two{h}_n)}
  \\
  &\hphantom{{}\geq{}}
  -2C^2i\epsilon\nm{v}_{L^2(\two{\towrdom{n}},\two{h}_n)}^2,
\end{aligned}
\end{equation*}
where for the last inequality
we have used the 
fact that on $\transr_n(s)$
the potential function appearing here
is bounded above by $2$,
as is obvious from 
inspection of
\eqref{towrCutoffsConfFactsConfMets}.

We conclude
that for all $n>n_s$
the set
$\{v^{(n)}_j|_{\two{\towrdom{n}}(s)}\}_{j=1}^i$
is linearly independent,
and for all $v$ as above we have
\begin{equation*}
\frac{
  \indexform{\two{\liptowrdom{n}}(s)}_\neum
    \left(
    v|_{\two{\towrdom{n}}(s)},
    v|_{\two{\towrdom{n}}(s)}
    \right)
}
{\nm[\big]{v|_{\two{\towrdom{n}}(s)}^2}_{L^2(\two{\towrdom{n}}(s),\two{h}_n)}^2}
\leq
\frac{
  \eigenvalsym
    {\indexform{\two{\liptowrdom{n}}}_\neum}{j}
    {\cycgrp{\{z=0\}}}{+}
  +2C^2i\epsilon
}
{1-C^2i\epsilon}
\end{equation*}
and so
by virtue of the min-max characterization it follows that
\eqref{symminmax}
of the eigenvalues
\begin{equation*}
\eigenvalsym
    {\indexform{\two{\liptowrdom{n}}(s)}_\neum}{j}
    {\cycgrp{\{z=0\}}}{+}
\leq
\frac{
  \eigenvalsym
    {\indexform{\two{\liptowrdom{n}}}_\neum}{j}
    {\cycgrp{\{z=0\}}}{+}
  +2C^2i\epsilon
}
{1-C^2i\epsilon}
\end{equation*}
for all $n>n_s$
and $1 \leq j \leq i$.
Thus, using the second inequality
in \eqref{epsilonSmall},
we get in particular
\begin{equation*}
  \eigenvalsym
    {\indexform{\two{\liptowdom}}}
    {i}{\cycgrp{{\{z=0\}}}}{+}
\leq
\frac{
  \eigenvalsym
    {\indexform{\two{\liptowrdom{n}}}_\neum}{i}
    {\cycgrp{\{z=0\}}}{+}
  +2C^2i\epsilon
}
{1-C^2i\epsilon}
+\epsilon
\end{equation*}
for all $n>n_s$.
The claim now follows,
since this inequality holds
for all $\epsilon>0$,
with $C$ independent of $\epsilon$ and $n$. 
\end{proof}

By combining 
Lemma \ref{towrdomUpperBounds}
with Lemma \ref{towrdomLowerBounds} we immediately derive the following conclusion.

\begin{corollary}[Eigenvalues on
$\two{\liptowrdom{n}}$ and $\three{\liptowrdom{m}}$]
\label{towrdomEvals}
For each integer $i \geq 1$
\begin{align*}
\lim_{n \to \infty}
\eigenvalsym
  {\indexform{\two{\liptowrdom{n}}}_\neum}{i}
  {\cycgrp{\{z=0\}}}{\pm}
&=
\eigenvalsym{\indexform{\two{\liptowdom}}}{i}{\cycgrp{\{z=0\}}}{\pm},
\\[1ex]
\lim_{m \to \infty}
\eigenvalsym
  {\indexform{\three{\liptowrdom{m}}}_\neum}
  {i}{\cycgrp{\{y=z=0\}}}{\pm}
&=
\eigenvalsym{\indexform{\three{\liptowdom}}}{i}{\cycgrp{\{y=z=0\}}}{\pm},
\end{align*}
for each common choice of sign $\pm$ on both sides of each equation.
\end{corollary}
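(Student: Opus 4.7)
The plan is to derive this corollary as a direct, almost immediate consequence of the two preceding lemmas, since it is purely an elementary fact about the interplay between $\limsup$ and $\liminf$.

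First, I would fix an integer $i \geq 1$ and a choice of sign $\pm$ (and, correspondingly, a choice of surface among $\two{\liptowrdom{n}}, \three{\liptowrdom{m}}$ with the associated group). By Lemma \ref{towrdomUpperBounds} we have the inequality
\[
\limsup_{n \to \infty}
    \eigenvalsym{\indexform{\two{\liptowrdom{n}}}_\neum}{i}{\cycgrp{\{z=0\}}}{\pm}
\leq
  \eigenvalsym{\indexform{\two{\liptowdom}}}{i}{\cycgrp{\{z=0\}}}{\pm},
\]
while Lemma \ref{towrdomLowerBounds} yields the matching lower bound
\[
\liminf_{n \to \infty}
  \eigenvalsym
    {\indexform{\two{\liptowrdom{n}}}_\neum}
    {i}{\cycgrp{\{z=0\}}}{\pm}
\geq
\eigenvalsym
  {\indexform{\two{\liptowdom}}}
  {i}{\cycgrp{\{z=0\}}}{\pm}.
\]
Since the upper bound on the $\limsup$ and the lower bound on the $\liminf$ coincide, the sequence must converge, and its limit equals the common value. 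The exact same argument applies verbatim to $\three{\liptowrdom{m}}$ with the group $\cycgrp{\{y=z=0\}}$, thereby establishing the second claimed equality.

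There is no serious obstacle here: all the substantial work (scale-picking, uniform Sobolev and $C^0$ bounds from Lemma \ref{UniformBoundsOnTower}, the transition-region analysis from Lemma \ref{transitionEstimates}, and the reduction to the truncated problems via Lemma \ref{towrdomCptSpecConv}) has already been carried out in the two ingredient lemmas. Consequently, the proof of the corollary would consist essentially of invoking them and writing the sandwich argument above, with no additional estimates or constructions required.
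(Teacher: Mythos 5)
Your proof is correct and matches the paper exactly: the corollary is derived by combining the $\limsup$ upper bound of Lemma \ref{towrdomUpperBounds} with the $\liminf$ lower bound of Lemma \ref{towrdomLowerBounds} in a straightforward sandwich argument.
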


\begin{corollary}
[Equivariant index and nullity on $\two{\towr_n}$ and $\three{\towr_m}$]
\label{towrIndexAndNullity}
There exist $n_0, m_0>0$
such that
we have the following
indices and nullities
for all integers $n>n_0$
and $m>m_0$.
\[
\begin{array}{c|c||c|c|}
S & \grp
  & \equivind{\grp}(\indexform{S}_\neum)
    & \equivnul{\grp}(\indexform{S}_\neum) \\
\hline\hline
\two{\towr_n}  & \pri_{n} & 1 & 0 \\
\hline
\three{\towr_m} & \apr_{m+1} & 1 & 0 \\
\hline
\end{array}
\]
Additionally,
still assuming $m>m_0$
we have the upper bound
\begin{equation*}
\equivind{\pyr_{m+1}}\Bigl(\indexform{\three{\towr_m}}_\neum\Bigr)+\equivnul{\pyr_{m+1}}\Bigl(\indexform{\three{\towr_m}}_\neum\Bigr)
\leq
3.
\end{equation*}
\end{corollary}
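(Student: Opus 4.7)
The plan is to transfer each equivariant index/nullity statement on $\two{\towr_n}$ or $\three{\towr_m}$ to a corresponding statement on a fundamental domain equipped with the appropriate $\pm$-symmetry under a planar reflection, and then to invoke the eigenvalue convergence of Corollary~\ref{towrdomEvals} together with the explicit values computed on the models in Lemma~\ref{towIndexAndNullity}. Concretely, using the identifications already recorded just above Lemma~\ref{catrAndDiscrIndexAndNullity}, we have
\begin{align*}
\equivind{\pri_n}(\indexform{\two{\towr_n}}_\neum)
  &=\symind{\cycgrp{\{z=0\}}}{+}\!\left(\indexform{\two{\liptowrdom{n}}}_\neum\right),
&
\equivnul{\pri_n}(\indexform{\two{\towr_n}}_\neum)
  &=\symnul{\cycgrp{\{z=0\}}}{+}\!\left(\indexform{\two{\liptowrdom{n}}}_\neum\right),
\\
\equivind{\apr_{m+1}}(\indexform{\three{\towr_m}}_\neum)
  &=\symind{\cycgrp{\{y=z=0\}}}{-}\!\left(\indexform{\three{\liptowrdom{m}}}_\neum\right),
&
\equivnul{\apr_{m+1}}(\indexform{\three{\towr_m}}_\neum)
  &=\symnul{\cycgrp{\{y=z=0\}}}{-}\!\left(\indexform{\three{\liptowrdom{m}}}_\neum\right).
\end{align*}
For the $\pyr_{m+1}$-equivariant bound I will instead use $\equivind{\pyr_{m+1}}(\indexform{\three{\towr_m}}_\neum)=\ind\!\left(\indexform{\three{\liptowrdom{m}}}_\neum\right)$, and analogously for the nullity, so that no parity condition is imposed on the fundamental domain.

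Lemma~\ref{towIndexAndNullity} tells us that $\symind{\cycgrp{\{z=0\}}}{+}(\indexform{\two{\liptowdom}})=1$ with $\symnul{\cycgrp{\{z=0\}}}{+}(\indexform{\two{\liptowdom}})=0$, and similarly $\symind{\cycgrp{\{y=z=0\}}}{-}(\indexform{\three{\liptowdom}})=1$ with $\symnul{\cycgrp{\{y=z=0\}}}{-}(\indexform{\three{\liptowdom}})=0$. In both model cases the first $(\grp,\twsthom)$-eigenvalue is therefore strictly negative while the second is strictly positive. By Corollary~\ref{towrdomEvals} we have term-by-term convergence of the corresponding eigenvalues on $\two{\liptowrdom{n}}$ and $\three{\liptowrdom{m}}$, so for all sufficiently large $n$ (respectively $m$) the first eigenvalue stays strictly negative and the second stays strictly positive; this yields the two equalities of the main table, namely index equal to $1$ and nullity equal to $0$ in each of the two cases.

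For the additional bound on $\equivind{\pyr_{m+1}}+\equivnul{\pyr_{m+1}}$ applied to $\three{\towr_m}$, I will combine the $\pm$-parities: from Lemma~\ref{towIndexAndNullity}, reading off the second row of the table, $\ind(\indexform{\three{\liptowdom}})=\symind{+}+\symind{-}=1+1=2$ and $\nul(\indexform{\three{\liptowdom}})=\symnul{+}+\symnul{-}=1+0=1$, so the model satisfies $\ind+\nul=3$ and its fourth (non-equivariant) eigenvalue is strictly positive. Applying Corollary~\ref{towrdomEvals} separately for $+$ and $-$ parities, the fourth eigenvalue of $\indexform{\three{\liptowrdom{m}}}_\neum$ is eventually strictly positive, whence $\ind\!\left(\indexform{\three{\liptowrdom{m}}}_\neum\right)+\nul\!\left(\indexform{\three{\liptowrdom{m}}}_\neum\right)\leq 3$, as required. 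The main obstacle to this argument was not this final step but rather the spectral convergence of Corollary~\ref{towrdomEvals} itself, whose content relies crucially on the uniform eigenfunction bounds of Lemma~\ref{UniformBoundsOnTower} that let us compare the spectra of the wrapped towers with those of the straight limiting models; once these are available, the present statement reduces, as above, to reading off the model data and invoking standard semicontinuity for discrete spectra of compact resolvent operators.
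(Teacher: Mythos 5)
Your proof is correct and takes essentially the same route as the paper's: both reduce, via Lemma~\ref{dom_red_ext} and Proposition~\ref{conformalInvariance} (encapsulated in the identifications you quote from just above Lemma~\ref{catrAndDiscrIndexAndNullity}), to the $\pm$-parity spectra of $\indexform{\two{\liptowrdom{n}}}_\neum$ and $\indexform{\three{\liptowrdom{m}}}_\neum$, then invoke the eigenvalue convergence of Corollary~\ref{towrdomEvals} to import the model values from Lemma~\ref{towIndexAndNullity}. You also correctly handle the one subtle point the paper leaves implicit, namely that for the two equalities in the table the relevant model parity has no kernel (so eigenvalue convergence pins down both index and nullity exactly), while for the $\pyr_{m+1}$ bound only a one-sided estimate is possible since $\lambda_2^+$ of the model vanishes.
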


\begin{proof}
All claims follow from the conjunction of
Lemma \ref{dom_red_ext}
(to reduce to the appropriately even
and odd indices and nullities on 
$n^{-1}\two{\towrdom{n}}$ and $(m+1)^{-1}\three{\towrdom{m}}$
with Neumann boundary data),
Proposition \ref{conformalInvariance}
(to dispense with the above scale factors
$n, m+1$ and, more substantially,
to pass from the natural metric
to $\two{h}_n$ or $\three{h}_m$),
Lemma \ref{towrdomEvals}
(to reduce to the appropriate indices
and nullities of
$\two{\liptowdom}$
and
$\three{\liptowdom}$
),
and finally Lemma \ref{towIndexAndNullity}
(which provides these last quantities).
\end{proof}

\subsection{Proofs of Theorem \ref{cswEquivariantIndexAndNullity} and \ref{cswIndexBounds}}

The following statement collects, from the broader analysis conducted in the previous section, those conclusions we shall need to prove the two main results stated in the introduction.

\begin{corollary}[Equivariant index and nullity 
upper bounds for $\threebc_m$ and $\zerogen_n$]\label{cor:EquivUpperBound}
There exists $m_0, n_0>0$ such that for all integers $m>m_0$ and $n>n_0$
we have the bounds
\begin{alignat*}{3}
&\equivind{\apr_{m+1}}(\threebc_{m})
&&+\equivnul{\apr_{m+1}}(\threebc_{m})
&&\leq2,
\\[.5ex]
&\equivind{\pyr_{m+1}}(\threebc_{m})
&&+\equivnul{\pyr_{m+1}}(\threebc_{m})
&&\leq 6,
\\[.5ex]
&\equivind{\pri_n}(\zerogen_n)
&&+\equivnul{\pri_n}(\zerogen_n)
&&\leq2.
\end{alignat*}
\end{corollary}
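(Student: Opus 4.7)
The plan is to derive all three bounds from a single application of Proposition~\ref{mr}\ref{mrUpper} at $t=0$ (Neumann conditions on interior cuts), applied to the $\grp$-invariant decompositions of $\threebc_m$ and $\zerogen_n$ from Section~\ref{subs:deconstruction}. At $t=0$, for any $\grp$-invariant partition into Lipschitz subdomains $\lipdom_1,\ldots,\lipdom_k$, the proposition specializes to
\[
\equivind{\grp}(\Sigma)+\equivnul{\grp}(\Sigma)
\leq
\bigl(\equivind{\grp}+\equivnul{\grp}\bigr)\bigl(\indexform{\lipdom_1}_\neum\bigr)
+\sum_{i=2}^k\equivind{\grp}\bigl(\indexform{\lipdom_i}_\neum\bigr),
\]
with free choice of the distinguished piece $\lipdom_1$. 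Concretely, I would take the three $\apr_{m+1}$-invariant pieces $\three{\discr_m}$, $\three{\towr_m}$, and $\three{\catr_m}\cup\refl_{\{y=z=0\}}\three{\catr_m}$ for the first bound; the four $\pyr_{m+1}$-invariant pieces $\three{\discr_m}$, $\three{\towr_m}$, $\three{\catr_m}$, $\refl_{\{y=z=0\}}\three{\catr_m}$ for the second; and the two $\pri_n$-invariant pieces $\two{\towr_n}$ and $\two{\catr_n}\cup\refl_{\{z=0\}}\two{\catr_n}$ for the third.

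For each disjoint-union piece $S\sqcup gS$ appearing in the first and third partitions, I would invoke Lemma~\ref{dom_red_ext} in the degenerate case of empty interior boundary to identify the $(\grp,\nrmlsgn{\nu})$-invariant spectrum on $S\sqcup gS$ with the $(\stabgrp,\nrmlsgn{\nu}|_\stabgrp)$-invariant spectrum on a single copy $S$, where $\stabgrp<\grp$ is the index-$2$ stabilizer of $S$. In our cases this stabilizer consists exactly of the $z$-sign-preserving symmetries, namely $\pyr_{m+1}<\apr_{m+1}$ and $\pyr_n<\pri_n$; moreover, the restriction of $\nrmlsgn{\nu}$ to it coincides with the sign homomorphism implicit in the equivariant notation of Lemma~\ref{catrAndDiscrIndexAndNullity} and Corollary~\ref{towrIndexAndNullity}. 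After this reduction, every index and nullity appearing on the right-hand side of the displayed inequality is directly tabulated in those two auxiliary results.

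Inserting the tabulated values then yields the three estimates: for $\apr_{m+1}$ on $\threebc_m$ with $\lipdom_1=\three{\discr_m}$ one finds $(0+0)+1+1=2$; for $\pyr_{m+1}$ on $\threebc_m$ with $\lipdom_1=\three{\discr_m}$ one finds $1+3+1+1=6$, invoking the auxiliary inequalities $\equivind{\pyr_{m+1}}+\equivnul{\pyr_{m+1}}\leq 1$ on $\three{\discr_m}$ and $\leq 3$ on $\three{\towr_m}$; and for $\pri_n$ on $\zerogen_n$ with $\lipdom_1=\two{\towr_n}$ one finds $(1+0)+1=2$. The main point requiring care will be the disjoint-union reduction, specifically the verification that the stabilizer subgroups and the restriction of the sign homomorphism match those implicit in the auxiliary lemmas; once that identification is made, the rest is routine bookkeeping against the tables.
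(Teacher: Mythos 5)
Your proposal is correct and follows essentially the same route as the paper: apply Proposition~\ref{mr}\ref{mrUpper} at $t=0$ to the same $\grp$-invariant decompositions, reduce the disjoint-union pieces to a single catenoidal annulus with the stabilizer $\pyr_{m+1}$ (resp.\ $\pyr_n$), and read off the tabulated values from Lemma~\ref{catrAndDiscrIndexAndNullity} and Corollary~\ref{towrIndexAndNullity}. The paper justifies the disjoint-union reduction by directly matching $\apr_{m+1}$-equivariance on $\three{\catr_m}\cup\refl_{\{y=z=0\}}\three{\catr_m}$ with $\pyr_{m+1}$-equivariance on $\three{\catr_m}$, rather than citing Lemma~\ref{dom_red_ext} (whose statement assumes $\lipdom$ connected), but the content is the same and your identification of the stabilizer subgroups and the restricted sign homomorphism is accurate.
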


\begin{proof}
We apply item \ref{mrUpper}
of Proposition \ref{mr}, for the partition ``into building blocks'' defined in Section \ref{subs:deconstruction} (cf. Figure \ref{fig:decomposition}),
in conjunction with
Lemma \ref{catrAndDiscrIndexAndNullity}
and Corollary \ref{towrIndexAndNullity}
for the ancillary estimates for the index and nullity of the various blocks. We find that the three index-plus-nullity sums appearing in the statement
are respectively bounded above by
\begin{equation*}
\begin{gathered}
  \equivind{\pyr_{m+1}}\Bigl(\indexform{\three{\catr_m}}_\neum\Bigr)
    +\equivind{\apr_{m+1}}\Bigl(\indexform{\three{\towr_m}}_\neum\Bigr)
    +\left[
      \equivind{\apr_{m+1}}\Bigl(\indexform{\three{\discr_m}}_\neum\Bigr)
      +\equivnul{\apr_{m+1}}\Bigl(\indexform{\three{\discr_m}}_\neum\Bigr)
    \right]
    \leq
    1+1+0
    =
    2,
  \\
  2\equivind{\pyr_{m+1}}\Bigl(\indexform{\three{\catr_m}}_\neum\Bigr)
    +\equivind{\pyr_{m+1}}\Bigl(\indexform{\three{\towr_m}}_\neum\Bigr)
    +\left[
      \equivind{\pyr_{m+1}}\Bigl(\indexform{\three{\discr_m}}_\neum\Bigr)
      +\equivnul{\pyr_{m+1}}\Bigl(\indexform{\three{\discr_m}}_\neum\Bigr)
    \right]
    \leq
    2+3+1
    =
    6,
  \\
   \equivind{\pyr_n}\Bigl(\indexform{\two{\catr_m}}_\neum\Bigr)
    +\left[
      \equivind{\pri_n}\Bigl(\indexform{\two{\towr_m}}_\neum\Bigr)
      +\equivnul{\pri_n}\Bigl(\indexform{\two{\towr_m}}_\neum\Bigr)
    \right]
    \leq
    1+1
    =
    2.
\end{gathered}
\end{equation*}
The first term in the first line
arises as an upper bound for the
$\apr_{m+1}$-equivariant index
of $\three{\catr_m} \cup \refl_{\{y=z=0\}}\three{\catr_m}$
subject to the natural (free boundary) Robin condition
on the portion of its boundary in $\Sp^2$
and subject to the homogeneous Neumann boundary condition
on the remainder of the boundary.
To obtain this upper bound
we have used the fact that
a function on $\three{\catr_m} \cup \refl_{\{y=z=0\}}\three{\catr_m}$
(a disjoint union, with each annulus disjoint from $\{z=0\}$)
is $\apr_{m+1}$-equivariant
if and only if
its restriction to $\three{\catr_m}$
is $\pyr_{m+1}$-equivariant
and it is odd with respect to any one (so all) of the $m+1$
reflections through horizontal lines in $\apr_{m+1}$.
The first term of the final line
is obtained in similar fashion.
\end{proof}

So, we are in position to fully determine the (maximally) equivariant index and nullity for the two families of free boundary minimal surfaces we constructed in \cite{CSWnonuniqueness}.

\begin{proof}[Proof of Theorem \ref{cswEquivariantIndexAndNullity}]
We combine the upper bounds
of the preceding corollary
with the lower bounds
from our earlier paper \cite{CSWnonuniqueness}, specifically with the content of Proposition 7.1 (cf. Remark 7.5) therein for what pertains to the index. At that stage, the fact that both nullities are zero then follows from the first and third inequality in Corollary \ref{cor:EquivUpperBound}.
\end{proof}

Finally, we can obtain the absolute estimates on the Morse index of the same families.

\begin{proof}[Proof of Theorem \ref{cswIndexBounds}]
The lower bounds have already been established: specifically, for $\threebc_m$ this is just part of Proposition \ref{pro:CheapLowerBods}, while for $\zerogen_n$ it follows from just combining Proposition \ref{pro:CheapLowerBods} with Proposition~\ref{pro:OddLowerBods}.
For the upper bound
we can apply the Montiel--Ros argument
making use of the equivariant upper bounds above, as we are about to explain.
In the case of $\zerogen_n$, the $\pri_n$-equivariant upper bound on the Morse index (and nullity) is equivalent to an upper bound on the index and nullity on each domain $\Omega^n_i=\zerogen_n\cap W_i$ where $W_1,\ldots, W_{4n}$ are the open domains defined, in $\B^3$, by the horizontal plane $\left\{z=0\right\}$ together with the $n$ vertical planes passing through the origin and having equations $\theta=\pi/(2n)+i\pi/n$, $i=0,1,\ldots, n-1$ (in the cylindrical coordinates defined at the beginning of Section \ref{sec:FBMSgen}), subject to Neumann conditions in the interior boundary as prescribed by Lemma \ref{dom_red_ext}. Thus the conclusion comes straight by appealing to Corollary \ref{cor:MontielRosHuman} given the third displayed equation of Corollary \ref{cor:EquivUpperBound}. Similarly, for $\threebc_m$ we can interpret the second inequality in the statement of Corollary \ref{cor:EquivUpperBound} as a statement on the index and nullity of the portions of surfaces that are contained in any of the $2(m+1)$ sets obtained by cutting with the $m+1$ vertical planes passing through the origin and having equations $\theta=\pi/(2(m+1))+i\pi/(m+1)$, $i=0,1,\ldots, m$, again subject to Neumann conditions. This completes the proof.
\end{proof}

\setlength{\parskip}{1ex plus 1pt minus 1pt}
\begin{bibdiv}
\begin{biblist}

\bib{AmbCarSha18-SMNconj}{article}{
      author={Ambrozio, Lucas},
      author={Carlotto, Alessandro},
      author={Sharp, Ben},
       title={Comparing the {M}orse index and the first {B}etti number of
  minimal hypersurfaces},
        date={2018},
     journal={J. Differential Geom.},
      volume={108},
      number={3},
       pages={379\ndash 410},
}

\bib{AmbCarSha18-Index}{article}{
      author={Ambrozio, Lucas},
      author={Carlotto, Alessandro},
      author={Sharp, Ben},
       title={Index estimates for free boundary minimal hypersurfaces},
        date={2018},
     journal={Math. Ann.},
      volume={370},
      number={3-4},
       pages={1063\ndash 1078},
}

\bib{AronszajnUC}{article}{
      author={Aronszajn, Nachman},
       title={A unique continuation theorem for solutions of elliptic partial
  differential equations or inequalities of second order},
        date={1957},
     journal={J. Math. Pures et Appl.},
      volume={36},
      number={9},
       pages={235\ndash 249},
}

\bib{CarFraSch20}{article}{
      author={Carlotto, Alessandro},
      author={Franz, Giada},
      author={Schulz, Mario~B.},
       title={Free boundary minimal surfaces with connected boundary and
  arbitrary genus},
        date={2022},
     journal={Camb. J. Math.},
      volume={10},
      number={4},
       pages={835\ndash 857},
}

\bib{CarLi19}{article}{
      author={Carlotto, Alessandro},
      author={Li, Chao},
       title={Constrained deformations of positive scalar curvature metrics},
        date={2024},
     journal={J. Differential Geom.},
      volume={126},
      number={2},
       pages={465\ndash 554},
}

\bib{CSWnonuniqueness}{article}{
      author={Carlotto, Alessandro},
      author={Schulz, Mario~B.},
      author={Wiygul, David},
       title={Infinitely many pairs of free boundary minimal surfaces with the
  same topology and symmetry group},
     journal={Mem. Amer. Math. Soc. (to appear)},
      eprint={arXiv:2205.04861},
}

\bib{ChoeVision90}{article}{
      author={Choe, Jaigyoung},
       title={Index, vision number and stability of complete minimal surfaces},
        date={1990},
     journal={Arch. Rat. Mech. Anal.},
      volume={109},
       pages={195\ndash 212},
}

\bib{Devyver2019}{article}{
      author={Devyver, Baptiste},
       title={Index of the critical catenoid},
        date={2019},
        ISSN={0046-5755},
     journal={Geom. Dedicata},
      volume={199},
       pages={355\ndash 371},
}

\bib{EvansGariepy}{book}{
      author={Evans, Lawrence~C.},
      author={Gariepy, Ronald~F.},
       title={Measure {T}heory and {F}ine {P}roperties of {F}unctions},
     edition={Revised Edition},
      series={Textbooks in Mathematics},
   publisher={CRC Press},
        date={2015},
}

\bib{Fis85}{article}{
      author={Fischer-Colbrie, Doris},
       title={On complete minimal surfaces with finite {M}orse index in
  three-manifolds},
        date={1985},
     journal={Invent. Math.},
      volume={82},
      number={1},
       pages={121\ndash 132},
}

\bib{FolPacZol17}{article}{
      author={Folha, Abigail},
      author={Pacard, Frank},
      author={Zolotareva, Tatiana},
       title={Free boundary minimal surfaces in the unit 3\mbox{-}{b}all},
        date={2017},
     journal={Manuscripta Math.},
      volume={154},
      number={3-4},
       pages={359\ndash 409},
}

\bib{FranzThesis}{book}{
      author={Franz, Giada},
       title={Contributions to the theory of free boundary minimal surfaces},
      series={PhD thesis},
   publisher={ETH Z\"urich},
        date={2022},
}

\bib{FraserLectureNotes}{incollection}{
      author={Fraser, Ailana},
       title={Extremal eigenvalue problems and free boundary minimal surfaces
  in the ball},
        date={2020},
   booktitle={Geometric analysis ({L}ecture {N}otes in {M}ath., vol. 2263,
  {S}pringer, {C}ham)},
      series={Lecture Notes in Math.},
      volume={2263},
   publisher={Springer, Cham},
       pages={1\ndash 40},
}

\bib{FraSch11}{article}{
      author={Fraser, Ailana},
      author={Schoen, Richard},
       title={The first {S}teklov eigenvalue, conformal geometry, and minimal
  surfaces},
        date={2011},
     journal={Adv. Math.},
      volume={226},
      number={5},
       pages={4011\ndash 4030},
}

\bib{FraSch16}{article}{
      author={Fraser, Ailana},
      author={Schoen, Richard},
       title={Sharp eigenvalue bounds and minimal surfaces in the ball},
        date={2016},
     journal={Invent. Math.},
      volume={203},
      number={3},
       pages={823\ndash 890},
}

\bib{GirLag21}{article}{
      author={Girouard, Alexandre},
      author={Lagac\'{e}, Jean},
       title={Large {S}teklov eigenvalues via homogenisation on manifolds},
        date={2021},
        ISSN={0020-9910},
     journal={Invent. Math.},
      volume={226},
      number={3},
       pages={1011\ndash 1056},
}

\bib{KapouleasEuclideanDesing}{article}{
      author={Kapouleas, Nikolaos},
       title={Complete embedded minimal surfaces of finite total curvature},
        date={1997},
     journal={J. Differential Geom.},
      volume={45},
       pages={95\ndash 169},
}

\bib{KapouleasLiDiscCCdesing}{article}{
      author={Kapouleas, Nikolaos},
      author={Li, Martin Man-chun},
       title={Free boundary minimal surfaces in the unit three-ball via
  desingularization of the critical catenoid and the equatorial disk},
        date={2021},
     journal={J. Reine Angew. Math.},
      volume={776},
       pages={201\ndash 254},
}

\bib{KapouleasMcGrathGenLinDblI}{article}{
      author={Kapouleas, Nikolaos},
      author={McGrath, Peter},
       title={Generalizing the {L}inearized {D}oubling approach, {I}: General
  theory and new minimal surfaces and self-shrinkers},
        date={2023},
     journal={Camb. J. Math.},
      volume={11},
      number={2},
       pages={299\ndash 439},
}

\bib{KapouleasWiygulIndex}{article}{
      author={Kapouleas, Nikolaos},
      author={Wiygul, David},
       title={The index and nullity of the {L}awson surfaces {$\xi_{g,1}$}},
        date={2020},
     journal={Camb. J. Math.},
      volume={8},
      number={2},
       pages={363\ndash 405},
}

\bib{KapouleasWiygul17}{article}{
      author={Kapouleas, Nikolaos},
      author={Wiygul, David},
       title={Free-boundary minimal surfaces with connected boundary in the
  $3$-ball by tripling the equatorial disc},
        date={2023},
     journal={J. Differential Geom.},
      volume={123},
      number={2},
       pages={311\ndash 362},
}

\bib{KapouleasZouCloseToBdy}{article}{
      author={Kapouleas, Nikolaos},
      author={Zou, Jiahua},
       title={Free boundary minimal surfaces in the {E}uclidean three-ball
  close to the boundary},
     journal={preprint},
      eprint={arXiv:2111.11308},
}

\bib{KarcherScherk}{article}{
      author={Karcher, Hermann},
       title={Embedded minimal surfaces derived from {S}cherk's examples},
        date={1988},
     journal={Manuscripta Math.},
      volume={62},
       pages={83\ndash 114},
}

\bib{KarKokPol14}{article}{
      author={Karpukhin, Mikhail},
      author={Kokarev, Gerasim},
      author={Polterovich, Iosif},
       title={Multiplicity bounds for {S}teklov eigenvalues on {R}iemannian
  surfaces},
        date={2014},
        ISSN={0373-0956},
     journal={Ann. Inst. Fourier (Grenoble)},
      volume={64},
      number={6},
       pages={2481\ndash 2502},
}

\bib{Ketover16equiv}{article}{
      author={Ketover, Daniel},
       title={Equivariant min-max theory},
     journal={preprint},
      eprint={arXiv:1612.08692},
}

\bib{Ketover16FB}{article}{
      author={Ketover, Daniel},
       title={Free boundary minimal surfaces of unbounded genus},
     journal={preprint},
      eprint={arXiv:1612.08691},
}

\bib{LiSurvey}{incollection}{
      author={Li, Martin Man-chun},
       title={Free boundary minimal surfaces in the unit ball: recent advances
  and open questions},
        date={2020},
   booktitle={Proceedings of the {I}nternational {C}onsortium of {C}hinese
  {M}athematicians, 2017 ({F}irst {A}nnual {M}eeting) ({I}nternational {P}ress
  of {B}oston, {I}nc.)},
   publisher={International Press of Boston, Inc.},
       pages={401\ndash 436},
}

\bib{Lim17}{article}{
      author={Lima, Vanderson},
       title={Bounds for the {M}orse index of free boundary minimal surfaces},
        date={2022},
     journal={Asian J. Math.},
      volume={26},
      number={2},
       pages={227\ndash 252},
}

\bib{MarquesNeves2016}{article}{
      author={Marques, Fernando~C.},
      author={Neves, Andr\'{e}},
       title={Morse index and multiplicity of min-max minimal hypersurfaces},
        date={2016},
     journal={Camb. J. Math.},
      volume={4},
      number={4},
       pages={463\ndash 511},
}

\bib{MarquesNeves2017Sur}{incollection}{
      author={Marques, Fernando~C.},
      author={Neves, Andr\'{e}},
       title={The space of cycles, a {W}eyl law for minimal hypersurfaces and
  {M}orse index estimates},
        date={2018},
   booktitle={Surveys in differential geometry 2017. {C}elebrating the 50th
  anniversary of the {J}ournal of {D}ifferential {G}eometry},
      series={Surv. Differ. Geom.},
      volume={22},
   publisher={Int. Press, Somerville, MA},
       pages={319\ndash 329},
}

\bib{MarquesNeves2020}{incollection}{
      author={Marques, Fernando~C.},
      author={Neves, Andr\'{e}},
       title={Applications of min-max methods to geometry},
        date={2020},
   booktitle={Geometric analysis},
      series={Lecture Notes in Math.},
      volume={2263},
   publisher={Springer, Cham},
       pages={41\ndash 77},
}

\bib{MontielRos}{incollection}{
      author={Montiel, Sebasti\'{a}n},
      author={Ros, Antonio},
       title={Schr\"{o}dinger operators associated to a holomorphic map},
        date={1991},
   booktitle={Global {D}ifferential {G}eometry and {G}lobal {A}nalysis
  ({B}erlin, 1990)},
      series={Lecture Notes in Math.},
      volume={1481},
   publisher={Springer, Berlin},
       pages={147\ndash 174},
}

\bib{Neves2014}{incollection}{
      author={Neves, Andr\'{e}},
       title={New applications of min-max theory},
        date={2014},
   booktitle={Proceedings of the {I}nternational {C}ongress of
  {M}athematicians},
      volume={2263},
   publisher={Kyung Moon Sa},
       pages={939\ndash 957},
}

\bib{Sar17}{article}{
      author={Sargent, Pam},
       title={Index bounds for free boundary minimal surfaces of convex
  bodies},
        date={2017},
     journal={Proc. Amer. Math. Soc.},
      volume={145},
      number={6},
       pages={2467\ndash 2480},
}

\bib{Scherk}{article}{
      author={Scherk, Heinrich~Ferdinand},
       title={Bemerkungen \"{u}ber die kleinste {F}l\"{a}che innherhalb
  gegebener {G}renzen},
        date={1835},
     journal={J. Reine Angew. Math.},
      volume={13},
       pages={185\ndash 208},
}

\bib{SmithZhou2019}{article}{
      author={Smith, Graham},
      author={Zhou, Detang},
       title={The {M}orse index of the critical catenoid},
        date={2019},
        ISSN={0046-5755},
     journal={Geom. Dedicata},
      volume={201},
       pages={13\ndash 19},
}

\bib{TaylorPDE1}{book}{
      author={Taylor, Michael~E.},
       title={Partial {D}ifferential {E}quations {I}},
      series={Applied Mathematical Sciences},
   publisher={Springer-Verlag, New York},
        date={1996},
      volume={115},
        note={Basic theory},
}

\bib{Tran2020}{article}{
      author={Tran, Hung},
       title={Index characterization for free boundary minimal surfaces},
        date={2020},
        ISSN={1019-8385},
     journal={Comm. Anal. Geom.},
      volume={28},
      number={1},
       pages={189\ndash 222},
}

\end{biblist}
\end{bibdiv}

\printaddress

\end{document}